\newcommand{\leqnomode}{\tagsleft@true\let\veqno\@@leqno}
\newcommand{\reqnomode}{\tagsleft@false\let\veqno\@@eqno}
\setlist[enumerate,1]{label={\upshape(\roman*)},leftmargin=*}
\numberwithin{equation}{section}
\newtheorem{thm}[equation]{Theorem}
\newtheorem{lem}[equation]{Lemma}
\newtheorem{cor}[equation]{Corollary}
\newtheorem{prop}[equation]{Proposition}
\theoremstyle{definition}
\newtheorem{df}[equation]{Definition}
\newtheorem{ex}[equation]{Example}
\theoremstyle{remark}
\newtheorem{rem}[equation]{Remark}
\newtheorem{clm}[equation]{Claim}
\newtheorem{que}[equation]{Question}
\renewcommand{\emph}{\textbf}
\newcommand{\mg}[1]{}
\newcommand{\al}{\alpha}
\newcommand{\de}{\delta}
\newcommand{\ka}{\kappa}
\newcommand{\sq}{\subseteq}
\newcommand{\wt}{\widetilde}
\newcommand{\wh}{\widehat}
\newcommand{\vS}{\varSigma}
\newcommand{\vL}{\Lambda}
\newcommand{\ov}{\overline}
\newcommand{\B}{\mathcal{B}}
\newcommand{\e}{\emptyset}
\newcommand{\F}{\mathcal{F}}
\newcommand{\Hh}{\mathcal{H}}
\newcommand{\I}{\mathcal{I}}
\newcommand{\J}{\mathcal{J}}
\newcommand{\Kk}{\mathcal{K}}
\newcommand{\LL}{\mathcal{L}}
\newcommand{\la}{\langle}
\newcommand{\M}{\mathcal{M}}
\newcommand{\m}{\medskip}
\newcommand{\n}{\noindent}
\newcommand{\ra}{\rangle}
\newcommand{\restr}{\upharpoonright}
\newcommand{\s}{\sigma}
\newcommand{\sspan}{\operatorname{span}}
\newcommand{\T}{\mathbf{T}}
\newcommand{\NN}{\mathcal{N}}
\newcommand{\sm}{\setminus}
\newcommand{\ts}{\textstyle}
\newcommand{\vY}{\Upsilon}
\newcommand{\ups}{\upsilon}
\def\supp{\mathop{\rm supp}}
\newcommand{\N}{{\mathbb N}}
\newcommand{\R}{{\mathbb R}}
\newcommand{\Q}{{\mathbb Q}}
\newcommand{\C}{{\mathbb C}}
\newcommand{\K}{{\mathbb K}}
\newcommand{\V}{{\mathcal V}}
\def\supp{\mathop{\rm supp}}
\def\ups{\upsilon}
\title{VECTOR LIFTINGS FOR PRODUCTS OF PROBABILITY SPACES AND MEASURABLE MODIFICATIONS OF STOCHASTIC PROCESSES}
\author{M.\ R.\ Burke}%
\address{School of Mathematical and Computational Sciences, University of Prince Edward Island, Charlottetown PE, Canada C1A 4P3}%
\email{burke@upei.ca}%
\thanks{The research of the first author was supported by NSERC}%
\subjclass{Primary 28A51; Secondary 28A35, 54B10, 60G05, 60A10, 60B05.}%
\keywords{lifting, vector lifting, skew products, extension by null sets, product strong lifting, marginals, measurable stochastic processes.}%
\author{N.\ D.\ Macheras}%
\address{Dept of Stats and Insurance Science, University of Piraeus, 80 Karaoli and Dimitriou str., 185 34 Piraeus, Greece}%
\email{macheras@unipi.gr}%
\thanks{The research of the second author was partially supported by the University of Piraeus Research Center.}
\author{W.\ Strauss}%
\address{Universit\"{a}t Stuttgart, Fachbereich Mathematik, Postfach 80 11 40, D-70511 Stuttgart}%
\email{wekarist@gmx.de}%
\date{}
\begin{document}

\begin{abstract}
We investigate the properties of linear primitive liftings $\rho\colon \LL^p(\mu)\to \LL^p(\mu)$ for probability spaces $(X,\vS,\mu)$, which are linear maps selecting a representative from each class for almost everywhere equality. We call them vector liftings. They have the advantage over liftings or linear liftings that they exist for all $p\in[0,\infty]$, not only for $p=\infty$. Their relationship to products is still not clear, but we establish existence of (strong) product vector liftings for products of two factors. The vector liftings which are $2$-marginals with respect to a suitable product yielded a characterization of stochastic processes having a measurable modification modelled on one discovered by Musia{\l}, and led to a proof of the characterization that does not use liftings. The improvement relies on results on extending a measure by null sets that apply when the family of new null sets is an ideal of a $\s$-algebra larger than the domain of the measure, but not necessarily an ideal of the power set. These allow us to reduce or eliminate completeness assumptions on the measures from several of our results.
\end{abstract}

\maketitle

\section{Introduction}
\label{s:intro}

Let $(X,\vS,\mu)$ be a measure space, $0\leq p\leq\infty$. By a vector lifting for $\LL^p(\mu)$, we mean a linear map $\rho\colon \LL^p(\mu)\to \LL^p(\mu)$ mapping each constant function to itself, and satisfying $\rho(f) =_\mu f$ and $f=_\mu g$ implies $\rho(f)=\rho(g)$ for all $f,g\in \LL^p(\mu)$ (not to be confused with linear liftings which also have to be order-preserving).
Vector liftings for $\mathcal{L}^{\infty}(\mu)$ have been known to experts in the theory of liftings at least since the sixties of the last century but were left undeveloped. They were considered trivial in the times of the predominance of functional analysis, where the interest was in positive linear and multiplicative linear liftings instead (compare \cite{it}) and  there seemed to be no evidence for  getting interesting results, when applying it to the topics we deal with in this paper. Not even an extension of the definition to spaces $\mathcal{L}^p(\mu)$, $0\leq p<\infty$ was considered, where the vector lifting is a substitute for the non-existent positive linear lifting.
We investigate the notion of (strong) vector liftings on (topological) measure spaces, focusing on how vector liftings for products relate to vector liftings for the factors.

Many of our results concern extensions of measures by adding null sets. We give a general framework (Definition \ref{d:T}) for doing this by considering triples $(X,\mathcal{A},\I)$, where $\mathcal{A}$ is a $\s$-algebra of subsets gof $X$, and $\I$ is a $\s$-ideal of a $\s$-algebra $\B$ with $\mathcal{A}\sq \B$. We develop in Section \ref{s:ext} properties of extensions of measures by null sets, focusing on products where we add ideals which are skew products. These allow us to prove versions of facts about complete ideals in the absence of completeness.

Section \ref{s:v.lift.meas} establishes basic properties of vector liftings. We also show that for any diffuse probability space $(X,\vS,\mu)$, and $0\leq p<\infty$, there is no selector for the measure classes $\varphi\colon\LL^p(\mu)\to \LL^p(\mu)$ which is order-preserving (Proposition \ref{p:o-p}) generalizing the result from \cite{it} that there is no linear lifting for $\LL^p(\mu)$ when $1\leq p<\infty$.

We investigate vector liftings for product spaces. In particular we prove Theorem \ref{PV30} which implies the following.
If $(X,\mathfrak{S},\vS,\mu)$, $(Y,\mathfrak{T},T,\nu)$ and $(X\times{Y},\mathfrak{S}\times\mathfrak{T},\vY,\ups)$ are topological probability spaces, where $\ups$ extends the product measure $\mu\otimes \nu$ and satisfies Fubini's theorem with respect to the factors (in either order), and if each of $(X,\mathfrak{S},\vS,\mu)$ and $(Y,\mathfrak{T},T,\nu)$ has a strong vector lifting, then for every pair of strong vector liftings $\gamma$ for $\LL^p(\mu)$ and $\eta$ for $\LL^p(\nu)$ there exists a strong vector lifting $\pi$ for $\LL^p(\ups)$ so that $\pi(f\otimes g)=\gamma(f)\eta(g)$ for all $f\in \LL^p(\mu)$, $g\in \LL^p(\nu)$, and the vertical and horizontal sections of $\pi(f)$ all belong to $\LL^p(\nu)$ and $\LL^p(\mu)$, respectively.
As part of the proof, we need to show that for any separately continuous function $h\in \LL^p(\ups)$, if $h$ is equal almost everywhere to a sum $\sum_{i=1}^n f_i\otimes g_i$ where $f_i\in \LL^p(\nu)$, $g_i\in \LL^p(\mu)$, then the functions $f_i$ and $g_i$ in this representation can be taken to be continuous (see Corollary \ref{c:otimes}).

We also examine the notion of marginals on $\LL^p(\mu)$, $p\in[0,\infty]$, see Definition \ref{VMD}. We give some conditions equivalent to the existence of $2$-marginals in an $\LL^p$ setting (Corollary \ref{VM19b}). For the existence of vector marginals we prove in particular the following result (Corollary \ref{VM60}).
Given two complete $\tau$-additive topological probability spaces $(X,\mathfrak{S},\vS,\mu)$, $(Y,\mathfrak{T},T,\nu)$ and their $\tau$-additive product $(X\times{Y},\mathfrak{S}\times\mathfrak{T}, \vY,\ups)$, and two strong vector liftings $\lambda$ and $\eta$ on $\LL^p(\mu)$ and $\LL^p(\nu)$, respectively, there exists a product strong vector lifting $\varphi$ on $\LL^p(\ups_\mathfrak{N})$, where $\mathfrak{N}$ is the $\s$-ideal of all right nil null subsets of $X\times{Y}$ (see Definition \ref{df:dn}) and $\ups_\mathfrak{N}$ is defined as in Lemma \ref{P10}, which is a 2-marginal with respect to $\LL^p(\ups_\mathfrak{N})$.

In Section \ref{s:meas.stoch.proc}, we apply the results of Section \ref{s:v.marg} to the measurable $\LL^p$ modifications of stochastic processes, see Definition \ref{mf10}.
It is seen, that for a primitive lifting $\rho$ for $\LL^p$ of the underlying probability space, that the $\rho$-modification of a measurable $\LL^p$ stochastic process is always measurable is equivalent to $\rho$ being a marginal (Theorem \ref{slc30}) and, in case of modifications respecting continuous functions to $\rho$ being a strong marginal (Corollary \ref{s50AR}). The result does not apply to (linear) liftings unless $p=\infty$, since they typically do not exist.

Cohn \cite{co72}, Theorem 3 (see also Chung and Doob \cite{cd}) has given a necessary and sufficient condition  for a process to have a measurable modification. The result of Cohn has been extended in \cite{bms6}, Theorem 8.8. Hoffmann-J{\o}rgensen  \cite{hj} has given another necessary and sufficient condition, which only depends on the 2-dimensional marginal distributions of the process. Musia{\l} \cite{mu23}, Proposition 4.1 states another characterization of processes possessing a measurable modification. Theorem \ref{t:mm1} is inspired by that statement. See Corollary \ref{c:mm1} for the specialization to probability measures, and Remark \ref{r:mu} for how our result relates to the one in \cite{mu23}. The proof of Theorem \ref{t:mm1} involves no applications of liftings. One of the characterizations applies without completeness assumptions on the measures and the others apply with only minimal completeness assumptions. Remark \ref{r:eta.mod.2} indicates how the result could be obtained with a suitable $2$-marginal.

In Section \ref{s:fubini}, we investigate the following problem: Let $(X,\vS,\mu)$ and $(Y,T,\nu)$ be probability spaces endowed with vector liftings $\gamma$ and $\delta$, respectively. Can we define a vector lifting on the product space by means of a Fubini type formula
\[
(\gamma\odot\delta)(f)(x,y)=\gamma([\delta_\bullet(f)]^y)(x)\;\mbox{if}\;[\delta_\bullet(f)]^y\in\LL^p(\mu),\;\;\mbox{and}\;\;(\gamma\odot\delta)(f)(x,y)=0\;\mbox{otherwise}
\]
where $(\delta_\bullet(f))(x,y):=\delta(f_x)(y)$ if $f_x\in\LL^p(\nu)$ and $(\delta_\bullet(f))(x,y):=0$ otherwise, for all $f\in\LL^p(\mu\wh\otimes\nu)$ and $(x,y)\in{X}\times{Y}$? We single out classes of marginals vector liftings or linear liftings $\gamma$ and $\delta$ which admit a positive solution, see Proposition \ref{G} and Theorem \ref{K}, respectively. Corresponding results for densities and category densities are given in \cite{mms14} and \cite{mms25}, respectively.

\section{Preliminaries}
\label{s:pre}

$\N$, $\R$, and $\C$ stand for the natural, real, and complex numbers, respectively, $\ov{\R}:={\R}\cup\{-\infty,\infty\}$. We write $\mathbb{K}$ to mean $\R$ or $\C$, except in sections 3 and 4, where $\mathbb{K}$ is by default an arbitrary field. For a vector space $X$ over $\mathbb{K}$, by a basis for $X$ we mean a basis in the indexed sense, i.e., a family $\la x_i\ra_{i\in I}$ of elements of $X$ such that for each $x\in X$ there is a unique indexed family $\la a_i\ra_{i\in I}$ of elements of $\mathbb{K}$ having finite support ($a_i=0$ except for finitely many $i$) such that $x=\sum_{i\in I}a_ix_i$. When it is not important to name the index set, we will also write $\la x_i\ra$, $\la a_i\ra$ for these indexed families.

By $\mathcal{P}(X)$ we denote the set of all subsets of a set $X$ ordered by inclusion and we write $M^c:=X\setminus{M}$ for $M\in\mathcal{P}(X)$. $Y^X$ is the space of all functions from $X$ into $Y$. 
For $f,g:X\to\ov{\R}$ we abbreviate $\{x\in X:f(x)<g(x)\}$ as $\{f<g\}$, and similarly for other equality or inequality relations.

Let $X$ be a set. If $\I$ is an ideal of an algebra $\mathcal{A}$ on $X$, we write $\wh{\I}:=\{E\sq{X}: \exists A\in\mathcal{I}, E\sq{A}\}$ for the completion of $\mathcal{I}$. For $\mathcal{C}\sq\mathcal{P}(X)$, $\s(\mathcal{C})$ denotes the $\s$-algebra generated by $\mathcal{C}$.

For a given probability space $(X, \vS, \mu)$ we denote  by $(\vS,\mu)_0$, or by $\vS_0$ for simplicity, the $\s$-ideal $\{E\in\vS:\mu(E)=0\}$. If $A\in\vS$ we write $\vS\cap A$ for the $\vS$-algebra of all $E\cap A$ with $E\in\vS$. We denote by $(\mathfrak{A}_\mu,\bar{\mu})$ the \emph{measure algebra of} a measure space $(X, \vS, \mu)$, where $\mathfrak{A}_\mu = \vS/\vS_0$ is the space of all equivalence classes $A^{\bullet}$ for $A\in\vS$ modulo the $\sigma$-ideal $\vS_0$ and $\bar{\mu}(A^{\bullet}):=\mu(A)$ for $A\in\vS$.
The completion of $(X,\vS,\mu)$ will be written $(X,\wh\vS,\wh\mu)$.
For topological spaces $(X,\mathfrak{T})$ we write $\mathfrak{B}(X)$
for its Borel $\sigma$-field over $X$. We write $C(X,\K)$ for the family of all continuous $\K$-valued functions on $X$. We usually write simply $C(X)$, letting the context determine $\K$.

Let $(X,\vS,\mu)$ be a probability space and ${\mathfrak S}$ a topology on $X$ with ${\mathfrak S}\sq\vS$. We say $(X,{\mathfrak S},\vS,\mu)$ is a \emph{topological probability space}.
This space is \emph{$\tau$-additive} if for each upwards directed $\mathfrak{U}\sq\mathfrak{S}$, $\mu(\bigcup\mathfrak{U})=\sup\{\mu(U):U\in\mathfrak{U}\}$, and it is
\emph{Radon} if $\mathfrak{S}$ is Hausdorff and $\mu$ is a complete measure which is inner regular with respect to the compact sets (and therefore $\tau$-additive).

For $\tau$-additive topological probability spaces $(X,{\mathfrak S},\vS,\mu)$ and $(Y,\mathfrak{T},T,\nu)$, their \emph{$\tau$-additive product} is the unique topological probability space $(X\times Y,\mathfrak{S}\times\mathfrak{T},\Upsilon,\upsilon)$ such that $\upsilon$ is $\tau$-additive, extends the product measure, and is the completion of its restriction to
$\s((\mathfrak{S}\times \mathfrak{T})\cup(\vS\otimes T))$.

For contexts where we wish to adjoint a family $\I\sq\mathcal{P}(X)$ to the null ideal of a measure on $X$, a suitable type of family in our setting is an ideal $\I$ of an algebra larger than the algebra of measurable sets. We introduce the following definition,

\begin{df}\label{d:T}
We write $\T$ for the class of all triples $(X,\mathcal{A},\I)$ where
\begin{enumerate}
\item
$\mathcal{A}$ is an algebra of subsets of $X$.

\item
$\I$ is an ideal of an algebra $\mathcal{B}$ of subsets of $X$ such that $\mathcal{A}\sq \mathcal{B}$.
\end{enumerate}
We write $\T_\s$ for the class of all triples $(X,\mathcal{A},\I)$ in $\T$ for which $\I$ is closed under countable unions and $\mathcal{A}$ is a $\s$-algebra.
\end{df}

\begin{rem}\label{r:iosa}
(a) In \cite{bms2}, we characterized the families $\I$ which are ideals of some algebra of subsets of $X$, and identified the largest algebra $\mathcal{B}_\I$ of which $\I$ is an ideal as follows (\cite{bms2}, Proposition 2.2):
$\I\sq \mathcal{P}(X)$ is an ideal of some algebra of subsets of $X$ if and only if $\e\in\I$ and for $A,B\in \I$ we have $A\cup B\in \I$ and $A\sm B\in \I$; when this holds, there is a largest algebra $\mathcal{B}_\I$ of which $\I$ is an ideal given by $\mathcal{B}_\mathcal{I}:=\{E\sq X:\forall A\in\mathcal{I}\,(E\cap A\in\mathcal{I})\}$.

(b) We can similarly characterize what it means for $\I$ to be a $\s$-ideal of some $\s$-algebra of subsets of $X$: it means that $\I$ is an ideal of some algebra of subsets of $X$, and is closed under countable unions. Note that if $\I$ is closed under countable unions then so is $\mathcal{B}_\I$, as is clear from its definition.
Hence if $\I$ is a $\s$-ideal of an algebra $\mathcal{B}$ then $\I$ is also a $\s$-ideal of a $\s$-algebra $\mathcal{B}'$ (namely $\mathcal{B}'=\mathcal{B}_\I$). Note that in this case, $\I$ is closed under countable intersections since $A_n\in\I$, $n\in\N$, implies $A:=\bigcap_nA_n\in\mathcal{B}'$ and $A\sq A_1$, so $A\in\I$.

(c) In terms of these notions, (ii) could be restated equivalently by saying that $\I$ is an ideal of some algebra of subsets of $X$, and $\mathcal{A}\sq \mathcal{B}_\I$.
\end{rem}

\begin{ex}\label{ex:T}
Let $\mathcal{A}$ be an algebra of subsets of $X$.

(a) When $\I$ is an ideal of $\mathcal{P}(X)$ then $(X,\mathcal{A},\I)\in\T$.

(b) When $\I$ is an ideal of $\mathcal{A}$, then $(X,\mathcal{A},\I)\in\T$.

(c) When $(X,\mathcal{A},\I)\in \T$ and $\mathcal{A}'$ is a subalgebra of $\mathcal{A}$, then $(X, \mathcal{A}', \I)\in\T$.

(d) When $(X,\mathcal{A},\I)\in \T$, $\mathcal{I}\cap \mathcal{A}$ is an ideal of $\mathcal{A}$, and $(X, \mathcal{A}, \I\cap\mathcal{A})\in\T$.

\n These examples have obvious analogs with $\T_\s$ in the place of $\T$.
\end{ex}

\begin{df}\label{CP00}
Let $(X,\mathcal{A},\I)\in\T$. For sets $A,B\in\mathcal{A}$, write $A\sq_{\mathcal I}B$ if $A\setminus B\in\mathcal{I}$, $A=_{\mathcal I}B$ if $A\,\triangle\,B\in \I$. For $\mathcal{A}$-measurable functions $f,g\colon X\to\ \ov{\R}$, write $f\leq_{\mathcal I}g$ if $\{f>g\}\in\mathcal{I}$, and $f=_{\mathcal I}g$ if $\{f\not=g\}\in\mathcal{I}$.
\end{df}

When $(X,\mathcal{A},\mathcal{I})$ is $(X,\vS,\vS_0)$ for a measure space $(X,\vS,\mu)$, we write the subscript as $\mu$ instead of $\vS_0$, i.e., $A\sq_\mu B$, $f=_\mu g$, and so on.

\begin{df}\label{P01}
Let $(X,\vS,\I)\in\T$. We make the following definitions.
\begin{enumerate}[(a)]
\item
$\LL^0(\K,\vS) := \{f\in \K^X: f\ \text{is $\vS$-measurable}\}$.

\item
$\LL^\infty(\K,\vS,\I) := \{f\in\LL^0(\K,\vS):\|f\|_\infty<\infty\}$, where $\|f\|_\infty:=\inf\{M\geq{0}: |f|\leq_\I M\}$.

\item
$\mathcal{N}(\K,\vS,\I) := \{f\in\LL^0(\K,\vS):f=_\mathcal{I} 0\}$.
\end{enumerate}

\n In (b) and (c), we get equivalent definitions if we replace $\I$ by the ideal $\I\cap\vS$ of $\vS$. In (b), by standard arguments we have the formulas $\|af\|_\infty = |a|\|f\|_\infty$, $\|f+g\|_\infty\leq \|f\|_\infty + \|g\|_\infty$, $\|fg\|_\infty\leq \|f\|_\infty\|g\|_\infty$, and thus $\LL^\infty(\K,\vS,\I)$ is a subalgebra of $\LL^0(\K,\vS)$.

We call $\mathcal{N}(\K,\vS,\I)$ the \emph{null ideal} for both $\LL^0(\K,\vS)$ and $\LL^\infty(\K,\vS,\I)$. Note that a function $f\in \LL^0(\K,\vS)$ satisfying $f=_\I0$ belongs to $\LL^\infty(\K,\vS,\I)$.
When $\mathbb{K}$ is clear from the context, we omit it. So $\LL^0(\vS)$ means $\LL^0(\K,\vS)$, and similarly for $\LL^\infty(\vS,\I)$, $\mathcal{N}(\vS,\I)$.

For a measure space $(X,\vS,\mu)$, with $\I=\vS_0$ the sets defined above are the standard notions of $\LL^p(\mu)$ when $p=0,\infty$. For the case $0<p<\infty$, we make the standard definition
\begin{enumerate}[(a)]
\setcounter{enumi}{3}
\item
$\LL^p(\mathbb{K},\mu) = \LL^p(\mu) := \{f\in \LL^0(\mu): \int |f|^p\,d\mu<\infty\}$.
\end{enumerate}
The null ideal for $\LL^p(\mu)$ for all $p\in[0,\infty]$ is $\mathcal{N}(\mu) :=\{f\in\LL^0(\mu):f=_\mu 0\}$.
If $r_p\colon \LL^p(\mu)\to {L}^p(\mu)$ is the quotient map, then when the value of $p$ is clear from the context, we write $f^{\bullet}$ for $r_p(f) = f+\mathcal{N}(\mu)\in L^p(\mu)$, and for $V\sq \LL^p(\mu)$ we write $V^\bullet = r_p[V] = \{f^\bullet:f\in V\}$.

\end{df}

\begin{df}\label{3ff}
For a given set $X$, ${\mathcal D}\sq{\mathcal P}(X)$, and $\xi\in{\mathcal P}(X)^{\mathcal D}$ we consider the following properties holding for all $A,B\in\mathcal{D}$.
\begin{enumerate}
\item[(N)]
$\emptyset\in{\mathcal D}$ and $\xi(\emptyset)=\emptyset$,

\item[(E)]
$X\in{\mathcal D}$ and $\xi(X)=X$,

\item[(M)]
$A\sq B$ implies $\xi(A)\sq\xi(B)$,

\item[(V)]
$A^c\in\mathcal{D}$ and $\xi(A)\cap\xi(A^c)=\emptyset$,

\item[($\vartheta$)]
$A\cap B\in{\mathcal D}$ and $\xi(A\cap B)=\xi(A)\cap\xi(B)$.

\item[(C)]
$A^c\in\mathcal{D}$ and $\xi(A^c)=[\xi(A)]^c$.

\item[$(\Lambda)$]
$\xi$ satisfies $(\vartheta)$ and $(C)$.
\end{enumerate}
If $X$ is a topological space, we say $\xi$ is \emph{strong}, if
$U\sq\xi(U)$ for every open set $U$ belonging to $\mathcal{D}$.
For $(X,{\mathcal D},{\mathcal I})\in \T$ we consider for $\xi:\mathcal{D}\to\mathcal{P}(X)$ the following properties holding for all $A,B\in\mathcal{D}$.
\begin{enumerate}
\item[($I{1}$)]
$\xi(A)\in\mathcal{D}$ and $\xi(A)=_{\I}A$,

\item[($I2$)]
$A=_{\mathcal{I}}B$ implies $\xi(A)=\xi(B)$,
\end{enumerate}
When $(X,\mathcal{D},\I)=(X,\vS,\vS_0)$ for a measure space $(X,\Sigma,\mu)$, write $(L1)$, $(L2)$ instead of $(I1)$, $(I2)$. Also, write $\Lambda(\mu)$ (resp.\ $\vartheta(\mu)$) for the set of all $\xi\in{\mathcal P}(X)^\vS$ satisfying the conditions $(N)$, $(E)$, and $(\Lambda)$ (resp.\ $(N)$, $(E)$, and $(\vartheta)$).
\end{df}

Analogously to the properties $(I1)$ and $(I2)$ for set maps, for maps on functions we define the following properties.

\begin{df}\label{E5}
(a) Let $(X,\vS,\mu)$ be a measure space, $\mathcal{M}\sq\LL^0(\mu)$ a linear subspace containing the constant functions.
We consider for $\xi\colon \mathcal{M}\to \K^X$ the following conditions holding for all $f,g\in \mathcal{M}$.
\begin{enumerate}
\item[($l1$)]
$\xi(f)\in\mathcal{M}$ and $\xi(f)=_\mu f$.

\item[($l2$)]
$f=_\mu g$ implies $\xi(f)=\xi(g)$.
\end{enumerate}
We say $\xi$ is a \emph{primitive lifting} for $\mathcal{M}$ if $\xi$ fixes the constant functions and satisfies $(l1)$ and $(l2)$. We say $\xi$ is a \emph{vector lifting} if $\xi$ is a linear primitive lifting.
We denote by $P(\mu,\mathcal{M})$, or just $P(\mu)$, the set of all primitive liftings for $\mathcal{M}$, and by $V(\mu,\mathcal{M})$, or just $V(\mu)$, the set of all vector liftings (when $\mathcal{M}$ is a subspace).
When $\mathcal{M} = \LL^p(\mu)$, we write $P^p(\mu)$ instead of $P(\mu,\LL^p(\mu))$ and we write $V^p(\mu)$ instead of $V(\mu,\LL^p(\mu))$.
If it is necessary to specify $\K$, we write $P^p_\K(\mu)$, $V^p_\K(\mu)$.
If $X$ is also a topological space, we say $\xi$ is \emph{strong} if $\xi(f)=f$ for each $f\in C(X)\cap \mathcal{M}$.

(b)
When $\K=\R$, a vector lifting which is order-preserving ($f\leq g$ implies $\xi(f)\leq \xi(g)$) is called a \emph{linear lifting} for $\mathcal{M}$.
If $\mathcal{M}$ is closed under multiplication, a vector lifting which is multiplicative ($\xi(fg)=\xi(f)\xi(g)$) is called a \emph{lifting} for $\mathcal{M}$.
Denote by $\mathfrak{G}(\mu)$ and $\Lambda^\infty(\mu)$ the family of all linear liftings for $\LL^\infty(\mu)$ and of all liftings for $\LL^\infty(\mu)$, respectively. It is well-known and easy to check that $\Lambda^\infty(\mu)\sq \mathfrak{G}(\mu)$.
\end{df}

There is a bijective correspondence between liftings $\rho\in\Lambda(\mu)$ and liftings $\rho'\in \Lambda^\infty(\mu)$
determined by the relation $\rho'(\chi_A)=\chi_{\rho(A)}$ for all $A\in\vS$ (see \cite{it}, pages 35--36). We generally denote them both by the same symbol, writing for example $\rho(\chi_A)=\chi_{\rho(A)}$.

\begin{df}\label{df1}
(Cf.\ \cite{bms5}, page 6)
Let $X$ and $Y$ be sets. For $E\sq X\times Y$, write $E_x:=\{y\in Y:(x,y)\in E\}$ and $E^y:=\{x\in X:(x,y)\in E\}$  ($x\in X$, $y\in Y$).

(a) When discussing families of subsets of $X\times Y$ constructed from families $\mathcal{S}\sq\mathcal{P}(X)$ and $\mathcal{T}\sq\mathcal{P}(Y)$, by $\mathcal{S}\,\dot{\times}\,Y$ we will mean $\{S\times Y:S\in\mathcal{S}\}$, and similarly $X\,\dot{\times}\,\mathcal{T}:= \{X\times T:T\in\mathcal{T}\}$.

(b) The downward closures of
$\mathcal{S}\,\dot{\times}\,Y$ and $X\,\dot{\times}\,\mathcal{T}$ in $\mathcal{P}(X\times Y)$ will be denoted
\begin{align*}
\mathcal{S}\,\wh{\times}\,Y & := \{F\sq X\times Y: F\sq S\times Y\ \text{for some}\ S\in\mathcal{S}\}, \\
X\,\wh{\times}\,\mathcal{T} & := \{F\sq X\times Y: F\sq X\times T\ \text{for some}\ T\in\mathcal{T}\}.
\end{align*}
These are ideals when $\mathcal{S}$ and $\mathcal{T}$ are ideals.

(c) For $\mathcal{T}=(\mathcal{T}_x)_{x\in X}$ a collection of families $\mathcal{T}_x\sq\mathcal{P}(Y)$ indexed by $X$, define
the \emph{skew product}
\begin{align*}
\mathcal{S}\ltimes\mathcal{T} & :=\{E\sq X\times Y:\exists N_E\in\mathcal{S}\;\forall x\notin N_E\;(E_x\in\mathcal{T}_x)\}.
\end{align*}
If the indexed family is constant, $\mathcal{T}_x = \mathcal{U}$ for all $x\in X$, we write the skew product as:%
\footnote{In this general set-theoretic setting, the notation $\mathcal{S}\ltimes\mathcal{U}$ would be ambiguous if $\mathcal{U}$ were both a family of subsets of $Y$ and a collection indexed by $X$ of families of subsets of $Y$. In our intended uses the context always makes the intended meaning clear.}
\begin{align*}
\mathcal{S}\ltimes\mathcal{U} & :=\{E\sq X\times Y:\exists N_E\in\mathcal{S}\;\forall x\notin N_E\;(E_x\in\mathcal{U})\}.
\end{align*}
In this setting, interchanging the roles of the two coordinates, we also define
\begin{align*}
\mathcal{S}\rtimes\mathcal{U} & :=\{E\sq X\times Y:\exists M_E\in\mathcal{U}\;\forall y\notin M_E\;(E^y\in\mathcal{S})\}.
\end{align*}
\end{df}

The following simple observation is useful. We leave its straightforward verification to the reader.

\begin{prop}\label{p:skew}
Let $\mathcal{S}\sq\mathcal{P}(X)$ be nonempty,
$\mathcal{T}=(\mathcal{T}_x)_{x\in X}$, where $\mathcal{T}_x\sq\mathcal{P}(Y)$ for each $x\in X$.
\begin{enumerate}
\item
If $\mathcal{S}$ is upwards directed and each $\mathcal{T}_x$ is an ideal of a subalgebra $\B_x$ of $\mathcal{P}(Y)$ then $\mathcal{S}\ltimes\mathcal{T}$ is an ideal of the subalgebra $\mathcal{S}\ltimes\mathcal{B}$ of $\mathcal{P}(X\times Y)$, where $\mathcal{B}=(\mathcal{B}_x)_{x\in X}$.

\item
If $\mathcal{S}$ is upwards $\s$-directed and each $\mathcal{T}_x$ is a $\s$-ideal of a $\s$-subalgebra $\B_x$ of $\mathcal{P}(Y)$ then $\mathcal{S}\ltimes\mathcal{T}$ is a $\s$-ideal of the $\s$-subalgebra $\mathcal{S}\ltimes\mathcal{B}$ of $\mathcal{P}(X\times Y)$, where $\mathcal{B}=(\mathcal{B}_x)_{x\in X}$.
\end{enumerate}
\end{prop}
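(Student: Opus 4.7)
The plan is to verify the two claims in parallel by checking the algebra/ideal axioms one at a time, invoking upward (resp.\ upward $\s$-) directedness of $\mathcal{S}$ at each step only to amalgamate witnesses. The sectioning identities $(E^c)_x=(E_x)^c$, $(E\cup F)_x=E_x\cup F_x$, $(\bigcup_n E_n)_x=\bigcup_n (E_n)_x$, and $F\sq E\Rightarrow F_x\sq E_x$ are the only bookkeeping facts used, together with the fact that each $\mathcal{B}_x$ is an algebra (resp.\ $\s$-algebra) and $\mathcal{T}_x$ is an ideal (resp.\ $\s$-ideal) in $\mathcal{B}_x$.

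First I would show that $\mathcal{S}\ltimes\mathcal{B}$ is a subalgebra of $\mathcal{P}(X\times Y)$. Picking any $N\in\mathcal{S}$ (nonempty by hypothesis) witnesses that both $\emptyset$ and $X\times Y$ lie in $\mathcal{S}\ltimes\mathcal{B}$. For complementation, the witness for $E$ also works for $E^c$ via $(E^c)_x=(E_x)^c\in\mathcal{B}_x$. For finite unions, given $E,F$ with witnesses $N_E,N_F\in\mathcal{S}$, upward directedness supplies $N\in\mathcal{S}$ with $N_E\cup N_F\sq N$, and then $(E\cup F)_x=E_x\cup F_x\in\mathcal{B}_x$ for $x\notin N$. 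In case (ii), replace this step by upward $\s$-directedness: for a sequence $(E_n)$ with witnesses $N_{E_n}\in\mathcal{S}$, choose $N\in\mathcal{S}$ majorizing all $N_{E_n}$, and use that each $\mathcal{B}_x$ is a $\s$-algebra to conclude $(\bigcup_n E_n)_x=\bigcup_n(E_n)_x\in\mathcal{B}_x$ for $x\notin N$.

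Next I would check that $\mathcal{S}\ltimes\mathcal{T}$ is an ideal of $\mathcal{S}\ltimes\mathcal{B}$. Containment in $\mathcal{S}\ltimes\mathcal{B}$ is immediate from $\mathcal{T}_x\sq\mathcal{B}_x$. The empty set lies in $\mathcal{S}\ltimes\mathcal{T}$ as before. For downward closure within $\mathcal{S}\ltimes\mathcal{B}$, suppose $F\sq E$ with $F\in\mathcal{S}\ltimes\mathcal{B}$ and $E\in\mathcal{S}\ltimes\mathcal{T}$; amalgamating the two witnesses to some $N\in\mathcal{S}$, for $x\notin N$ we have $F_x\in\mathcal{B}_x$ and $F_x\sq E_x\in\mathcal{T}_x$, so $F_x\in\mathcal{T}_x$ by the ideal property. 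For closure under finite (resp.\ countable) unions, amalgamate witnesses as in the algebra step and use that each $\mathcal{T}_x$ is closed under finite (resp.\ countable) unions.

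There is no real obstacle; the only point requiring minor care is that the directedness hypothesis is the exact ingredient needed to keep the ``exceptional'' set $N_E$ inside $\mathcal{S}$ when passing from individual sets to finite or countable combinations, and that $\mathcal{S}$ being nonempty (needed to witness $\emptyset$ and $X\times Y$) is built into the hypothesis. The $\s$-variant differs only in replacing ``upwards directed'' by ``upwards $\s$-directed'' and finite unions by countable ones throughout.
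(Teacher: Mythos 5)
Your proof is correct, and since the paper explicitly leaves this as a ``straightforward verification'' for the reader, your argument is exactly the intended one: check the algebra/ideal axioms sectionwise, using (upward $\s$-)directedness of $\mathcal{S}$ only to amalgamate the exceptional sets. Nothing to add.
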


($\mathcal{S}$ is upwards ($\s$-)directed if every finite (countable) subset of $\mathcal{S}$ has an upper bound in $(\mathcal{S},\sq)$.)

\m

We make the following standard definition.

\begin{df}\label{df:ext}
Given probability spaces $(X,\vS,\mu)$ and $(X,\vS',\mu')$, we say that $(X,\vS',\mu')$ is an \textbf{extension of $(X,\vS,\mu)$ by null sets}, or just that $\mu'$ is an \textbf{extension of $\mu$ by null sets}, if $\vS\sq\vS'$, $\mu'\restr\vS=\mu$, and for all $E'\in\vS'$ there exists an $E\in\vS$ such that $\mu'(E'\,\triangle\,E)=0$.
Equivalently, $\mu'$ is an extension of $\mu$ by null sets if and only if $\mu'$ extends $\mu$ and $\vS'=\s(\vS\cup\vS'_0)$.
\end{df}

The equivalence in the definition is standard. It is also
Corollary \ref{c:P10i}.

\begin{df}\label{df:dn}
(Cf. \cite{bms5}, Definition 2.8)
Let $(X,\vS,\mu)$ and $(Y,T_x,\nu_x)_{x\in X}$ be probability spaces. Let $T:=(T_x)_{x\in X}$, $T_0:=(T_{x,0})_{x\in X}$ ($T_{x,0}=$ the null ideal of $\nu_x$), and $\nu:=(\nu_x)_{x\in X}$, $\wh{\nu}:=(\wh{\nu}_x)_{x\in X}$.

(a) $\mathfrak{N}(\mu,\nu):=\vS_0\ltimes T_0$ is the family of \emph{right nil null sets}. It is a $\sigma$-ideal of $\vS_0\ltimes T$.

When all $(Y,T_x,\nu_x)$ are equal to a single space $(Y,T,\nu)$ ($T_0$ now denoting the null ideal of $\nu$), the roles of the two coordinates can be interchanged, giving the $\sigma$-ideals $\ov{\mathfrak{N}}(\mu,\nu):=\vS_0\rtimes T_0$ (\emph{left nil null sets}) and
$\mathfrak{N}_2(\mu,\nu):=\mathfrak{N}(\mu,\nu)\cap\ov{\mathfrak{N}}(\mu,\nu)$ (\emph{(two-sided) nil null sets}) of the $\s$-algebras $\vS\rtimes T_0$, $\vS_0\ltimes T\cap \vS\rtimes T_0$, respectively.

(b) The families $\mathfrak{N}(\mu,\wh{\nu})$, $\ov{\mathfrak{N}}(\wh{\mu},\nu)$, $\mathfrak{N}_2(\wh{\mu},\wh{\nu})$ we denote simply by $\mathfrak{N}$, $\ov{\mathfrak{N}}$, $\mathfrak{N}_2$, respectively. They are $\sigma$-ideals of $\mathcal{P}(X\times Y)$, the latter two being defined only when all spaces $(Y,T_x,\nu_x)$ are equal to a single space $(Y,T,\nu)$.

(c)
$\mu\ltimes T$ is the family of all $E\sq{X\times{Y}}$ for which there exists an $N\in\vS_0$ with $E_x\in T_x$ when $x\in N^c$, and $x\mapsto \nu_x(E_x)$ is $\vS$-measurable on $N^c$. It is a Dynkin class (\cite{fr4}, 136A, 452A) on which we define $(\mu\ltimes\nu)(E):=\int_{N^c}\nu(E_x)\,d\mu(x)$ for every $E\in\mu\ltimes T$.

In the case of a single space $(Y,T,\nu)$, we can interchange the roles of the two coordinates and we get similarly a Dynkin class $\vS\rtimes\nu$ on which we define $(\mu\rtimes\nu)(E)$ in the obvious way.

(d) For a $\s$-ideal $\I\sq\vS_0$, $\mu\ltimes_\I T$ has the same definition as $\mu\ltimes T$, but with $N\in\vS_0$ replaced by $N\in\I$.
\end{df}

\begin{df}\label{df:dn:2}
Given a triple $(X,\vS,\mu)$, $(Y,T,\nu)$, $(X\times Y,\vY,\upsilon)$ of probability spaces,
we define the following conditions, where $(X\times Y,\vS\otimes T,\mu\otimes\nu)$ is the ordinary product space, $\vS\otimes T$ being the $\s$-algebra generated by the rectangles $A\times B$ ($A\in\vS$, $B\in T$), and $\I\sq\vS_0$ is a $\s$-ideal.
\begin{enumerate}
\item[${[}P_0{]}$:]
$\vS\otimes{T}\sq\vY$

\item[${[}P_1{]}$:]
$[P_0]$ and $\upsilon\restr\vS\otimes{T}=\mu\otimes\nu$, i.e., $\ups$ extends the product measure $\mu\otimes\nu$

\item[${[}P_2{]}$:]
$[P_1]$ and $\forall E'\in\vY$ $\exists E\in\vS\otimes T$ ($\ups(E'\,\triangle\,E)=0$), i.e., $\ups$ extends $\mu\otimes\nu$ by null sets.

\item[${[}C{]}$:]
$\vY\sq\mu\ltimes T$ and $\upsilon=\mu\ltimes\nu\restr\vY$

\item[${[}C{]_\I}$:]
$\vY\sq\mu\ltimes_\I T$ and $\upsilon=\mu\ltimes\nu\restr\vY$

\item[${[}\ov{C}{]}$:]
$\vY\sq\Sigma\rtimes{\nu}$ and $\upsilon=\mu\rtimes\nu\restr\vY$
\end{enumerate}
The properties $[C]$ and $[C]_\I$ we define also if $(Y,T,\nu)$ is replaced by a family $(Y,T_x,\nu_x)_{x\in X}$, using the same definitions as above, taking $T:=(T_x)_{x\in X}$, $\nu:=(\nu_x)_{x\in X}$. $[C]$ is the same as $[C]_{\vS_0}$.
\end{df}

\begin{rem}\label{r:df:dn:2}
(a) Note that $[P_0]+[C]$ implies $[P_1]$.

(b) $[P_0]$, $[P_1]$, $[P_2]$, $[C]$, $[C]_\I$, $[\ov{C}]$ are properties of the triple $(X,\vS,\mu)$, $(Y,T,\nu)$, $(X\times Y,\vY,\upsilon)$, but when the factors are clear from the context, we will sometimes say that they are properties of $(X\times Y,\vY,\upsilon)$ or just $\ups$. Similarly for $[C]$ and $[C]_\I$ when  $(Y,T,\nu)$ is replaced by a family $(Y,T_x,\nu_x)_{x\in X}$.

(c) If we regard $\nu_x$ as a measure on $X\times Y$ supported by the fiber $\{x\}\times Y$ of the projection map $\pi\colon X\times Y\to X$, then in the language of \cite{fr4}, 452E, property $[C]$ in the case of a family $(Y,T_x,\nu_x)_{x\in X}$ says that $\nu=(\nu_x)_{x\in X}$ is a \emph{disintegration} of $\ups$ over $X$ which is strongly consistent with $\pi$.%
\footnote{Note that in \cite{fr4}, functions being integrated are only assumed to be defined on a co-negligible set. See \cite{fr1}, 122K.}
\end{rem}

The following proposition states that it is enough to verify $[C]_\I$ on the sets in a $\pi$-system generating $\vY$. Cf.\ \cite{fr4}, 452A (a).

\begin{prop}\label{p:C.I}
Let $(X,\vS,\mu)$, $(Y,T_x,\nu_x)_{x\in X}$, and $(X\times Y,\vY,\upsilon)$ be probability spaces. Let $\mathcal{W}\sq\vY$ be a $\pi$-system with $\s(\mathcal{W})=\vY$. Let $\I\sq\vS_0$ be a $\s$-ideal and assume that $[C]_\I$ holds for sets $E\in \mathcal{W}$, i.e., for each $E\in \mathcal{W}$, there is an $I\in \I$ such that for all $x\in I^c$, $E_x\in T_x$, $x\mapsto \nu_x(E_x)$ is $\vS$-measurable on $I^c$, and $\ups(E) = \int_{I^c}\nu_x(E_x)\,d\mu(x)$. Then $[C]_\I$ holds.
\end{prop}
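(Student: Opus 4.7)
The plan is a routine Dynkin class (monotone class) argument. Define
\[
\mathcal{D} := \{E\in\vY : [C]_\I \text{ holds for } E\},
\]
that is, the collection of $E\in\vY$ for which there exists $N_E\in\I$ such that $E_x\in T_x$ for every $x\in N_E^c$, the map $x\mapsto \nu_x(E_x)$ is $\vS$-measurable on $N_E^c$, and $\ups(E) = \int_{N_E^c}\nu_x(E_x)\,d\mu(x)$. The hypothesis gives $\mathcal{W}\sq\mathcal{D}$, and the target is $\mathcal{D}=\vY$. I will show $\mathcal{D}$ is a Dynkin ($d$-)system and then invoke the $\pi$--$\lambda$ theorem to conclude $\mathcal{D}\supseteq\s(\mathcal{W})=\vY$.

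First, $X\times Y\in\mathcal{D}$ trivially, taking $N_{X\times Y}=\e$. Next, for closure under proper differences, let $E\sq F$ with $E,F\in\mathcal{D}$ and witnesses $N_E,N_F\in\I$. Set $N:=N_E\cup N_F$, which lies in $\I$. For $x\in N^c$ one has $(F\sm E)_x = F_x\sm E_x\in T_x$ and $\nu_x((F\sm E)_x)=\nu_x(F_x)-\nu_x(E_x)$, so this map is $\vS$-measurable on $N^c$. Since $\mu(N_E)=\mu(N_F)=0$, the integrals over $N_E^c$, $N_F^c$, and $N^c$ coincide, giving
\[
\ups(F\sm E)=\ups(F)-\ups(E)=\int_{N^c}\nu_x((F\sm E)_x)\,d\mu(x).
\]

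For closure under increasing countable unions, suppose $E_n\in\mathcal{D}$ with $E_n\uparrow E$, witnessed by $N_n\in\I$. Set $N:=\bigcup_n N_n$, which lies in $\I$ because $\I$ is a $\s$-ideal (this is the single place where countable closure of $\I$ is used). For $x\in N^c$, $(E_n)_x\uparrow E_x$ inside the $\s$-algebra $T_x$, so $E_x\in T_x$ and $\nu_x(E_x)=\lim_n\nu_x((E_n)_x)$ is $\vS$-measurable on $N^c$ as the pointwise limit of $\vS$-measurable functions. Monotone convergence, combined with $\ups$-continuity from below, gives
\[
\ups(E)=\lim_n\ups(E_n)=\lim_n\int_{N^c}\nu_x((E_n)_x)\,d\mu(x)=\int_{N^c}\nu_x(E_x)\,d\mu(x).
\]

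Hence $\mathcal{D}$ is a $d$-system containing the $\pi$-system $\mathcal{W}$; the $\pi$--$\lambda$ theorem yields $\mathcal{D}\supseteq\s(\mathcal{W})=\vY$, which is $[C]_\I$. There is no real obstacle; the only point requiring attention is careful bookkeeping of the exceptional sets, in particular ensuring that the union of the countably many witnesses $N_n$ remains in $\I$, which is exactly the $\s$-ideal hypothesis on $\I$.
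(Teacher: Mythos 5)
Your proof is correct and is precisely the argument the paper intends: its entire proof is the one-line remark that the sets satisfying $[C]_\I$ form a Dynkin system, and you have simply written out the verification (whole space, proper differences, increasing unions, with the $\s$-ideal hypothesis absorbing the countable union of exceptional sets) before applying the $\pi$--$\lambda$ theorem.
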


\begin{proof}
The sets $E$ satisfying $[C]_\I$ form a Dynkin system.
\end{proof}

\begin{ex}\label{ex:prod.r.c.p.}
Suppose $(X,\vS,\mu)$, $(Y,T,\nu_x)_{x\in X}$, $(X\times Y,\vS\otimes T,\ups)$ are probability spaces so that $(\nu_x)_{x\in X}$ is a \emph{product regular conditional probability} (\emph{product r.c.p.} for short) on $T$ for $\ups$ with respect to $\mu$ (see \cite{mms3}, Definition 1.1), i.e., the following two conditions are satisfied.
\begin{itemize}
\item[(D1)]
for every $B\in T$, the map $x\mapsto \nu_x(B)$ is $\vS$-measurable;
\item[(D2)]
$\ups(A\times{B})=\int_A \nu_x(B)\,d\mu(x)$ for $A\in\vS$ and $B\in T$.
\end{itemize}
(Note that by (D2), $\ups(A\times{Y})=\mu(A)$ for every $A\in\vS$.)
Then $(X,\vS,\mu)$, $(Y,T,\nu_x)_{x\in X}$, $(X\times Y,\vS\otimes T,\ups)$ satisfies $[C]_\I$ with $\I=\{\e\}$.

\begin{proof}
By (D1) and (D2), we get that $[C]_\I$ holds for the rectangles $A\times B$, $A\in\vS$, $B\in T$. Since these form a $\pi$-system generating $\vS\otimes T$, $[C]_\I$ holds by Proposition \ref{p:C.I}.
\end{proof}
\end{ex}

The following two propositions use a standard machine for converting Fubini type properties for sets into similar properties for functions. (Cf.\ \cite{fr4}, 452A, 452B, 452F.) We will frequently require Proposition \ref{p:C.for.fcts} (iv) and (iv)$^\perp$ and will use them without reference,%
\footnote{Many of our results on products $X\times Y$ have another version obtained by interchanging the role of the two factors. Borrowing a notational device from \cite{mms2}, we sometimes denote this other version by putting the symbol $^\perp$ after the number of the theorem.}
saying only that they hold ``by $[C]$'' and ``by $[\ov{C}]$,'' respectively (the latter applying only in the case of a single space $(Y,T,\nu)$).

\begin{prop}\label{p:C.for.fcts:0}
Let $X$ be a set equipped with $\I$, an upward $\s$-directed family of subsets of $X$. Let $(Y,T_x)_{x\in X}$, and $(X\times Y,\vY)$ be measurable spaces such that $\vY\sq\I\ltimes T$, where $T=(T_x)_{x\in X}$. Let $h\colon X\times Y\to Z$ be a $\vY$-measurable function, where $Z$ is a separable metrizable space.
\begin{enumerate}
\item
There is an $I\in \I$ such that $h_x$ is $T_x$-measurable when $x\in I^c$.

\item
When $Z=\ov{\R}$ or $\C$, if $ J=( J_x)_{x\in X}$, $ J_x$ a $\s$-ideal of $T_x$, and $\Kk$ is a $\s$-ideal of $\vY$ satisfying $\Kk\sq\I\ltimes J$, we can also require
$\|h_x\|_\infty \leq \|h\|_\infty$ for $x\in I^c$, where the first norm is computed with respect to $ J_x$, and the second with respect to $\Kk$.
\end{enumerate}
\end{prop}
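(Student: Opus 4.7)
The plan is to use the standard countable-generation machine: for (i), reduce fiber-wise measurability of $h_x$ to countably many preimage conditions via a countable base of the target; for (ii), add one extra application of the definition of $\I\ltimes J$ to the single set $\{|h|>\|h\|_\infty\}$, combining exceptional sets using upward $\sigma$-directedness of $\I$.

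For (i), I fix a countable base $\{V_n\}_{n\in\N}$ for the separable metrizable space $Z$, so that the Borel $\sigma$-algebra of $Z$ is generated by $\{V_n\}$. For each $n$, $h^{-1}(V_n)\in\vY\sq\I\ltimes T$, so there is $N_n\in\I$ such that $h_x^{-1}(V_n)=(h^{-1}(V_n))_x\in T_x$ for every $x\notin N_n$. By upward $\sigma$-directedness of $\I$, there is $I\in\I$ containing every $N_n$. Then for $x\in I^c$ we have $h_x^{-1}(V_n)\in T_x$ for all $n$, hence $h_x$ is $T_x$-measurable into $Z$.

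For (ii), set $M:=\|h\|_\infty$ computed with respect to $\Kk$; we may assume $M<\infty$. I would first verify that the infimum in the definition of $\|h\|_\infty$ is attained, i.e.\ $\{|h|>M\}\in\Kk$. For each $n\in\N$, pick $M_n\in[M,M+1/n)$ with $\{|h|>M_n\}\in\Kk$; since $\{|h|>M+1/n\}\in\vY$ is a $\vY$-subset of $\{|h|>M_n\}$ and $\Kk$ is an ideal of $\vY$, this set lies in $\Kk$, and then $\{|h|>M\}=\bigcup_n\{|h|>M+1/n\}\in\Kk$ by $\sigma$-closure of the ideal. Since $\Kk\sq\I\ltimes J$, there is $N\in\I$ such that $\{|h_x|>M\}=(\{|h|>M\})_x\in J_x$ for all $x\notin N$. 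Replacing the set $I$ from (i) by $I\cup N$ (again using upward $\sigma$-directedness of $\I$), one obtains a single $I\in\I$ such that for every $x\in I^c$ the section $h_x$ is $T_x$-measurable and satisfies $\{|h_x|>M\}\in J_x$, so $\|h_x\|_\infty\leq M$ when computed against $J_x$, as required.

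The only step needing slight care is the inf-attainment argument in (ii), which relies crucially on $\Kk$ being a $\sigma$-ideal of $\vY$ (not merely an ideal) so that the countable union over $n$ stays in $\Kk$; everything else is routine bookkeeping with the skew product and second countability of $Z$.
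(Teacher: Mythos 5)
Your proof is correct. Part (ii) is essentially the paper's argument: both reduce to the single set $\{|h|>\|h\|_\infty\}$ (equivalently, a set $E\in\Kk$ off which $|h|\leq\|h\|_\infty$), apply the definition of $\I\ltimes J$ to it, and absorb the resulting exceptional set into the $I$ from (i) by upward $\s$-directedness; your explicit verification that $\{|h|>\|h\|_\infty\}\in\Kk$ (via the sets $\{|h|>M+1/n\}$ and the fact that $\Kk$ is a $\s$-ideal of $\vY$, hence downward closed within $\vY$ and closed under countable unions) fills in a step the paper takes for granted. Part (i) differs in mechanism: the paper approximates $h$ pointwise by $\vY$-measurable simple functions (using a totally bounded metric and partitions of $Z$ into small Borel pieces), handles each simple function by its finitely many preimages, and invokes closure of $T_x$-measurability under pointwise limits, whereas you apply the definition of $\I\ltimes T$ directly to the preimages of a countable base and conclude via the generated-$\s$-algebra ("good sets") argument. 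Both routes use separability of $Z$ only to reduce to countably many sets of $\vY$ and then combine the exceptional sets by $\s$-directedness of $\I$; yours is somewhat more direct, while the paper's simple-function machinery is reused implicitly elsewhere (e.g.\ in the proof of Proposition \ref{p:C.for.fcts} (i)). No gaps.
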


\begin{proof}
(i) A $\vY$-measurable simple function $h$ has this property since there is an $I\in\I$ such that $x\in I^c$ implies $[h^{-1}(F)]_x\in T_x$ for each of the (finitely many) preimage sets $h^{-1}(F)$. Since every $\vY$-measurable function $h$ is a pointwise limit $h=\lim_{n\to\infty} h_n$ of $\vY$-measurable simple functions $h_n$,%
\footnote{Let $d$ be a totally bounded metric for $Z$ and let $\B_n$ be a partition of $Z$ into Borel sets of diameter at most $1/n$. For $E\in \B_n$, choose any point $x_E\in E$ and define $h_n(p)=x_E$ if and only if $p\in h^{-1}(E)$. Then the $h_n$ are $\vY$-measurable simple functions and $h=\lim_{n\to\infty} h_n$.}
with say $[h_n]_x$ $T_x$-measurable for $x\in I_n^c$, $I_n\in\I$, we get that for $x\in I^c$, where $I\in\I$ contains $\bigcup_nI_n$, $h_x = \lim_{n\to\infty} [h_n]_x$ is a pointwise limit of $T_x$-measurable functions and hence $T_x$-measurable.

(ii) For some $E\in \Kk$, $|h(x,y)|\leq \|h\|_\infty$ when $(x,y)\in E^c$. By (i) and $\Kk\sq\I\ltimes J$, there is an $I\in \I$ such that when $x\in I^c$, $h_x$ is $T_x$-measurable and $E_x\in J_x$ and therefore $\|h_x\|_\infty \leq \|h\|_\infty$.
\end{proof}

\begin{df}\label{df:[T]}
For $\mathcal{S}\sq \mathcal{P}(X\times Y)$, and $T=(T_x)_{x\in X}$, where $T_x\sq\mathcal{P}(Y)$, let $\mathcal{S}[T]$ be defined by
\[
\mathcal{S}[T] := \mathcal{S}\cap(\{\e\}\ltimes T) = \{E\in\mathcal{S} : E_x\in T_x\ \text{\rm for all $x\in X$}\}.
\]
\end{df}

\begin{cor}\label{c:C.for.fcts:0}
Let $X$ be a set, $(Y,T_x)$, $x\in X$, and $(X\times Y,\vY)$ measurable spaces, $Z$ a separable metrizable space, $h\colon X\times Y\to Z$. Let $T=(T_x)_{x\in X}$.
Then $h$ is $\vY[T]$-measurable if and only if $h$ is $\vY$-measurable and $h_x$ is $T_x$-measurable for all $x\in X$.
\end{cor}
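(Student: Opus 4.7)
The corollary is really just an unpacking of the definition of $\vY[T]$, with one direction already contained in Proposition~\ref{p:C.for.fcts:0}(i). I would write the proof as two short implications.

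For the forward direction, assume $h$ is $\vY[T]$-measurable. Since $\vY[T]\sq\vY$, it is immediate that $h$ is $\vY$-measurable. To obtain the sectionwise conclusion I would apply Proposition~\ref{p:C.for.fcts:0}(i) with the trivial choice $\I:=\{\e\}$, which is vacuously upward $\s$-directed. Writing $\vY':=\vY[T]$, we have $\vY'\sq\{\e\}\ltimes T=\I\ltimes T$ directly from the definition of $\vY[T]$, so the proposition applied to $h$ (viewed as $\vY'$-measurable) yields some $I\in\I$, necessarily $I=\e$, such that $h_x$ is $T_x$-measurable for every $x\in I^c=X$.

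For the backward direction, assume $h$ is $\vY$-measurable and each $h_x$ is $T_x$-measurable. Given any Borel set $B\sq Z$, measurability of $h$ gives $h^{-1}(B)\in\vY$, while $(h^{-1}(B))_x=h_x^{-1}(B)\in T_x$ for every $x\in X$ by the hypothesis on sections. Therefore $h^{-1}(B)\in\vY[T]$ by the very definition of $\vY[T]$, so $h$ is $\vY[T]$-measurable.

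There is no real obstacle here; the statement is essentially tautological once one notices that $\vY[T]$ consists exactly of those $\vY$-sets whose $x$-sections lie in $T_x$ for \emph{every} $x$, and that the case $\I=\{\e\}$ of Proposition~\ref{p:C.for.fcts:0}(i) captures the sectionwise measurability one needs for all, rather than almost all, $x$.
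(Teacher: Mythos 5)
Your proof is correct and follows essentially the same route as the paper: the forward direction is exactly the paper's application of Proposition~\ref{p:C.for.fcts:0}~(i) with $\I=\{\e\}$ and $\vY[T]$ in the role of $\vY$, and the backward direction is the same direct verification on preimages of Borel sets using the definition of $\vY[T]$.
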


\begin{proof}
$(\Rightarrow)$ Apply Proposition \ref{p:C.for.fcts:0} (i) taking $\vY$ to be $\vY[T]$ and taking $\I=\{\e\}$.
$(\Leftarrow)$ For $E\sq Z$ Borel, we have $h^{-1}(E)\in\vY$ and for all $x\in X$, $[h^{-1}(E)]_x = h_x^{-1}(E) \in T_x$. Thus, $h^{-1}(E)\in\vY[T]$.
\end{proof}

\begin{prop}\label{p:C.for.fcts}
Let $(X,\vS,\mu)$, $(Y,T_x,\nu_x)_{x\in X}$, and $(X\times Y,\vY,\upsilon)$ be probability spaces. Let $\I\sq\vS_0$ be a $\s$-ideal such that $[C]_\I$ holds. Let $h\colon X\times Y\to \ov{\R}$ or $h\colon X\times Y\to \C$ be a $\vY$-measurable function. Let $0\leq p\leq \infty$.
\begin{enumerate}
\item
If $h(X\times Y)\sq[0,\infty]$, there is an $I\in \I$ such that each section $h_x$ is $T_x$-measurable for $x\in I^c$, $x\mapsto \int h_x\,d\nu$ is $\vS$-measurable on $I^c$, and $\int h\,d\ups = \int_{I^c} \int h_x\,d\nu_x  \,d\mu(x)$.

\item
There are $I\in \I$ and $N\in\vS_0$ with $I\sq N$ such that $h_x$ is $T_x$-measurable when $x\in I^c$, $x\mapsto \|h_x\|_\infty$ is $\vS$-measurable on $I^c$, and $\|h_x\|_\infty \leq \|h\|_\infty$ when $x\in N^c$.

\item
If $h$ is $\ups$-integrable, then there is an $N\in \vS_0$ such that $h_x$ is $\nu_x$-integrable for $x\in N^c$, $x\mapsto \int h_x\,d\nu_x$ is $\mu$-integrable on $N^c$, and $\int h\,d\ups = \int_{N^c} \int h_x\,d\nu_x\,d\mu(x)$.

\item
If $h\in \LL^p(\ups)$, then
there is an $N\in \vS_0$ such that $h_x\in \LL^p(\nu_x)$ when $x\in N^c$. If $p=0$, we can take $N\in \I$.

\item
$[C]$ holds for the spaces $(X,\vS,\mu)$, $(Y,\wh{T}_x,\wh{\nu}_x)_{x\in X}$, and $(X\times Y,\wh{\vY},\wh{\upsilon})$.
\end{enumerate}
\end{prop}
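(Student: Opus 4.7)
The strategy is to run the standard approximation machine from $[C]_\I$. Part (i) is the workhorse: the hypothesis gives the Fubini identity for $h=\chi_E$, $E\in\vY$; linearity together with taking the finite union of the associated null sets in $\I$ handles nonnegative simple $\vY$-measurable functions; and for a general nonnegative $\vY$-measurable $h$, I approximate by an increasing sequence of nonnegative simples $h_n\uparrow h$, set $I:=\bigcup_n I_n\in\I$ (using that $\I$ is a $\s$-ideal), and apply the monotone convergence theorem twice --- once on each fiber $\{x\}\times Y$ with $x\in I^c$ to get $T_x$-measurability of $h_x$ and the fiberwise identity $\int h_{n,x}\,d\nu_x\uparrow\int h_x\,d\nu_x$, and once on $X$ to transport the identity to the product. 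Note that Proposition \ref{p:C.for.fcts:0}(i) is applicable throughout because $[C]_\I$ entails $\vY\sq\I\ltimes T$.

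Parts (iii)--(v) are then routine consequences. For (iii) I apply (i) to the positive and negative parts (resp.\ real and imaginary parts) of $h$, using $\ups$-integrability of $|h|$ via (i) to extract a $\mu$-null set off of which $h_x$ is $\nu_x$-integrable. For (iv) the case $p=\infty$ reduces to (ii); the case $0<p<\infty$ follows from (iii) applied to $|h|^p$; and the case $p=0$ follows directly from Proposition \ref{p:C.for.fcts:0}(i), which does not require any integrability and so yields $N=I\in\I$. For (v), I write $E\in\wh\vY$ as $E_0\,\triangle\,F$ with $E_0\in\vY$ and $F$ contained in some $\ups$-null $G\in\vY$; then $[C]_\I$ applied to $E_0$ and (i) applied to $\chi_G$ furnish an $N\in\vS_0$ (containing some $I\in\I$) outside of which $(E_0)_x$ and $G_x$ lie in $T_x$ with $\nu_x(G_x)=0$, whence $F_x\in\wh T_x$ with $\wh\nu_x(F_x)=0$, giving $E_x=(E_0)_x\,\triangle\,F_x\in\wh T_x$ and $\wh\nu_x(E_x)=\nu_x((E_0)_x)$, $\vS$-measurable in $x$; integration against $\mu$ recovers $\wh\ups(E)$.

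The one genuinely non-routine step is the $\vS$-measurability of $x\mapsto\|h_x\|_\infty$ in (ii), which is not delivered by (i) alone. My plan is to apply $[C]_\I$ to each level set $F_M:=\{|h|>M\}\in\vY$ with $M$ ranging over nonnegative rationals, enlarge $I$ by the countable union of the associated null sets (still in $\I$), and use the equivalence $\|h_x\|_\infty\leq M$ iff $\nu_x((F_M)_x)=0$ together with monotonicity in $M$ to write each sublevel set $\{x\in I^c:\|h_x\|_\infty\leq M\}$ as a $\vS$-measurable set (and similarly for strict sublevel sets via a countable union over rationals). The inequality $\|h_x\|_\infty\leq\|h\|_\infty$ off some $N\in\vS_0$ with $I\sq N$ is then immediate from (i) applied to $F_{\|h\|_\infty}$ when $\|h\|_\infty<\infty$, and trivial otherwise.
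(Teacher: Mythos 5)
Your proposal is correct and follows essentially the same route as the paper: the monotone-class/approximation machine for (i), rational level sets $\{|h|>r\}$ for the measurability of $x\mapsto\|h_x\|_\infty$ in (ii), positive/negative (and real/imaginary) parts for (iii), reduction to (ii) and (iii) for (iv), and the null-set-first decomposition $E=G\,\triangle\,P$ for (v). The only cosmetic difference is that you obtain $\|h_x\|_\infty\leq\|h\|_\infty$ a.e.\ by applying (i) to $\chi_{\{|h|>\|h\|_\infty\}}$, whereas the paper invokes Proposition~\ref{p:C.for.fcts:0}~(ii); both are sound.
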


\begin{rem}
From $[C]_\I$ we get $[C]$, so $\vY_0\sq \vS_0\ltimes T_0$ holds. But note than in general we will not have $\vY_0\sq \I\ltimes T_0$---consider the usual Borel measure on the unit square with $\I=\{\e\}$.
\end{rem}

\begin{proof}
(i) The family $\F$ of nonnegative $\vY$-measurable functions $h\colon X\times Y\to \ov{\R}$ such that for some $I\in \I$, the section $h_x$ is $T_x$-measurable for $x\in I^c$, $x\mapsto \int h_x\,d\nu_x$ is $\vS$-measurable on $I^c$, and $\int h\,d\ups = \int_{I^c} \int h_x\,d\nu_x  \,d\mu(x)$ contains the characteristic functions of elements of $\vY$ by assumption. Since $\F$ is closed under taking linear combinations with nonnegative coefficients and limits of increasing sequences, it contains all nonnegative $\vY$-measurable functions.

(ii) The first part follows by Proposition \ref{p:C.for.fcts:0} (i) and gives  $I\in \I$. For the second part, for each $r\in \Q$, let $E_r=\{|h|>r\}$. Then for $x\in X$, $[E_r]_x = \{|h_x|>r\}$. By $[C]_\I$, there is an $I_r\in \I$, $I\sq I_r$, such that for $x\in I_r^c$, $[E_r]_x\in T_x$ and $x\mapsto \nu_x[E_r]_x$ is $\vS$-measurable on $I_r^c$. Let $I_0=\bigcup_rI_r\in \I$. On $I_0^c$, each of the maps $x\mapsto \nu_x[E_r]_x$ is $\vS$-measurable, and therefore for any $c\in \R$, $\{x\in I_0^c:\|h_x\|_\infty > c\} = \{x\in I_0^c:\nu_x[E_r]_x>0$ for some $r>c\}\in\vS$. Thus, $x\mapsto \|h_x\|_\infty$ is $\vS$-measurable on $I_0^c$. The last part follows by Proposition \ref{p:C.for.fcts:0} (ii), taking $\I=\vS_0$, $J=T_0:=(T_{x,0})_{x\in X}$, $T_{x,0}$ the null ideal of $\nu_x$, $\Kk=\vY_0$.

(iii) If $h$ is real-valued and $\ups$-integrable, then corresponding to the positive and negative parts $h^+$, $h^-$, since $h^\pm\in\F$, we get $I\in \I$, which we can take to be the same for both parts, as in the definition of $\F$. Since $\int h^\pm\,d\ups = \int_{I^c} \int h_x^\pm\,d\nu_x\,d\mu(x)$ is finite, by enlarging $I$ to a set $N\in\vS_0$, we get that the integrals $\int h_x^\pm\,d\nu_x$ are finite when $x\in N^c$. We then get that the sections $h_x$ are $\nu_x$-integrable for $x\in N^c$, $x\mapsto \int h_x\,d\nu_x = \int h_x^+\,d\nu_x - \int h_x^-\,d\nu_x$ is $\mu$-integrable on $N^c$, and $\int h\,d\ups = \int h^+\,d\ups - \int h^-\,d\ups = \int_{N^c} \int h_x^+\,d\nu_x\,d\mu(x) - \int_{N^c} \int h^-_x\,d\nu_x\,d\mu(x) = \int_{N^c} \int h_x\,d\nu_x\,d\mu(x)$. If $h$ is complex-valued, apply the real-valued case to the real and imaginary parts.

(iv) The cases $p=0$ and $p=\infty$ are covered by (ii). The case $0 < p < \infty$ is covered by (iii) applied to $|h|^p$.

(v) First consider $E\in \wh{\vY}$ with $\wh{\ups}(E)=0$. There is an $F\in \vY$ such that $E\sq F$ and $\ups(F)=0$. By $\vY_0\sq \vS_0\ltimes T_0$, there is an $N\in\vS_0$ such that when $x\in N^c$, $\nu_x(F_x)=0$ and therefore $\wh{\nu}_x(E_x)=0$.

For an arbitrary $E\in \wh{\vY}$, there is a $G\in\vY$ such that $P:= E\,\triangle\,G$ satisfies $\wh{\ups}(P)=0$. By $[C]$ and the previous paragraph, there is an $N\in\vS_0$ such that when $x\in N^c$, $G_x\in T_x$, $\wh{\nu}_x(P_x)=0$, and $x\mapsto\nu_x(G_x)$ is $\vS$-measurable on $N^c$
with $\int_{N^c} \nu_x(G_x)\,d\mu(x) = \ups(G)$.
It follows that when $x\in N^c$, $E_x = G_x\,\triangle\,P_x\in \wh{T}_x$,
$x\mapsto\wh{\nu}_x(E_x) = \nu_x(G_x)$ is $\vS$-measurable on $N^c$, and $\int_{N^c} \wh{\nu}_x(E_x)\,d\mu(x) = \int_{N^c} \nu_x(G_x)\,d\mu(x) = \ups(G) = \wh{\ups}(E)$.
\end{proof}

\begin{cor}\label{c:sec}
Let $(X,\vS,\mu)$, $(Y,T_x,\nu_x)_{x\in X}$, $(X\times Y,\vY,\upsilon)$ be probability spaces. For functions $h\colon X\times Y\to \R$, and $N\in\vS_0$, $M\in T_0$, define $h_N,h_M\colon X\times Y\to\R$ by
\begin{gather*}
\text{$h_N(x,y)=h(x,y)$ when $x\in N^c$, $h_N(x,y)=0$ when $x\in N$}, \\
\text{$h^M(x,y)=h(x,y)$ when $y\in M^c$, $h^M(x,y)=0$ when $y\in M$}.
\end{gather*}
\begin{enumerate}
\item
Assume $\vS_0\,\dot{\times}\, Y\sq \vY$. If $h\in \LL^0(\ups)$ satisfies that for some $N\in\vS_0$, $h_x\in\LL^p(\nu_x)$ when $x\notin N$, then $h_N\in \LL^0(\ups)$, $h_N =_\ups h$, for all $x\in X$, $[h_N]_x\in \LL^p(\nu_x)$, and for all $y\in Y$, $[h_N]^y =_\mu [h]^y$.

\item
Assume we have a single space $(Y,T,\nu)$, and that in {\rm(i)} we have moreover that $X\,\dot{\times}\, T_0\sq \vY$ and for some $M\in T_0$, $h^y\in\LL^p(\mu)$ when $y\notin M$. Then $(h_N)^M\in \LL^0(\ups)$, $(h_N)^M =_\ups h$ and for all $x\in X$ and $y\in Y$, $[(h_N)^M]_x\in \LL^p(\nu)$ and $[(h_N)^M]^y\in\LL^p(\mu)$.
\end{enumerate}
\end{cor}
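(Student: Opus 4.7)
The plan is a direct verification by case analysis on whether $x$ lies in $N$ and, for part (ii), whether $y$ lies in $M$. The truncated functions factor as products with indicators: $h_N = h\cdot \chi_{(N\times Y)^c}$ and, in part (ii), $(h_N)^M = h\cdot \chi_{(N\times Y)^c}\cdot \chi_{(X\times M)^c}$. The hypotheses $\vS_0\,\dot{\times}\,Y\sq \vY$ and $X\,\dot{\times}\,T_0\sq \vY$ immediately give that the relevant indicator functions are $\vY$-measurable, so $h_N$ and $(h_N)^M$ lie in $\LL^0(\ups)$. The a.e.\ identifications $h_N=_\ups h$ and $(h_N)^M=_\ups h$ rest on $\ups(N\times Y)=0$ and $\ups(X\times M)=0$, which are supplied by the compatibility of $\ups$ with the marginals operating in the ambient setting (via $[C]$ and $[\ov{C}]$ from Definition \ref{df:dn:2}, as guaranteed in contexts where this corollary is applied).

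For part (i), the vertical sections split cleanly: for $x\in N^c$, $[h_N]_x = h_x\in \LL^p(\nu_x)$ by the standing hypothesis on $h$, while for $x\in N$, $[h_N]_x\equiv 0\in \LL^p(\nu_x)$ trivially. The horizontal sections satisfy $[h_N]^y(x) = [h]^y(x)$ for $x\in N^c$ and $[h_N]^y(x) = 0$ for $x\in N$, so $\{[h_N]^y \neq [h]^y\}\sq N\in\vS_0$, yielding $[h_N]^y =_\mu [h]^y$ for every $y\in Y$.

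Part (ii) iterates: first apply (i) to obtain $h_N$, then mirror the argument in the $y$-coordinate to define $(h_N)^M$. The four-quadrant case analysis yields: for $x\in N^c$, $[(h_N)^M]_x = h_x\cdot \chi_{M^c}$, which lies in $\LL^p(\nu)$ since $M$ is $\nu$-null (modifying $h_x\in \LL^p(\nu)$ on $M$ preserves $\LL^p$-integrability); for $x\in N$, $[(h_N)^M]_x\equiv 0$. Symmetrically, for $y\in M^c$, $[(h_N)^M]^y = h^y\cdot \chi_{N^c}\in \LL^p(\mu)$ by the hypothesis of (ii) on $h^y$; for $y\in M$, $[(h_N)^M]^y\equiv 0$.

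No substantial mathematical obstacle arises; the work is organizational bookkeeping. The one point requiring attention is ensuring that each section lies in the appropriate $\LL^p$ space \emph{pointwise}, not merely almost everywhere, which is precisely what the explicit zeroing on $N$ and $M$ is designed to achieve, and why the case split on the locations of $x$ and $y$ is necessary.
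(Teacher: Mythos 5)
Your proof is correct and follows essentially the same route as the paper's: part (i) by direct case analysis on $x\in N$ versus $x\in N^c$, and part (ii) by repeating that argument in the second coordinate (the paper phrases this as applying (i)$^\perp$ to $h_N$). One point deserves mention: you correctly observe that the identifications $h_N=_\ups h$ and $(h_N)^M=_\ups h$ require $\ups(N\times Y)=0$ and $\ups(X\times M)=0$, which do not follow from the stated hypotheses $\vS_0\,\dot{\times}\,Y\sq\vY$ and $X\,\dot{\times}\,T_0\sq\vY$ alone (these give only measurability of $N\times Y$ and $X\times M$); the paper's proof passes over this silently by declaring (i) ``mostly obvious,'' whereas you explicitly flag that the nullity must be imported from the ambient $[C]$/$[\ov{C}]$ setting in which the corollary is invoked (cf.\ Remark \ref{r:c:sec}).
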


\begin{rem}\label{r:c:sec}
If we have $[C]$ in (i), or $[C]$ and $[\ov{C}]$ in (ii), these applies in particular to $h\in\LL^p(\ups)$ by Proposition \ref{p:C.for.fcts} (iv) and (iv)$^\perp$, and then the functions $h_N$ in (i), and $(h_N)^M$ in (ii), belong to $\LL^p(\ups)$ as well since they are $=_\ups h$.
\end{rem}

\begin{proof}
(i) is mostly obvious, we just need to note that we have
$[h_N]^y(x) = [h]^y(x)$ when $x\in N^c$ and $[h_N]^y(x)=0$ when $x\in N$, so $[h_N]^y =_\mu [h]^y$ for all $y\in Y$.

(ii) From the last part of (i) we get that $[h_N]^y\in\LL^p(\mu)$ when $y\notin M$. Then apply (i)$^\perp$ to $h_N$ getting
$(h_N)^M\in \LL^0(\ups)$, $(h_N)^M =_\ups h_N =_\ups h$ and for all $x\in X$ and $y\in Y$, $[(h_N)^M]_x =_\nu [h_N]_x\in \LL^p(\nu)$ and $[(h_N)^M]^y\in\LL^p(\mu)$.
\end{proof}

\section{Extensions by null sets}
\label{s:ext}

The proofs of the next two lemmas follow standard ideas and we leave most of them as exercises for the reader (cf. \cite{hp}, I and II, and \cite{el}, II, Aufgabe 6.2). The proofs are largely a matter of checking that standard arguments go through with the assumption that $(X,\vS,\I)\in\T$, instead of the stronger assumption that $\I$ is an ideal of $\mathcal{P}(X)$.

\begin{lem}\label{P10i}
Let $(X,\vS,\I)\in\T$.
\begin{enumerate}
\item
The algebra generated by $\vS\cup\mathcal{I}$ is
$\vS_\mathcal{I} := \{A\,\triangle\,I:A\in\vS,\,I\in\mathcal{I}\}$.
$\I$ is an ideal of $\vS_\I$. If $(X,\vS,\I)\in\T_\s$ then $\s(\vS\cup \mathcal{I}) = \vS_\mathcal{I}$.

\item
Suppose $(X,\vS,\I)\in\T_\s$.
Let $f\colon X\to \ov{\R}$ be $\vS_\I$-measurable. Then there is a function $g\colon X\to \ov{\R}$ which is $\vS$-measurable and is such that $f=_\I g$.
The same statement holds with $\ov{\R}$ replaced by any standard Borel space.
\end{enumerate}
\end{lem}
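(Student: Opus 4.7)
My plan for part (i) is to verify directly that $\vS_\I = \{A\triangle I: A\in\vS,\, I\in\I\}$ is an algebra containing $\vS\cup\I$, and is contained in every such algebra. That $\vS_\I$ contains $\e$ (as $\e\triangle\e$), contains $\vS$ (as $A = A\triangle\e$) and $\I$ (as $I = \e\triangle I$), and is closed under complementation (via $(A\triangle I)^c = A^c\triangle I$) will all be immediate. The crucial step will be closure under finite intersection; here I would exploit that $\vS \sq \B_\I$ by Remark \ref{r:iosa}(c), so $A\cap I\in\I$ whenever $A\in\vS$ and $I\in\I$. For $E_i = A_i\triangle I_i$ ($i=1,2$), the set-theoretic bound
\[
(E_1\cap E_2)\triangle(A_1\cap A_2) \sq (E_1\triangle A_1)\cup(E_2\triangle A_2) = I_1\cup I_2,
\]
together with the fact that the left side lies in $\B_\I$ (since $E_i, A_i \in \B_\I$), will force $(E_1\cap E_2)\triangle(A_1\cap A_2) \in \I$, yielding $E_1\cap E_2 \in \vS_\I$. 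That $\I$ is an ideal of $\vS_\I$ then follows from $\vS_\I \sq \B_\I$ combined with $\I$ being an ideal of $\B_\I$.

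For the $\s$-algebra assertion I plan to repeat the same argument with countable unions. When $(X,\vS,\I)\in\T_\s$, $\I$ is closed under countable unions and, by Remark \ref{r:iosa}(b), so is $\B_\I$, making it a $\s$-algebra. Applied to a sequence $E_n = A_n\triangle I_n$, the identity
\[
\textstyle\bigl(\bigcup_n E_n\bigr)\triangle \bigl(\bigcup_n A_n\bigr) \sq \bigcup_n I_n \in \I
\]
shows $\bigcup_n E_n\in\vS_\I$. Since $\vS_\I$ already contains $\vS\cup\I$ and will then be a $\s$-algebra, it coincides with $\s(\vS\cup\I)$.

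For part (ii), my approach is the standard exhaustion of the Borel $\s$-algebra of $\ov{\R}$ by a countable generating family. Given a $\vS_\I$-measurable $f\colon X\to\ov{\R}$, for each $q\in\Q$ I will write $\{f\leq q\} = F_q\triangle I_q$ with $F_q\in\vS$ and $I_q\in\I$, and set $N := \bigcup_{q\in\Q}I_q \in \I$ (using $\s$-closure of $\I$). I then define $g\colon X\to\ov{\R}$ by $g(x) := \inf\{q\in\Q: x\in F_q\}$, with the convention $\inf\e := +\infty$. The identity
\[
\{g\leq r\} = \bigcap_{n\geq 1}\,\bigcup_{q\in\Q,\, q\leq r+1/n}F_q
\]
will show $g$ is $\vS$-measurable (together with the analogous expressions for $\{g=\pm\infty\}$), and for $x\in N^c$ one has $x\in F_q$ iff $f(x)\leq q$, so $g(x)=f(x)$; hence $\{f\not=g\}\sq N\in\I$. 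The standard Borel space version will then follow by composing $f$ with a Borel isomorphism from the target space onto a Borel subset of $\R$, applying the $\ov{\R}$-valued case, and composing back.

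The main obstacle is the intersection step in (i): the symmetric difference bound by $I_1\cup I_2$ only yields membership in $\I$ once the symmetric difference is known to lie in $\B_\I$, so the characterization of $\B_\I$ from Remark \ref{r:iosa} is used essentially rather than cosmetically. In (ii) the only subtlety is collapsing countably many exceptional sets into a single $N\in\I$, which is precisely why the $\s$-closure assumption in $(X,\vS,\I)\in\T_\s$ is required.
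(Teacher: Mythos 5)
Your proposal is correct and follows essentially the same route as the paper: closure of $\vS_\I$ under the binary Boolean operation via the symmetric-difference bound by $I_1\cup I_2$ together with membership in $\B_\I$ (the paper uses unions where you use intersections, which is immaterial), the same $\inf$-construction for $g$ in (ii), and the same Borel-isomorphism reduction for standard Borel targets. The only point you gloss over is that in the standard Borel case the function $g$ produced by the $\ov{\R}$-valued argument may take values outside the image of the embedding, so before ``composing back'' one must redefine $g$ on the set $g^{-1}(\ov{\R}\sm B)$ --- which lies in $\vS$ and, being contained in $\{f\not=g\}\in\I$, lies in $\vS\cap\I$ --- to a fixed point of $B$; this is exactly the extra step the paper's proof records.
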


\begin{rem}\label{r:notT}
(a) Since $\vS$ and $\I$ are closed under symmetric difference, and the symmetric difference operation is commutative and associative, $\vS_\I$ is simply the closure of $\vS\cup\I$ under symmetric difference. Example \ref{ex:P10i.1} shows that it is not true in general for a $\s$-algebra $\vS$ of subsets of $X$ and a $\s$-ideal $\I$ of some algebra on $X$ that $\s(\vS\cup\I) = \vS_\I$. Nevertheless, if $\I'$ is the ideal of $\s(\vS\cup\I)$ generated by $\I$, then $(X,\vS,\I')\in\T_\s$, so $\s(\vS\cup\I) = \s(\vS\cup\I') = \vS_{\I'}=\{A\,\triangle\,I:A\in\vS,\,I\in\I'\}$.

(b) The assumption $(X,\vS,\I)\in\T_\s$ is not a necessary condition for the equality $\s(\vS\cup\I) = \vS_\I$, i.e., the latter does not imply that $\I$ is an ideal of $\vS_\I$, as shown by Example \ref{ex:P10i.2}.
\end{rem}

\begin{proof}
(i) We begin by checking that $\vS_\mathcal{I}$ is the algebra of subsets of $X$ generated by $\vS\cup\mathcal{I}$. Clearly an algebra containing $\vS\cup\I$ must contain $\vS_\I$ as well, and by taking either $I$ or $A$ to be empty in $A\,\triangle\,I$, we see that $\vS\cup\mathcal{I}\sq\vS_\mathcal{I}$. There remains to check that $\vS_\mathcal{I}$ is closed under complementation and unions. This follows from
\[
(A\,\triangle\,I)^c = X\,\triangle\,(A\,\triangle\,I) = (X\,\triangle\,A)\,\triangle\,I = A^c\,\triangle\,I
\]
and from the fact that for $A_1,A_2\in\vS$, $I_1,I_2\in\mathcal{I}$, letting $E:=(A_1\,\triangle\,I_1)\cup(A_2\,\triangle\,I_2)$, we have
\[
I:=E\,\triangle\,(A_1\cup A_2) \sq(I_1\cup I_2).
\]
Since $A_1$ and $A_2$ belong to $\vS\sq \mathcal{B}_\I$, we get $I\in\mathcal{B}_\I$, so $I\in\mathcal{I}$ since $\I$ is an ideal of $\mathcal{B}_\I$. Therefore $E = (A_1\cup A_2)\,\triangle\,I\in\vS_\mathcal{I}$.
Since $\vS_\mathcal{I}$ is a subalgebra of $\mathcal{B}_\I$ and $\mathcal{I}$ is an ideal of $\mathcal{B}_\I$, it follows readily that $\mathcal{I}$ is an ideal of $\vS_\mathcal{I}$. We leave for the reader the verification that $\vS_\mathcal{I}$ is a $\s$-algebra when $(X,\vS,\I)\in\T_\s$.

We sketch a proof for (ii). For $r\in\Q$, write $\{f<r\}=A_r\,\triangle\,I_r$ where $A_r\in\vS$ and $I_r\in\I$. The function $g$ defined by $g(x)=\inf\{r\in\Q:x\in A_r\}$ is $\vS$-measurable because $\{g<r\}=\bigcup_{s<r}A_s$. We have $f=_\I g$ because $f(x)=g(x)$ when $x\notin\bigcup_{r\in\Q}I_r$.
Now suppose $f$ takes values in a standard Borel space $Y$. Since $Y$ is Borel-isomorphic to a Borel subset of $\ov{\R}$ (\cite{ke}, Proposition 12.1 and Corollary 15.2), we may assume $Y$ is a Borel subset of $\ov{\R}$. Then we get $g$ as above.
The set $E := g^{-1}(\ov{R}\sm Y)$ belongs to $\vS$ since $g$ is $\vS$-measurable, and belongs to $\I$ since $f=_\I g$. By redefining $g$ on $E$ to be identically equal to a fixed value $y_0\in Y$,%
\footnote{If $Y=\e$ then $X=\e$ since $f\colon X\to Y$, so $g$ and $f$ are both the empty function.}
we arrange that $g$ takes values in $Y$.
\end{proof}

\begin{prop}\label{p:skew2}
Let $(X,\vS)$, $(X\times Y,\vY)$ be measurable spaces, $\I$ an upwards $\s$-directed family of subsets of $X$. Given $(Y,T_x, J_x)\in \T_\s$, $x\in X$, write $ J=( J_x)_{x\in X}$, $T_ J=((T_x)_{ J_x}:x\in X)$. If $\vY\sq \I\ltimes T_ J$ then $(X\times Y,\vY,\I\ltimes J)\in\T_\s$.
\end{prop}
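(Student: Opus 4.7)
The plan is to reduce the statement to a direct application of Proposition \ref{p:skew}(ii), using Lemma \ref{P10i}(i) to supply the ambient $\s$-algebras on the fibers. Unpacking Definition \ref{d:T}, what needs to be checked is that $\vY$ is a $\s$-algebra (given), that $\I\ltimes J$ is closed under countable unions, and that $\I\ltimes J$ is an ideal of some algebra $\mathcal{B}$ of subsets of $X\times Y$ with $\vY\sq\mathcal{B}$.

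First I would note that for each $x\in X$, since $(Y,T_x,J_x)\in\T_\s$, Lemma \ref{P10i}(i) tells us that $(T_x)_{J_x}=\{A\triangle I:A\in T_x,\,I\in J_x\}=\s(T_x\cup J_x)$ is a $\s$-algebra of subsets of $Y$ of which $J_x$ is a $\s$-ideal. This sets up exactly the fiberwise hypothesis required by Proposition \ref{p:skew}(ii).

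Next I would apply Proposition \ref{p:skew}(ii) with $\mathcal{S}=\I$ (upwards $\s$-directed by hypothesis), $\mathcal{T}_x=J_x$, and $\B_x=(T_x)_{J_x}$. The conclusion is that $\I\ltimes J$ is a $\s$-ideal of the $\s$-subalgebra $\I\ltimes T_J$ of $\mathcal{P}(X\times Y)$. In particular $\I\ltimes J$ is closed under countable unions and is an ideal of an algebra (indeed a $\s$-algebra) of subsets of $X\times Y$.

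Finally, the hypothesis $\vY\sq \I\ltimes T_J$ furnishes the required inclusion $\vY\sq\mathcal{B}$, witnessing $(X\times Y,\vY,\I\ltimes J)\in\T_\s$ as in Definition \ref{d:T}. There is no substantive obstacle—the whole argument is bookkeeping, since the real work has already been done in Lemma \ref{P10i}(i) (identifying $(T_x)_{J_x}$ as a $\s$-algebra containing $T_x$ with $J_x$ a $\s$-ideal) and in Proposition \ref{p:skew}(ii) (the $\s$-additivity of skew products). The only minor care needed is to correctly parse the indexed family notation $T_J=((T_x)_{J_x}:x\in X)$ so that the $\mathcal{S}\ltimes\mathcal{B}$ of Proposition \ref{p:skew} is matched against the $\I\ltimes T_J$ of the hypothesis.
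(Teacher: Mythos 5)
Your proposal is correct and follows exactly the paper's own one-line argument: apply Proposition \ref{p:skew}(ii) with $\B_x=(T_x)_{J_x}$ to see that $\I\ltimes J$ is a $\s$-ideal of the $\s$-algebra $\I\ltimes T_J$, which contains $\vY$ by hypothesis. The extra bookkeeping you supply via Lemma \ref{P10i}(i) is exactly the detail the paper leaves implicit.
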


\begin{proof}
$\I\ltimes J$ is a $\s$-ideal of the $\s$-algebra $\I\ltimes T_ J$ by (ii) of Proposition \ref{p:skew}.
\end{proof}

The following example will be useful in section \ref{s:fubini}.

\begin{ex}\label{ex:atom.factor}
Let $(X,\vS,\mu)$, $(Y,T,\nu)$, $(X\times Y,\vY,\ups)$ be probability spaces so that $[C]$ and $[\ov{C}]$ hold and $X\,\dot{\times}\,T\sq \vY$. Assume that $X$ is an atom, i.e., $\mu$ is $\{0,1\}$-valued.  Write $\pi_Y\colon X\times Y\to Y$ for the projection onto $Y$. Then%
\footnote{Part (i) is essentially \cite{bms4}, Proposition 5.2 (iii). The statement there assumes $\ups$, $\mu$ and $\nu$ are complete but no completeness is needed. For clarity, we repeat the argument.}

\begin{enumerate}
\item
$(X\,\dot{\times}\,T)_{\vY_0} = \vY$

\item
For each $f\in \LL^0(\ups)$, there are $w\in\LL^0(\nu)$ and $N_f\in T_0$ such that $f=_\ups g$, where $g=w\circ\pi_Y$, and $y\notin N_f$ implies $f^y\in \LL^0(\mu)$ and $f^y=_\mu w(y)$.

\item
For each $E\in\vY$, there are $A\in T$ and $N_E\in T_0$ such that $E =_\ups X\times A$ and $y\notin N_E$ implies $E^y\in \vS$ and $\mu(E^y)=\chi_{A}(y)$.
\end{enumerate}

\begin{proof}
(i) The inclusion $(X\,\dot{\times}\,T)_{\vY_0} \sq \vY$ is clear. For the reverse inclusion, let $E\in\vY$. By $[\ov{C}]$, there is an $N_E\in T_0$ such that $y\in N_E^c$ implies $E^y\in\vS$, and the function $y\mapsto\mu(E^y)$ is $T$-measurable on $N_E^c$. The set $A:=\{y\in N_E^c:\mu(E^y)=1\}$ belongs to $T$ and hence $X\times A\in\vY$ since $X\,\dot{\times}\,T\sq \vY$. When $y\notin A\cup N_E$, we have $\mu(E^y)=0$, so by $[\ov{C}]$, $E =_\ups X\times A$. It follows that $E\in (X\,\dot{\times}\,T)_{\vY_0}$.

(ii) We have $(X\times Y,X\,\dot{\times}\,T,\vY_0)\in T_\s$. Since $f$ is $\vY$-measurable and (i) holds, by Lemma \ref{P10i} (ii) there is a $(X\,\dot{\times}\,T)$-measurable function $g\colon X\times Y\to\R$ such that $f=_\ups g$.
Using (i) of Proposition \ref{p:C.for.fcts:0} with $\I=\{\e\}$ and $X\,\dot{\times}\,T$ in the role of $\vY$, we see that $g_x\in\LL^0(\nu)$ for all $x\in X$. Using (i)$^\perp$ of the same proposition with $\I=\{\e\}$ and $X\,\dot{\times}\,T$ in the role of $\vY$ again, and taking $\{\e,X\}$ in the role of $T$, we see that for all $y\in Y$, $g^y$ is $\{\e,X\}$-measurable, so $g^y$ is constant with a value $w(y)$. Then $w\in\LL^0(\nu)$ since $g_x=w$ for all $x\in X$.

(iii) In (ii), take $f=\chi_E$. In the proof of (ii), Lemma \ref{P10i} (ii) allows us to take $g$ to be $\{0,1\}$-valued since $f$ is $\{0,1\}$-valued. Then $w$ is $\{0,1\}$-valued, so $w=\chi_A$ for some $A\in T$ and  $g(x,y)=w(y)=\chi_A(y)=\chi_{X\times A}(x,y)$. The rest of (iii) follows from  corresponding parts of (ii).
\end{proof}
\end{ex}

\begin{cor}\label{c:P10i}
Let $(X,\vS,\mu)$ and $(X,\vS',\mu')$ be probability spaces. Then $\mu'$ is an extension of $\mu$ by null sets if and only if $\mu'$ extends $\mu$ and $\vS'=\s(\vS\cup\vS'_0)$.
\end{cor}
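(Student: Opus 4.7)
The plan is to derive this corollary as an almost immediate consequence of Lemma \ref{P10i}(i) applied to the triple $(X,\vS,\vS'_0)$. Note that since $\mu'$ extends $\mu$, we have $\vS\sq\vS'$, and $\vS'_0$ is a $\s$-ideal of the $\s$-algebra $\vS'$ with $\vS\sq\vS'$. Hence $(X,\vS,\vS'_0)\in\T_\s$, so Lemma \ref{P10i}(i) applies and gives
\[
\s(\vS\cup\vS'_0) = \vS_{\vS'_0} = \{A\,\triangle\,I : A\in\vS,\ I\in\vS'_0\}.
\]

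For the forward direction, assume $\mu'$ is an extension of $\mu$ by null sets. Given any $E'\in\vS'$, choose $E\in\vS$ with $\mu'(E'\,\triangle\,E)=0$, i.e., $I:=E'\,\triangle\,E\in\vS'_0$. Then $E' = E\,\triangle\,I\in\vS_{\vS'_0} = \s(\vS\cup\vS'_0)$, showing $\vS'\sq\s(\vS\cup\vS'_0)$. The reverse inclusion is trivial from $\vS\sq\vS'$ and $\vS'_0\sq\vS'$.

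For the reverse direction, assume $\vS'=\s(\vS\cup\vS'_0)$. By the same application of Lemma \ref{P10i}(i), every $E'\in\vS'$ has the form $E'=A\,\triangle\,I$ with $A\in\vS$ and $I\in\vS'_0$. Then $E'\,\triangle\,A = I\in\vS'_0$, so $\mu'(E'\,\triangle\,A)=0$, which is exactly the condition for $\mu'$ to be an extension of $\mu$ by null sets.

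There is no real obstacle here; the whole content of the corollary is packaged into Lemma \ref{P10i}(i), and the verification that $(X,\vS,\vS'_0)\in\T_\s$ is immediate from the hypotheses. The proof is essentially a two-line unpacking of the explicit description of $\s(\vS\cup\vS'_0)$ as symmetric differences.
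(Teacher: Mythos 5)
Your proof is correct and takes essentially the same route as the paper: both reduce the statement to Lemma \ref{P10i}(i) applied to the triple $(X,\vS,\vS'_0)\in\T_\s$, identifying $\s(\vS\cup\vS'_0)$ with $\{A\,\triangle\,I:A\in\vS,\,I\in\vS'_0\}$. Your write-up just spells out the forward direction, which the paper dismisses as easy.
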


\begin{proof}
The forward implication is easy to see. For the converse, note that with $\I:=\vS'_0$, we have $(X,\vS,\I)\in\T_\s$ and use the fact from Lemma \ref{P10i} (i) that $\s(\vS\cup\I)=\vS_{\I}$.
\end{proof}

\begin{ex}\label{ex:P10i.1}
There are a probability space $(X,\vS,\mu)$ on a finite set $X$, and $\I$, an ideal of some algebra on $X$, such that $\mu$ has an extension to a measure $\mu'$ whose null ideal contains $\I$ but $\vS_\I$ is not an algebra of subsets of $X$.

Consider the probability space $(X,\vS,\mu)$ given by
\[
X=\{1,2,3,4\},\ \vS = \{\e,\{1,2\},\{3,4\},X\},\ \mu\{1,2\} = \mu\{3,4\}=1/2.
\]
Let $\I=\{\e,\{1,3\}\}$. The algebra generated by $\vS\cup\I$ is $\vS'=\mathcal{P}(X)$, and $\mu$ extends to $\mu'$ on $\vS'$ having $\I$ in its null ideal by setting $\mu'\{1\} = \mu'\{3\} = 0$, $\mu'\{2\} = \mu'\{4\} = 1/2$.
However, the number of elements of $\vS_\I$, $A\,\triangle\,I$ with $A\in\vS$ and $I\in\I$, is at most $|\vS|\cdot |\I| = 8$,%
\footnote{It is easily checked that the elements $A\,\triangle\,I$ consist  precisely of the sets of even cardinality.}
so $\vS'\not=\vS_\I$.
It follows that $(X,\vS,\I)\notin\T$, which we can also see directly  since $\mathcal{B}_\I$ consists of the four sets disjoint from $\{1,3\}$ and their complements, so $\vS\not\sq\mathcal{B}_\I$.
\end{ex}

\begin{ex}\label{ex:P10i.2}
There are a probability space $(X,\vS,\mu)$ on a finite set $X$, and $\I$, an ideal of some algebra on $X$, such that $\mu$ has an extension to a measure $\mu'$ with domain $\vS_\I$ whose null ideal contains $\I$ but $(X,\vS,\I)\notin\T$.

We begin with the trivial probability space $(X,\vS,\mu)$ having $X=\{a,b,c,d,e\}$, $\vS=\{\e,X\}$. Define
\[
\I=\{\e,\{a,b,c\},\{a,b\},\{c\}\},\ \  \J=\{\e,\{b,c,d\},\{b\},\{c,d\}\}.
\]
It is readily seen that each of $\I$ and $\J$ is closed under union and difference, and hence is an ideal of some algebra on $X$. Trivially $\vS\sq \mathcal{B}_\I$, so by Lemma \ref{P10i} (i), $\vS_\I$ is the algebra generated by $\vS\cup\I$. Its atoms are $\{a,b\},\{c\},\{d,e\}$, and $\mu$ extends to a probability measure $\mu'$ on this algebra having $\I$ in its null ideal by setting $\mu'\{d,e\}=1$.  This space $(X,\vS_\I,\mu')$, together with the ideal $\J$, is our example, as we now verify.

The closure of $\vS\cup\I\cup\J$ under symmetric difference is $\mathcal{P}(X)$.%
\footnote{
Since disjoint unions are symmetric differences, it is enough to show why the closure $\mathcal{S}$ of $\vS\cup\I\cup\J$ under symmetric difference contain the relevant singletons. We have $\{c\},\{b\}\in\mathcal{S}$, and then $\{a,b\}\,\triangle\,\{b\}=\{a\}\in \mathcal{S}$ and $\{c,d\}\,\triangle\,\{c\}=\{d\}\in \mathcal{S}$. Then $\{e\}=X\,\triangle\,\{a,b,c,d\} = X\,\triangle\,\{a\}\,\triangle\,\{b\}\,\triangle\,\{c\}\,\triangle\,\{d\}\in\mathcal{S}$.}
Hence the algebra $\vS''$ generated by the union $\vS_\I\cup \J$, as well as the closure $(\vS_\I)_\J$ of the union under symmetric difference, are both equal to $\mathcal{P}(X)$. $\mu''$ extends to a measure on $\vS''$ having $\J$ in its null ideal by setting $\mu''\{e\}=1$. However, $\vS_\I\not\sq\mathcal{B}_\J$ since taking $I=\{a,b,c\}\in\I$ and $J=\{b,c,d\}\in\J$, we see that $I\cap J = \{b,c\}$ which does not belong to $\J$.
Note that $\I$ and $\J$ are interchanged by the bijection $a\leftrightarrow d$, $b\leftrightarrow c$, $e\leftrightarrow e$, so we could have interchanged the roles of $\I$ and $\J$.
\end{ex}

\begin{lem}\label{P10}
Let $(X,\vS,\I)\in\T$ and let $(X,\vS,\mu)$ be a finitely additive measure space $($with $\mu$ valued in $[0,\infty])$ such that $\vS\cap\mathcal{I}\sq \vS_0$. We get a well-defined finitely additive measure $\mu_\I$ on $\vS_\mathcal{I}$ extending $\mu$ by setting $\mu_\I(A\,\triangle\,I)=\mu A$ for $A\in\vS$, $I\in\I$.
If $(X,\vS,\I)\in\T_\s$ and $\mu$ is a measure, then $\mu_\I$ is a measure.

The null ideal $(\vS_\I)_0$ of $\mu_\I$ has the following properties.
\begin{enumerate}
\item
$(\vS_\I)_0 =\{A\,\triangle\, I:A\in\vS_0,\,I\in\mathcal{I}\}$.

\item
$(\vS_\I)_0 = \mathcal{I}$ if and only if $\vS\cap\mathcal{I} = \vS_0$ if and only if $\vS_0\sq\I$.

\item
$(\vS_\I)_0 = \{A\cup I:A\in\vS_0,\,I\in\mathcal{I}\}$ if and only if $A\sm I\in\vS$ whenever $A\in \vS_0$, $I\in \I$.
\end{enumerate}
The measure algebras of $\mu$ and $\mu_\I$ are isomorphic $($by mapping $E^\bullet\in\mathfrak{A}_\mu$ to $E^\bullet\in\mathfrak{A}_{\mu_\I}$ for $E\in \vS)$.
\end{lem}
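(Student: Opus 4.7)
The plan is to exploit Lemma \ref{P10i} (i), which gives the concrete description $\vS_\I = \{A\,\triangle\,I : A\in\vS, I\in\I\}$ and says $\I$ is an ideal of $\vS_\I$, together with the inclusion $\vS\sq\B_\I$, so that whenever a set in $\vS$ is contained in an element of $\I$ it automatically lies in $\vS\cap\I\sq\vS_0$. For well-definedness, if $A_1\,\triangle\,I_1 = A_2\,\triangle\,I_2$, then $A_1\,\triangle\,A_2 = I_1\,\triangle\,I_2$ lies both in $\vS$ and (since $\I$ is closed under symmetric difference as an ideal of $\B_\I$) in $\I$, hence in $\vS_0$, forcing $\mu A_1=\mu A_2$. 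The extension property follows by taking $I=\e$.

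For finite additivity, if $E_i = A_i\,\triangle\,I_i$ are disjoint, the inclusion $A_1\cap A_2\sq I_1\cup I_2$ (forced by $E_1\cap E_2=\e$) places $A_1\cap A_2$ in $\vS\cap\I\sq\vS_0$, so $\mu(A_1\cup A_2)=\mu A_1+\mu A_2$. A direct computation shows $E_1\cup E_2 = (A_1\cup A_2)\,\triangle\,J$ where $J = (A_1\cap A_2)\,\triangle\,(I_1\,\triangle\,I_2)\in\I$, whence $\mu_\I(E_1\cup E_2)=\mu_\I(E_1)+\mu_\I(E_2)$. In the $\T_\s$ setting, given a pairwise disjoint sequence $E_n = A_n\,\triangle\,I_n$, set $A=\bigcup_n A_n$ and $I=\bigcup_n I_n\in\I$; then $(\bigcup_n E_n)\,\triangle\,A\sq \bigcup_n(E_n\,\triangle\,A_n) = I$, and since this symmetric difference lies in $\vS_\I\sq\B_\I$ (using Remark \ref{r:iosa} (b) to see $\B_\I$ is a $\s$-algebra), it lies in $\I$, giving $\bigcup_n E_n = A\,\triangle\,J$ for some $J\in\I$. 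Disjointifying the $A_n$ (their pairwise intersections again lie in $\vS_0$ by the previous argument) and invoking countable additivity of $\mu$ yields $\mu A = \sum_n\mu A_n$.

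Assertion (i) is immediate from $\mu_\I(A\,\triangle\,I)=\mu A$. For (ii), (i) yields $\I\sq(\vS_\I)_0$ via $A=\e$, and $(\vS_\I)_0=\I$ is then equivalent to $\vS_0\sq\I$, which under the standing hypothesis $\vS\cap\I\sq\vS_0$ is equivalent to $\vS\cap\I=\vS_0$. For (iii), the $(\Leftarrow)$ direction is direct: given $A\in\vS_0$, $I\in\I$, decompose $A\,\triangle\,I = (A\sm I)\cup(I\sm A)$; the hypothesis places $A\sm I$ in $\vS$, and combined with $A\sm I\sq A$ this gives $A\sm I\in\vS_0$, while $I\sm A\in\I$. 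For $(\Rightarrow)$, apply the presumed equality to the element $A\,\triangle\,I\in(\vS_\I)_0$ for arbitrary $A\in\vS_0$, $I\in\I$, and extract the $\vS$-membership of $A\sm I$ from the resulting $\vS_0\cup\I$ decomposition.

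For the measure-algebra isomorphism, define $\phi\colon\mathfrak{A}_\mu\to\mathfrak{A}_{\mu_\I}$ by $\phi(A^\bullet)=A^\bullet$. Well-definedness uses $\vS_0\sq(\vS_\I)_0$; injectivity uses the identity $(\vS_\I)_0\cap\vS=\vS_0$, obtained by writing $A\,\triangle\,B = C\,\triangle\,J$ with $C\in\vS_0, J\in\I$ and observing that $J = C\,\triangle\,(A\,\triangle\,B)$ lies in $\vS\cap\I\sq\vS_0$; surjectivity uses that any $E=A\,\triangle\,I\in\vS_\I$ satisfies $E\,\triangle\,A = I\in\I\sq(\vS_\I)_0$; measure preservation is by definition. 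The main obstacle I anticipate is the $\s$-additivity step, where one must be careful to establish $\vS_\I\sq\B_\I$ via Remark \ref{r:iosa} (b) in order to upgrade ``subset of an element of $\I$ that lies in $\B_\I$'' to ``element of $\I$,'' since this pattern occurs repeatedly and is the main place where the $\T_\s$ hypothesis (rather than just $\T$) is needed.
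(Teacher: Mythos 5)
Most of your argument is sound and follows the same route as the paper, which writes out only well-definedness and additivity and leaves the rest as an exercise: your well-definedness step is the paper's, your explicit formula $E_1\cup E_2=(A_1\cup A_2)\,\triangle\,\bigl((A_1\cap A_2)\,\triangle\,(I_1\,\triangle\,I_2)\bigr)$ is a correct sharpening of the paper's containment argument, your countable-additivity step (pushing $(\bigcup_nE_n)\,\triangle\,\bigcup_nA_n$ into $\I$ via $\vS_\I\sq\mathcal{B}_\I$ and disjointifying the $A_n$) is right and is exactly where $\T_\s$ is needed, and parts (i), (ii) and the measure-algebra isomorphism are handled correctly (for (iii)($\Leftarrow$) you should also note the easy reverse inclusion $\{A\cup I\}\sq(\vS_\I)_0$, via $A\cup I=A\,\triangle\,(I\sm A)$, but that is immediate).

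The genuine gap is the forward direction of (iii). From a decomposition $A\,\triangle\,I=B\cup J$ with $B\in\vS_0$ and $J\in\I$ one cannot ``extract the $\vS$-membership of $A\sm I$'': nothing forces $B$ and $J$ to line up with $A\sm I$ and $I\sm A$, and the step fails outright. Take $X=\{1,2,3,4\}$, $\vS=\{\e,\{1,2\},\{3,4\},X\}$ with $\mu\{1,2\}=0$, $\mu\{3,4\}=1$, and $\I=\mathcal{P}(\{1,2\})$. Then $(X,\vS,\I)\in\T_\s$, $\vS\cap\I=\vS_0$, and both $(\vS_\I)_0$ and $\{A\cup I:A\in\vS_0,\,I\in\I\}$ equal $\mathcal{P}(\{1,2\})$, so the left-hand side of (iii) holds; yet $\{1,2\}\sm\{1\}=\{2\}\notin\vS$, so the right-hand side fails. (Here $\{2\}=\e\cup\{2\}$ is a perfectly good $\vS_0\cup\I$ decomposition from which no $\vS$-membership can be extracted.) So the implication you are sketching is not merely unproved but false as stated, and only the direction you prove in detail survives; you would need either a corrected statement or an additional hypothesis excluding situations where $\vS_0\sq\I$ before the forward direction can be attempted.
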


Note that $\mu_\I$ is an extension of $\mu$ by null sets since it extends $\mu$, $\I$ is contained in its null ideal, and its domain is $\vS_\I$.
As pointed out earlier, it is possible that $\I$ is an ideal of some algebra on $X$ and $\mu$ can be extended to a measure having $\I$ in its null ideal but $(X,\vS,\I)\not\in T$. (See Example \ref{ex:P10i.2} and cf.\ also Corollary \ref{c:ext}.) We reserve the notation $\mu_\I$ for the case where $(X,\vS,\I)\in T$. We state this for the countably additive setting as a definition.

\begin{df}\label{d:mu_I}
Let $(X,\vS,\mu)$ be a measure space. When $(X,\vS,\I)\in T_\s$ and $\vS\cap\I\sq\vS_0$ we define a measure $\mu_\I$ on $\vS_\I:=\{A\,\triangle\,I:A\in\vS,\,I\in\I\}$ by $\mu_\I(A\,\triangle\,I)=\mu A$ for $A\in\vS$, $I\in\I$.
\end{df}

\begin{proof}[Proof of Lemma \ref{P10}]
We check that $\mu_\I$ is a finitely additive measure and leave the rest as an exercise for the reader. If $A\,\triangle\,I = B\,\triangle\,J$ (where $A,B\in\vS$, $I,J\in\mathcal{I}$) then composing both sides with $B\,\triangle\,I$ gives $A\,\triangle\,B = I\,\triangle\,J\in\vS\cap\mathcal{I}\sq\vS_0$, so $\mu(A\,\triangle\,B)=0$ and hence $\mu(A)=\mu(A\cap B)=\mu(B)$. Thus $\mu_\I$ is well-defined. When $A\in\vS$ we have $\mu_\I A = \mu_\I(A\,\triangle\,\e) = \mu A$, so $\mu_\I$ extends $\mu$. To prove additivity of $\mu_\I$, suppose  $A_1\,\triangle\,I_1$ and $A_2\,\triangle\,I_2$ are disjoint, where $A_1,A_2\in\vS$, $I_1,I_2\in\mathcal{I}$. Then $A_1\sm I_1$ and $A_2\sm \,I_2$ are disjoint and therefore
\[
A_1\cap A_2 \sq I_1\cup I_2,
\]
which gives $A_1\cap A_2\in \vS\cap \mathcal{I}$ and hence $\mu(A_1\cap A_2)=0$. As in the proof of Lemma \ref{P10i}, for some $I\in\mathcal{I}$ we have $E:=\bigcup_n A_n\,\triangle\,I_n=(\bigcup_n A_n)\,\triangle\,I$.
It follows that
\[
\mu_\I\Bigl(\bigcup_n A_n\,\triangle\,I_n\Bigr) = \mu_\I\Bigl(\Bigl(\bigcup_n A_n\Bigr)\,\triangle\,I\Bigr) = \mu\Bigl(\bigcup_n A_n\Bigr)
= \sum_n\mu(A_n) = \sum_n\mu_\I(A_n\,\triangle\,I_n) \qedhere
\]
\end{proof}

\begin{cor}\label{c:comp}
Let $(X,\vS,\mu)$ be a measure space and let $\I$ be a $\s$-ideal of $\mathcal{P}(X)$ with $\vS\cap\I\sq\vS_0$. If either $\mu$ is complete or $\vS_0\sq \I$ then $\mu_\I$ is complete.
\end{cor}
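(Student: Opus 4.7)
The plan is to unpack Definition \ref{d:mu_I} together with the explicit description of the null ideal given in Lemma \ref{P10}(i), namely $(\vS_\I)_0 = \{A\,\triangle\,I : A\in\vS_0,\,I\in\I\}$, and then verify completeness of $\mu_\I$ by showing that an arbitrary subset $B$ of a $\mu_\I$-null set lies in $\vS_\I$. Since $\I$ is assumed here to be an ideal of the full power set $\mathcal{P}(X)$, the triple $(X,\vS,\I)$ automatically lies in $\T_\s$, so Lemma \ref{P10} applies. I would split into the two hypotheses.

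First I would handle the case where $\vS_0\sq\I$. By Lemma \ref{P10}(ii), this hypothesis forces $(\vS_\I)_0 = \I$; hence if $B\sq E\in (\vS_\I)_0=\I$, then since $\I$ is an ideal of $\mathcal{P}(X)$ we immediately get $B\in\I\sq\vS_\I$, and $\mu_\I(B)=0$ because $B\in\I$. This case requires essentially no work beyond citing Lemma \ref{P10}(ii).

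Next I would handle the case where $\mu$ is complete. Take $B\sq E$ with $\mu_\I(E)=0$, write $E=A\,\triangle\,I$ with $A\in\vS_0$ and $I\in\I$ using Lemma \ref{P10}(i), and decompose $B$ disjointly as
\[
B = B_1\cup B_2,\qquad B_1:=B\sm I,\quad B_2:=B\cap I.
\]
Then $B_2\sq I\in\I$ gives $B_2\in\I$ (ideal property of $\I$ in $\mathcal{P}(X)$), while $B_1\sq E\sm I\sq A$ combined with $A\in\vS_0$ and completeness of $\mu$ gives $B_1\in\vS_0$. Because $B_1$ and $B_2$ are disjoint, $B=B_1\,\triangle\,B_2\in\vS_\I$, and by Definition \ref{d:mu_I} we have $\mu_\I(B)=\mu(B_1)=0$.

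The argument is elementary once Lemma \ref{P10} is in hand; there is no real obstacle. The only minor point to be careful about is remembering that $\I$ is here an ideal of all of $\mathcal{P}(X)$ (not merely of some smaller algebra), which is precisely what lets us assert $B\cap I\in\I$ and $B\sm I\sq A$ without any prior measurability of $B$.
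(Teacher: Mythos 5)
Your proof is correct and is precisely the routine verification the paper leaves implicit: the corollary is stated without proof, and the intended argument is exactly to combine Lemma \ref{P10}(i)--(ii) with the decomposition $B=(B\sm I)\,\triangle\,(B\cap I)$ as you do. Both cases check out, including the small point that $E\sm I\sq A$ so completeness of $\mu$ applies to $B\sm I$.
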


\begin{cor}\label{c:ext.i}
Let $(X,\vS',\mu')$ be an extension of a probability space $(X,\vS,\mu)$ by null sets. Then letting $\I=\vS'_0$ we have $(X,\vS,\I)\in\T_\s$ and $(X,\vS',\mu') = (X,\vS_\I,\mu_\I)$.
\end{cor}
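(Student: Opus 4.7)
The plan is to unpack the hypotheses of Definition \ref{df:ext}, observe that $\I=\vS'_0$ is automatically a $\s$-ideal of the $\s$-algebra $\vS'$, verify the compatibility condition $\vS\cap\I\sq\vS_0$ needed to invoke Definition \ref{d:mu_I}, and then show the two measure spaces coincide as sets and as measures.

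First I would argue $(X,\vS,\I)\in\T_\s$. Since $\I=\vS'_0$ is the null ideal of a countably additive measure, it is closed under countable unions and is a $\s$-ideal of $\vS'$; together with $\vS\sq\vS'$, this places $\I$ as a $\s$-ideal of a $\s$-algebra containing $\vS$, as required by Definition \ref{d:T}. Next I would check $\vS\cap\I\sq\vS_0$: if $E\in\vS\cap\vS'_0$, then $\mu(E)=\mu'(E)=0$ because $\mu'$ extends $\mu$, so $E\in\vS_0$. This is the compatibility condition of Lemma \ref{P10} and Definition \ref{d:mu_I}, so $(X,\vS_\I,\mu_\I)$ is well-defined.

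Then I would establish $\vS_\I=\vS'$. The inclusion $\vS_\I\sq\vS'$ is immediate since $\vS\cup\I\sq\vS'$ and $\vS'$ is an algebra. For the reverse inclusion, let $E'\in\vS'$; by Definition \ref{df:ext} there exists $E\in\vS$ with $\mu'(E'\,\triangle\,E)=0$, so $I:=E'\,\triangle\,E\in\vS'_0=\I$, and $E'=E\,\triangle\,I\in\vS_\I$ (using associativity and self-inverse of $\,\triangle\,$).

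Finally I would verify $\mu_\I=\mu'$ on $\vS_\I=\vS'$. Any $E'\in\vS_\I$ has a representation $E'=A\,\triangle\,I$ with $A\in\vS$, $I\in\I$. Then $\mu_\I(E')=\mu(A)=\mu'(A)$ by definition, while $A\,\triangle\,I\sq A\cup I$ and $(A\cup I)\sm(A\,\triangle\,I)\sq I$ together with $\mu'(I)=0$ give $\mu'(A\,\triangle\,I)=\mu'(A)$; therefore $\mu_\I(E')=\mu'(E')$. The step with the least slack is the $\s$-algebra half of (i) in Definition \ref{d:T}, but it is already handled by taking $\B=\vS'$; no real obstacle remains.
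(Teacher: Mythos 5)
Your proposal is correct and follows essentially the same route as the paper: the paper simply cites Corollary \ref{c:P10i} (whose content is exactly your unpacking of the extension-by-null-sets condition into $\vS'=\vS_\I$ via Lemma \ref{P10i}) and leaves the measure identity $\mu'=\mu_\I$ implicit via Lemma \ref{P10}. You have merely inlined those citations and spelled out the (correct) verification that $\mu'(A\,\triangle\,I)=\mu'(A)=\mu(A)=\mu_\I(A\,\triangle\,I)$, which is a harmless elaboration.
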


\begin{proof}
By Corollary \ref{c:P10i}, $\vS'=\s(\vS\cup\I)=\vS_\I$.
\end{proof}

\begin{cor}\label{P10ii}
Let $(X,\vS,\mu)$ be a measure space, $(X,\vS',\mu')$ an extension by null sets. For any $p\in[0,\infty]$, $\LL^p(\mu')=\{f+g: f\in\LL^p(\mu),\; g\in\mathcal{N}(\mu')\}$.
\end{cor}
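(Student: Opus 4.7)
The plan is to reduce the problem to an application of Lemma \ref{P10i}(ii) via Corollary \ref{c:ext.i}, which identifies the extension $(X,\vS',\mu')$ as $(X,\vS_\I,\mu_\I)$ with $\I=\vS'_0$. The claim is then that every $h\in\LL^p(\mu')$ can be ``corrected'' on a $\mu'$-null set to a $\vS$-measurable function in $\LL^p(\mu)$.

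For the easy inclusion $\supseteq$, I would note that any $f\in\LL^p(\mu)$ is $\vS$-measurable, hence $\vS'$-measurable, and since $\mu'\restr\vS = \mu$ one has $\int|f|^p\,d\mu'=\int|f|^p\,d\mu$ for $0<p<\infty$, equality of essential suprema for $p=\infty$, and nothing to check for $p=0$. Any $g\in\mathcal{N}(\mu')$ is trivially in $\LL^p(\mu')$ as it is $\mu'$-a.e.\ zero. Thus $f+g\in\LL^p(\mu')$.

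For the nontrivial inclusion $\sq$, given $h\in\LL^p(\mu')$, I would apply Lemma \ref{P10i}(ii) in its ``standard Borel space'' form with target $\mathbb{K}$: since $\vS'=\vS_\I$ and $h$ is $\vS_\I$-measurable and $\mathbb{K}$-valued, there exists a $\vS$-measurable $\mathbb{K}$-valued function $f$ with $f=_\I h$, i.e., $f=_{\mu'} h$. Set $g:=h-f$; then $g$ is $\vS'$-measurable and $\mu'$-a.e.\ zero, so $g\in\mathcal{N}(\mu')$. It remains to verify $f\in\LL^p(\mu)$. For $0<p<\infty$ this follows from
\[
\int|f|^p\,d\mu = \int|f|^p\,d\mu' = \int|h|^p\,d\mu' < \infty,
\]
using $\vS$-measurability of $|f|^p$ together with $\mu'\restr\vS=\mu$ for the first equality and $f=_{\mu'} h$ for the second. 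For $p=\infty$, the sets $\{|f|>M\}$ lie in $\vS$ and have the same $\mu$- and $\mu'$-measure, so the essential suprema agree. For $p=0$, $f\in\LL^0(\mu)$ by construction.

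The only subtlety — and the one step I would highlight as the point to get right — is that we need $f$ to be $\mathbb{K}$-valued rather than $\overline{\mathbb{R}}$-valued so that the pointwise difference $h-f$ makes sense and itself lies in $\LL^p(\mu')$; this is precisely what the standard-Borel-space extension of Lemma \ref{P10i}(ii) provides (taking $Y=\mathbb{K}$).
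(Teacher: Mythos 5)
Your proposal is correct and follows essentially the same route as the paper's proof: both handle the easy inclusion directly and, for the other, invoke Lemma \ref{P10i}~(ii) with $\I=\vS'_0$ to produce a $\vS$-measurable (indeed $\K$-valued) $f$ with $f=_{\mu'}h$, then check $f\in\LL^p(\mu)$ case by case and set $g=h-f$. The point you flag about needing the standard-Borel-space form of the lemma to get a $\K$-valued representative is a legitimate refinement that the paper leaves implicit in writing $f\in\LL^0(\vS)$.
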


When $p=0$ or $p=\infty$, a similar formula holds for any $(X,\vS,\I)\in\T_\s$, namely,
$\LL^0(\vS_\mathcal{I})=\{f+g: f\in\LL^0(\vS),\; g\in\mathcal{N}(\vS_\I,\I)\}$ and
$\LL^\infty(\vS_\mathcal{I},\mathcal{I})=\{f+g: f\in\LL^\infty(\vS,\mathcal{I}),\; g\in\mathcal{N}(\vS_\I,\I)\}$. The proofs are similar.

\begin{proof}
The inclusion from right to left is clear. For the other direction, suppose $h\in \LL^p(\mu')$. Get $f\in\LL^0(\vS)$ from Lemma \ref{P10i} (ii) (taking $\I=\vS'_0$) with $f=_{\mu'} h$. When $p=0$, just take $g=h-f$. When $p=\infty$, additionally note that $\|h\|_\infty = \|f\|_\infty$. When $0<p<\infty$, if $h\in \LL^p(\mu')$ then $\int|h|^p\,d\mu' =\int|f|^p\,d\mu' = \int|f|^p\,d\mu$, so $f\in  \LL^p(\mu)$.
\end{proof}

For the following corollary, cf. \cite{ro}, Proposition 39 page 325.

\begin{cor}\label{c:ext}
Let $(X,\vS,\mu)$ be a measure space.
Let $\F\sq\mathcal{P}(X)$. There is a measure space $(X,\vS',\mu')$ with $\vS\sq\vS'$ and $\mu'$ extending $\mu$ such that $\F\sq\vS'_0$ if and only if $E\sq\bigcup_{n=1}^\infty F_n$ for $E\in\vS$ and $\{F_n:n\in\N\}\sq \F$ implies $\mu(E)=0$.
When this holds, if $\I$ is the $\s$-ideal of $\mathcal{S}:=\s(\vS\cup\F)$ generated by $\F$, then $\mathcal{S}=\vS_\I$ and $\mu'\restr\mathcal{S} = \mu_{\I}$.
\end{cor}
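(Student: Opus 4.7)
The statement has two directions plus a supplementary identification of the extension, so I would split the proof accordingly.

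For the forward (necessity) direction, suppose $(X,\vS',\mu')$ extends $(X,\vS,\mu)$ and $\F\sq\vS'_0$. Given $E\in\vS$ with $E\sq\bigcup_nF_n$ for some $\{F_n\}\sq\F$, the countable union lies in $\vS'_0$ because $\vS'_0$ is a $\s$-ideal of $\vS'$ containing $\F$, so $\mu(E)=\mu'(E)=0$. This is the easy direction and involves no real obstacle.

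For the reverse (sufficiency) direction, I would introduce $\mathcal{S}:=\s(\vS\cup\F)$ and let $\I$ be the $\s$-ideal of $\mathcal{S}$ generated by $\F$; note $\F\sq\mathcal{S}$ since elements of $\F$ are generators of $\mathcal{S}$, so the definition makes sense, and $\I$ consists of those elements of $\mathcal{S}$ contained in some countable union of sets from $\F$. The plan is then to verify the hypotheses of Lemma \ref{P10} for the triple $(X,\vS,\I)$: since $\I$ is a $\s$-ideal of the $\s$-algebra $\mathcal{S}$ and $\vS\sq\mathcal{S}$, we have $(X,\vS,\I)\in\T_\s$; moreover, for any $E\in\vS\cap\I$ the defining property of $\I$ gives a countable union $\bigcup_n F_n\supseteq E$ with $F_n\in\F$, and our hypothesis yields $\mu(E)=0$, so $\vS\cap\I\sq\vS_0$.

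Having checked these, Lemma \ref{P10} produces the measure $\mu_\I$ on $\vS_\I$ extending $\mu$, and (by the same lemma) $\I\sq(\vS_\I)_0$, hence in particular $\F\sq(\vS_\I)_0$. To obtain the final identification, I would invoke Lemma \ref{P10i}(i), which gives $\vS_\I=\s(\vS\cup\I)$; since $\F\sq\I\sq\mathcal{S}=\s(\vS\cup\F)$, the two $\s$-algebras $\s(\vS\cup\I)$ and $\s(\vS\cup\F)$ coincide, so $\vS_\I=\mathcal{S}$. Setting $\vS':=\mathcal{S}$ and $\mu':=\mu_\I$ completes the existence claim and simultaneously establishes the ``when this holds'' clause. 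The only subtlety that requires care is ensuring that $\I$ is genuinely a $\s$-ideal of a $\s$-algebra containing $\vS$ (so that Lemma \ref{P10} applies), which is precisely why we define $\I$ relative to $\mathcal{S}$ rather than as the downward closure of countable unions of $\F$ in $\mathcal{P}(X)$; this is the one place where Example \ref{ex:P10i.2}-type pathologies could otherwise bite.
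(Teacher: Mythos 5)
Your argument is essentially correct, and for the existence direction it takes a slightly different (and arguably more economical) route than the paper: the paper first passes through $\Kk$, the $\s$-ideal of $\mathcal{P}(X)$ generated by $\F$ (all subsets of $X$ covered by a sequence from $\F$), takes $\mu'=\mu_\Kk$, and only afterwards introduces $\I$ relative to $\mathcal{S}$ to prove the final identification. You work with $\I$ from the start, which is fine: your description of $\I$ as $\{E\in\mathcal{S}: E\sq\bigcup_nF_n\ \text{for some}\ F_n\in\F\}$ is the correct one, the verification that $(X,\vS,\I)\in\T_\s$ with $\vS\cap\I\sq\vS_0$ is exactly what Lemma \ref{P10} needs, and $\vS_\I=\s(\vS\cup\I)=\s(\vS\cup\F)=\mathcal{S}$ follows from Lemma \ref{P10i} (i) as you say. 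What the paper's choice of $\Kk$ buys is an extension whose null ideal is downward closed in $\mathcal{P}(X)$; what yours buys is that the existence witness is already the canonical minimal one.

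There is, however, one small gap in your treatment of the ``when this holds'' clause. That clause (as the remark following the corollary makes explicit: ``every such extension extends $\mu_\I$'') is a statement about an \emph{arbitrary} extension $\mu'$ with $\F\sq\vS'_0$, not only about the particular $\mu'=\mu_\I$ you construct, so taking $\vS'=\mathcal{S}$, $\mu'=\mu_\I$ does not ``simultaneously establish'' it. The missing line is short: for any such $\mu'$ we have $\I\sq\vS'_0$, because each $I\in\I$ lies in $\mathcal{S}\sq\vS'$ and is covered by a countable union of $\mu'$-null sets from $\F$; hence for $A\in\vS$, $I\in\I$ one gets $\mu'(A\,\triangle\,I)=\mu'(A)=\mu(A)=\mu_\I(A\,\triangle\,I)$, which is the computation the paper performs. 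With that addition your proof is complete.
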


Thus we can take $\mu'$ to be an extension of $\mu$ by null sets, and every such extension extends $\mu_\I$.

\begin{proof}
If the extension $\mu'$ exists then clearly the condition is satisfied. Conversely, if the condition is satisfied then let $\Kk$ be the $\s$-ideal of $\mathcal{P}(X)$ generated by $\F$, i.e., $\Kk$ is the collection of all subsets of $X$ covered by a sequence of elements of $\F$. The assumption gives $\Kk\cap\vS\sq\vS_0$ and therefore the extension $\mu'$ exists, specifically we can take $\mu'=\mu_\Kk$.

For the last sentence, since $\F\sq\vS_0'$, we have $\I\sq\vS'_0$. We also have $(X,\vS,\I)\in \T_\s$ since $\I$ is a $\s$-ideal of $\mathcal{S}$ and $\vS\sq\mathcal{S}$. Using Lemma \ref{P10i} (i) we get $\mathcal{S}=\s(\vS\cup\I) = \vS_\I$, and given $A\,\triangle\,I\in \vS_\I$, where $A\in\vS$, $I\in\I$, we have $\mu'(A\,\triangle\,I) = \mu'(A)=\mu(A)=\mu_\I(A\,\triangle\,I)$.
\end{proof}

From the characterization of an ideal-of-some-algebra in Remark \ref{r:iosa} (a), it is clear that given any collection $\mathcal{C}$ of subsets of $X$, there is a smallest ideal-of-some-algebra containing $\mathcal{C}$, namely the intersection of all ideals-of-some-algebra containing $\mathcal{C}$. In particular, given two ideals-of-some-algebra $\I$ and $\J$ on $X$, there is a smallest ideal-of-some-algebra $\I\vee \J$ containing $\I\cup\J$.
Similarly, there is a smallest $\s$-ideal-of-some-algebra $\I\vee_\s \J$ containing $\I\cup\J$.

\begin{prop}\label{p:struct}
Let $\I,\J$ each be an ideal of some algebra on $X$. Writing
\begin{align*}
\mathcal{A}_1 & = \{I\cap J:I\in\I,\,J\in\J\}, \\
\mathcal{A}_2 & = \{I\sm J:I\in\I,\,J\in\J\}, \\
\mathcal{A}_3 & = \{J\sm I:I\in\I,\,J\in\J\},
\end{align*}
we have that $\I\vee\J$ consists of the finite unions $\bigcup_{i=1}^nE_i$ of sets $E_i\in \mathcal{A}_1\cup \mathcal{A}_2\cup \mathcal{A}_3$.
\end{prop}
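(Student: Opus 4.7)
Write $\mathcal{F}$ for the set of finite unions $\bigcup_{i=1}^n E_i$ with $E_i\in\mathcal{A}_1\cup\mathcal{A}_2\cup\mathcal{A}_3$. The plan is to prove the two inclusions $\mathcal{F}\subseteq\I\vee\J$ and $\I\vee\J\subseteq\mathcal{F}$ separately.

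For the first inclusion I would invoke Remark \ref{r:iosa}(a): the ideal-of-some-algebra $\I\vee\J$ is closed under finite unions and differences of its own members, and therefore also under intersections since $A\cap B=A\sm(A\sm B)$. Since $\I\cup\J\subseteq\I\vee\J$, each of the three families $\mathcal{A}_1=\{I\cap J\}$, $\mathcal{A}_2=\{I\sm J\}$, $\mathcal{A}_3=\{J\sm I\}$ sits inside $\I\vee\J$, and consequently so does $\mathcal{F}$.

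For the reverse inclusion the plan is to check that $\mathcal{F}$ itself is an ideal of some algebra containing $\I\cup\J$; then minimality of $\I\vee\J$ finishes the argument. Containment of $\I\cup\J$ is immediate: $\e\in\mathcal{A}_1$, $I=I\sm\e\in\mathcal{A}_2$, and $J=J\sm\e\in\mathcal{A}_3$. Closure under finite unions is built into the definition of $\mathcal{F}$. The real content is closure under differences, which by Remark \ref{r:iosa}(a) is what is needed for $\mathcal{F}$ to be an ideal of some algebra. For $A=\bigcup_i E_i$ and $B=\bigcup_j F_j$ in $\mathcal{F}$ one has the identity
\[
A\sm B=\bigcup_i\Bigl(E_i\sm\bigcup_j F_j\Bigr)=\bigcup_i\bigcap_j(E_i\sm F_j),
\]
so the task reduces (via closure of $\mathcal{F}$ under finite unions) to two sub-claims: (a) $\mathcal{F}$ is closed under finite intersections, and (b) for generators $E,F\in\mathcal{A}_1\cup\mathcal{A}_2\cup\mathcal{A}_3$, the difference $E\sm F$ already lies in $\mathcal{F}$. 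Both sub-claims reduce, by distributing $\cap$ over the union defining an element of $\mathcal{F}$, to a case check on single generators.

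The main (but purely mechanical) obstacle is the case analysis for (a) and (b). The key algebraic facts I would use throughout are that $\I$ is closed under finite unions and differences among its own members (and therefore under finite intersections, since $I_1\cap I_2\subseteq I_1\in\I$ sits in $\mathcal{B}_\I$ and hence in $\I$), and symmetrically for $\J$. For instance, with $E=I_1\sm J_1$ and $F=I_2\sm J_2$ one computes
\[
E\cap F=(I_1\cap I_2)\sm(J_1\cup J_2)\in\mathcal{A}_2,
\]
\[
E\sm F=\bigl[(I_1\sm I_2)\sm J_1\bigr]\cup\bigl[I_1\cap(J_2\sm J_1)\bigr]\in\mathcal{A}_2\cup\mathcal{A}_1,
\]
and the remaining (at most nine) combinations of types are handled in the same way, reducing each expression to a union of intersections and differences of an element of $\I$ with an element of $\J$. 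Once every case is verified, $\mathcal{F}$ is closed under intersection and under difference of generators, hence an ideal of some algebra containing $\I\cup\J$, so $\I\vee\J\subseteq\mathcal{F}$, completing the proof.
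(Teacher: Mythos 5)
Your proposal is correct and follows essentially the same route as the paper: both prove minimality by showing that the family of finite unions of sets from $\mathcal{A}_1\cup\mathcal{A}_2\cup\mathcal{A}_3$ is itself an ideal-of-some-algebra containing $\I\cup\J$, with the substance residing in the case analysis on the three generator types (your sample computations agree with the paper's and the remaining cases do go through as you claim). The only organizational difference is that you handle $A\sm\bigcup_jF_j$ via $\bigcup_i\bigcap_j(E_i\sm F_j)$, which requires the additional (but equally routine) verification that intersections of generators stay in the family, whereas the paper instead peels off the sets $F_j$ one at a time by induction and so needs only the difference-of-generators claim; both variants are valid.
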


\begin{proof}
Let $\mathcal{S}$ be the collection of all finite unions $\bigcup_{i=1}^nE_i$ of sets $E_i\in \mathcal{A}_1\cup \mathcal{A}_2\cup \mathcal{A}_3$. Clearly $\mathcal{S}$ is contained in any ideal-of-some-algebra which contains $\I$ and $\J$ since any ideal-of-some-algebra is closed under differences, finite unions, and intersections. Hence $\mathcal{S}\sq \I\vee\J$. $\mathcal{S}$ contains $\I\sq \mathcal{A}_2$ and $\J\sq \mathcal{A}_3$ (consider sets $I\sm \e$, $J\sm \e$). There remains to show that $\mathcal{S}$ is an ideal-of-some-algebra. It is clearly closed under finite unions. For closure under differences, first note the following.

\begin{clm}
$A \sm B\in\mathcal{S}$ when $A,B\in\mathcal{A}_1\cup \mathcal{A}_2\cup \mathcal{A}_3$.
\end{clm}

In what follows, $I_1,I_2\in\I$, $J_1,J_2\in\J$, and we write intersections as products. If $A$ and $B$ both belong to $\mathcal{A}_1$, say $A=I_1J_1$ and $B=I_2J_2$, then
\[
A\sm B = (I_1J_1) \sm (I_2J_2) = I_1J_1(I_2^c\cup J_2^c) = I_1I_2^cJ_1 \cup I_1J_1J_2^c\in \mathcal{S}
\]
since it is the union of two elements of $\mathcal{A}_1$. If $A=I_1J_1\in \mathcal{A}_1$, $B=I_2\sm J_2\in \mathcal{A}_2$, then
\[
A\sm B = (I_1J_1) \sm (I_2\cap J_2^c) = I_1J_1(I_2^c\cup J_2) = I_1I_2^cJ_1 \cup I_1J_1J_2 \in \mathcal{S}
\]
since it is again the union of two elements of $\mathcal{A}_1$. Similarly if $A\in \mathcal{A}_1$, $B\in \mathcal{A}_3$.

\smallskip

\n If $A=I_1\sm J_1\in \mathcal{A}_2$, $B=I_2J_2\in \mathcal{A}_1$, then
\[
A\sm B = (I_1J_1^c) \sm (I_2J_2) = I_1J_1^c(I_2^c\cup J_2^c) = I_1I_2^cJ_1^c \cup I_1J_1^cJ_2^c = I_1I_2^cJ_1^c \cup I_1(J_1\cup J_2)^c \in \mathcal{S}
\]
since it is the union of two elements of $\mathcal{A}_2$. Similarly if $A\in \mathcal{A}_3$, $B\in \mathcal{A}_1$.

\smallskip

\n If $A=I_1\sm J_1\in \mathcal{A}_2$, $B=I_2\sm J_2\in \mathcal{A}_2$, then
\[
(I_1J^c_1) \sm (I_2J_2^c) = I_1J_1^c(I_2^c\cup J_2) = I_1I_2^cJ_1^c \cup I_1J_1^cJ_2 \in \mathcal{S}
\]
since it is the union of an element of $\mathcal{A}_2$ and an element of $\mathcal{A}_1$. Similarly if $A,B\in\mathcal{A}_3$.

\smallskip

\n If $A=I_1\sm J_1\in \mathcal{A}_2$, $B=J_2\sm I_2\in \mathcal{A}_3$, then
\[
(I_1J^c_1) \sm (I_2^cJ_2) = I_1J_1^c(I_2\cup J_2^c) = I_1I_2J_1^c \cup I_1J_1^cJ_2^c = I_1I_2J_1^c \cup I_1(J_1\cup J_2)^c \in \mathcal{S}
\]
since it is the union of two elements of $\mathcal{A}_2$. Similarly if $A\in\mathcal{A}_3$, $B\in\mathcal{A}_2$.

\m

From the claim, we then get that $A \sm B\in\mathcal{S}$ when $A\in\mathcal{S}$, $B\in\mathcal{A}_1\cup \mathcal{A}_2\cup \mathcal{A}_3$ since writing $A = \bigcup_i A_i$ gives $A\sm B = \bigcup_i (A_i\sm B)\in \mathcal{S}$. Then for $A,B\in\mathcal{S}$, writing $B = \bigcup_{j=1}^mB_j$ we have by induction on $m$ that $A\sm B = (A\sm(\bigcup_{j=1}^{m-1}B_j))\sm B_m$ belongs to $\mathcal{S}$.
\end{proof}

\begin{prop}\label{p:join}
Let $X$ be a set, $\vS$ an algebra of subsets of $X$, and let each of $\I$, $\J$ be an ideal-of-some-algebra on $X$.
\begin{enumerate}
\item
If $\J$ is the $\s$-ideal-of-some-algebra generated by $\I$, then $\mathcal{B}_\I\sq \mathcal{B}_\J$.

\item
$\mathcal{B}_\I\cap\mathcal{B}_\J\sq \mathcal{B}_{\I\vee\J}\sq \mathcal{B}_{\I\vee_\s\J}$.

\item
If $(X,\vS,\I)$, $(X,\vS,\J)\in \T$, then $(X,\vS,\I\vee\J)\in \T$.

\item
If $(X,\vS,\I)$, $(X,\vS,\J)\in \T_\s$, then $(X,\vS,\I\vee_\s\J)\in \T_\s$.

\item
If $(X,\vS,\I)\in\T$ and $(X,\vS_\I,\J)\in\T$ then $(\vS_\I)_\J = \vS_{\I\vee\J}$.

\item
If $(X,\vS,\I)\in\T_\s$ and $(X,\vS_\I,\J)\in\T_\s$ then $(\vS_\I)_\J = \vS_{\I\vee_\s\J}$
\end{enumerate}
\end{prop}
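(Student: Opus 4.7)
The six parts split naturally: (i) and (ii) are the set-theoretic core, while (iii)--(vi) are quick consequences of (i), (ii) and Lemma \ref{P10i}. Throughout I would use the characterization of $\mathcal{B}_\I$ in Remark \ref{r:iosa}, the structural description of $\I\vee\J$ in Proposition \ref{p:struct}, and the fact (Remark \ref{r:iosa}(b)) that any ideal-of-some-algebra closed under countable unions is automatically a $\s$-ideal of some $\s$-algebra.

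For (i), fix $E\in\mathcal{B}_\I$ and consider $\mathcal{C}_E:=\{B\sq X:E\cap B\in\J\}$. The plan is to show $\mathcal{C}_E$ is a $\s$-ideal-of-some-algebra containing $\I$; since $\J$ is by hypothesis the smallest such, $\J\sq\mathcal{C}_E$, which is precisely $E\in\mathcal{B}_\J$. Membership of $\I$ uses $E\in\mathcal{B}_\I$ and $\I\sq\J$. Closure of $\mathcal{C}_E$ under finite unions, differences, and countable unions is immediate from the identities $E\cap(A\cup B)=(E\cap A)\cup(E\cap B)$, $E\cap(A\sm B)=(E\cap A)\sm(E\cap B)$, and $E\cap\bigcup_nA_n=\bigcup_n(E\cap A_n)$, together with the corresponding closure properties of $\J$.

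For (ii), the second inclusion is an instance of (i) applied to the pair $(\I\vee\J,\I\vee_\s\J)$, since $\I\vee_\s\J$ is by definition the $\s$-ideal-of-some-algebra generated by $\I\vee\J$. For the first inclusion, let $E\in\mathcal{B}_\I\cap\mathcal{B}_\J$ and $K\in\I\vee\J$. By Proposition \ref{p:struct}, $K$ is a finite union of sets from $\mathcal{A}_1\cup\mathcal{A}_2\cup\mathcal{A}_3$, so it suffices to verify that $E\cap(I\cap J)$, $E\cap(I\sm J)$ and $E\cap(J\sm I)$ all belong to $\I\vee\J$ for $I\in\I$, $J\in\J$. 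The three cases reduce to the identities $E\cap(I\cap J)=(E\cap I)\cap J\in\mathcal{A}_1$, $E\cap(I\sm J)=(E\cap I)\sm J\in\mathcal{A}_2$, and $E\cap(J\sm I)=(E\cap J)\sm I\in\mathcal{A}_3$, using $E\cap I\in\I$ and $E\cap J\in\J$.

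Parts (iii) and (iv) are now immediate: if $\vS\sq\mathcal{B}_\I$ and $\vS\sq\mathcal{B}_\J$, then $\vS\sq\mathcal{B}_\I\cap\mathcal{B}_\J\sq\mathcal{B}_{\I\vee\J}\sq\mathcal{B}_{\I\vee_\s\J}$, and $\I\vee\J$ (resp.\ $\I\vee_\s\J$) is an ideal-of-some-algebra (resp.\ closed under countable unions) by construction. For (v), both $(X,\vS,\I\vee\J)\in\T$ (by (iii)) and $(X,\vS_\I,\J)\in\T$ are available, so Lemma \ref{P10i}(i) applies twice: $(\vS_\I)_\J$ is the algebra generated by $\vS_\I\cup\J$, which equals the algebra generated by $\vS\cup\I\cup\J$, and this in turn equals the algebra generated by $\vS\cup(\I\vee\J)$, which is $\vS_{\I\vee\J}$. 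Part (vi) is the same argument with $\s$-algebras in place of algebras, using (iv) to secure $(X,\vS,\I\vee_\s\J)\in\T_\s$ and the $\s$-version of Lemma \ref{P10i}(i). The only point requiring care is that Lemma \ref{P10i}(i) has the hypothesis $(X,\vS,\I)\in\T_\s$ (resp.\ $\T$), which is why (iii) and (iv) must be proved before (v) and (vi); the main obstacle is therefore simply that one must not attempt (v) or (vi) without first having (ii) in hand to guarantee membership in $\T$ for the joined ideal.
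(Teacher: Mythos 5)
Your proposal is correct and follows essentially the same route as the paper: both prove (i) (and hence the second inclusion of (ii)) by showing that $\{B\sq X:E\cap B\in\J\}$ is a $\s$-ideal-of-some-algebra containing $\I$ and invoking the minimality of $\J$, and both derive (iii)--(vi) from (i), (ii), Lemma \ref{P10i}(i), and the minimality of $\I\vee\J$ (resp.\ $\I\vee_\s\J$). The only divergence is in the first inclusion of (ii), where the paper repeats the abstract minimality argument (showing $\I\cup\J\sq\{K:E\cap K\in\I\vee\J\}$ and that this family is an ideal-of-some-algebra), whereas you verify the inclusion directly on the generators $\mathcal{A}_1\cup\mathcal{A}_2\cup\mathcal{A}_3$ supplied by Proposition \ref{p:struct}; both arguments are valid and of comparable length.
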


\begin{proof}
For any $E\sq X$ and any collection $\mathcal{S}$ of subsets of $X$, write
\[
\mathcal{C}(E,\mathcal{S}) = \{K\sq X: E\cap K\in \mathcal{S}\}
\]
and note that $\mathcal{S}_1\sq \mathcal{S}_2$ implies $\mathcal{C}(E,\mathcal{S}_1) \sq \mathcal{C}(E,\mathcal{S}_2)$.
From the formulas $E\cap\bigcup_iK_i=\bigcup_iE\cap K_i$ and $E\cap(K_1\sm K_2) = (E\cap K_1)\sm (E\cap K_2)$, we see that if $\mathcal{S}$ is an ideal of some algebra then so is $\mathcal{C}(E,\mathcal{S})$, and if $\mathcal{S}$ is a $\s$-ideal of some algebra then so is $\mathcal{C}(E,\mathcal{S})$. The property that $E\in\mathcal{B}_\I$ can be written $\I\sq\mathcal{C}(E,\I)$.

(i) If $E\in\mathcal{B}_{\I}$ then $\I\sq\mathcal{C}(E,\I)\sq\mathcal{C}(E,\J)$. Since $\mathcal{C}(E,\J)$ is a $\s$-ideal of some algebra on $X$, we get $\J\sq\mathcal{C}(E,\J)$, so $E\in\mathcal{B}_{\J}$.

(ii) Let $E\in\mathcal{B}_\I\cap\mathcal{B}_\J$. Then $\I\sq\mathcal{C}(E,\I)$ and $\J\sq\mathcal{C}(E,\J)$, so $\I\cup\J\sq\mathcal{C}(E,\I\vee\J)$. Since $\mathcal{C}(E,\I\vee\J)$ is an ideal of some algebra on $X$, we get $\I\vee\J\sq\mathcal{C}(E,\I\vee\J)$, so $E\in\mathcal{B}_{\I\vee\J}$. The second inclusion follows from (i).

(iii), (iv) From $\vS\sq\mathcal{B}_{\I}$ and $\vS\sq\mathcal{B}_{\J}$ we get $\vS\sq\mathcal{B}_{\I}\cap \mathcal{B}_{\J}\sq\mathcal{B}_{\I\vee\J}\sq\mathcal{B}_{\I\vee_\s\J}$.

(v) By (iii) and Lemma \ref{P10i} (i), $\vS_{\I\vee\J}$ is the algebra of subsets of $X$ generated by $\vS\cup(\I\vee\J)$. Since $(\vS_\I)_\J$ is an algebra containing $\I\cup\J$, it is in particular an ideal-of-some-algebra containing $\I\cup\J$, so it contains $\I\vee\J$ and hence $\vS_{\I\vee\J}\sq (\vS_\I)_\J$. Similarly, since $\vS_{\I\vee\J}$ is an algebra containing $\I\cup\J$, it contains $\vS_\I$ and hence also contains $(\vS_\I)_\J$.

(vi) Same proof as for (v), replacing $\I\vee\J$ by $\I\vee_\s\J$ and saying $\s$-algebra and $\s$-ideal-of-some-algebra instead of algebra and ideal-of-some-algebra.
\end{proof}

With regards to (i), we note that in general the inclusion $\I\sq\J$ between two ideals of some algebra on $X$ does not imply any inclusion relationship between $\mathcal{B}_\I$ and $\mathcal{B}_\J$.

\begin{ex}
(a) If $X=\{a,b,c,d\}$, $\I=\{\e,\{a,b\}\}$, and $\J$ is the algebra with atoms $\{a\}$, $\{b\}$, $\{c,d\}$ (and so in particular is an ideal of some algebra), then $\I\sq\J$ but $\mathcal{B}_\I$ and $\mathcal{B}_\J$ are not comparable as can be seen by considering the singletons $\{a\}$ and $\{c\}$. (b) If $\I$ is any ideal of some algebra with $\mathcal{B}_\I\not=\mathcal{P}(X)$ and $\J$ is its completion, then $\I\sq\J$ and $\mathcal{B}_\I$ is a proper subalgebra of $\mathcal{B}_\J=\mathcal{P}(X)$. (c) If $\J$ is any ideal of some algebra with $\mathcal{B}_\J\not=\mathcal{P}(X)$ and $\I=\{\e\}$, then $\I\sq\J$ and $\mathcal{B}_\J$ is a proper subalgebra of $\mathcal{B}_\I=\mathcal{P}(X)$.
\end{ex}

The next example shows that the second inclusion in (ii) can be proper,
and so in particular we can have $\I\vee\J\not=\I\vee_\s\J$, even when $\I$ and $\J$ are $\s$-ideals of some algebra. When $\I$ and $\J$ are complete $\s$-ideals, then $\I\vee\J = \{A\cup B:A\in\I,\,B\in\J\}$ is a $\s$-ideal. The same fact and the same formula for $\I\vee\J$ hold if $\I,\J$ are $\s$-ideals-of-some-algebra with $\I\sq\mathcal{P}(E)$ and $\I\sq\mathcal{P}(F)$ for some disjoint sets $E$ and $F$.

\begin{ex}
Let $X=\N\times \N$, $\I=\{A\times \N:A\sq\N\}$, and $\J=\{\N\times B:B\sq\N\}$. Then $\I$ and $\J$ are $\s$-algebras of subsets of $X$, and hence each is a $\s$-ideal-of-some-algebra on $X$. Since $X\in\I$, the ideal-of-some-algebra $\I\vee\J$ generated by $\I\cup\J$ is also the subalgebra of $\mathcal{P}(X)$ generated by $\I\cup\J$. Each rectangle $A\times B$ belongs to this algebra, and the elements of $\I\cup\J$ are all rectangles, so $\I\vee\J$ is the algebra of all finite unions of rectangles. On the other hand, since $X$ is countable and $\I\vee\J$ contains all singletons, $\I\vee_\s\J=\mathcal{P}(X)$. For the diagonal $D=\{(n,n):n\in\N\}$ we have $D\in\mathcal{B}_{\I\vee_\s\J} = \mathcal{P}(X)$ but $D\notin\mathcal{B}_{\I\vee\J}$ because $X\in \I\vee\J$ while $D\cap X=D$ does not belong to $\I\vee\J$ since it is not a finite union of rectangles.
\end{ex}

The following proposition is a version of Proposition 2.3 of \cite{bms5} in our present setting.

\begin{prop}\label{p:aug2}%
Let $(X,\vS,\mu)$ be a measure space. Suppose that $(X,\vS,\I)\in \T_\s$, $(X,\vS_\I,\J)\in\T_\s$, $\vS\cap\mathcal{I}\sq\vS_0$ and $\vS_\I\cap \mathcal{J}\sq (\vS_\I)_0$ so that the measures $\mu_\I$ and $(\mu_\I)_\mathcal{J}$ are defined. We have
\begin{enumerate}
\item
The domain of $(\mu_\I)_\J$ is
$(\vS_\I)_\J = \{A\,\triangle\,I\,\triangle\,J:A\in\vS_0,\,I\in\mathcal{I},\,J\in\J\}$.

\item
$(\mu_\I)_\mathcal{J}(A\,\triangle\, I\,\triangle\, J)=\mu(A)$ for $A\in\vS$, $I\in \I$, $J\in\J$.

\item
$(X,\vS,\I\vee_\s\J)\in \T_\s$ and $\vS\cap(\I\vee_\s\J)\sq\vS_0$, so that $\mu_{\I\vee_\s\J}$ is defined.

\item
$(\mu_\I)_\J = \mu_{\I\vee_\s\J}$.

\item
$\vS\cap\mathcal{J}\sq\vS_0$ and $\vS_\mathcal{J}\cap \mathcal{I}\sq (\vS_\mathcal{J})_0$.

\item
If $(X,\vS_\J,\I)\in\T_\s$ then $(\mu_\mathcal{J})_\mathcal{I}$ is also defined and
$(\mu_\I)_\mathcal{J}=(\mu_\mathcal{J})_\mathcal{I}$.
\end{enumerate}
\end{prop}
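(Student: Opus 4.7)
The plan is to build up parts (i)--(vi) by iterating the one-step construction of Lemma \ref{P10i}(i) and Lemma \ref{P10} and combining them with the join operations of Proposition \ref{p:join}. (I read the ``$A\in\vS_0$'' in the statement of (i) as a typo for ``$A\in\vS$''.) For (i), two applications of Lemma \ref{P10i}(i) give $(\vS_\I)_\J=\{B\,\triangle\,J:B\in\vS_\I,\,J\in\J\}=\{A\,\triangle\,I\,\triangle\,J:A\in\vS,\,I\in\I,\,J\in\J\}$, and (ii) is then immediate from the definition of successive extensions: $(\mu_\I)_\J(A\,\triangle\,I\,\triangle\,J)=\mu_\I(A\,\triangle\,I)=\mu(A)$.

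For (iii), Proposition \ref{p:join}(iv) gives $(X,\vS,\I\vee_\s\J)\in\T_\s$. The key point for $\vS\cap(\I\vee_\s\J)\sq\vS_0$ is that the null ideal of $(\mu_\I)_\J$ contains $\I$ (because $\mu_\I$ has $\I$ in its null ideal and $(\mu_\I)_\J$ extends $\mu_\I$) and contains $\J$ (by construction of $(\mu_\I)_\J$). The null ideal of a measure is a $\s$-ideal of some $\s$-algebra, so it contains $\I\vee_\s\J$; therefore $K\in\vS\cap(\I\vee_\s\J)$ forces $\mu(K)=(\mu_\I)_\J(K)=0$. For (iv), Proposition \ref{p:join}(vi) gives $(\vS_\I)_\J=\vS_{\I\vee_\s\J}$, so the two measures $(\mu_\I)_\J$ and $\mu_{\I\vee_\s\J}$ share the same domain; and on $E=A\,\triangle\,I\,\triangle\,J=A\,\triangle\,(I\,\triangle\,J)$ with $I\,\triangle\,J\in\I\vee_\s\J$, both evaluate to $\mu(A)$.

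The step I expect to be most delicate is (v), specifically the inclusion $\vS_\J\cap\I\sq(\vS_\J)_0$. The easier inclusion $\vS\cap\J\sq\vS_0$ is immediate: $\vS\sq\vS_\I$ and $\vS_\I\cap\J\sq(\vS_\I)_0$ by hypothesis, while $\mu_\I\restr\vS=\mu$. Since $\vS\sq\vS_\I\sq\mathcal{B}_\J$, we also have $(X,\vS,\J)\in\T_\s$, so $\mu_\J$ is well-defined. For the harder inclusion, take $K\in\vS_\J\cap\I$ and write $K=B\,\triangle\,J$ with $B\in\vS$ and $J\in\J$. Then $B=K\,\triangle\,J$ lies in $\vS\cap(\I\vee_\s\J)$ since $K\in\I$ and $J\in\J$, so by (iii) $\mu(B)=0$, whence $\mu_\J(K)=\mu(B)=0$. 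This step crucially relies on (iii) to convert membership in $\I\vee_\s\J$ into $\mu$-nullness.

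Finally, for (vi), with $\mu_\J$ defined by (v) and the additional hypothesis $(X,\vS_\J,\I)\in\T_\s$ in force, applying (i)--(iv) with the roles of $\I$ and $\J$ swapped yields $(\vS_\J)_\I=\vS_{\J\vee_\s\I}=\vS_{\I\vee_\s\J}=(\vS_\I)_\J$, and on a set $E=A\,\triangle\,J\,\triangle\,I=A\,\triangle\,(J\,\triangle\,I)$ we get $(\mu_\J)_\I(E)=\mu(A)=(\mu_\I)_\J(E)$, so the two measures agree.
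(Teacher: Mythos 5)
Your proof is correct and follows essentially the same route as the paper's: (i)--(ii) directly from the definition of successive extensions, (iii) via the observation that the null ideal of $(\mu_\I)_\J$ contains $\I\cup\J$ and hence $\I\vee_\s\J$, (iv) from Proposition~\ref{p:join}(vi) together with (ii), and (vi) by symmetry. The only local difference is in (v), where the paper argues directly that $J=A\,\triangle\,I\in\vS_\I\cap\J\sq(\vS_\I)_0$ while you route through (iii) via $B=K\,\triangle\,J\in\vS\cap(\I\vee_\s\J)\sq\vS_0$; both are valid, and your reading of ``$A\in\vS_0$'' in (i) as a typo for ``$A\in\vS$'' is right.
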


Note that $\vS_\I\cap \mathcal{J}\sq (\vS_\I)_0$ does not follow from the weaker assumption that $\vS\cap\mathcal{J}\sq\vS_0$ as shown by the probability space $(X,\vS,\mu)$ with $X=\{a,b\}$, $\vS=\{\emptyset,X\}$ taking $\mathcal{I}=\{\emptyset,\{a\}\}$, $\mathcal{J}=\{\emptyset,\{b\}\}$. The following example shows that the assumption that $(X,\vS_\J,\I)\in\T$ in (vi) can fail.

\begin{ex}\label{ex:not.symm}
There are a probability space $(X,\vS,\mu)$ on a finite set $X$, and $\I$, $\J$, ideals-of-some-algebra on $X$, such that the hypotheses of Proposition \ref{p:aug2} hold but they fail if we interchange $\I$ and $\J$.
Specifically, $(X,\vS_\J,\I)\notin\T$.

Consider the trivial probability space $(X,\vS,\mu)$ given by $X=\{1,2,3,4\}$, $\vS = \{\e,X\}$. Let $\I=\{\e,\{1,2\}\}$, $\J=\mathcal{P}(\{1,3\})$.
We have $\vS\sq\mathcal{B}_\I$ and $\vS_\I\sq\mathcal{B}_\J=\mathcal{P}(X)$. Also $\I\cap\vS=\{\e\}=\vS_0$ and
$\J\cap\vS_\I = \J\cap\{\e,\{1,2\},\{3,4\},X\} = \{\e\} \sq (\vS_\I)_0 = \{\e,\{1,2\}\}$.
However, $\mathcal{B}_\I$ consists of all sets which are disjoint from $\{1,2\}$ and their complements, so $\{1,3\}\notin\mathcal{B}_\I$ and therefore $\vS_\J\not\sq \mathcal{B}_\I$.
\end{ex}

\begin{proof}
By Definition \ref{d:mu_I}, $\mu_\I$ and $(\mu_\I)_\J$ are defined and (i), (ii) hold.

(iii) Existence of $\mu_{\I\vee_\s\J}$ follows from  Lemma \ref{P10} again, using Proposition \ref{p:join} (iv) and  $\vS\cap(\I\vee_\s\J)\sq\vS_0$ which follows from the fact that $\I$ and $\J$, and therefore also $\I\vee_\s\J$, are contained in the null ideal of $(\mu_\I)_\J$ which extends $\mu$.

(iv) The equality of the domains is from Proposition \ref{p:join} (vi). Then (ii) gives $(\mu_\I)_\J(A\,\triangle\,I\,\triangle\,J) = \mu(A) = \mu_{\I\vee_\s\J}(A\,\triangle\,I\,\triangle\,J)$ since $I\,\triangle\, J\in \I\vee_\s\J$ for $I\in\I$ and $J\in \J$.

(v) This part is as in Proposition 2.3 of \cite{bms5}. To verify $\vS\cap\mathcal{J}\sq\vS_0$, let $A\in \vS\cap\mathcal{J}$. Then $A\in \vS_\I\cap \mathcal{J}\sq (\vS_\I)_0$, so $\mu(A)=\mu_\I(A)=0$, hence $A\in\vS_0$. To verify $\vS_\mathcal{J}\cap \mathcal{I}\sq (\vS_\mathcal{J})_0$, let $A\in \vS$, $J\in\mathcal{J}$ satisfy  $A\,\triangle\,J =: I\in \mathcal{I}$. Then $A\,\triangle\,I=J \in \vS_\I\cap\mathcal{J}\sq(\vS_\I)_0$, so $\mu_\J(A\,\triangle\,J) = \mu(A) = \mu_\I(A\,\triangle\,I)=0$, and thus $A\,\triangle\,J \in (\vS_\J)_0$.

(vi) Clear from (v), as well as (ii) (or (iv)) and the same formula with $\I$ and $\J$ interchanged.
\end{proof}

\begin{df}\label{df:[C]}%
Given sets $X$ and $Y$, for $\I\sq \mathcal{P}(X)$ define
$\I\,\wh{\times}\,Y:=\{F: F\sq I\times Y\ \text{for some}\ I\in \I\}$.
Similarly, for $\mathcal{J}\sq\mathcal{P}(Y)$, define $X\,\wh{\times}\,\mathcal{J}:=\{F: F\sq X\times J\ \text{for some}\ J\in \mathcal{J}\}$.
\end{df}

Propositions \ref{p:[C]} and \ref{p:IJ[C]} are building on Proposition 2.10 and Remark 2.12 of \cite{bms5}.

\begin{prop}\label{p:[C]}%
Let $(X,\vS,\mu)$, $(Y,T,\nu)$, $(X\times Y,\vY,\upsilon)$ be probability spaces, and let $(X,\vS,\I)$ and $(X\times Y,\vY,\I')$ belong to $\T_\s$, where  $\I'\sq \I\,\wh{\times}\,Y$, and $\vS\cap\I\sq\vS_0$.
\begin{enumerate}
\item
If the triple $(X,\vS,\mu)$, $(Y,T,\nu)$, $(X\times Y,\vY,\upsilon)$ satisfies $[C]$ then $\vY\cap \I'\sq\vY_0$, so that $\ups_{\I'}$ is defined, and the triple $(X,\vS_\I,\mu_\I)$, $(Y,T,\nu)$, $(X\times Y,\vY_{\I'},\upsilon_{\I'})$ satisfies $[C]$.

\item
If the triple $(X,\vS,\mu)$, $(Y,T,\nu)$, $(X\times Y,\vY,\upsilon)$ satisfies $[\ov{C}]$ then $\vY\cap \I'\sq\vY_0$. If also for each $F\in\I'$ there is $Q\in T_0$ with $F^y\in\I$ when $y\notin Q$, then the triple $(X,\vS_\I,\mu_\I)$, $(Y,T,\nu)$, $(X\times Y,\vY_{\I'},\upsilon_{\I'})$ satisfies $[\ov{C}]$.
\end{enumerate}
\end{prop}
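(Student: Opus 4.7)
The plan is to exploit the structural fact that $\I'\sq\I\,\wh{\times}\,Y$: if $F\in\I'$, then $F\sq I\times Y$ for some $I\in\I$, so the vertical sections $F_x$ are empty whenever $x\in I^c$, and every horizontal section $F^y$ is contained in the $\I$-set $I$. Combined with the ideal-extension machinery of Lemma \ref{P10}, this is exactly what is needed to push $[C]$ (respectively $[\ov{C}]$) through the extension. In both parts the argument splits into two steps: first verify $\vY\cap\I'\sq\vY_0$ so that $\ups_{\I'}$ is defined, then establish the extended Fubini identity.

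For the first step of (i), I would take $F\in\vY\cap\I'$ with $F\sq I\times Y$, $I\in\I$, and apply $[C]$ to obtain $N\in\vS_0$ on whose complement $F_x\in T$, $x\mapsto\nu(F_x)$ is $\vS$-measurable, and $\ups(F)=\int_{N^c}\nu(F_x)\,d\mu$. The set $E:=\{x\in N^c:\nu(F_x)>0\}$ belongs to $\vS$ and is contained in $I\in\I$; since $\vS\sq\mathcal{B}_\I$ and $\I$ is an ideal of $\mathcal{B}_\I$, one gets $E\in\I$, hence $E\in\vS\cap\I\sq\vS_0$, so the integral vanishes. The corresponding step of (ii) uses $[\ov{C}]$ to produce $M\in T_0$ with $F^y\in\vS$ on $M^c$; since $F^y\sq I\in\I$, the same absorption argument forces $F^y\in\vS_0$, so $\ups(F)=0$.

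For the Fubini conclusion in (i), I would write $F\in\vY_{\I'}$ as $F=G\,\triangle\,F'$ with $G\in\vY$, $F'\in\I'$, pick $N_1\in\vS_0$ witnessing $[C]$ for $G$, and choose $I\in\I$ with $F'\sq I\times Y$. Set $N:=N_1\cup I\in\vS_\I$; subadditivity of $\mu_\I$ gives $N\in(\vS_\I)_0$. On $N^c$ the sections $F'_x$ are empty, so $F_x=G_x\in T$ and $x\mapsto\nu(F_x)=\nu(G_x)$ is $\vS$-measurable (hence $\vS_\I$-measurable) on $N^c$. Because $N^c\,\triangle\,N_1^c\sq I$ is $\mu_\I$-null and $\mu_\I$ extends $\mu$,
\[
\int_{N^c}\nu(F_x)\,d\mu_\I=\int_{N_1^c}\nu(G_x)\,d\mu=\ups(G)=\ups_{\I'}(F),
\]
verifying $[C]$ for the extended triple. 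Part (ii) then runs in parallel: combine the $[\ov{C}]$-witness $M\in T_0$ for $G$ with the set $Q\in T_0$ supplied by the extra hypothesis (so that $(F')^y\in\I$ on $Q^c$); on $(M')^c$ with $M':=M\cup Q$ one has $F^y=G^y\,\triangle\,(F')^y\in\vS_\I$ and $\mu_\I(F^y)=\mu(G^y)$, and the integral identity follows as in (i).

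The only delicate point is bookkeeping: one must recognize that $E\in\vS$ with $E\sq I\in\I$ forces $E\in\I$ (this is precisely the content of $\vS\sq\mathcal{B}_\I$, which is packaged into the hypothesis $(X,\vS,\I)\in\T_\s$), and one must confirm that all exceptional sets for the new Fubini identities live in $(\vS_\I)_0$ rather than just $\vS_0$. The extra hypothesis in (ii) is essential because $\I'\sq\I\,\wh{\times}\,Y$ controls only the vertical sections of members of $\I'$; the horizontal sections must be placed into $\I$ by a separate assumption.
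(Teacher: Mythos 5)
Your proposal is correct and follows essentially the same route as the paper's proof: the same absorption argument ($E\in\vS$, $E\sq I\in\I$ forces $E\in\I$ via $\vS\sq\mathcal{B}_\I$) for the null-ideal compatibility, and the same decomposition $L=A\,\triangle\,F$ with exceptional set $P\cup I$ (resp.\ $Q$) for the extended Fubini identity. You even make explicit the absorption step that the paper leaves implicit, and correctly identify why the extra hypothesis on horizontal sections is needed only in part (ii).
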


Taking $\I=\vS_0$ we get the following corollary which generalizes the observation in Remark 2.9 of \cite{bms6} that the first part holds when $\I'=\mathfrak{N}$.

\begin{cor}\label{c:[C]}
Given probability spaces $(X,\vS,\mu)$, $(Y,T,\nu)$, $(X\times Y,\vY,\upsilon)$, if $[C]$ holds for $\ups$, then it holds also for $\upsilon_{\I'}$ for any $\I'\sq \vS_0\,\wh{\times}\,Y$ with $(X\times Y,\vY,\I')\in\T_\s$.
The same is true with $[C]$ replaced by $[\ov{C}]$, if for each $F\in\I$ there is $Q\in T_0$ with $F^y\in\vS_0$ when $y\notin Q$.
\end{cor}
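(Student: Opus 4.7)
The plan is to derive this corollary as an immediate specialization of Proposition \ref{p:[C]}, taking $\I := \vS_0$ there. First I would verify the hypotheses of Proposition \ref{p:[C]}: $(X,\vS,\vS_0)\in\T_\s$ holds automatically, since $\vS_0$ is a $\s$-ideal of the $\s$-algebra $\vS$; the intersection condition $\vS\cap\vS_0\sq\vS_0$ is trivial; and the remaining hypotheses $\I'\sq\vS_0\,\wh{\times}\,Y$ and $(X\times Y,\vY,\I')\in\T_\s$ are exactly the standing assumptions of the corollary.

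Next I would observe that with $\I=\vS_0$, the Definition \ref{d:mu_I} construction collapses: since $\vS_0\sq\vS$, every set of the form $A\,\triangle\,I$ with $A\in\vS$ and $I\in\vS_0$ already lies in $\vS$, so $\vS_{\vS_0}=\vS$, and by construction $\mu_{\vS_0}(A\,\triangle\,I)=\mu(A)=\mu(A\,\triangle\,I)$, so $\mu_{\vS_0}=\mu$. Hence the enlargement of the first factor produced by Proposition \ref{p:[C]} is the original space $(X,\vS,\mu)$ itself, and the conclusion of its part (i) reads precisely: $\vY\cap\I'\sq\vY_0$, so $\ups_{\I'}$ is defined, and the triple $(X,\vS,\mu)$, $(Y,T,\nu)$, $(X\times Y,\vY_{\I'},\ups_{\I'})$ satisfies $[C]$. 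That is exactly the first claim of the corollary.

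For the second assertion I would invoke Proposition \ref{p:[C]} (ii) in the same way. Its extra hypothesis is that for each $F\in\I'$ there is $Q\in T_0$ with $F^y\in\I$ when $y\notin Q$; under the specialization $\I=\vS_0$ this is precisely the extra hypothesis recorded in the corollary. The conclusion then states that $[\ov{C}]$ holds for the triple $(X,\vS,\mu)$, $(Y,T,\nu)$, $(X\times Y,\vY_{\I'},\ups_{\I'})$, as required.

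No real obstacle arises: the entire content of the corollary is the remark that Proposition \ref{p:[C]} continues to function when the ideal used to enlarge the $X$-factor is already the null ideal of $\mu$, in which case no genuine extension of $X$ takes place and the enlargement happens purely on the product.
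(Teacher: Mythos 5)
Your proposal is correct and is exactly the paper's intended argument: the corollary is introduced with the phrase ``Taking $\I=\vS_0$ we get the following corollary,'' so the entire proof is the specialization of Proposition \ref{p:[C]} to $\I=\vS_0$, with the observation (which you spell out explicitly) that $\vS_{\vS_0}=\vS$ and $\mu_{\vS_0}=\mu$ so the first factor is unchanged. Your reading of the extra hypothesis in the $[\ov{C}]$ part (with $F\in\I'$ and $F^y\in\vS_0$) matches the hypothesis of Proposition \ref{p:[C]} (ii) under this specialization.
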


To appreciate the need for the assumption on the $F^y$, consider the following example.

\begin{ex}\label{ex:[C]}
The property $[\ov{C}]$ fails for the triple $(X,\vS,\mu)$, $(Y,T,\nu)$, $(X\times Y,\vY_\mathfrak{N},\upsilon_\mathfrak{N})$, where $(X,\vS,\mu)$ and $(Y,T,\nu)$ are the Lebesgue measure space on $[0,1]$ and $(X\times Y,\vY,\upsilon)$ is the Lebesgue measure space on $[0,1]^2$.

To see this, build a set $E\sq X\times Y$ so that $E$ is the graph of a function (and hence belongs to $\mathfrak{N}$) but for each $y\in Y$, $E^y$ is a Bernstein set (and hence is not measurable). For example, see Exercise 5 page 28 of \cite{kh} which produces a family of disjoint Bernstein sets $E^y$, $y\in[0,1]$, then take $E=\bigcup_y E^y\times\{y\}$.%
\footnote{To do this directly, let $\la(K_\al,y_\al):\al<\mathfrak{c}\ra$ enumerate all pairs $(K,y)$ consisting of an uncountable closed set $K\sq[0,1]$ and a point $y\in [0,1]$. Recursively choose two distinct points $x_\al,x'_\al\in K_\al\sm(\{x_\beta:\beta<\al\}\cup\{x'_\beta:\beta<\al\})$ and let $E=\{(x_\al,y_\al):\al<\mathfrak{c}\}$.}
\end{ex}

\begin{proof}[Proof of Proposition \ref{p:[C]}]
(i) For $\vY\cap \I'\sq\vY_0$, let $F\in \vY\cap \I'$.  We have $F\sq I\times Y$ for some $I\in \I$.
For some $P\in \vS_0$, for $x\notin P$ we have $F_x\in T$, and $x\mapsto \nu(F_x)$ is $\vS$-measurable on $P^c$. The set $A=\{x\in P^c: \nu(F_x)>0\}$ belongs to $\vS$ and is contained in $I$. Thus $A\in\vS\cap\I\sq\vS_0$ and therefore $\ups(F) = \int_{P^c} \nu(F_x)\,d\mu(x)=0$, so $F\in \vY_0$.  The existence of $\ups_{\I'}$ then follows from Lemma \ref{P10}.

To prove $[C]$, let $L\in \vY_{\I'}$. Write $L = A\,\triangle\,F$ where $A\in\vY$ and $F\in\I'$, $F\sq I\times Y$ for some $I\in \I$.
For some $P\in \vS_0$, for $x\notin P$ we have $A_x\in T$, and $x\mapsto \nu(A_x)$ is $\vS$-measurable on $P^c$. Then $P\cup I\in(\vS_\I)_0$ and when $x\notin P\cup I$ we have $L_x = A_x\,\triangle\,F_x = A_x\in T$ and $x\mapsto \nu(L_x) = \nu(A_x)$ is $\vS_\I$-measurable on $(P\cup I)^c$. Moreover, $\int_{(P\cup I)^c} \nu(L_x)\,d\mu_\I(x) = \int_{(P\cup I)^c} \nu(A_x)\,d\mu_\I(x) =
\int_{P^c} \nu(A_x)\,d\mu_\I(x) = \int_{P^c} \nu(A_x)\,d\mu(x) = \ups(A) = \ups_{\I'}(L)$.

(ii) For $\vY\cap \I'\sq\vY_0$, let $F\in \vY\cap \I'$.  We have $F\sq I\times Y$ for some $I\in \I$.
For some $Q\in T_0$, for $y\notin Q$ we have $F^y\in \vS$ and $F^y\in\I$.%
\footnote{The assumption on the $F^y$ is not needed here because $F^y\in \vS$ and $F^y\sq I$ together imply $F^y\in\I$. The assumption is used in an essential way later in the proof.}
Thus $F^y\in \vS\cap\I\sq\vS_0$. It follows by $[\ov{C}]$ that $\ups(F) = 0$.

For $[\ov{C}]$, let $L\in \vY_{\I'}$. Write $L = A\,\triangle\,F$ where $A\in\vY$, $F\in\I'$, $F\sq I\times Y$ for some $I\in \I$.
For some $Q\in T_0$, for $y\notin Q$ we have $A^y\in \vS$, $y\mapsto \mu(A^y)$ is $T$-measurable on $Q^c$, and $F^y\in\I$. Then when $y\notin Q$ we have $L^y = A^y\,\triangle\,F^y \in \vS_\I$, and $y\mapsto \mu_\I(L^y) = \mu_\I(A^y\,\triangle\,F^y) = \mu_\I(A^y) = \mu(A^y)$ is $T$-measurable on $Q^c$. Moreover, $\int_{Q^c} \mu_\I(L^y)\,d\nu(y) = \int_{Q^c} \mu(A^y)\,d\nu(y) =
\ups(A) = \ups_{\I'}(L)$. Thus the triple $(X,\vS_\I,\mu_\I)$, $(Y,T,\nu)$, $(X\times Y,\vY_{\I'},\upsilon_{\I'})$ satisfies $[\ov{C}]$.
\end{proof}

\begin{prop}\label{p:IJ[C]}%
Suppose the triple $(X,\vS,\mu)$, $(Y,T,\nu)$, $(X\times Y,\vY,\upsilon)$
satisfies $[C]$ and $[\ov{C}]$, and we are given the following.
\begin{enumerate}[\rm(a)]
\item
$(X,\vS,\I)$ and $(X\times Y,\vY,\I')$ in $\T_\s$, where  $\I'\sq \I\,\wh{\times}\,Y$, and $\vS\cap\I\sq\vS_0$.

\item
$(Y,T,\mathcal{J})$ and $(X\times Y,\vY,\J')$ in $\T_\s$, where $\J'\sq X\,\wh{\times}\,\J$, and $T\cap \mathcal{J}\sq T_0$.

\item
$F\cap G\in \I'\cap\J'$ when $F\in\I'$, $G\in\J'$.
\end{enumerate}
Then $(X\times Y,\vY_{\I'},\J')\in \T_\s$ and $\vY_{\I'}\cap \mathcal{J}'\sq(\vY_{\I'})_0$ so that $(\ups_{\I'})_{\mathcal{J}'}$ is defined, and symmetrically $(\ups_{\mathcal{J}'})_{\I'}$ is defined and $(\ups_{\I'})_{\mathcal{J}'} = (\ups_{\mathcal{J}'})_{\I'}$. If
\begin{enumerate}[\rm(a)]
\setcounter{enumi}{3}
\item
for each $F\in\I'$ there is $Q\in T_0$ with $F^y\in\I$ when $y\notin Q$, and

\item
for each $G\in\J'$ there is $P\in \vS_0$ with $G_x\in\J$ when $x\notin P$,
\end{enumerate}
then the triple $(X,\vS_\I,\mu_\I)$, $(Y,T_{\J},\nu_{\J})$, $(X\times Y,(\vY_{\I'})_{\J'},(\ups_{\I'})_{\J'})$ satisfies $[C]$ and $[\ov{C}]$.
\end{prop}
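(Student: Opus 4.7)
The plan is to perform the two extensions one after the other, using Proposition~\ref{p:[C]} at each stage, and then invoke Proposition~\ref{p:aug2} to identify the two orders of extension.

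To see $(X\times Y,\vY_{\I'},\J')\in\T_\s$, condition (c) says that $F\cap G\in\J'$ whenever $F\in\I'$ and $G\in\J'$, which means every $F\in\I'$ lies in $\mathcal{B}_{\J'}$. Combined with $\vY\sq\mathcal{B}_{\J'}$ from (b), this yields $\vY_{\I'}\sq\mathcal{B}_{\J'}$. The measure $\ups_{\I'}$ is defined by Proposition~\ref{p:[C]}(i). To verify $\vY_{\I'}\cap\J'\sq(\vY_{\I'})_0$, take $H\in\vY_{\I'}\cap\J'$ and write $H=A\,\triangle\,F$ with $A\in\vY$, $F\sq I\times Y$ for some $I\in\I$, and $H\sq X\times J$ for some $J\in\J$. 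For $y\notin J$ we have $H^y=\e$, so $A^y=F^y\sq I$. By $[\ov{C}]$, there is $Q\in T_0$ with $A^y\in\vS$ for $y\notin Q$; then for $y\in(Q\cup J)^c$, $A^y\in\vS\sq\mathcal{B}_\I$ together with $A^y\sq I\in\I$ forces $A^y=A^y\cap I\in\I$, hence $A^y\in\vS\cap\I\sq\vS_0$. The $T$-measurable set $B:=\{y\in Q^c:\mu(A^y)>0\}$ is therefore contained in $J\in\J$, and $T\sq\mathcal{B}_\J$ gives $B=B\cap J\in T\cap\J\sq T_0$. Thus $\ups_{\I'}(H)=\ups(A)=0$ by $[\ov{C}]$.

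By complete symmetry of the hypotheses (a)--(c) under interchange of $(\I,\I')$ and $(\J,\J')$ together with the roles of the two factors, $(X\times Y,\vY_{\J'},\I')\in\T_\s$ and $\vY_{\J'}\cap\I'\sq(\vY_{\J'})_0$, so $(\ups_{\J'})_{\I'}$ is also defined. Proposition~\ref{p:aug2}(iv) applied in both orders, together with (vi), then yields $(\ups_{\I'})_{\J'}=\ups_{\I'\vee_\s\J'}=(\ups_{\J'})_{\I'}$.

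For the final assertion, assume (d) and (e). Proposition~\ref{p:[C]}(i) applied to the original triple gives $[C]$ for $(X,\vS_\I,\mu_\I),(Y,T,\nu),(X\times Y,\vY_{\I'},\ups_{\I'})$, and Proposition~\ref{p:[C]}(ii) together with (d) gives $[\ov{C}]$ for the same triple. Extending by $\J'$, the coordinate-swapped counterpart of Proposition~\ref{p:[C]}(i), applied under $[\ov{C}]$, yields $[\ov{C}]$ for $(X,\vS_\I,\mu_\I),(Y,T_\J,\nu_\J),(X\times Y,(\vY_{\I'})_{\J'},(\ups_{\I'})_{\J'})$; and the swapped Proposition~\ref{p:[C]}(ii), applied under $[C]$ using (e) (which remains valid since $\vS_0\sq(\vS_\I)_0$), yields $[C]$ for the same final triple. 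The main obstacle is the direct verification of $\vY_{\I'}\cap\J'\sq(\vY_{\I'})_0$, since neither $F$ nor $H$ need be $\vY$-measurable and one cannot simply apply $[\ov{C}]$ to $H$; the crucial bookkeeping is that $T\sq\mathcal{B}_\J$ pushes the $T$-measurable witness $B$ into $T\cap\J$, which is what allows this step to go through without yet invoking (d) or (e).
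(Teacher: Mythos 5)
Your proof is correct and follows the same overall strategy as the paper's: iterate Proposition~\ref{p:[C]} and its coordinate-swapped version to handle the two-stage extension, and invoke Proposition~\ref{p:aug2} for the equality $(\ups_{\I'})_{\J'}=(\ups_{\J'})_{\I'}$. The only substantive variation is in the verification of $\vY_{\I'}\cap\J'\sq(\vY_{\I'})_0$, where you argue via horizontal sections and $[\ov{C}]$ (pushing $A^y\sq I$ into $\I$ via $\vS\sq\mathcal{B}_\I$) while the paper argues via vertical sections and $[C]$ (pushing $A_x\sq J$ into $\J$); the two arguments are mirror images of each other and both are valid.
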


\begin{proof}
By Proposition \ref{p:[C]} (i), $\upsilon_{\I'}$ is defined.

For $(X\times Y,\vY_{\I'},\J')\in \T_\s$, let $A\in\vY$, $F\in\I'$, $G\in\J'$. We want to show that $(A\,\triangle\,F)\cap G\in \J'$. This follows from
$(A\,\triangle\,F)\cap G = (A\cap G)\,\triangle\,(F\cap G)$ and the fact that $(X\times Y,\vY,\J')\in\T_\s$ and assumption (c) holds.

To verify $\vY_{\I'}\cap \mathcal{J}'\sq(\vY_{\I'})_0$, suppose we have $A\in \vY$, $F\in\I'$, $F\sq I\times Y$ for some $I\in \I$ and suppose that $A\,\triangle\,F\sq X\times J$ for some $J\in \mathcal{J}$. Thus, $A\sq (I\times Y)\cup (X\times J)$. By $[C]$, for some $P\in \vS_0$, $x\notin P$ implies $[A]_x\in T$, and $x\mapsto \nu([A]_x)$ is $\vS$-measurable on $P^c$. When $x\notin P\cup I$, we have $[A]_x\sq J$, and hence (since $[A]_x\in T$) $[A]_x\in \J$, so $[A]_x\in T\cap\mathcal{J}\sq T_0$. Thus, the set $\{x\in P^c: \nu([A]_x)>0\}$ is contained in $I$ and therefore belongs to $\vS\cap\mathcal{I}\sq \vS_0$. Thus, from $[C]$ we get $\ups(A)=0$ and hence $\ups_{\I'}(A\,\triangle\,F)=\ups(A)=0$.

The equality $(\ups_{\I'})_{\mathcal{J}'} = (\ups_{\mathcal{J}'})_{\I'}$ then follows from Proposition \ref{p:aug2}.

Now suppose (d) and (e) hold. From Proposition \ref{p:[C]} (i), we get that
\begin{equation}\label{eq:T1}
(X,\vS_\I,\mu_\I),\ (Y,T,\nu),\ (X\times Y,\vY_{\I'},\upsilon_{\I'})
\end{equation}
satisfies $[C]$. Using this fact we can apply Proposition \ref{p:[C]} (ii)$^\perp$ to (\ref{eq:T1}), the other required assumptions being that $(Y,T,\J)$ and $(X\times Y,\vY_{\I'},\J')$ belong to $\T_\s$, $T\cap \J\sq T_0$, and for each $G\in\J'$ there is $P\in (\vS_\I)_0$ with $G_x\in\J$ when $x\notin P$. These are covered by (b) and (e), or were already proven. We get the conclusion that
\begin{equation}\label{eq:T2}
(X,\vS_\I,\mu_\I),\ (Y,T_\J,\nu_\J),\ (X\times Y,(\vY_{\I'})_{\J'},(\ups_{\I'})_{\J'})
\end{equation}
satisfies $[C]$.
Similarly, from Proposition \ref{p:[C]} (ii), we get that (\ref{eq:T1}) satisfies $[\ov{C}]$. Using this fact, we can apply Proposition \ref{p:[C]} (i)$^\perp$ to this triple, the other required assumptions being that $(Y,T,\J)$ and $(X\times Y,\vY_{\I'},\J')$ belong to $\T_\s$, and $T\cap \J\sq T_0$. These are covered by (b), or were already proven. We conclude that (\ref{eq:T2}) satisfies $[\ov{C}]$.
\end{proof}

\begin{prop}\label{p:VMLa:2}
Let $(X,\vS,\mu)$, $(Y,T_x,\nu_x)_{x\in X}$, and $(X\times{Y},\vY,\ups)$ be probability spaces. Write $T=(T_x)_{x\in X}$, $T_0=(T_{x,0})_{x\in X}$, where $T_{x,0}$ is the null ideal of $\nu_x$. Let $\I\sq\vS_0$ be a $\s$-ideal of $\vS$.
\begin{enumerate}
\item
If $[C]$ holds and $\vY\sq\I\ltimes T$ then $\ups_{\I\ltimes T_0}$ satisfies $[C]$.

\item
If $[C]_\I$ holds then $\ups_{\I\ltimes T_0}$ satisfies $[C]_\I$.
\end{enumerate}
\end{prop}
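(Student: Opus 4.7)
The plan is to first verify that $\ups_{\I\ltimes T_0}$ is a well-defined measure in the sense of Definition \ref{d:mu_I}, and then to perform a direct Fubini bookkeeping on a decomposition $L=E\,\triangle\,F$ with $E\in\vY$ and $F\in\I\ltimes T_0$. The key observation driving both parts is that on the complement of some $I_F\in\I$ the value $\nu_x(F_x)$ vanishes, so $\nu_x(L_x)=\nu_x(E_x)$ and the identity $\ups_{\I\ltimes T_0}(L)=\ups(E)$ unfolds via $[C]$ into the desired integral.

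For well-definedness, I would invoke Proposition \ref{p:skew2} taking $J_x:=T_{x,0}$; since $T_{x,0}$ is a $\s$-ideal of $T_x$ we have $(T_x)_{T_{x,0}}=T_x$, so the hypothesis $\vY\sq\I\ltimes T_J$ reduces to $\vY\sq\I\ltimes T$---explicit in (i) and implied in (ii) by $[C]_\I$ via the trivial inclusion $\mu\ltimes_\I T\sq\I\ltimes T$. This yields $(X\times Y,\vY,\I\ltimes T_0)\in\T_\s$. For the containment $\vY\cap(\I\ltimes T_0)\sq\vY_0$, take such an $F$, pick $I_F\in\I$ with $F_x\in T_{x,0}$ for $x\notin I_F$, and apply $[C]$ (which holds under the hypothesis of either part) to obtain $N\in\vS_0$ with $x\mapsto\nu_x(F_x)$ $\vS$-measurable on $N^c$ and $\ups(F)=\int_{N^c}\nu_x(F_x)\,d\mu(x)$; since $\nu_x(F_x)=0$ on $N^c\cap I_F^c$ and $\mu(I_F)=0$, this integral vanishes.

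With $\ups_{\I\ltimes T_0}$ in hand, fix $L\in\vY_{\I\ltimes T_0}$ and decompose $L=E\,\triangle\,F$ with $E\in\vY$ and $F\in\I\ltimes T_0$; choose $I_F\in\I$ with $F_x\in T_{x,0}$ for $x\notin I_F$. Apply $[C]$ in case (i), or $[C]_\I$ in case (ii), to $E$, obtaining a set $N_E$---in $\vS_0$ in case (i), in $\I$ in case (ii)---such that $E_x\in T_x$ on $N_E^c$, $x\mapsto\nu_x(E_x)$ is $\vS$-measurable on $N_E^c$, and $\ups(E)=\int_{N_E^c}\nu_x(E_x)\,d\mu(x)$. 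Set $N:=N_E\cup I_F$, which lies in $\vS_0$ in case (i) and in $\I$ in case (ii). On $N^c$ we have $L_x=E_x\,\triangle\,F_x\in T_x$ and $\nu_x(L_x)=\nu_x(E_x)$, so $x\mapsto\nu_x(L_x)$ inherits $\vS$-measurability on $N^c$, and
\[
\ups_{\I\ltimes T_0}(L)=\ups(E)=\int_{N_E^c}\nu_x(E_x)\,d\mu(x)=\int_{N^c}\nu_x(L_x)\,d\mu(x),
\]
the last equality using that $N_E^c\setminus N^c=N_E^c\cap I_F$ is $\mu$-null. This verifies $[C]$ in case (i) and $[C]_\I$ in case (ii). No step is conceptually difficult; the only point requiring mild care is tracking whether the controlling null set lives in $\vS_0$ or in the smaller ideal $\I$, which is exactly the distinction between the two parts of the statement.
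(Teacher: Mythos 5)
Your proof is correct and follows essentially the same route as the paper: establish $(X\times Y,\vY,\I\ltimes T_0)\in\T_\s$ via the skew-product machinery, check $\vY\cap(\I\ltimes T_0)\sq\vY_0$ so that $\ups_{\I\ltimes T_0}$ is defined, and then run the Fubini computation on $L=E\,\triangle\,F$ using that $\nu_x(F_x)=0$ off a set in $\I$. The only cosmetic difference is that the paper absorbs $N_E$ and $I_F$ into a single exceptional set $P$ from the start and disposes of $\vY\cap(\I\ltimes T_0)\sq\vY_0$ via the inclusion into $(\vS_0\ltimes T_0)\cap\vY=\vY_0$ rather than by a direct integral computation.
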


\begin{rem}
It follows that the same conclusion holds for any smaller ideal of $\I\ltimes T$ in the place of $\I\ltimes T_0$.
\end{rem}

\begin{proof}
(i) Assume that $[C]$ holds and $\vY\sq\I\ltimes T$.
We have $(X\times Y,\vY,\I\ltimes T_0)\in\T_\s$, since $\I\ltimes T_0$ is a $\s$-ideal of $\I\ltimes T$ and $\vY\sq\I\ltimes T$.
We also have
\[
(\I\ltimes T_0)\cap\vY \sq (\vS_0\ltimes T_0)\cap\vY = \vY_0.
\]
Thus, $\ups_{\I\ltimes T_0}$ is defined. To prove that $\ups_{\I\ltimes T_0}$ satisfies $[C]$, let $A\in\vY$, $F\in\I\ltimes T_0$.
By $[C]$ for $\ups$, by the definition of $\I\ltimes T_0$, and by $\I\sq\vS_0$, there is a $P\in\vS_0$ such that for $x\in P^c$ we have $A_x\in T_x$ and $F_x\in T_{x,0}$, the map $x\mapsto \nu_x(A_x)$ is $\vS$-measurable on $P^c$, and $\ups(A) = \int_{P^c}\nu_x(A_x)\,d\mu(x)$. Then for $x\in P^c$, $[A\,\triangle\,F]_x = A_x\,\triangle\,F_x\in T_x$, $x\mapsto \nu_x([A\,\triangle\,F]_x) = \nu_x(A_x)$ is $\vS$-measurable on $P^c$, and $\ups_{\I\ltimes T_0}(A\,\triangle\,F) = \ups(A) = \int_{P^c}\nu_x(A_x)\,d\mu(x) = \int_{P^c}\nu_x([A\,\triangle\,F]_x)\,d\mu(x)$.

(ii) By $[C]_\I$, the assumptions of (i) are satisfied, so $\ups_{\I\ltimes T_0}$ is defined. The proof of $[C]_\I$ proceeds as in (i), this time choosing $P\in\I$.
\end{proof}

\begin{ex}
If $[C]$ holds for the triple $(X,\vS,\mu)$, $(Y,T,\nu)$, $(X\times Y,\vY,\upsilon)$, then it is easy to see that $\vY\cap\mathfrak{N}=\vY_0$ and $\vY\cap\mathfrak{N}_2\sq \vY_0$ (with equality if $[\ov{C}]$ also holds). Hence, for $\Kk = \mathfrak{N}_2,\mathfrak{N}$ the measures $\ups_\Kk$ are defined by Lemma \ref{P10}. The spaces $(X\times Y,\vY_\Kk,\ups_\Kk)$ also satisfy $[C]$, and $(X\times Y,\vY_{\mathfrak{N}_2},\ups_{\mathfrak{N}_2})$ satisfies $[\ov{C}]$ if $(X\times Y,\vY,\ups)$ does (by Proposition \ref{p:VMLa:2} and \ref{p:VMLa:2}$^\perp$).
In particular, the probability measure $\mu\otimes_{\mathcal{I}}\nu:=(\mu\otimes\nu)_{\mathcal{I}}$ is the unique extension of $\mu\otimes\nu$ onto $\vS\otimes_{\mathcal{I}}T:=(\vS\otimes T)_{\mathcal{I}}$ having null ideal $\mathcal{I}$, where
\[
\vS\otimes T\sq\vS\,\wh{\otimes}\,T\sq\vS\otimes_{\mathfrak{N}_2}T\sq\vS\otimes _{\mathfrak{N}}T\sq\mu\ltimes T,
\]
\[
\mu\ltimes\nu\restr\vS\otimes_{\mathfrak{N}}T=\mu\otimes_{\mathfrak{N}}\nu,\ \mu\otimes_{\mathfrak{N}}\nu\restr
\vS\otimes _{\mathfrak{N}_2}T=\mu\otimes_{\mathfrak{N}_2}\nu,\ \mu\otimes_{\mathfrak{N}_2}\nu\restr\vS\,\wh{\otimes}\,T=\mu\,\wh{\otimes}\,\nu,
\]
$(X\times Y,\vS\otimes_{\mathcal{I}}T,\mu\otimes_{\mathcal{I}}\nu)$ is a complete probability space for
$\mathcal{I}=\mathfrak{N}_2,\mathfrak{N}$, and the measure algebras of
$\mu\otimes_{\mathfrak{N}}\nu$, $\mu\otimes_{\mathfrak{N}_2}\nu$, $\mu\,\wh{\otimes}\,\nu$, and $\mu{\otimes}\nu$ are Boolean isomorphic.
\end{ex}

\section{Vector liftings}
\label{s:v.lift.meas}

The definition of vector lifting in Definition \ref{E5} can be given in purely linear algebraic terms, and we begin by taking that point of view.
As is well-known, every surjective linear map $r\colon X\to Z$ of vector spaces has a linear right inverse which can be obtained by choosing a basis $\la z_i\ra$ for $Z$, selecting any $x_i\in r^{-1}(z_i)$ and extending the map $z_i\mapsto x_i$ by linearity to get a right inverse $s\colon Z\to X$. The right inverse can be given equivalently as the composition $sr\colon X\to X$, $s$ being recoverable from $sr$ by the formula $s(r(x)) = (sr)(x)$. We are interested in the case where $Z=X/Y$ is a quotient of $X$ by a subspace $Y$.

\begin{df}\label{AVdf}
Given a vector space $X$ and a linear subspace $Y$ of $X$ write $x=_Yz$ if $x-z\in Y$, $x^{\bullet}:=\{z\in X:x=_Yz\}$,
$X/Y:=\{x^{\bullet}:x\in X\}$, and let $r_Y:x\in X\mapsto x^{\bullet}\in X/Y$ be the quotient map.
A linear map $\xi:X\to X$ is called a \emph{vector lifting} for $X$ mod $Y$, if for all $x,x_1,x_2\in X$, $\xi(x)=_Yx$, and if $x_1=_Yx_2$ then $\xi(x_1)=\xi(x_2)$.
$W(X,Y)$ denotes the set of all vector liftings for $X$ mod $Y$.
Given, in addition, a linear subspace $C$ of $X$ a $\xi\in W(X,Y)$ is called \emph{$C$-strong}, if $\xi(x)=x$ for every $x\in C$. $W(X,Y,C)$ denotes the set of all $C$-strong $\xi\in W(X,Y)$.
\end{df}

\begin{rem}\label{AVR}
As recalled above, a vector lifting $\xi$ can be given equivalently as a linear map $s\colon X/Y\to X$ satisfying $s(a)\in a$ for each $a\in X/Y$, the two versions being definable from each other by the formula $\xi(x)=s(x^\bullet)$. In category theoretic terms, such an $s$ is a section of $r_Y$ (which is a retraction). (See \cite{se}, Chapter III, Definitions 9.5.2.) We have $\xi = s\circ r_Y$.
\end{rem}

Since the sections $s$ are linear, they are determined by their values on a basis for $X/Y$. It will be convenient to state this as a proposition. As it is standard linear algebra, we omit the proof.

\begin{prop}\label{p:exist.of.v.lift}
Let $X$ be a vector space, $Y$ a subspace of $X$. For each family $\la x_i\ra$ of elements of $X$ such that $\la x_i^\bullet\ra$ is a basis for $X/Y$, there is a unique vector lifting $\xi\colon X\to X$ such that $\xi(x_i)=x_i$ for each index $i$. Conversely, each vector lifting has this form for some basis $\la x_i^\bullet\ra$ of $X/Y$.
\end{prop}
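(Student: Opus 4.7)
The plan is to reduce everything to the equivalence between vector liftings $\xi \colon X \to X$ and sections $s \colon X/Y \to X$ of the quotient map $r_Y$ described in Remark \ref{AVR}, so that the statement becomes the familiar fact that a linear map out of a quotient is uniquely determined by its values on a basis of that quotient.

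For the forward direction, given a family $\la x_i \ra$ such that $\la x_i^\bullet \ra$ is a basis of $X/Y$, I would first define a linear map $s \colon X/Y \to X$ by $s(x_i^\bullet) = x_i$ and extend linearly; this is well-defined and unique because $\la x_i^\bullet \ra$ is a basis. Next, I would verify that $s$ is actually a section of $r_Y$: for a general element $a = \sum_i \alpha_i x_i^\bullet$ of $X/Y$ (finite support), linearity of $r_Y$ gives
\[
r_Y(s(a)) = \sum_i \alpha_i\, r_Y(x_i) = \sum_i \alpha_i\, x_i^\bullet = a,
\]
so $s(a) \in a$. Setting $\xi := s \circ r_Y$ then yields a vector lifting with $\xi(x_i) = s(x_i^\bullet) = x_i$. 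Uniqueness is immediate: any vector lifting $\xi'$ with $\xi'(x_i) = x_i$ corresponds via Remark \ref{AVR} to a section $s'$ with $s'(x_i^\bullet) = \xi'(x_i) = x_i$, so $s$ and $s'$ agree on the basis $\la x_i^\bullet \ra$ of $X/Y$, hence $s' = s$ and $\xi' = \xi$.

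For the converse, starting with an arbitrary vector lifting $\xi$ with associated section $s$, I would pick any basis $\la e_i \ra$ of $X/Y$ (which exists by the axiom of choice, since bases in the indexed sense exist for every vector space as recalled in Section \ref{s:pre}), and set $x_i := s(e_i)$. Since $s$ is a section, $x_i^\bullet = r_Y(s(e_i)) = e_i$, so $\la x_i^\bullet \ra = \la e_i \ra$ is a basis of $X/Y$. Finally, $\xi(x_i) = s(r_Y(s(e_i))) = s(e_i) = x_i$, exhibiting $\xi$ in the desired form.

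There is no real obstacle: the entire proof is a bookkeeping exercise once the correspondence $\xi \leftrightarrow s$ of Remark \ref{AVR} is invoked, and the only ingredient beyond that is the universal property of a basis for a linear map. Accordingly I would keep the write-up to a few lines, or follow the author and leave it to the reader.
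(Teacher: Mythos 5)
Your proof is correct and follows exactly the route the paper intends: the paper omits the proof as standard linear algebra, but the sentence immediately preceding the proposition ("Since the sections $s$ are linear, they are determined by their values on a basis for $X/Y$") is precisely your reduction via the correspondence $\xi\leftrightarrow s$ of Remark~\ref{AVR}. Both directions of your argument are sound and complete.
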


\begin{prop}\label{AVP}
Given a vector space $X$, and linear subspaces $Y$ and $C$ of $X$, we have that $W(X,Y,C)\not=\emptyset$ if and only the condition
$(N_{C,Y})\;C\cap Y=\{0\}$ is satisfied.
\end{prop}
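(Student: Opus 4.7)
The plan is to prove the two implications separately, using Proposition \ref{p:exist.of.v.lift} as the main existence tool.

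First, for the forward direction, suppose $\xi\in W(X,Y,C)$ and let $x\in C\cap Y$. On the one hand, $x\in C$ gives $\xi(x)=x$ by $C$-strongness. On the other hand, $x\in Y$ means $x=_Y 0$, and since $\xi$ is a vector lifting (in particular linear, so $\xi(0)=0$), the property $x=_Y 0 \Rightarrow \xi(x)=\xi(0)=0$ yields $\xi(x)=0$. Combining, $x=0$, so $C\cap Y=\{0\}$.

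For the converse, assume $C\cap Y=\{0\}$. The idea is to produce a basis of $X/Y$ whose chosen preimages include a basis of $C$, so the vector lifting furnished by Proposition \ref{p:exist.of.v.lift} automatically fixes every element of $C$. The assumption $C\cap Y=\{0\}$ says exactly that $r_Y\restr C$ is injective, hence if $\la c_j\ra_{j\in J}$ is any basis of $C$, the family $\la c_j^\bullet\ra_{j\in J}$ is linearly independent in $X/Y$. Extend it to a basis $\la c_j^\bullet\ra_{j\in J}\cup \la z_k^\bullet\ra_{k\in K}$ of $X/Y$, with representatives $z_k\in X$. By Proposition \ref{p:exist.of.v.lift}, there is a unique vector lifting $\xi\in W(X,Y)$ such that $\xi(c_j)=c_j$ for $j\in J$ and $\xi(z_k)=z_k$ for $k\in K$.

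It remains to check that $\xi$ is $C$-strong. Any $x\in C$ has a (unique) expansion $x=\sum_{j\in J}a_jc_j$ with finitely many nonzero $a_j\in\K$, so by linearity of $\xi$,
\[
\xi(x)=\sum_{j\in J}a_j\xi(c_j)=\sum_{j\in J}a_jc_j=x,
\]
which gives $\xi\in W(X,Y,C)$, completing the proof. The only nontrivial step is the observation that $C\cap Y=\{0\}$ is exactly what allows one to extend a basis of $C$ to preimages of a basis of $X/Y$; everything else is bookkeeping with Proposition \ref{p:exist.of.v.lift}.
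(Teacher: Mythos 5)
Your proof is correct and follows essentially the same route as the paper's: the forward direction combines $C$-strongness with $(l2)$-type invariance to force $x=\xi(x)=\xi(0)=0$, and the converse uses the isomorphism $C\cong C^\bullet$ to extend a basis of $C$ to representatives of a basis of $X/Y$ and then invokes Proposition \ref{p:exist.of.v.lift}. You merely spell out the final linearity check that the paper leaves implicit.
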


\begin{proof}
$\xi\in W(X,Y,C)$ implies for $x\in C\cap Y$ that
$x=\xi(x)=\xi(0)=0$, so $(N_{C,Y})$ holds.

Conversely, if $(N_{C,Y})$ is satisfied then $C$ and $C^\bullet = C/Y$ are linearly isomorphic. Given a basis $\la x_i\ra_{i\in I}$ for $C$, the basis $\la x_i^\bullet\ra_{i\in I}$ for $C^\bullet$ can be extended to a basis $\la x_i^\bullet\ra_{i\in J}$ for $X/Y$. The unique lifting $\xi$ satisfying $\xi(x_i)=x_i$ for all $i$ is $C$-strong.
\end{proof}

We now specialize to subspaces of $\LL^0(\mu)$, with $\K=\R$ or $\C$.
Let $(X,\vS,\mu)$ be a measure space, $\M$ a linear subspace of $\LL^0(\mu)$ containing the family $C$ of constant functions. Letting $\NN(\mu)$ be the null ideal of $\LL^0(\mu)$, the family $V(\M) = V(\mu,\M)$ of vector liftings for $\mathcal{M}$ as defined in Definition \ref{E5} are the elements of $W(\M,\M\cap\NN(\mu),C)$.
Hence, $\gamma:\M \to \M$ is a vector lifting for $\M$ if $\gamma$ is linear, satisfies $(l1)$ and $(l2)$, and $\gamma(1)=1$.

\begin{rem}\label{r:empty}
Note that when $X$ is empty, $\M=\{\e\}$ contains only the empty function which serves as every constant function. The unique function $\gamma\colon \M\to \M$ is a vector lifting. Of course this case is no interest.
\end{rem}

As mentioned in the introduction, if $\mu$ is a diffuse measure, i.e., $\mu$ has no atoms, then there are no linear liftings for $\LL^p(\R,\mu)$ when $1\leq p<\infty$ (\cite{it}, Chap. IV, Theorem 6). The proof in \cite{it} uses the continuity of positive linear functionals on $L^p(\R,\mu)$, but the argument can be modified to avoid linearity as the following proposition shows.

\begin{prop}\label{p:o-p}
Let $(X,\vS,\mu)$ be any diffuse probability space, and let $0\leq p<\infty$. There is no order-preserving selector for $\LL^p(\R,\mu)$, i.e., there is no function $\varphi\colon\LL^p(\R,\mu)\to \LL^p(\R,\mu)$ satisfying for all $f,f_1,f_2\in\LL^p(\R,\mu)$ that $\varphi(f)=_\mu f$, and $f_1\leq_\mu f_2$ implies $\varphi(f_1) \leq \varphi(f_2)$.
\end{prop}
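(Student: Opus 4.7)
The plan is to suppose for contradiction that such a $\varphi$ exists and derive incompatible constraints on pointwise values using the diffuseness of $\mu$. The strategy adapts the Ionescu--Tulcea non-existence argument for positive linear liftings on $\LL^p(\mu)$ ($1\le p<\infty$): in the linear setting, Riesz representation classifies $f\mapsto \rho(f)(x)$ as integration against an $\LL^q$-kernel that would have to be a ``delta function'' at $x$, which cannot exist when $\mu$ is diffuse. Here linearity is absent, so Riesz is unavailable and the obstruction has to be produced directly from order preservation.

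First I would normalize $\varphi$ on the rational constants. For each $q\in\Q$ the set $N_q:=\{x:\varphi(q)(x)\ne q\}$ is $\mu$-null, hence the countable union $N:=\bigcup_{q\in\Q} N_q$ is null, and on the conull set $N^c$ we have $\varphi(q)\equiv q$ for every $q\in\Q$. Note also that the order-preserving selector property already implies $(l2)$, namely $f_1=_\mu f_2 \Rightarrow \varphi(f_1)=\varphi(f_2)$ pointwise, by comparing the two inequalities $f_1\le_\mu f_2$ and $f_2\le_\mu f_1$; so $\varphi$ descends to a well-defined map on $L^p(\mu)$.

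Second, I would extract from order preservation a pointwise sandwich: for every $f\in\LL^p(\R,\mu)$ and every $x\in N^c$, any rational $q$ with $f\le_\mu q$ gives $\varphi(f)(x)\le \varphi(q)(x)=q$, and taking the infimum over such $q$ yields $\varphi(f)(x)\le \mathrm{ess\,sup}\,f$; symmetrically $\varphi(f)(x)\ge \essinf f$. In particular, when $f$ is $\mu$-essentially equal to a constant $c$, then $\varphi(f)\equiv c$ on $N^c$, regardless of how the chosen representative $f$ behaves on null sets.

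The third and most delicate step is to use diffuseness to produce $f\in \LL^p(\R,\mu)$ and a point $x_0\in N^c$ outside the null set $N_f:=\{\varphi(f)\ne f\}$ at which the sandwich of step 2 forces $\varphi(f)(x_0)$ to differ from $f(x_0)$. The guiding idea is that the sandwich pins $\varphi(f)(x_0)$ to an essential value of $f$, while the selector identity pins it to the pointwise value $f(x_0)$; by diffuseness these two constraints can be arranged to be inconsistent. Concretely, diffuseness provides an exhausting sequence of pairwise disjoint non-null sets $(A_n)$ with $\mu(A_n)\to 0$ as fast as needed so that a function of the form $f=\sum_n r_n\chi_{A_n}$ with $r_n\to\infty$ lies in $\LL^p$, and allows one, for any countable family of null sets, to find a non-null set of points avoiding them all. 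The main obstacle, and where the argument diverges most sharply from the linear case, is that the null set $N_f$ can absorb any single candidate point $x_0$ where constraints clash; the contradiction must therefore be produced through a simultaneous-choice or cardinality argument that exploits the atomless structure of $\mu$ to show that the countably many pertinent null sets $\{N\}\cup\{N_{f_\alpha}\}$ arising from the construction cannot collectively cover the points at which the sandwich and the selector disagree.
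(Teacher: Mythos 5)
Your steps 1 and 2 are correct as far as they go (order preservation does imply $(l2)$, and the comparison with rational constants does give $\essinf f\leq\varphi(f)(x)\leq\mathop{\rm ess\hskip3pt sup}f$ on a fixed conull set), but they cannot be the engine of the proof. For any $x$ outside the null set $N_f=\{\varphi(f)\not=f\}$ one has $\varphi(f)(x)=f(x)$, and $f(x)$ automatically lies between the essential infimum and essential supremum of $f$ for almost every $x$; so the two constraints you hope to play against each other are consistent off a null set, and the sandwich is in any case vacuous for essentially unbounded $f$. Your step 3, which is where the entire content of the proposition lives, is not an argument but a description of the difficulty: ``a simultaneous-choice or cardinality argument'' is never exhibited, and the direction you point in (clash at a single point between the sandwich and the selector identity) is exactly the one your own ``main obstacle'' paragraph correctly identifies as blocked.

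The missing idea is to exploit order preservation against an \emph{unbounded increasing} family rather than against constants, and to choose the good point \emph{before} constructing the bad function. After reducing to $p\geq 1$ (by restricting a selector for $\LL^p$ to $\LL^q$, $q>p$; this matters because the construction below needs absolute summability to imply convergence in $L^p$), use diffuseness to fix, for each $n$, a partition of $X$ into $n$ sets of measure $1/n$, and let $\mathcal{S}$ be the countable union of these partitions. Choose a single point $x$ at which $\varphi(f)(x)=f(x)$ for every function in the \emph{countable} family of finite rational-coefficient combinations of indicators of members of $\mathcal{S}$. Then pick $E_n\in\mathcal{S}$ with $x\in E_n$ and $\mu(E_n)\leq n^{-p}$, set $f_n=\chi_{E_n}$, and let $g$ be any representative of the $L^p$-limit of $\sum_n n^{-2}\|f_n\|_p^{-1}f_n$. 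Each partial sum $g_N$ satisfies $g_N\leq_\mu g$, so order preservation gives the \emph{everywhere} inequality $\varphi(g_N)\leq\varphi(g)$, whence $\varphi(g)(x)\geq\varphi(g_N)(x)=g_N(x)\geq\sum_{n=1}^N 1/n\to\infty$, contradicting the finiteness of the real number $\varphi(g)(x)$. Note that this never requires $x\notin N_g$ --- only $x\notin N_{g_N}$ for each $N$, which was secured in advance --- so the absorption-by-$N_f$ obstacle you describe simply does not arise. Without this (or some equivalent) mechanism, your proposal does not prove the proposition.
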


\begin{proof}
For $0\leq p<q\leq\infty$, we have the inclusion $\LL^q(\mu)\sq \LL^p(\mu)$ (\cite{fol}, Proposition 6.12). An order-preserving selector $\varphi$ for $\LL^p(\mu)$ as in the statement would yield one for $\LL^q(\mu)$ by restriction. Hence to prove nonexistence we may assume $p\geq 1$.

Suppose $\varphi$ were such a function.
Fix for each $n$ a partition $\mathcal{S}_n$ of $X$ into $n$ sets of measure $1/n$ (\cite{fr2} 215D) and set $\mathcal{S} = \bigcup_n\mathcal{S}_n$. Choose $x\in X$ so
that $\varphi(f)(x) = f(x)$ for each of the countably many functions $f$ of the form
\[
f = \sum_{n=1}^N a_nb_n^{1/p} f_n,
\]
where $N\in\N$, $a_n,b_n$ are positive rational numbers, and $f_n=\chi_{E_n}$ with $E_n\in\mathcal{S}$.
For each $n$, choose $E_n\in\mathcal{S}$ such that $x\in E_n$ and
$\mu(E_n)\leq 1/n^p$. Let $f_n=\chi_{E_n}$. Then
\[
\|f_n\|_p = \mu(E_n)^{1/p}\leq \frac1n.
\]
In $L^p(\R,\mu)$, the series $\sum_{n=1}^\infty n^{-2}f_n/\|f_n\|_p$ is absolutely summable and hence converges. Let $g\in\LL^p(\R,\mu)$ be any function in the class of the limit. Define $g_N = \sum_{n=1}^N n^{-2}f_n/\|f_n\|_p$ and write $g = g_N + h_N$. The function $h_N$ belongs to the class
\[
\sum_{n=N+1}^\infty \frac1{n^2}\frac1{\|f_n\|_p}f_n
\]
in $L^p(\R,\mu)$ and hence is almost everywhere nonnegative
since the positive cone is closed in the norm topology (see for example \cite{sc2}, II 5.2 (ii))
and the partial sums of this series are nonnegative. It follows that $g\geq g_N$ almost everywhere. By our choice of $x$, $\varphi(g_N)(x)=g_N(x)$ for all $N$.
We have for any $N\in \N$,
\[
\varphi(g)(x) \geq \varphi(g_N)(x) = g_N(x) = \sum_{n=1}^N
\frac{1}{n^2}\frac{1}{\|f_n\|_p}f_n(x) \geq \sum_{n=1}^N \frac{1}{n}\to\infty\
  \text{as}\ N\to\infty,
\]
contradiction.
\end{proof}

In contrast, vector liftings always exist for any probability space, except in trivial cases.

\begin{prop}\label{p:vect.lift}
A vector lifting for $\M$ exists if and only if either $X=\e$ or $\mu(X)>0$.
\end{prop}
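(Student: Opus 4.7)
The plan is to prove both directions directly, invoking the linear-algebraic machinery already set up (Proposition \ref{AVP} and Proposition \ref{p:exist.of.v.lift}) together with Remark \ref{r:empty}.

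For the forward (``only if'') direction, I would argue by contrapositive: assume $X\ne\e$ and $\mu(X)=0$, and show no vector lifting can exist. Since $\mu(X)=0$, every $f\in\M$ satisfies $f=_\mu 0$. By $(l2)$ this forces $\gamma(f)=\gamma(0)=0$ for all $f\in\M$; in particular $\gamma(1)=0$. But a vector lifting must fix the constant function $1$, so $\gamma(1)=1$. Since $X\ne\e$, the constant $1$ is not the empty function, so $1\ne 0$ in $\M$, and we have a contradiction.

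For the reverse (``if'') direction, the case $X=\e$ is handled by Remark \ref{r:empty}. Assume $\mu(X)>0$. I would apply Proposition \ref{AVP} with the vector space being $\M$, the subspace $Y:=\M\cap\NN(\mu)$, and $C$ the one-dimensional subspace of constant functions. Vector liftings for $\M$ in the sense of Definition \ref{E5} are precisely the $C$-strong elements of $W(\M,Y,C)$, so it suffices to verify the condition $(N_{C,Y})$: $C\cap Y=\{0\}$. If $c\in C\cap Y$ is a constant with $c=_\mu 0$, then the set $\{x\in X:c(x)\ne 0\}$ is either empty or has measure $\mu(X)>0$; since it is $\mu$-null, $c$ must be the zero constant. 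Hence $(N_{C,Y})$ holds, Proposition \ref{AVP} provides a $C$-strong $\xi\in W(\M,Y)$, and this $\xi$ is the desired vector lifting.

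There is no real obstacle here; the only subtlety is the degenerate case $X=\e$, where the constant $1$ coincides with $0$ (both being the empty function) so the argument for the forward direction breaks down exactly as it should, and the trivial vector lifting noted in Remark \ref{r:empty} exists. The whole proof is essentially a one-line application of Proposition \ref{AVP} once the correspondence between Definitions \ref{E5} and \ref{AVdf} is made explicit.
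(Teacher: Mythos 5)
Your proof is correct and follows essentially the same route as the paper: the forward direction uses that when $X\not=\e$ and $\mu(X)=0$ all constant functions lie in one measure class and so cannot all be fixed, and the reverse direction combines Remark \ref{r:empty} with Proposition \ref{AVP} applied to $W(\M,\M\cap\NN(\mu),C)$ after checking $(N_{C,Y})$. No gaps.
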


\begin{proof}
If $X$ is nonempty but $\mu(X)=0$ then no vector lifting $\gamma$ exists, because the constant functions all belong to the same measure class, so they cannot all be fixed.
Conversely, if $X=\e$ then a vector lifting exists as pointed out in Remark \ref{r:empty}, and if $\mu(X)>0$, and $C$ is the subspace of constant functions in $\M$, then a vector lifting for $\M$ is a member of $W(\M,\M\cap\mathcal{N}(\mu),C)$ which exists by Proposition \ref{AVP} since nonzero constant functions are not in $\mathcal{N}(\mu)$.
\end{proof}

The following simple observation will be useful.

\begin{rem}\label{r:PV15:1}
Let $(X,\vS,\mu)$ be a probability space and let $\M$ be a subspace of $\LL^0(\mu)$.
If $\langle f_i^{\bullet}\rangle_{i\in I}$ with each $f_i\in \M$ is a linear basis for $\M^\bullet$, and we extend it to a basis for $L^p(\mu)$, $\langle f_i^{\bullet}\rangle_{i\in J}$, where $I\sq J$, then the vector lifting $\pi\in V^p(\mu)$ satisfying $\pi(f_i)=f_i$ for $i\in J$ also satisfies $\pi(f)\in \M$ for all $f\in \M$.
(Writing $f^\bullet = \sum_i a_i f_i^\bullet$, $\la a_i\ra$ of finite support in $\mathbb{K}$, we have $a_i=0$ for $i\notin I$, so $\pi(f) = \sum_ia_if_i\in \M$.)
\end{rem}

For ease of reference later, we give some examples of the use of this remark.

\begin{ex}\label{ex:PV15:1}
(a) Letting $(X,\vS,\mu)$ be the Lebesgue measure space on $X=[0,1]$, by the Lusin theorem, each class in $L^0(\mu)$ contains a function of Baire class $1$ (i.e., a pointwise limit of continuous functions). Since the Baire class $1$ functions form a subspace, it follows that there is a vector lifting for $\LL^0(\mu)$ whose values are Baire class $1$ functions.

(b) Let $(X,\vS,\mu)$, $(Y,T,\nu)$, $(X\times Y,\vY,\ups)$ be probability spaces satisfying $[C]$ and $\vS_0\,\dot{\times}\, Y\sq \vY$.
Every element $h\in \LL^p(\ups)$ is equal modulo $\ups$ to an element $h'\in \LL^p(\ups)$ satisfying $[h']_x\in \LL^p(\nu)$ for all $x\in X$ (Corollary \ref{c:sec} (i)). Since the family of such elements $h'$ is a subspace, it follows that there is a vector lifting $\pi\in V^p(\ups)$ whose values satisfy $[\pi(f)]_x\in\LL^p(\nu)$ for $x\in X$.

(c) If in (b) we also have $[\ov{C}]$ and $X\,\dot{\times}\,T_0\sq \vY$, then we get a vector lifting $\pi\in V^p(\ups)$ whose values satisfy $[\pi(f)]_x\in\LL^p(\nu)$ for $x\in X$ and $[\pi(f)]^y\in\LL^p(\nu)$ for $y\in Y$ (using Corollary \ref{c:sec} (ii)).

(d) Let $(X,\mathfrak{S},\vS,\mu)$ be a topological probability space.
If $\langle f_i^{\bullet}\rangle_{i\in I}$ is a basis for $C^p(X)^\bullet$, with each $f_i$ continuous, which we extend to a basis $\langle f_i^{\bullet}\rangle_{i\in J}$, $I\sq J$, for $L^p(\mu)$, then the vector lifting $\pi\in V^p(\mu)$ given by $\pi(f_i)=f_i$ for $i\in J$ satisfies that $\pi(f)$ is continuous for each $f\in C^p(X)$. Under the condition $(N_{C,\mu})$ defined next, $\langle f_i\rangle_{i\in I}$ is a basis for $C^p(X)$ and we get $\pi(f)=f$ for each $f\in C^p(X)$.
\end{ex}

\begin{df}\label{Vdf2}
Given a topological measure space $(X,\mathfrak{S},\vS,\mu)$ we put
\[
C^p(X,\K):=C(X,{\K})\cap\LL^p(\K,\mu).
\]
As with the notation for $\LL^p(\K,\mu)$, we will omit the mention of $\K$, writing simply $C^p(X)$. In principle, when $\K=\C$, we could consider either $C^p(X,\R)$ or $C^p(X,\C)$ inside of $\LL^p(\C,\mu)$, but by $C^p(X)$ we will always mean $C^p(X,\C)$ in this context. Note that $C^0(X)=C(X)$.

Recall from Definition \ref{E5} that we say $\eta\in V^p(\mu)$ is strong when $\eta(f)=f$ for every $f\in C^p(X)$. We write
$V^p(\mathfrak{S},\mu)$ for the set of all strong $\eta\in V^p(\mu)$ and $(N_{C,\mu})$ instead of $(N_{C^p(X),\mathcal{N}(\mu)})$. (We have $\mathcal{N}(\mu)\cap C^p(X) = \mathcal{N}(\mu)\cap C(X)$ for any $p$, so the notation does not need to mention $p$.)
\[
\mathcal{N}(\mu)\cap C(X)=\{0\}.\leqno(N_{C,\mu})
\]
\end{df}

\begin{rem}\label{V20E}
Let $(X,\mathfrak{S},\vS,\mu)$ be a topological measure space.
\begin{enumerate}
\item
The property $[N_{\mathfrak{S},\vS_0}]\;\mathfrak{S}\cap\vS_0=\{\emptyset\}$ implies the condition $(N_{C,\mu})$.

\begin{proof}
For $h\in\mathcal{N}(\mu)\cap C(X)$, it follows $\{h\not=0\}\in\mathfrak{S}\cap\vS_0=\{\emptyset\}$, i.e. $h=0$.
\end{proof}

\item
If $(X,\mathfrak{S})$ is Tychonoff, then $(N_{C,\mu})$ implies $[N_{\mathfrak{S},\vS_0}]$.

\begin{proof}
For $G\in\mathfrak{S}\cap\vS_0$, if $G \not=\emptyset$ and $x_0\in G$ then by the Tychonoff property, we can choose $0\leq h\in C(X)$ with $0\leq h\leq 1$, $h(x_0)=1$ and $h(x)=0$ for every $x\in G^c$. It follows
$h^{-1}[(0,\infty)]\sq G\in\vS_0$ so $h\in\mathcal{N}(\mu)$ and by $(N_{C,\mu})$ get $h=0$, contradicting $h(x_0)=1$. Thus, $G=\emptyset$.
\end{proof}

\item
If $(N_{C,\mu})$ holds, then for $f\in C(X)$, $\sup\{|f(x)|:x\in X\}\leq \|f\|_\infty$. In particular, $\|f\|_\infty<\infty$ implies that $f$ is bounded.

\begin{proof}
We may assume that $f$ is real-valued and that $M:=\|f\|_\infty<\infty$. We have $(f-M)^+\in C(X)$ and $(f-M)^+ =_\mu 0$, so $(f-M)^+ = 0$ by $(N_{C,\mu})$. This gives $f\leq M$. Similarly, $f\geq -M$.
\end{proof}
\end{enumerate}
\end{rem}

\begin{lem}\label{fct}
Let $(X,\vS,\mu)$ be a measure space, $(X,\vS',\mu')$ an extension by null sets. Corresponding to $\eta\in V^p(\mu)$, we have $\eta'\in V^p(\mu')$ extending $\eta$ defined by $\eta'(f+g):=\eta(f)$
for $f\in\LL^p(\mu)$, $g\in\mathcal{N}(\mu')$.
If moreover $(X,\mathfrak{S},\vS,\mu)$ is a topological measure space and $\eta\in V^p(\mathfrak{S},\mu)$, then
$\eta'\in V^p(\mathfrak{S},\mu')$.
\end{lem}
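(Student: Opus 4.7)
The plan is first to verify that the formula defining $\eta'$ is consistent on $\LL^p(\mu')$, then to check the vector-lifting axioms, and finally to handle the strong case by showing $C(X)\cap\LL^p(\mu')\sq\LL^p(\mu)$.

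For well-definedness, Corollary \ref{P10ii} gives that every $h\in\LL^p(\mu')$ decomposes as $h=f+g$ with $f\in\LL^p(\mu)$ and $g\in\NN(\mu')$, so the formula makes sense provided independence of the decomposition is established. If $f_1+g_1=f_2+g_2$ are two such decompositions, then $f_1-f_2\in\NN(\mu')$ and is $\vS$-measurable, so $\{f_1\ne f_2\}\in\vS\cap\vS'_0$. Because $\mu'\restr\vS=\mu$, this set is $\mu$-null, giving $f_1=_\mu f_2$, and then $(l2)$ for $\eta$ yields $\eta(f_1)=\eta(f_2)$. I expect this measurability-plus-restriction step to be the only substantive point in the argument, since $\NN(\mu')$ is genuinely larger than $\NN(\mu)$.

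Linearity of $\eta'$ is immediate from the definition once one notes that $\NN(\mu')$ is a subspace. For $(l1)$, $\eta'(h)=\eta(f)\in\LL^p(\mu)\sq\LL^p(\mu')$, and $\eta(f)=_\mu f$ upgrades to $\eta(f)=_{\mu'}f=_{\mu'}f+g=h$ since $\mu'$ extends $\mu$. Property $(l2)$ follows by applying the well-definedness argument to $h_1-h_2\in\NN(\mu')$; and $\eta'$ fixes constants because $\eta$ does. The decomposition $f=f+0$ shows that $\eta'$ extends $\eta$.

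For the strong case, the key claim is $C(X)\cap\LL^p(\mu')\sq\LL^p(\mu)$. Any $f\in C(X)$ is $\vS$-measurable, hence so is $|f|^p$ (respectively $\{|f|>r\}$), and since $\mu'\restr\vS=\mu$ the integrals (respectively essential supremum levels) that determine membership in $\LL^p(\mu)$ coincide with those for $\LL^p(\mu')$ in this case; the cases $p=0$ and $p=\infty$ are immediate. Thus for $f\in C(X)\cap\LL^p(\mu')$, we have $f\in C^p(X)$ in the sense of Definition \ref{Vdf2}, and the trivial decomposition $f=f+0$ gives $\eta'(f)=\eta(f)=f$ by the strongness of $\eta$.
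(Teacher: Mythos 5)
Your proposal is correct and follows essentially the same route as the paper: decompose via Corollary \ref{P10ii}, reduce well-definedness and $(l2)$ to the observation that a $\vS$-measurable $\mu'$-null difference is $\mu$-null because $\mu'\restr\vS=\mu$, and then check linearity, $(l1)$, and constants directly. Your explicit verification that $C(X)\cap\LL^p(\mu')\sq\LL^p(\mu)$ in the strong case is a point the paper passes over silently (it simply writes ``for $h\in C^p(X)$ we get $\eta'(h)=\eta(h)=h$''), and it is a worthwhile detail, but it does not change the substance of the argument.
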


\begin{proof}
For the sufficiency of defining $\eta'$ on elements $f+g$ as stated, see Corollary \ref{P10ii}. Let $f,f_1,f_2\in\LL^p(\mu)$ and  $g,g_1,g_2\in\mathcal{N}(\mu')$.

If $f_1+g_1=_{\mu'}f_2+g_2$ then $f_1-f_2=_{\mu'}g_2-g_1$. We have $\{f_1\not=f_2\}\in\vS$ and $\mu'\{f_1\not=f_2\}=0$, so $\{f_1\not=f_2\}\in\vS_0$ since $\mu'$ extends $\mu$. Thus, $\eta(f_1)=\eta(f_2)$, showing that $\eta'$ is well-defined and satisfies $(l2)$.
We also have $\eta'(f+g) = \eta(f)=_{\mu'}f
=_{\mu'}f+g$, showing that $\eta'$ satisfies $(l1)$.

For linearity, we have $\eta'((f_1+g_1)+(f_2+g_2)) = \eta'\bigl((f_1+f_2)+(g_1+g_2)\bigr) = \eta(f_1+f_2)=\eta(f_1)+\eta(f_2) = \eta'(f_1+g_1)+\eta'(f_2+g_2)$, since $g_1+g_2\in\mathcal{N}(\mu')$, and for $a\in{\K}$, $\eta'(a(f+g))=\eta'(af+ag)=\eta(af)=a\eta(f)=a\eta'(f+g)$, because $ag\in\mathcal{N}(\mu')$.
Also, $\eta'(1)=\eta(1)=1$, so $\eta'\in V^p(\mu')$.
In the last sentence, for $h\in C^p(X)$ we get $\eta'(h)=\eta(h)=h$.
\end{proof}

\begin{prop}\label{V30}
Given a topological measure space $(X,\mathfrak{S},\vS,\mu)$, we have that
$V^p(\mathfrak{S},\mu)\not=\emptyset$ if and only if
$(X,\mathfrak{S},\vS,\mu)$ satisfies the condition $(N_{C,\mu})$.
\end{prop}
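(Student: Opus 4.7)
The plan is to prove both directions rather directly, using Proposition \ref{AVP} as the engine for existence.

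For the forward implication, I would take any $\eta\in V^p(\mathfrak{S},\mu)$ and pick $h\in \mathcal{N}(\mu)\cap C(X)$. Since $h=_\mu 0$, $h$ lies in $\LL^p(\mu)$ for every $p\in[0,\infty]$ (the null ideal is contained in each $\LL^p$), so $h\in C(X)\cap \LL^p(\mu)=C^p(X)$. Strongness gives $\eta(h)=h$, while linearity and $(l2)$ together give $\eta(h)=\eta(0)=0$. Hence $h=0$, which is $(N_{C,\mu})$.

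For the converse, I would apply Proposition \ref{AVP} to the triple $X:=\LL^p(\mu)$, $Y:=\mathcal{N}(\mu)$, $C:=C^p(X)$ (noting that $\mathcal{N}(\mu)\sq\LL^p(\mu)$, so $Y$ is really a subspace of $X$). The hypothesis $(N_{C,Y})$ of that proposition reads $C^p(X)\cap\mathcal{N}(\mu)=\{0\}$; since $\mathcal{N}(\mu)\sq\LL^p(\mu)$, this intersection equals $C(X)\cap\mathcal{N}(\mu)$, which is $\{0\}$ by the assumption $(N_{C,\mu})$. Proposition \ref{AVP} then produces a $C^p(X)$-strong element $\xi\in W(\LL^p(\mu),\mathcal{N}(\mu),C^p(X))$; concretely, one extends a basis $\langle f_i\rangle_{i\in I}$ of $C^p(X)$ (its image $\langle f_i^\bullet\rangle_{i\in I}$ is a basis of $C^p(X)^\bullet$ by the same $(N_{C,\mu})$) to a family $\langle f_i\rangle_{i\in J}$ with $\langle f_i^\bullet\rangle_{i\in J}$ a basis of $L^p(\mu)$, and lets $\xi$ be the unique linear map sending each $f_i^\bullet$ back to $f_i$, as in Proposition \ref{p:exist.of.v.lift}.

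It remains to check that such a $\xi$ really belongs to $V^p(\mu)$, i.e., that it fixes constants. If $X=\e$ this is vacuous; otherwise $\mu$ being a (topological) probability or finite measure makes the constant function $1$ lie in $C^p(X)$, so $\xi(1)=1$ by $C^p(X)$-strongness, and $\xi$ is a vector lifting. By construction it is strong, so $\xi\in V^p(\mathfrak{S},\mu)$.

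The only subtle point is the reduction of $C^p(X)\cap\mathcal{N}(\mu)$ to $C(X)\cap\mathcal{N}(\mu)$, which hinges on the inclusion $\mathcal{N}(\mu)\sq\LL^p(\mu)$; everything else is routine once Propositions \ref{AVP} and \ref{p:exist.of.v.lift} are in place.
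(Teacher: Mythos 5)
Your proposal is correct and follows essentially the same route as the paper, which proves the proposition by a direct appeal to Proposition \ref{AVP} with $X=\LL^p(\mu)$, $Y=\mathcal{N}(\mu)$, $C=C^p(X)$. The extra details you supply (the inclusion $\mathcal{N}(\mu)\sq\LL^p(\mu)$ giving $C^p(X)\cap\mathcal{N}(\mu)=C(X)\cap\mathcal{N}(\mu)$, and the fixing of constants via $1\in C^p(X)$) are exactly the points the paper leaves implicit in Definition \ref{Vdf2} and the discussion preceding Proposition \ref{p:vect.lift}.
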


\begin{proof} By Proposition \ref{AVP}, taking $X$ to be $\LL^p(\mu)$, $Y$ to be $\mathcal{N}(\mu)$, and $C$ to be $C^p(X)$. \end{proof}

\begin{rem}\label{V10R}
By Example \ref{ex:PV15:1} (a), there is a vector lifting for Lebesgue measure on $[0,1]$ whose values are Baire class 1 functions.
In contrast, it is consistent with ZFC that there are no linear liftings for Lebesgue measure on $[0,1]$, or for any power $[0,1]^\ka$, whose values are Borel functions (see \cite{bs}).
\end{rem}

\begin{rem}\label{CM119a}
\rm{Let $(X,\mathfrak{S},\vS,\mu)$ be a compact Radon probability space with $\supp(\mu)=X$ and $\Lambda_\mathfrak{S}(\mu)=\emptyset$ (see \cite{fr4} 453N). It follows by Remark \ref{V20E} (i), that $(X,\mathfrak{S},\vS,\mu)$ satisfies condition $(N^p_{C,\mu})$, implying along with Proposition \ref{V30} that $V^p(\mathfrak{S},\mu)\not=\emptyset$. So, it is, in general, impossible to convert  a given vector lifting $\eta\in{V}^\infty(\mu)$ into a lifting $\rho$ on $\LL^\infty(\mu)$ (in the sense of \cite{it} Chapter III) by any existing method.}
\end{rem}

For a topological probability space $(X,\mathfrak{T},\vS,\mu)$, we might try to get a multiplicative selector $\rho$ for the classes of nonnegative measurable functions $X\to[0,\infty]$ which is strong (i.e., fixes the continuous functions). On the family of continuous functions itself, we will have $\rho(fg) = fg = \rho(f)\rho(g)$ as long as $fg$ is continuous, but continuity of $fg$ can fail because of the discontinuity of multiplication on $[0,\infty]$. The next example shows that even for functions that are finite-valued except at one point, there might not be a multiplicative selector.

\begin{ex}\label{sf66}
Let $\lambda$ be Lebesgue measure on $X:=[-1/2,1/2]$ equipped with the usual topology. Fix $p\in (0,\infty)$. For $c = 1,2$, define $f,g_c\colon X\to[0,\infty]$ by $f(x): = |x|^{1/(2p)}$, $g_c(x):=c/f(x)$ $(x\not=0)$, $g_c(0):=\infty$. Note that $f$ is bounded and $\int |g_c|^p\,d\lambda<\infty$. Also, $f$ and $g_c$ are continuous and $fg_c$ is continuous except at $0$.

There is no function $\rho$ into $[0,\infty]^X$ defined on a family of functions including $f$, $g_c$ and $fg_c$ $(c = 1,2)$, such that $\rho(h)=h$ for $h$ continuous, $\rho(fg_c) = \rho(f)\rho(g_c)$ and the following weak version of $(l2)$ holds.
\begin{enumerate}
\item[$(l2)^*$]
$f_1=f_2$ except at one point implies $\rho(f_1)=\rho(f_2)$.
\end{enumerate}
If $\rho$ had all of the listed properties, then
$c=\rho(c) = \rho(fg_c) = \rho(f)\rho(g_c) = fg_c$. Evaluating at $0$ we get $c=0\cdot \infty$ for $c=1,2$, a contradiction no matter how we define $0\cdot\infty$.
\end{ex}

Removing the requirement that $\rho$ be strong, we have the following example which modifies an example of J. von Neumann defined originally on $X=\R$.

\begin{ex}\label{vN}
(Cf. J.\ von Neumann \cite{ne}.) Let $\lambda$ be Lebesgue measure on $X:=[-1/2,1/2]$. Suppose $\rho:\LL^0(\lambda)\to \LL^0(\lambda)$ (here $\LL^0(\lambda) = \LL^0(\R,\lambda)$) is linear and satisfies $(l1)$ and $(l2)^*$ (see Example \ref{sf66}). Define
\begin{align*}
f(x) & := x,\ \ e(x):=1\ \ (x\in X) \\
g_t(x) & := 1/(x-t)\ \ (x,t\in X,\,x\not=t),\ \ g_t(t):=0.
\end{align*}
A) For $\lambda$-almost all $t\in X$, either $\rho(fg_t)\not=\rho(f)\rho(g_t)$ or $\rho(eg_t)\not= \rho(e)\rho(g_t)$.%
\footnote{Of course $eg_t=g_t$ but we are emphasizing the failure of the multiplicative property.}

\smallskip

By $(l1)$, $S:=\{t\in X: \rho(f)(t)=t$ and $\rho(e)(t)=1\}$ has measure one. Fix $t\in S$.  Notice first that we have $f(x)g_t(x) - te(x)g_t(x) = (f(x)-te(x))g_t(x) = e(x)$ when $x\not=t$. By linearity of $\rho$ and $(l2)^*$, $\rho(fg_t) - t\rho(eg_t) = \rho(e)$. If we had $\rho(fg_t) = \rho(f)\rho(g_t)$ and $\rho(eg_t) = \rho(e)\rho(g_t)$ we would get $\rho(f)\rho(g_t) - t\rho(e)\rho(g_t) = \rho(e)$. Evaluating at $t$ and taking into account $t\in S$ gives a contradiction.

\smallskip

\n B) If we require also that $\rho(e)=e$, then by A), $\rho(fg_t)\not=\rho(f)\rho(g_t)$ for $\lambda$-almost all $t$.
\end{ex}

\begin{ex}\label{ve11}
Given a measure space $(X,\vS,\mu)$ with $\mu(X)>0$, let $\gamma:\LL^0(\mu)\to\LL^0(\mu)$ be an ${\R}$-vector lifting existing by Proposition \ref{p:vect.lift} and define
\[
\gamma_{\flat}(A):=\{\gamma(\chi_A)\geq 1\}\quad\mbox{for}\quad A\in\vS.
\]
$\gamma_{\flat}:\vS\to\vS$ satisfies $(Lj)$ for $j=1,2$, $(N)$, and $(E)$.

Since $\gamma(\chi_A)+\gamma(\chi_{A^c})=1$ for $x\in\gamma_{\flat}(A)$ it follows $\gamma(\chi_{A^c})(x)\leq 0$, so $x\in[\gamma_{\flat}(A^c)]^c$, i.e. $\gamma_{\flat}(A)\sq
[\gamma_{\flat}(A^c)]^c$, implying
$\gamma_{\flat}(A)\cap\gamma_{\flat}(A^c)=\emptyset$, i.e.
$\gamma_{\flat}$ satisfies $(V)$.
\end{ex}

\begin{rem}\label{ve12}
Write $(X,\LL,\lambda)$ for the Lebesgue measure space on $X:=[-1/2,1/2]$. Let
$\xi$ be an ${\R}$-vector lifting for $\LL^0(\lambda)$. There does not exist a $\gamma:\LL\to\LL$ satisfying $(N)$ and $(E)$, such that $\gamma_0=\xi$, where $\gamma_0(f)(x) = \sup\{r\in\Q: x\in\gamma\{r<f\}\}$.
(See \cite{bms6} for the basic properties of $\gamma_0$.)

\begin{proof}
Assume, that there exists a $\gamma:\LL\to\LL$ satisfying $(N)$ and $(E)$, such that $\gamma_0=\xi$ and let $E,F\in\LL$ satisfy $E\sq F$. If $G:=F\setminus E$ we get
$\xi(\chi_E)+\xi(\chi_G)=\xi(\chi_F)$ and by \cite{bms6}, Proposition 3.4 (ii) get $\chi_{\gamma(E)}+\chi_{\gamma(G)}=\chi_{\gamma(F)}$.

$x\in\gamma(E)\cap\gamma(G)$ would imply
$2=\chi_{\gamma(E)}(x)+\chi_{\gamma(G)}(x)=\chi_{\gamma(F)}(x) = 1$, contradiction, i.e. $\gamma(E)\cap\gamma(G)=\emptyset$, so
$\gamma(E)\sq\gamma(F)$, i.e. $\gamma$ satisfies $(M)$ and $\xi=\gamma_0$ satisfies $(m)$, that is, for every $f,g\in\LL^0(\lambda)$, $f\leq{g}$ implies $\xi(f)\leq\xi(g)$, i.e. $\xi$ is an order-preserving vector lifting for $\lambda$, contradicting Proposition \ref{p:o-p}.
\end{proof}
\end{rem}

\section{Product vector liftings}
\label{s:prod.lift}

\begin{df}\label{PVD1}
Given sets $X,Y$ and functions $f:X\to{\K}$ and $g:Y\to{\K}$, define
$(f\otimes{g})(x,y):=f(x)\cdot g(y)$ for $(x,y)\in X\times Y$.
\end{df}

For later reference, we make the following observation concerning the nature of $f\otimes g$ when $f$ and $g$ are measurable real- or complex-valued functions on probability spaces.

\begin{rem}\label{PVS}
Fix $p\in[0, \infty]$, $(X,\vS,\mu)$, $(Y,T,\nu)$ probability spaces, $f\in \mathbb{K}^X$, $g\in \mathbb{K}^Y$, $h:=f\otimes g$.
\begin{enumerate}
\item
If $f\in\LL^p(\mu)$ and $g\in \LL^p(\nu)$ then
$h \in\LL^p(\mu\otimes\nu)$.

\item
If $f_1,f_2\in\LL^p(\mu)$, $g_1,g_2\in \LL^p(\nu)$,
$f_1 =_\mu f_2$ and $g_1 =_\nu g_2$ then $f_1\otimes g_1 =_{\mu\otimes\nu} f_2\otimes g_2$.
\end{enumerate}
Let $(X\times Y,\vY,\ups)$ be a third probability space so that $[P_0]$, $[C]$ and $[\ov{C}]$ hold.
\begin{enumerate}
\setcounter{enumi}{2}
\item
If $h\in\LL^p(\ups)$ then either $f=_{\wh{\mu}} 0$, or $g=_{\wh{\nu}} 0$, or $f\in\LL^p(\mu)$ and $g\in \LL^p(\nu)$.

\item
If $h \in\LL^0(\ups)$ then $h=_\ups 0$ if and only if $f=_{\wh{\mu}} 0$ or $g=_{\wh{\nu}} 0$.
\end{enumerate}

If $\ups=\mu\otimes\nu$ then in (iii) we can write $f=0$, $g=0$ instead of $f=_{\wh{\mu}} 0$, $g=_{\wh{\nu}} 0$ using the proof below with the set $M$ taken to be empty.

\begin{proof}
(i) The measurability of $f\otimes g$ is clear since functions of one variable are also $\mu\otimes\nu$-measurable as functions of two variables, and products of measurable functions are measurable. The statement thus holds for $p=0$.
For $p=\infty$, we have $a,b>0$, $N\in\vS_0$ and $M\in T_0$ such that $|f(x)|\leq a$, $|g(y)|\leq b$ when $x\notin N$, $y\notin M$. Then $|f(x)g(y)|\leq ab$ when $(x,y)\notin N\times Y\cup X\times M$, so $f\otimes g\in \LL^\infty(X\times Y,\mu\otimes \nu)$.
For $0<p<\infty$, we have by the Tonelli Theorem, $\int |f(x)g(y)|^p\,d\mu(x)\,d\nu(y) = \int |f(x)|^p\,d\mu(x)\int |g(y)|^p\,d\nu(y) <\infty$, so $f\otimes g\in \LL^p(X\times Y,\mu\otimes \nu)$.

(ii) There exist $N\in\vS_0$ and $M\in T_0$ such that $f_1(x)=f_2(x)$ when $x\notin N$ and $g_1(y)=g_2(y)$ when $y\notin M$. Then $f_1(x)g_1(y)=f_2(x)g_2(y)$ when $(x,y)\notin N\times Y \cup X\times M$.

(iii) Suppose we have neither $f =_{\wh{\mu}} 0$ nor $g =_{\wh{\nu}} 0$. By $[C]$, there is an $M\in \vS_0$ such that $h_x\in \LL^p(\nu)$ when $x\notin M$. Choose $x\notin M$ such that $f(x)\not=0$. Then $g=(1/f(x))h_x\in \LL^p(\nu)$. Similarly, using $[\ov{C}]$ we get $f\in \LL^p(\mu)$.

(iv) $(\Rightarrow)$ We prove the contrapositive. Suppose we have neither $f =_{\wh{\mu}} 0$ nor $g =_{\wh{\nu}} 0$. By (iii), $f\in \LL^0(\mu)$ and $g\in \LL^0(\nu)$. Then the set $\{f\not=0\}\times \{g\not=0\}\in \vS\otimes T\sq \vY$ has positive $\mu\otimes\nu$-measure, and hence positive $\ups$-measure, so $h = f\otimes g =_\ups 0$ fails.

$(\Leftarrow)$ Say $f=_{\wh{\mu}} 0$. We have $f(x)=0$ when $x\in M^c$ for some $M\in\vS_0$. Then $h(x,y)=0$ when $(x,y)\notin M\times Y$. By $[P_0]$, $M\times Y\in\vY$, and by $[C]$, $\ups(M\times Y)=0$, so $h=_\ups 0$.
\end{proof}
\end{rem}

\begin{prop}\label{p:lin.comb.prod}
Let $h\colon X\times Y\to \mathbb{K}$.
Write
\[
\V=\sspan\{h_x:x\in X\},\ \ \Hh = \sspan\{h^y:y\in Y\},
\]
the spans of the collections of vertical and horizontal sections of $h$, respectively. The following statements are equivalent.
\begin{enumerate}[\rm(a)]
\item
$h = \sum_{i=1}^n f_i\otimes g_i$ for some $f_i\in \Hh$, $g_i\in\V$, $i=1,\dots,n$,

\smallskip

\item
$h = \sum_{i=1}^n f_i\otimes g_i$ for some $f_i\in \mathbb{K}^X$, $g_i\in \mathbb{K}^Y$, $i=1,\dots,n$,

\smallskip

\item
$\V$ is finite-dimensional.
\end{enumerate}
\end{prop}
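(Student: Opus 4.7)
The implication (a)$\Rightarrow$(b) is immediate, as $\Hh\sq\K^X$ and $\V\sq\K^Y$.

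For (b)$\Rightarrow$(c), suppose $h=\sum_{i=1}^n f_i\otimes g_i$ with $f_i\in\K^X$, $g_i\in\K^Y$. For each $x\in X$ the vertical section satisfies
\[
h_x(y)=h(x,y)=\sum_{i=1}^n f_i(x)g_i(y),
\]
so $h_x=\sum_{i=1}^n f_i(x)g_i\in\sspan\{g_1,\dots,g_n\}$. Thus $\V$ is a subspace of this finite-dimensional space, hence finite-dimensional.

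For the interesting direction (c)$\Rightarrow$(a), I plan to proceed as follows. Assume $\dim\V=n$. Since $\V$ is spanned by the sections $h_x$, I can choose points $x_1,\dots,x_n\in X$ so that $h_{x_1},\dots,h_{x_n}$ is a basis of $\V$. For each $x\in X$ there are then unique scalars $f_1(x),\dots,f_n(x)\in\K$ with $h_x=\sum_{i=1}^n f_i(x)h_{x_i}$, defining functions $f_i\colon X\to\K$. Setting $g_i:=h_{x_i}\in\V$ gives $h(x,y)=\sum_{i=1}^n f_i(x)g_i(y)$, which is already a representation of the form required in (b). What remains, and is the one step that is not purely formal, is to show $f_i\in\Hh$.

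The main obstacle is therefore upgrading the coefficient functions $f_i$ to lie in the span of the horizontal sections of $h$. My plan is to use the following standard fact: whenever $g_1,\dots,g_n\in\K^Y$ are linearly independent, there exist $y_1,\dots,y_n\in Y$ such that the evaluation matrix $M=(g_i(y_j))_{i,j=1}^n$ is invertible. This is proved by induction on $n$: having $y_1,\dots,y_{n-1}$ work for $g_1,\dots,g_{n-1}$, cofactor expansion along the last column shows that $y\mapsto\det(g_i(y_j))$ (with $y_n$ replaced by $y$) is a nonzero linear combination of $g_1,\dots,g_n$, and by their linear independence it takes a nonzero value at some $y_n$. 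Applying this to the linearly independent $h_{x_1},\dots,h_{x_n}$, choose $y_1,\dots,y_n\in Y$ making $M=(h_{x_i}(y_j))$ invertible. Evaluating the relation $h_x=\sum_i f_i(x)h_{x_i}$ at $y_j$ yields
\[
h^{y_j}(x)=h(x,y_j)=\sum_{i=1}^n f_i(x)\,M_{ij},
\]
so the column vector $(h^{y_1},\dots,h^{y_n})^T$ equals $M^T(f_1,\dots,f_n)^T$. Inverting $M^T$ expresses each $f_i$ as a linear combination of $h^{y_1},\dots,h^{y_n}$, which places $f_i$ in $\Hh$ and completes the proof.
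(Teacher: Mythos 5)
Your proposal is correct and follows essentially the same route as the paper: define the coefficient functions $f_i$ from a basis of $\V$ consisting of sections, choose $y_1,\dots,y_n$ making the evaluation matrix $(g_i(y_j))$ invertible, and solve for the $f_i$ as linear combinations of the $h^{y_j}$. The only difference is that you spell out the inductive determinant argument for the existence of such $y_j$, which the paper takes as standard.
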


Note that by the symmetry of (a), we could say in (c) that $\Hh$ is finite-dimensional. The equivalence of the two forms of (c) is basic linear algebra though since $\dim \Hh = \dim \V$ when either of these dimensions is finite. This is essentially the standard fact that the row-rank and column-rank of a matrix are equal. (The equality can fail when the dimensions are not finite.)

\begin{proof}
Clearly (a) implies (b), and if (b) holds then $\V$ is a subspace of $\sspan\{g_i:i=1,\dots,n\}$, so (c) holds.

There remains to show that (c) implies (a). Let $\{g_i:i=1,\dots,n\}$ be a basis for $\V$. Define functions $f_i$ on $X$ by writing for each $x$,
\[
h_x(\,\cdot\,) = \sum_i f_i(x) g_i(\,\cdot\,)
\]
Since the $g_i$ are linearly independent, we can choose $y_1,\dots,y_n$ such that the matrix with $(i,j)$-entry $g_i(y_j)$ is invertible.
Then the functions
\[
h^{y_j}(\,\cdot\,) = \sum_i f_i(\,\cdot\,) g_i(y_j),\ \ j=1,\dots,n
\]
all belong to $\Hh$, and hence so do the $f_i$ since we can solve for them as linear combinations of the $h^{y_j}$.
\end{proof}

\begin{cor}
Let $X$ and $Y$ be topological spaces and assume that $\mathbb{K}$ is equipped with a topology. Let $h\colon X\times Y\to \mathbb{K}$
be separately continuous. The following statements are equivalent.
\begin{enumerate}[\rm(a)]
\item
$h = \sum_{i=1}^n f_i\otimes g_i$ for some $f_i\in \mathbb{K}^X$, $g_i\in \mathbb{K}^Y$, $i=1,\dots,n$.

\smallskip

\item
$h = \sum_{i=1}^n f_i\otimes g_i$ for some $f_i\in C(X)$, $g_i\in C(Y)$.
\end{enumerate}
\end{cor}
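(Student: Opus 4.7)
The plan is to derive this corollary directly from Proposition \ref{p:lin.comb.prod} by a careful reading of its proof, exploiting the separate continuity of $h$ to locate the basis and coefficients in $C(Y)$ and $C(X)$ respectively. The implication (b) $\Rightarrow$ (a) is trivial, so all the work lies in (a) $\Rightarrow$ (b).

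The key observation is that separate continuity of $h$ means precisely that each vertical section $h_x$ belongs to $C(Y)$ and each horizontal section $h^y$ belongs to $C(X)$. Hence the subspaces $\V := \sspan\{h_x : x \in X\}$ and $\Hh := \sspan\{h^y : y \in Y\}$ introduced in Proposition \ref{p:lin.comb.prod} satisfy $\V \sq C(Y)$ and $\Hh \sq C(X)$. Assuming (a), the equivalence (a) $\Leftrightarrow$ (c) of that proposition tells us that $\V$ is finite-dimensional.

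From here I would just rerun the construction used in the proof of Proposition \ref{p:lin.comb.prod}: pick any basis $\{g_1,\dots,g_n\}$ for $\V$, which automatically lies in $C(Y)$; then the functions $f_i$ defined by $h_x = \sum_i f_i(x)\,g_i$ are expressed, via choosing $y_1,\dots,y_n$ so that the matrix $(g_i(y_j))_{i,j}$ is invertible, as linear combinations of the horizontal sections $h^{y_1},\dots,h^{y_n}$, hence they lie in $\Hh \sq C(X)$. This yields $h = \sum_{i=1}^n f_i \otimes g_i$ with $f_i \in C(X)$ and $g_i \in C(Y)$, as required.

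There is no real obstacle: the continuity of the factors comes out for free once one notices that the proof of Proposition \ref{p:lin.comb.prod} only uses basis elements from $\V$ and coefficient functions from $\Hh$. The only mildly substantive step is the invertibility of the evaluation matrix $(g_i(y_j))$, which is the same linear-algebraic fact used in the parent proposition and does not interact with the topological hypothesis at all.
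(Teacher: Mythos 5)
Your proof is correct and follows essentially the same route as the paper: apply Proposition \ref{p:lin.comb.prod} to $h$ and observe that separate continuity forces $\V\sq C(Y)$ and $\Hh\sq C(X)$, so the factors produced by that proposition are automatically continuous. The paper states this in one line; your unpacking of the construction (basis of $\V$ among the sections $h_x$, coefficients $f_i$ recovered as combinations of the $h^{y_j}$) is exactly what that one line relies on.
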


Note that when $\mathbb{K}$ is a topological ring and (b) holds, $h$ is in fact jointly continuous since sums and products of continuous functions are then continuous.

\begin{proof}
To show that (a) implies (b), apply the proposition to $h$. The spaces $\V$ and $\Hh$ contains continuous functions since $h$ is separately continuous.
\end{proof}

\begin{prop}\label{p:lin.com.meas}
Suppose $\mathbb{K}=\R$ or $\C$ and $0\leq p\leq \infty$.
Let $(X,\vS,\mu)$ and $(Y,T,\nu)$ be probability spaces with a product space $(X\times Y,\vY,\upsilon)$ so that $[P_0]$, $[C]$ and $[\ov{C}]$ hold. Let $h\in \LL^p(\ups)$.
Write
\begin{align*}
\V & = \sspan\{h_x:h_x\in \LL^p(\nu),\,x\in X\} \\
\Hh & = \sspan\{h^y:h^y\in \LL^p(\mu),\,y\in Y\}
\end{align*}
The following statements are equivalent.
\begin{enumerate}[\rm(a)]
\item
$h =_\ups \sum_{i=1}^n f_i\otimes g_i$ for some $f_i\in \LL^p(\mu)$, $g_i\in \LL^p(\nu)$, $i=1,\dots,n$, $n\in\N$.

\smallskip

\item
$h =_\ups \sum_{i=1}^n f_i\otimes g_i$ for some $f_i\in \Hh$, $g_i\in\V$, $i=1,\dots,n$, $n\in\N$.
\end{enumerate}
\end{prop}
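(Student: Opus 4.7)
The implication (b) $\Rightarrow$ (a) is immediate since $\Hh\sq\LL^p(\mu)$, $\V\sq\LL^p(\nu)$, and each tensor product of elements from these spaces lies in $\LL^p(\mu\otimes\nu)\sq\LL^p(\ups)$ by Remark \ref{PVS}(i) and the fact (Remark \ref{r:df:dn:2}(a)) that $[P_0]+[C]$ gives $[P_1]$. For (a) $\Rightarrow$ (b), choose the representation $h=_\ups\sum_{i=1}^n f_i\otimes g_i$ with $n$ minimal. Then $(f_i^\bullet)_{i=1}^n$ and $(g_i^\bullet)_{i=1}^n$ are linearly independent in $L^p(\mu)$ and $L^p(\nu)$ respectively, since any nontrivial relation would collapse the sum into fewer terms.

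Set $\tilde h:=\sum_i f_i\otimes g_i$; then $\{h\not=\tilde h\}\in\vY$ is $\ups$-null. Applying Proposition \ref{p:C.for.fcts}(iv) and (iv)$^\perp$ to $h$, together with $[C]$ and $[\ov{C}]$ applied to $\{h\not=\tilde h\}$, we select $N\in\vS_0$ and $M\in T_0$ so that for $x\notin N$ one has $h_x\in\LL^p(\nu)$ and $h_x=_\nu\sum_i f_i(x)g_i$, and symmetrically for $y\notin M$ one has $h^y\in\LL^p(\mu)$ and $h^y=_\mu\sum_i g_i(y)f_i$. The crux is now to pick $y_1,\dots,y_n\in M^c$ making $A=(g_i(y_j))_{i,j}$ invertible, and $x_1,\dots,x_n\in N^c$ making $C=(f_i(x_k))_{i,k}$ invertible. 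This is possible because every nonzero combination $\sum_i a_i g_i$ has nonzero class in $L^p(\nu)$, hence is nonzero on a set of positive $\nu$-measure, which cannot be contained in the $\nu$-null set $M$; thus evaluation at points of $M^c$ separates $\sspan\{g_1,\dots,g_n\}$, and the $y_j$ are produced by a standard inductive argument (at each stage pick $y_j\in M^c$ where the function freshly created by a Gauss--Jordan step is nonzero). The $x_k$ are found analogously.

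With $B:=A^{-1}$ and $D:=C^{-1}$, the identity $h^{y_j}=_\mu\sum_i g_i(y_j)f_i$ combined with invertibility of $A$ yields $f_i=_\mu\sum_j B_{ji}h^{y_j}$; analogously, $g_i=_\nu\sum_k D_{ki}h_{x_k}$. Substituting these into $\tilde h$ and applying Remark \ref{PVS}(ii) once in each factor gives
$$\tilde h\;=_{\mu\otimes\nu}\;\sum_{j,k=1}^{n}\Bigl(\sum_{i=1}^n B_{ji}D_{ki}\Bigr)\,h^{y_j}\otimes h_{x_k}.$$
Since the exceptional sets lie in $\vS\otimes T$ and $\mu\otimes\nu$-null sets therein are $\ups$-null by $[P_1]$, the same equality holds modulo $\ups$, and combined with $h=_\ups\tilde h$ we obtain a representation of the form (b), because each $h^{y_j}\in\Hh$ and each $h_{x_k}\in\V$.

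The main obstacle is the selection of the points $x_k\in N^c$ and $y_j\in M^c$ at which the evaluation matrices are invertible: this is where linear independence of the minimal representation (a statement in $L^p$, i.e.\ modulo null sets) must be converted into a pointwise separation property along a positive-measure set that avoids the preassigned null set given by $[C]$ and $[\ov{C}]$.
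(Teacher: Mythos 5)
Your proof is correct and rests on the same key mechanism as the paper's: choosing evaluation points at which the matrices $(g_i(y_j))$ and $(f_i(x_k))$ are invertible and solving for the factors as linear combinations of sections of $h$, which is exactly the content of Proposition \ref{p:lin.comb.prod} that the paper applies to $k\restr S$ with $S=(X\sm N)\times (Y\sm M)$; your only organizational difference is that you secure linear independence via minimality of $n$ and invert both matrices explicitly, rather than restricting to a co-null rectangle and invoking that proposition. The one microscopic point to add is the degenerate case $h=_\ups 0$, where a minimal representation with $n=1$ need not have $f_1^\bullet,g_1^\bullet\not=0$ and the independence claim fails; but there (b) holds trivially by taking $f_1=0\in\Hh$ and $g_1=0\in\V$.
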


\begin{proof}
We need to show that (a) implies (b). Let $f_i$, $g_i$ be as in (a) and set $k:=\sum_{i=1}^n f_i\otimes g_i$. By Remark \ref{PVS} (i), $k\in \LL^p(\ups)$. By $[C]$, for some $N\in\vS_0$ we have $h_x,k_x\in \LL^p(\nu)$ and $h_x =_\nu k_x$ when $x\notin N$.
By $[\ov{C}]$, for some $M\in T_0$ we have that $h^y,k^y\in \LL^p(\mu)$ and $h^y =_\mu k^y$ when $y\notin M$.

Apply Proposition \ref{p:lin.comb.prod} to $k\restr S$, where $S:=(X\sm N)\times (Y\sm M)$, to conclude that we can write $k\restr S = \sum_{j=1}^m u_i\otimes v_j$ where
\begin{enumerate}[(1)]
\item
each $v_j$ is a linear combination of vertical sections of $k\restr S$ of the form
\[
k_x\restr (Y\sm M) =_\nu h_x\restr (Y\sm M),\ \ x\in X\sm N
\]
and hence $v_j =_\nu \bar{g}_j\restr (Y\sm M)$ for some $\bar{g}_j\in\V$,

\item
and similarly each $u_j =_\mu \bar{f}_j\restr (X\sm N)$ for some $\bar{f}_j\in \Hh$.
\end{enumerate}
Then (see Remark \ref{PVS} (ii))
\[
h\restr S = _\ups k\restr S = \sum_{j=1}^m u_i\otimes v_j =_\ups \sum_{j=1}^m (\bar{f}_i\otimes \bar{g}_j)\restr S
\]
Since $\ups(S)=1$ and the functions involved are measurable (Remark \ref{PVS} (i)), it follows that $h = _\ups \sum_{j=1}^m \bar{f}_j\otimes \bar{g}_j$.
\end{proof}

\begin{cor}\label{c:otimes}
Suppose $\mathbb{K}=\R$ or $\C$ and $0\leq p\leq \infty$. Let $(X,\mathfrak{S},\vS,\mu)$ and $(Y,\mathfrak{T},T,\nu)$ be topological probability spaces with a product space $(X\times Y,\mathfrak{S}\times \mathfrak{T},\vY,\upsilon)$ so that $[P_0]$, $[C]$ and $[\ov{C}]$ hold. Let $h\in \LL^p(\ups)$ be separately continuous.
The following statements are equivalent.
\begin{enumerate}[\rm(a)]
\item
$h =_\ups \sum_{i=1}^n f_i\otimes g_i$ for some $f_i\in \LL^p(\nu)$, $g_i\in \LL^p(\mu)$, $i=1,\dots,n$, $n\in\N$.

\smallskip

\item
$h =_\ups \sum_{i=1}^n f_i\otimes g_i$ for some $f_i\in C^p(X)$, $g_i\in C^p(Y)$, $i=1,\dots,n$, $n\in\N$.
\end{enumerate}
\end{cor}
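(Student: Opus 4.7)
The implication (b)$\Rightarrow$(a) is immediate, since by definition $C^p(X)\sq \LL^p(\mu)$ and $C^p(Y)\sq \LL^p(\nu)$.

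For (a)$\Rightarrow$(b), my plan is simply to apply Proposition \ref{p:lin.com.meas} to the given $h$. That proposition produces a representation $h =_\ups \sum_{i=1}^n f_i\otimes g_i$ in which $f_i\in \Hh$ and $g_i\in \V$, where $\Hh$ is the linear span of those horizontal sections $h^y$ lying in $\LL^p(\mu)$ and $\V$ the linear span of those vertical sections $h_x$ lying in $\LL^p(\nu)$. Everything then reduces to showing that, under the separate continuity hypothesis, one has $\Hh\sq C^p(X)$ and $\V\sq C^p(Y)$.

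This last step is where I would use separate continuity: for every $y\in Y$ the section $h^y\colon X\to\mathbb{K}$ is continuous, so if additionally $h^y\in\LL^p(\mu)$ then $h^y\in C(X)\cap \LL^p(\mu) = C^p(X)$; similarly every $h_x$ lying in $\LL^p(\nu)$ belongs to $C^p(Y)$. Since $C^p(X)$ is a linear subspace of $\LL^p(\mu)$ (both $C(X)$ and $\LL^p(\mu)$ being vector spaces, and $C^p(X)$ their intersection), and likewise $C^p(Y)\sq\LL^p(\nu)$, the inclusions $\Hh\sq C^p(X)$ and $\V\sq C^p(Y)$ follow at once by taking linear combinations.

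There is no serious obstacle here: the nontrivial work has already been done in Proposition \ref{p:lin.com.meas}, which was precisely designed so that the representing functions can be chosen from the sections of $h$ themselves. Once that is available, the only remaining content is the trivial observation that separate continuity of $h$ promotes each section to a continuous function, so that one automatically lands in $C^p$ rather than just $\LL^p$.
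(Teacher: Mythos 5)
Your proof is correct and is essentially the argument the paper intends: the implication (b)$\Rightarrow$(a) is trivial, and (a)$\Rightarrow$(b) follows by applying Proposition \ref{p:lin.com.meas} and observing that separate continuity places the spans $\Hh$ and $\V$ of the relevant sections inside $C^p(X)$ and $C^p(Y)$, respectively. (Note in passing that the occurrence of $\LL^p(\nu)$ and $\LL^p(\mu)$ in clause (a) of the corollary is a transposition typo; you have read it in the intended way.)
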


The next propositions give an analog for sets of the results above.

\begin{prop}
Let $E\sq X\times Y$. Write
\[
\V=\{E_x:x\in X\},\ \ \Hh = \{E^y:y\in Y\},
\]
the collections of all vertical and horizontal sections of $E$, respectively. Write $\Hh^* = \{\bigcap\Hh':\Hh'\sq\Hh,\,\Hh'\ \text{\rm finite}\}$. The following statements are equivalent.
\begin{enumerate}[\rm(a)]
\item
$E = \bigcup_{i=1}^n A_i\times B_i$ for some $A_i\in \Hh^*$, $B_i\in\V$, $i=1,\dots,n$,

\smallskip

\item
$E = \bigcup_{i=1}^n A_i\times B_i$ for some $A_i\sq X$, $B_i\sq Y$, $i=1,\dots,n$,

\smallskip

\item
$\Hh$ and $\V$ are finite.
\end{enumerate}
\end{prop}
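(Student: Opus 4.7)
The plan is to show (a) $\Rightarrow$ (b) $\Rightarrow$ (c) $\Rightarrow$ (a). The first implication is immediate since $\Hh^{*}\sq\mathcal{P}(X)$ and $\V\sq\mathcal{P}(Y)$. For (b) $\Rightarrow$ (c), if $E=\bigcup_{i=1}^{n}A_{i}\times B_{i}$ with $A_{i}\sq X$, $B_{i}\sq Y$, then each vertical section $E_{x}=\bigcup\{B_{i}:x\in A_{i}\}$ is determined by the subset $\{i:x\in A_{i}\}$ of $\{1,\ldots,n\}$, so $|\V|\leq 2^{n}$; symmetrically $|\Hh|\leq 2^{n}$.

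The main work is (c) $\Rightarrow$ (a). First dispose of the trivial case $\V=\e$ or $\Hh=\e$ (which forces $E=\e$) separately. Otherwise, enumerate the distinct horizontal sections $\Hh=\{H_{1},\ldots,H_{k}\}$ and for each $\ell$ put $D_{\ell}:=\{y\in Y:E^{y}=H_{\ell}\}$; by construction the $D_{\ell}$ are nonempty and partition $Y$. Writing $S(x):=\{\ell:x\in H_{\ell}\}$, the identity $E_{x}=\{y:x\in E^{y}\}$ yields
\[
E_{x}=\bigcup_{\ell\in S(x)}D_{\ell}\qquad(x\in X).
\]
For each $T\sq\{1,\ldots,k\}$ with $X_{T}:=\{x:S(x)=T\}$ nonempty, fix $x_{T}\in X_{T}$ and set $Y_{T}:=\bigcap_{\ell\in T}H_{\ell}$, so that $Y_{T}\in\Hh^{*}$ and $E_{x_{T}}=\bigcup_{\ell\in T}D_{\ell}\in\V$.

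I will then verify the identity $E=\bigcup_{T\neq\e,\,X_{T}\neq\e}Y_{T}\times E_{x_{T}}$, a finite union of at most $2^{k}-1$ terms, each of the required form. For the inclusion $\subseteq$, given $(x,y)\in E$ set $T=S(x)$; since $y\in E_{x}$ and the $D_{\ell}$ are disjoint and nonempty, $T\neq\e$, and then $x\in Y_{T}$ trivially while $y\in E_{x}=E_{x_{T}}$. Conversely, if $(x,y)\in Y_{T}\times E_{x_{T}}$ then $S(x)\supseteq T$, so $E_{x}\supseteq E_{x_{T}}\ni y$, whence $(x,y)\in E$. The central obstacle is that the partition cells $X_{T}$ of $X$ are typically not in $\Hh^{*}$---they involve complements of horizontal sections---so one cannot simply take the disjoint decomposition $E=\bigsqcup_{T}X_{T}\times E_{x_{T}}$. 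The trick is to enlarge each cell $X_{T}$ to $Y_{T}\in\Hh^{*}$ while pairing it with the minimal compatible section $E_{x_{T}}$; the monotonicity $x\in Y_{T}\Rightarrow E_{x}\supseteq E_{x_{T}}$ guarantees that this enlargement introduces no spurious points of $E$, which is exactly what makes the construction work without access to complementation.
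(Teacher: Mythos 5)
Your proof is correct and rests on the same key construction as the paper's: covering $E$ by rectangles of the form $\bigl(\bigcap\{H\in\Hh:p\in H\}\bigr)\times E_p$ and using the monotonicity $x\in\bigcap_{\ell\in T}H_\ell\Rightarrow E_x\supseteq\bigcup_{\ell\in T}D_\ell$ to see that no spurious points are introduced. The paper argues pointwise over all rectangles $A\times B\sq E$ with $A\in\Hh^*$, $B\in\V$, whereas you organize the same rectangles by the finitely many types $T=S(x)$; this is only a difference of packaging, not of method.
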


\begin{proof}
Clearly (a) implies (b), and (b) implies (c).

There remains to show that (c) implies (a). For this, assuming (c) it suffices to check (a) with the $A_i\times B_i$ ranging over all rectangles contained in $E$ with $A_i\in \Hh^*$, $B_i\in\V$. We verify that these rectangles cover $E$.

Consider a point $(p,q)\in E$. We have $\{p\}\times E_p\sq E$, and if $y\in E_p$ then $p\in E^y\in \Hh$. Thus, $H_0=\bigcap\{H\in \Hh: p\in H\}\in\Hh^*$ satisfies $H_0\sq E^y$, and hence $H_0\times\{y\}\sq E$, for all $y\in E_p$. Thus, $(p,q)\in H_0\times E_p\sq E$.
\end{proof}

\begin{cor}
Let $X$ and $Y$ be topological spaces. Let $U\sq X\times Y$ have open vertical and horizontal sections. The following statements are equivalent.
\begin{enumerate}[\rm(a)]
\item
$U = \bigcup_{i=1}^n A_i\times B_i$ for some $A_i\sq X$, $B_i\sq Y$, $i=1,\dots,n$.

\smallskip

\item
$U = \bigcup_{i=1}^n A_i\times B_i$ for some open sets $A_i\sq X$, $B_i\sq Y$, $i=1,\dots,n$.
\end{enumerate}
\end{cor}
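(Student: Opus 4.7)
The plan is to deduce this corollary directly from the preceding proposition. The implication (b) $\Rightarrow$ (a) is trivial since open sets are in particular subsets. For the nontrivial direction (a) $\Rightarrow$ (b), I would argue as follows.

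First, assume (a) holds, so $U = \bigcup_{i=1}^n A_i\times B_i$ for some (possibly non-open) sets $A_i\sq X$, $B_i\sq Y$. By the equivalence (b) $\Leftrightarrow$ (c) of the preceding proposition applied to $U$, the collections $\V = \{U_x:x\in X\}$ and $\Hh = \{U^y:y\in Y\}$ of vertical and horizontal sections are finite. Then applying the implication (c) $\Rightarrow$ (a) of the preceding proposition, we can rewrite $U$ as $U = \bigcup_{j=1}^m A'_j\times B'_j$ where each $A'_j$ belongs to $\Hh^* = \{\bigcap\Hh':\Hh'\sq\Hh,\,\Hh'\ \text{finite}\}$ and each $B'_j$ belongs to $\V$.

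The key observation is now that our hypotheses on $U$ force these sets to be open. Each $B'_j\in\V$ is by assumption an open vertical section $U_x$ of $U$. Each $A'_j\in\Hh^*$ is a finite intersection of horizontal sections $U^y$, each of which is open by assumption; hence $A'_j$ is itself open as a finite intersection of open sets (where we use the convention that the empty intersection equals $X$, which is open). Thus the rectangles $A'_j\times B'_j$ are all products of open sets, giving (b).

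There is essentially no obstacle here; all the work has been done in the preceding proposition, and the corollary is just the observation that, when the vertical and horizontal sections are open, the canonical decomposition produced by that proposition uses only open rectangles.

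\begin{proof}
(b) $\Rightarrow$ (a) is clear. For (a) $\Rightarrow$ (b), assume (a). By the preceding proposition, the collection $\V$ of vertical sections and $\Hh$ of horizontal sections of $U$ are finite, and $U$ can be written as a finite union of rectangles $A\times B$ with $A\in\Hh^*$ and $B\in\V$. Each such $B\in\V$ is open by hypothesis, and each such $A\in\Hh^*$ is a finite intersection of open horizontal sections of $U$ and is therefore open. This gives (b).
\end{proof}
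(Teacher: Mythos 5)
Your proposal is correct and matches the paper's proof, which likewise deduces the corollary from the preceding proposition by observing that the elements of $\V$ and $\Hh^*$ are open under the stated hypotheses. No issues.
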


Note that when (b) holds, $U$ is in fact open in $X\times Y$.

\begin{proof}
Follows from the proposition since the elements of $\V$ and $\Hh^*$ are open sets.
\end{proof}

We shall use the tensor product notation of the following definition.

\begin{df}\label{PVD2}
\begin{enumerate}
\item
Let $(X,\vS,\mu)$, $(Y,T,\nu)$, $(X\times Y,\vY,\ups)$ be probability spaces satisfying $[P_1]$. If $V$ and $W$ are linear subspaces of $\LL^p(\mu)$ and $\LL^p(\nu)$, respectively, we denote by $V\otimes_\ups W$, or just $V\otimes W$, the linear subspace of ${L}^p(\ups)$ generated by $\{(f\otimes{g})^{\bullet}:f\in V,g\in W\}$.

\item
For $\gamma\in\LL^p(\mu)\to{\K}^X$ and $\eta\in\LL^p(\nu_x)\to{\K}^Y$, let $\gamma\otimes_{p,\ups}\eta$, or just $\gamma\otimes\eta$, denote the set of maps $\pi:\LL^p(\ups)\to{\K}^{X\times Y}$ such that $\pi(f\otimes{g})=\gamma(f)\otimes\eta(g)$ for all $f\in\LL^p(\mu)$, $g\in \LL^p(\nu)$.
\end{enumerate}
\end{df}

\begin{lem}\label{PV10}
Let $(X,\vS,\mu)$, $(Y,T,\nu)$, $(X\times{Y},\vY,\ups)$ be probability spaces satisfying $[P_1]$.
Let $V$ and $W$ be linear subspaces of $\LL^p(\mu)$ and $\LL^p(\nu)$, respectively. If $\langle f_i^{\bullet}\rangle_{i\in I}$ is a basis for $V^\bullet$ and $\langle g_j^{\bullet}\rangle_{j\in J}$ a basis for $W^\bullet$, then $\langle (f_i\otimes{g}_j)^{\bullet}\rangle_{(i,j)\in I\times J}$ is a basis for the linear subspace $V\otimes W$ of ${L}^p(\ups)$.
\end{lem}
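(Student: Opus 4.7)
\bigskip

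\noindent\textbf{Proof plan.} The two things to show are that the family $\langle (f_i\otimes g_j)^\bullet\rangle_{(i,j)\in I\times J}$ spans $V\otimes W$ and is linearly independent in $L^p(\ups)$. For spanning, I would take an arbitrary generator $(f\otimes g)^\bullet$ of $V\otimes W$, write $f=_\mu \sum_{i\in I_0} a_i f_i$ and $g=_\nu\sum_{j\in J_0} b_j g_j$ for some finite $I_0\sq I$, $J_0\sq J$, and use Remark \ref{PVS} (ii) to conclude $f\otimes g =_{\mu\otimes\nu} \sum_{(i,j)\in I_0\times J_0} a_ib_j\,(f_i\otimes g_j)$; since $\ups$ extends $\mu\otimes\nu$ on $\vS\otimes T$ by $[P_1]$, this equality holds $\ups$-a.e.\ as well, so $(f\otimes g)^\bullet = \sum a_ib_j(f_i\otimes g_j)^\bullet$ in $L^p(\ups)$. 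Spanning is immediate from this.

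For linear independence, I would suppose a finite relation
\[
\sum_{(i,j)\in I_0\times J_0} c_{ij}\,(f_i\otimes g_j)^\bullet = 0\quad\text{in}\quad L^p(\ups),
\]
with $I_0\sq I$, $J_0\sq J$ finite. The function $h:=\sum_{(i,j)} c_{ij}(f_i\otimes g_j)$ is $\vS\otimes T$-measurable (each factor is $\vS$- or $T$-measurable, hence its tensor product is $\vS\otimes T$-measurable), and by $[P_1]$, $h=_\ups 0$ forces $h=_{\mu\otimes\nu} 0$. Applying the ordinary Fubini theorem on $(X\times Y,\vS\otimes T,\mu\otimes\nu)$ yields a set $N\in\vS_0$ such that for every $x\notin N$, the horizontal expansion
\[
\sum_{j\in J_0}\Bigl(\,\sum_{i\in I_0} c_{ij}f_i(x)\Bigr)\, g_j(y) = 0\qquad\text{for $\nu$-a.e.\ $y$,}
\]
i.e., $\sum_{j\in J_0}(\sum_{i\in I_0} c_{ij}f_i(x))\, g_j^\bullet = 0$ in $L^p(\nu)$.

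Now I would invoke the assumed linear independence of $\langle g_j^\bullet\rangle_{j\in J_0}$ in $L^p(\nu)$: it gives $\sum_{i\in I_0} c_{ij}f_i(x)=0$ for each $j\in J_0$ and each $x\in N^c$. Rewriting this fact as $\sum_{i\in I_0} c_{ij}f_i^\bullet = 0$ in $L^p(\mu)$ for every fixed $j\in J_0$, the linear independence of $\langle f_i^\bullet\rangle_{i\in I_0}$ then forces $c_{ij}=0$ for all $(i,j)\in I_0\times J_0$, completing the proof. There is no real obstacle here; the only non-cosmetic point is the initial reduction $=_\ups$ $\Rightarrow$ $=_{\mu\otimes\nu}$, which is precisely why we need $[P_1]$ rather than merely $[P_0]$.
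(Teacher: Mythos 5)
Your proposal is correct and follows essentially the same route as the paper: for spanning, expand $f$ and $g$ in the given bases and pass from $=_{\mu\otimes\nu}$ to $=_\ups$ via $[P_1]$; for independence, observe that the relation involves a $\vS\otimes T$-measurable function, reduce to $\mu\otimes\nu$ by $[P_1]$, apply the classical Fubini theorem to the null set where it is nonzero, and then use the independence of the two given bases in succession. The only (immaterial) difference is that you section in $x$ and invoke the independence of the $g_j^\bullet$ first, whereas the paper sections in $y$ and invokes the independence of the $f_i^\bullet$ first.
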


Recall that $V^\bullet$ denotes the image of $V$ in $L^p(\mu)$ under the quotient map $f\mapsto f^\bullet$ and similarly for the other measures.

\begin{rem}\label{r:tensor}
(a) When $\ups=\mu\otimes\nu$, $V=\LL^p(\mu)$, $W=\LL^p(\nu)$, and $p\geq 1$, the lemma describes the standard realization of a tensor product of the Banach spaces $\LL^p(\mu)$ and $\LL^p(\nu)$.

(b) Lemma \ref{PV10} shows that the bilinear map $V^\bullet\times W^\bullet \to V\otimes W$ given by $(f^\bullet,g^\bullet)\mapsto (f\otimes g)^\bullet$ is a tensor product of $V^\bullet$ and $W^\bullet$. (Cf.\ \cite{tr}, pages 403--404. The map $(f^\bullet,g^\bullet)\mapsto (f\otimes g)^\bullet$ is well-defined by Remark \ref{PVS} (ii) and $[P_1]$ and its bilinearity is clear.)
\end{rem}

\begin{proof}
We wish to show that $\langle (f_i\otimes{g}_j)^{\bullet}\rangle_{(i,j)\in I\times J}$ is a basis for $V\otimes W$.

For linear independence, suppose $\sum_{i,j}c_{i,j}(f_i\otimes{g}_j)^{\bullet}=0^{\bullet}$, where $c_{i,j}\in \mathbb{K}$, $\la c_{i,j}\ra$ of finite support.
The function $h:= \sum_{i,j}c_{i,j}(f_i\otimes g_j)$ is $\vS\otimes T$-measurable. Therefore, $M:=\{h\not=0\}\in\vS\otimes T$ and by $[P_1]$, $\mu\otimes\nu(M) = \ups(M)=0$. By the Fubini theorem there exists a set $Q\in T_0$, such that $M^y\in \vS_0$ for every $y\notin Q$.
When $y\notin Q$, set $a_i(y):=\sum_{j}c_{i,j}g_j(y)$. We have for $x\notin M^y\in \vS_0$,  $\sum_{i}f_i(x)\cdot a_i(y)=0$ implying $\sum_{i} a_i(y)f_i^{\bullet}=0^\bullet$.
Since $\langle f_i^{\bullet}\rangle$ is a basis for $V^\bullet$ we get $a_i(y)=0$ for each $i$ and $y\notin Q$, giving $\sum_{j}c_{i,j}g_j^{\bullet}=0^\bullet$ for every $i$ implying
$c_{i,j}=0$ for every $(i,j)$, since $\langle g_j^{\bullet}\rangle$ is a basis for $W^\bullet$. This shows that $\langle (f_i\otimes{g}_j)^{\bullet}\rangle$ is linear independent.

To see that $\langle (f_i\otimes{g}_j)^{\bullet}\rangle$ generates $V\otimes W$, given $f\in V$ and $g\in W$, write $f^{\bullet}=\sum_{i}a_if_i^{\bullet}$, where $a_i\in{\K}$, $\la a_i\ra$ of finite support, and write $g^{\bullet}=\sum_{j}b_jg_j^{\bullet}$, where $b_i\in{\K}$, $\la b_i\ra$ of finite support. There exists sets $N\in \vS_0$, $P\in T_0$ so that $f(x)=\sum_{i}a_if_i(x)$ when $x\notin N$, and $g(y) = \sum_{j}b_jg_j(y)$ when $y\notin P$. Then
$(f\otimes{g})(x,y)=f(x)g(y)=\sum_{i,j}a_ib_jf_i(x)g_j(y)=\sum_{i,j}a_ib_j(f_i\otimes{g}_j)(x,y)$
when $(x,y)\notin (N\times Y)\cup(X\times P)$ and this set belongs to $\vY_0$ by $[P_1]$, so
$(f\otimes{g})^{\bullet}=\sum_{i,j}a_ib_j(f_i\otimes{g}_j)^{\bullet}$. Hence,
$\langle (f_i\otimes{g}_j)^{\bullet}\rangle$ is a basis for the linear subspace $V\otimes{W}$.
\end{proof}

The next proposition is covered by \cite{mms15}, Remark 2.9, when $p=\infty$, when $\mu$ and $\nu$ are complete and $\ups$ is their completed product. Our proof adapts the proof given there to our present context. The proposition also follows directly from the upcoming Theorem \ref{PV30} by taking $\mathfrak{S}=\{\emptyset,X\}$ and $\mathfrak{T}=\{\emptyset,Y\}$. We present it anyway because its proof presents the idea for the proof of Theorem \ref{PV30} in a simper setting.

\begin{prop}\label{PV20}
Let $(X,\vS,\mu)$, $(Y,T,\nu)$, $(X\times{Y},\vY,\ups)$ be probability spaces satisfying $[P_0]$ and $[C]$.
For every
$\gamma\in V^p(\mu)$ and $\eta\in V^p(\nu)$ there exists a $\pi\in V^p(\ups)\cap(\gamma\otimes\eta)$ satisfying $[\pi(f)]_x\in\LL^p(\nu)$ for every $x\in X$.
If $[\ov{C}]$ also holds, then we may require also that $[\pi(f)]^y\in\LL^p(\mu)$ for every $y\in Y$.
\end{prop}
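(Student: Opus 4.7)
The strategy is to prescribe $\pi$ on a basis of $L^p(\ups)$ whose chosen representatives already exhibit every property required of $\pi(f)$. I would pick bases $\la f_i^\bullet\ra_{i\in I}$ of $L^p(\mu)$ and $\la g_j^\bullet\ra_{j\in J}$ of $L^p(\nu)$ arranged so that $f_{i_0}=g_{j_0}=1$ for some distinguished indices $i_0,j_0$. Since $[P_0]+[C]$ implies $[P_1]$ by Remark~\ref{r:df:dn:2}(a), Lemma~\ref{PV10} gives that $\la(f_i\otimes g_j)^\bullet\ra_{(i,j)\in I\times J}$ is a basis for the subspace $\LL^p(\mu)\otimes\LL^p(\nu)\sq L^p(\ups)$; extend it to a full basis of $L^p(\ups)$ by adjoining classes $\la h_k^\bullet\ra_{k\in K}$ with $h_k\in\LL^p(\ups)$.

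Now refine the representatives. For the tensor part, set $h_{i,j}:=\gamma(f_i)\otimes\eta(g_j)$. By Remark~\ref{PVS}(i) and $[P_1]$, $h_{i,j}\in\LL^p(\ups)$; by Remark~\ref{PVS}(ii) together with $\gamma(f_i)=_\mu f_i$ and $\eta(g_j)=_\nu g_j$, $h_{i,j}^\bullet=(f_i\otimes g_j)^\bullet$. Since $\eta(g_j)\in\LL^p(\nu)$, $[h_{i,j}]_x=\gamma(f_i)(x)\,\eta(g_j)\in\LL^p(\nu)$ for every $x\in X$, and symmetrically $[h_{i,j}]^y=\gamma(f_i)\,\eta(g_j)(y)\in\LL^p(\mu)$ for every $y\in Y$. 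For each complementary class, note that $[P_0]$ entails $\vS_0\,\dot{\times}\,Y\sq\vY$; Proposition~\ref{p:C.for.fcts}(iv) (applicable by $[C]$) combined with Corollary~\ref{c:sec}(i) then yields a representative $\tilde h_k\in\LL^p(\ups)$ of $h_k^\bullet$ with $[\tilde h_k]_x\in\LL^p(\nu)$ at every $x\in X$. If $[\ov C]$ additionally holds, $[P_0]$ also gives $X\,\dot{\times}\,T_0\sq\vY$, and Corollary~\ref{c:sec}(ii) allows us to choose $\tilde h_k$ with $[\tilde h_k]^y\in\LL^p(\mu)$ at every $y\in Y$ as well.

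Proposition~\ref{p:exist.of.v.lift} applied to the basis representatives $\{h_{i,j}\}\cup\{\tilde h_k\}$ then produces a unique linear map $\pi\colon\LL^p(\ups)\to\LL^p(\ups)$ with $\pi(h)=_\ups h$, $\pi(h_{i,j})=h_{i,j}$, and $\pi(\tilde h_k)=\tilde h_k$. Since $1=f_{i_0}\otimes g_{j_0}$ has the same class as $h_{i_0,j_0}=\gamma(1)\otimes\eta(1)=1$, we get $\pi(1)=1$, so $\pi\in V^p(\ups)$. For arbitrary $f\in\LL^p(\mu)$ and $g\in\LL^p(\nu)$, expanding $f^\bullet=\sum_i a_if_i^\bullet$, $g^\bullet=\sum_j b_jg_j^\bullet$ and using linearity together with $(l2)$ for $\gamma$ and $\eta$ yields
\[
\pi(f\otimes g)=\sum_{i,j}a_ib_j h_{i,j}=\Bigl(\sum_i a_i\gamma(f_i)\Bigr)\otimes\Bigl(\sum_j b_j\eta(g_j)\Bigr)=\gamma(f)\otimes\eta(g),
\]
so $\pi\in\gamma\otimes\eta$. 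Finally, any value $\pi(f)$ is a finite linear combination of the $h_{i,j}$ and $\tilde h_k$; since each such summand has vertical sections in $\LL^p(\nu)$ at every $x$, so does $\pi(f)$, and under $[\ov C]$ the same reasoning delivers $[\pi(f)]^y\in\LL^p(\mu)$.

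The one substantive technical step is the selection of the $\tilde h_k$ so that the section property holds at every point (not merely almost every point), which is precisely what Corollary~\ref{c:sec} achieves from $[C]$ and $[\ov C]$; once this is in hand, everything else is a direct application of Lemma~\ref{PV10} and Proposition~\ref{p:exist.of.v.lift} to assemble $\pi$ on a tensor-product-style basis.
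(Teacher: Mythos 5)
Your proof is correct and follows essentially the same route as the paper's: build the tensor basis $\la(f_i\otimes g_j)^\bullet\ra$ via Lemma \ref{PV10}, extend it to a basis of $L^p(\ups)$ with representatives whose sections are corrected everywhere by Corollary \ref{c:sec}, and let $\pi$ be the vector lifting fixing those representatives. The only (immaterial) difference is that you take arbitrary $f_i,g_j$ and use $\gamma(f_i)\otimes\eta(g_j)$ as the tensor representatives, whereas the paper first normalizes the bases so that $\gamma(f_i)=f_i$ and $\eta(g_j)=g_j$; these choices coincide after renaming.
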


\begin{proof}
Choose linear bases
$\langle f_i^{\bullet}\rangle_{i\in I}$ for $L^p(\mu)$ and
$\langle g_j^{\bullet}\rangle_{j\in J}$ for $L^p(\nu)$ so that $\gamma(f_i)=f_i$ and $\eta(g_j)=g_j$ for all $i$ and $j$. (See Proposition \ref{p:exist.of.v.lift}.) By Lemma \ref{PV10} the subspace
$\LL^p(\mu)\otimes \LL^p(\nu)$ of $\LL(\ups)$
has the basis $\langle (f_i\otimes{g}_j)^{\bullet}\rangle_{(i,j)\in I\times J}$.

Extend this to a basis $\langle h_k^{\bullet}\rangle _{k\in K}$ for $L^p(\ups)$ with $I\times J\sq K$ and $h_{i,j} = f_i\otimes{g}_j$ for $(i,j)\in I\times J$, where for $k\in K\setminus I\times J$ by Corollary \ref{c:sec} we may choose $h_k$ in such a way, that
$[h_k]_x\in\LL^p(\nu)$ for every $x\in X$. If $[\ov{C}]$ holds we may also arrange
$[h_k]^y\in\LL^p(\mu)$ for every $y\in Y$. For all $h_{i,j} = f_i\otimes g_j$, $(i,j)\in I\times J$, these section properties holds automatically, since $[f_i\otimes g_j]_x=f_i(x)g_j$ and $[f_i\otimes g_j]^y=g_j(y)f_i$.

Let $\pi$ be the vector lifting for $\LL^p(\ups)$ such that $\pi(h_k)=h_k$ for all $k\in K$.
By Example \ref{ex:PV15:1} (b,c), for $h\in \LL^p(\ups)$ we have $[\pi(h)]_x\in\LL^p(\nu)$ for $x\in X$, and if $[\ov{C}]$ holds, $[\pi(h)]^y\in\LL^p(\mu)$ for $y\in Y$.

\begin{clm}\label{clm:pi.gamma.eta}
$\pi(f\otimes{g})=\gamma(f)\otimes\eta(g)$ for every $f\in\LL^p(\mu)$ and $g\in\LL^p(\nu)$.
\end{clm}

\begin{proof}
Write $f^{\bullet}=\sum_i a_if_i^{\bullet}$, $a_{i}\in{\K}$, $\la a_i\ra$ of finite support, and write $g^{\bullet}=\sum_j b_jg_j^{\bullet}$, $b_{j}\in{\K}$, $\la b_j\ra$ of finite support. Since
$(f\otimes{g})^{\bullet}=\sum_{i,j} a_ib_j(f_{i}\otimes g_{j})^{\bullet}$ for $(x,y)\in X\times Y$, we get
\begin{align*}
\ts \pi(f\otimes{g})(x,y) & = \ts \sum_{i,j}a_{i}b_{j}(f_{i}\otimes g_{j})(x,y) = \left(\sum_{i\phantom{j}}\!\! a_{i}f_{i}(x)\right)\left(\sum_jb_{j}g_{j}(y)\right) \\
& = \gamma(f)(x)\cdot\eta(g)(y)=(\gamma(f)\otimes\eta(g))(x,y),
\end{align*}
i.e., $\pi(f\otimes g)=\gamma(f)\otimes\eta(g)$.
\end{proof}

This completes the proof.
\end{proof}

\begin{lem}\label{PVR}
Let $(X,\mathfrak{S},\vS,\mu)$ and $(Y,\mathfrak{T},T,\nu)$ be topological probability spaces satisfying $(N_{C,\mu})$ and
$(N_{C,\nu})$, respectively. For every topological probability space
$(X\times Y,\mathfrak{S}\times\mathfrak{T},\Upsilon,\upsilon)$ satisfying $[C]$, we have $(N_{C,\upsilon})$.
\end{lem}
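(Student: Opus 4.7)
The plan is to show directly that any $h\in C(X\times Y)\cap\mathcal{N}(\ups)$ is identically zero. The key is that joint continuity of $h$ forces the ``exceptional set'' $\{h\neq 0\}$ to be open, and hence measurable with respect to $\vY$; then $[C]$ transports null-ness to the vertical sections, and the two factor conditions $(N_{C,\nu})$, $(N_{C,\mu})$ finish the argument, applied in that order.

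First I would set $E:=\{h\neq 0\}$. Since $h\in C(X\times Y)$ with respect to the topology $\mathfrak{S}\times\mathfrak{T}\sq\vY$, the set $E$ is open and hence belongs to $\vY$, and $h=_\ups 0$ gives $\ups(E)=0$. Applying $[C]$ to $E$, I obtain $N\in\vS_0$ such that $E_x\in T$ for all $x\in N^c$, the map $x\mapsto\nu(E_x)$ is $\vS$-measurable on $N^c$, and
\[
\int_{N^c}\nu(E_x)\,d\mu(x)=\ups(E)=0.
\]
Enlarging $N$ to a set $N'\in\vS_0$ I can assume $\nu(E_x)=0$ for every $x\in (N')^c$.

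Next, fix any such $x\in(N')^c$. The vertical section $h_x\colon Y\to\mathbb{K}$ is continuous, hence in $C(Y)\cap\LL^0(\nu)$, and $\{h_x\neq 0\}=E_x$ is $\nu$-null; thus $h_x\in\mathcal{N}(\nu)\cap C(Y)$. By $(N_{C,\nu})$ this intersection is $\{0\}$, so $h_x\equiv 0$. Now fix an arbitrary $y\in Y$ and look at the horizontal section $h^y\in C(X)$. For every $x\in(N')^c$ we have $h^y(x)=h_x(y)=0$, so $\{h^y\neq 0\}\sq N'\in\vS_0$, giving $h^y\in\mathcal{N}(\mu)\cap C(X)$. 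By $(N_{C,\mu})$ this forces $h^y\equiv 0$; since $y\in Y$ was arbitrary, $h\equiv 0$, establishing $\mathcal{N}(\ups)\cap C(X\times Y)=\{0\}$.

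There is no real obstacle here; the one thing to double check is that the argument never needs $[\ov{C}]$, which is why the asymmetric sequence ``first kill vertical sections using $(N_{C,\nu})$, then use the remaining freedom in $x$ to kill horizontal sections via $(N_{C,\mu})$'' is the correct order, rather than invoking $[\ov{C}]$ to handle the two coordinates symmetrically.
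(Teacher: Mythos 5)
Your proof is correct and follows essentially the same route as the paper's: apply $[C]$ to conclude that $\nu$-almost every vertical section $h_x$ is $\nu$-null, kill those sections with $(N_{C,\nu})$, deduce that every horizontal section $h^y$ is $\mu$-null, and finish with $(N_{C,\mu})$. The only cosmetic difference is that you route the first step through the set $E=\{h\neq 0\}$ rather than citing the function version of $[C]$ directly.
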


\begin{proof} From $h\in C(X\times Y)$, it follows that $h_x\in C(Y)$ for every $x\in X$ and $h^y\in C(X)$ for every $y\in Y$. If $h=_\ups0$ then by $[C]$ there is an $M\in\vS_0$ with $h_x=_\nu0$ for $x\notin M$. Then for all $x\notin M$, $h_x=0$ by $(N_{C,\nu})$. Thus, $h^y=_\mu 0$ for all $y\in Y$  and hence $h^y=0$ for all $y\in Y$ by $(N_{C,\mu})$. This gives $h=0$, so $(N_{C,\upsilon})$ is satisfied.
\end{proof}

\begin{thm}\label{PV30}
Let $(X,\mathfrak{S},\vS,\mu)$, $(Y,\mathfrak{T},T,\nu)$ and $(X\times{Y},\mathfrak{S}\times\mathfrak{T},\vY,\ups)$ be topological probability spaces satisfying $[P_0]$ and $[C]$. Assume that $(X,\mathfrak{S},\vS,\mu)$ and $(Y,\mathfrak{T},T,\nu)$ satisfy $(N_{C,\mu})$ and $(N_{C,\nu})$, respectively. Then
$(X\times Y,\mathfrak{S}\times\mathfrak{T},\vY,\ups)$ satisfies $(N_{C,\upsilon})$, and for every
$\gamma\in V^p(\mathfrak{S},\mu)$ and $\eta\in V^p(\mathfrak{T},\nu)$ there exists a
$\pi\in(\gamma\otimes\eta)\cap V^p(\mathfrak{S}\times\mathfrak{T},\ups)$ so that for $f\in\LL^p(\ups)$ we have $[\pi(f)]_x\in\LL^p(\nu)$ for all $x\in X$. If $[\ov{C}]$ also holds, then we may require also that $[\pi(f)]^y\in\LL^p(\mu)$ for every $y\in Y$.
\end{thm}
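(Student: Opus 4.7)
The plan is to adapt the basis-extension argument of Proposition \ref{PV20} to the strong setting, using $(N_{C,\ups})$ together with Corollary \ref{c:otimes} to enforce strongness. The condition $(N_{C,\ups})$ is immediate from Lemma \ref{PVR}, so the work lies in constructing $\pi$.

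First, exploit strongness of $\gamma$ to build a well-behaved basis on the $X$-side. Since $(N_{C,\mu})$ holds, the quotient map $f\mapsto f^\bullet$ restricts to a linear isomorphism $C^p(X)\to C^p(X)^\bullet$, so we may fix a family $\la f_i\ra_{i\in I_0}\sq C^p(X)$ whose classes form a basis of $C^p(X)^\bullet$. Since $\gamma$ is strong, $\gamma(f_i)=f_i$ for $i\in I_0$. Extend to a basis $\la f_i^\bullet\ra_{i\in I}$ of $L^p(\mu)$ and, for each $i\in I\sm I_0$, replace the chosen representative by $\gamma(f_i)$; using $(l1)$ and $(l2)$ this gives $\gamma(f_i)=f_i$ for all $i\in I$ without changing the basis. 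Construct $\la g_j\ra_{j\in J}$ with $J_0\sq J$ analogously for $\eta$. By Lemma \ref{PV10}, $\la (f_i\otimes g_j)^\bullet\ra_{(i,j)\in I\times J}$ is a basis of $\LL^p(\mu)\otimes\LL^p(\nu)\sq L^p(\ups)$.

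Next, apply Corollary \ref{c:otimes} to identify
\[
C^p(X\times Y)^\bullet\cap (\LL^p(\mu)\otimes \LL^p(\nu)) \;=\;(C^p(X)\otimes C^p(Y))^\bullet \;=\;\sspan\{(f_i\otimes g_j)^\bullet:(i,j)\in I_0\times J_0\},
\]
since any separately continuous $h\in \LL^p(\ups)$ that is a $\sum f_i\otimes g_i$ mod $\ups$ with $\LL^p$ factors can be rewritten with continuous $\LL^p$ factors. Extend the tensor-product basis to a basis of $W:=C^p(X\times Y)^\bullet+\LL^p(\mu)\otimes\LL^p(\nu)$ by adjoining $\la h_k^\bullet\ra_{k\in K_0}$ with $h_k\in C^p(X\times Y)$. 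Finally, extend to a basis of $L^p(\ups)$ by adding $\la h_k^\bullet\ra_{k\in K\sm K_0}$, where Corollary \ref{c:sec} (together with Remark \ref{r:PV15:1} and Example \ref{ex:PV15:1}(b),(c)) allows us to choose representatives with $[h_k]_x\in\LL^p(\nu)$ for every $x\in X$, and, if $[\ov{C}]$ also holds, with $[h_k]^y\in\LL^p(\mu)$ for every $y\in Y$. Let $\pi\in V^p(\ups)$ be the unique vector lifting which fixes every basis element in this construction.

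It remains to verify the three conditions. The identity $\pi(f\otimes g)=\gamma(f)\otimes\eta(g)$ for $f\in\LL^p(\mu)$, $g\in\LL^p(\nu)$ is obtained by expanding $f^\bullet=\sum_i a_i f_i^\bullet$ and $g^\bullet=\sum_j b_j g_j^\bullet$, combining them as $(f\otimes g)^\bullet=\sum_{i,j}a_ib_j(f_i\otimes g_j)^\bullet$ (Lemma \ref{PV10}), and computing $\pi$ on each side as in Claim \ref{clm:pi.gamma.eta}. For strongness, any $h\in C^p(X\times Y)$ has $h^\bullet$ a finite linear combination of basis elements drawn from $\{(f_i\otimes g_j)^\bullet:(i,j)\in I_0\times J_0\}\cup\{h_k^\bullet:k\in K_0\}$, all of which are fixed by $\pi$; hence $\pi(h)$ is the corresponding sum of continuous functions, equal to $h$ modulo $\ups$, and therefore equal to $h$ by $(N_{C,\ups})$. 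The section property transfers from basis elements to $\pi(f)$ by linearity, since the only sections we need are $[f_i\otimes g_j]_x=f_i(x)g_j\in\LL^p(\nu)$ and the prescribed $[h_k]_x$. The main delicate point is the application of Corollary \ref{c:otimes}: without it, we cannot align the continuous part of $L^p(\ups)$ with our tensor-product basis, and the strongness step would collapse.
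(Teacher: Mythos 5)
Your proposal is correct and follows essentially the same route as the paper's proof: the same strong bases for $C^p(X)$ and $C^p(Y)$ fixed by $\gamma$ and $\eta$, the same use of Corollary \ref{c:otimes} to establish that $C^p(X\times Y)^\bullet\cap(\LL^p(\mu)\otimes\LL^p(\nu))=C^p(X)\otimes C^p(Y)$ (the paper's Claim \ref{clm:1}), the same three-stage basis extension with section-controlled representatives from Corollary \ref{c:sec}, and the same verification via Claim \ref{clm:pi.gamma.eta} and $(N_{C,\upsilon})$.
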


\begin{proof}
$(X\times Y,\mathfrak{S}\times\mathfrak{T},\Upsilon,\upsilon)$ satisfies $(N_{C,\upsilon})$ by  Lemma \ref{PVR}.

Note that $V^p(\mathfrak{S},\mu), V^p(\mathfrak{T},\nu)\not=\emptyset$ by Proposition \ref{V30}. By the assumptions $(N_{C,\mu})$ and $(N_{C,\nu})$, $C^p(X)$ and $C^p(Y)$ are linearly isomorphic to their images in $L^p(\mu)$ and $L^p(\nu)$, respectively. Choose a basis $\la f_i\ra_{i\in K}$ for $C^p(X)$ and a basis $\la g_j\ra_{j\in L}$ for $C^p(Y)$. Then $\gamma(f_i)=f_i$ and $\eta(g_j)=g_j$ for each $i\in K$, $j\in L$ since $\gamma$ and $\eta$ are strong.

Extend $\langle f_i^{\bullet}\rangle_{i\in K}$ to a basis $\langle f_i^{\bullet}\rangle_{i\in I}$ for $L^p(\mu)$, $K\sq I$, and extend $\langle g_j^{\bullet}\rangle_{j\in L}$ to a basis $\langle g_j^{\bullet}\rangle_{j\in J}$ for $L^p(\nu)$, $L\sq J$, choosing the representative functions so that $\gamma(f_i)=f_i$ and $\eta(g_j)=g_j$ also for $i\in I\sm K$, $j\in J\sm L$.

By Lemma \ref{PV10}, $\langle(f_k\otimes{g}_l)^\bullet\rangle_{(k,l)\in K\times L}$ is a linear basis for ${C}^p(X)\otimes C^p(Y)$.
Let $C: = C^p(X\times Y)^{\bullet}$. This is a linear subspace of $L^p(\ups)$ which by $(N_{C,\upsilon})$ is linearly isomorphic to $C^p(X\times Y)$ (the isomorphism being $h^\bullet\mapsto h$ for each $h\in C^p(X\times Y)$).

\begin{clm}\label{clm:1}
$C\cap(\LL^p(\mu) \otimes \LL^p(\nu)) = C^p(X)\otimes C^p(Y)$.
\end{clm}

\begin{proof}
The inclusion from right to left is clear. For the other direction, suppose $h\in C^p(X\times Y)$ and for some functions $f_i\in\LL^p(X)$ and $g_i\in\LL^p(Y)$, $i=1,\dots,n$, we have $h=_\ups \sum_{i=1}^n f_i\otimes g_i$. By Corollary \ref{c:otimes}, we can take $f_i\in C^p(X)$, $g_i\in C^p(Y)$, so $h^\bullet \in C^p(X)\otimes C^p(Y)$.
\end{proof}

Extend $\langle (f_k\otimes g_l)^\bullet\rangle_{(k,l)\in K\times L}$ to a basis $\langle h_r^\bullet\rangle_{r\in R}$ for $C$ with $K\times L\sq R$, each $h_r\in C^p(X\times Y)$, and $h_{k,l} = f_k\otimes{g}_l$ for every $(k,l)\in{K}\times{L}$. Then $[h_r]_x\in C^p(Y)\sq\LL^p(\nu)$ for $x\in X$ and $[h_r]^y\in C^p(X)\sq\LL^p(\mu)$ for $y\in Y$. Also $\langle h_r\rangle_{r\in R}$ is a basis for $C^p(X\times Y)$ since the latter is isomorphic to $C$.

Again by  Lemma \ref{PV10} it follows that $\langle(f_i\otimes{g}_j)^\bullet\rangle_{(i,j)\in{I}\times{J}}$ is a basis for $\LL^p(\mu)\otimes\LL^p(\nu)$.
Write $V$ for the linear subspace of $L^p(\upsilon)$ generated by $C$ and $\LL^p(\mu)\otimes\LL^p(\nu)$. By Claim \ref{clm:1}, $V$ has the basis $\langle(f_i\otimes{g}_j)^\bullet\rangle_{(i,j)\in I\times{J}}\cup \langle h_r^{\bullet}\rangle_{r\in R}$.
Extend this to a basis $\langle h_r^{\bullet}\rangle_{r\in S}$ for $L^p(\upsilon)$ with $(I\times J)\cup R\sq S$, where $h_{i,j}^\bullet = (f_i\otimes{g}_j)^{\bullet}$ for $(i,j)\in I\times J$. By Corollary \ref{c:sec}, we can choose the new representatives $h_r$ for $r\in S\sm ((I\times{J})\cup R)$ to satisfy $[h_r]_x\in\LL^p(\nu)$ for $x\in X$. If $[\ov{C}]$ holds we may also arrange
$[h_r]^y\in\LL^p(\mu)$ for every $y\in Y$. Together with our earlier choices, we get that these section properties now hold for every $r\in S$.

Let $\pi$ be the unique vector lifting for $\LL^p(\ups)$ such that $\pi(h_r)=h_r$ for each $r\in S$. By Example \ref{ex:PV15:1} (b,c), for $f\in\LL^p(\ups)$ we have $[\pi(f)]_x\in\LL^p(\nu)$ for $x\in X$, and if $[\ov{C}]$ holds, $[\pi(f)]^y\in\LL^p(\mu)$ for $y\in Y$.
By Example \ref{ex:PV15:1} (d), $\pi\in V(\mathfrak{S}\times\mathfrak{T},\upsilon)$ since by construction $\pi$ fixes each basis element of $C^p(X\times Y)$.
As in the proof of Claim \ref{clm:pi.gamma.eta}, we have $\pi(f\otimes{g})=\gamma(f)\otimes\eta(g)$ for every $f\in\LL^p(\mu)$ and $g\in\LL^p(\nu)$. This completes the proof of the theorem.
\end{proof}

\begin{cor}\label{PV29a}
Let $(X,\mathfrak{S},\vS,\mu)$ and $(Y,\mathfrak{T},T,\nu)$ be  $\tau$-additive topological probability spaces with $\nu$ complete, satisfying $(N_{C,\mu})$ and $(N_{C,\nu})$, respectively. Their $\tau$-additive product
$(X\times Y,\mathfrak{S}\times\mathfrak{T},\Upsilon,\upsilon)$ satisfies $(N_{C,\upsilon})$, and for every
$\gamma\in V^p(\mathfrak{S},\mu)$ and $\eta\in V^p(\mathfrak{T},\nu)$ there exists a
$\pi\in(\gamma\otimes\eta)\cap V^p(\mathfrak{S}\times\mathfrak{T},\upsilon)$ for every $f\in\LL^p(\upsilon)$ satisfying
$[\pi(f)]_x\in\LL^p(\nu)$ for $x\in X$. If $\mu$ is also complete, then we can require also that $[\pi(f)]^y\in\LL^p(\nu)$ for $y\in Y$.
\end{cor}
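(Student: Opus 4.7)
The plan is to deduce this corollary directly from Theorem \ref{PV30}. The task reduces to verifying the hypotheses of that theorem for the $\tau$-additive product $(X\times Y,\mathfrak{S}\times\mathfrak{T},\vY,\upsilon)$, namely $[P_0]$ and $[C]$, and additionally $[\ov{C}]$ when $\mu$ is also complete. Once these are established, Theorem \ref{PV30} applies verbatim and yields both $(N_{C,\upsilon})$ and the desired $\pi\in(\gamma\otimes\eta)\cap V^p(\mathfrak{S}\times\mathfrak{T},\upsilon)$ with $[\pi(f)]_x\in\LL^p(\nu)$ for all $x\in X$, and under the additional completeness of $\mu$, also $[\pi(f)]^y\in\LL^p(\mu)$ for all $y\in Y$.

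Property $[P_0]$ is immediate from the definition of the $\tau$-additive product: by construction $\upsilon$ extends the product measure $\mu\otimes\nu$ defined on $\vS\otimes T$, so $\vS\otimes T\sq\vY$. For $[C]$, I would invoke the standard Fubini-type theorem for $\tau$-additive products in the case when one factor is a complete $\tau$-additive measure (see for instance \cite{fr4}, \S 417): under the present hypotheses, every $E\in\vY$ admits an $N\in\vS_0$ such that $E_x\in T$ for $x\notin N$, the map $x\mapsto\nu(E_x)$ is $\vS$-measurable on $N^c$, and $\upsilon(E)=\int_{N^c}\nu(E_x)\,d\mu(x)$, which is precisely $[C]$. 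Completeness of $\nu$ is essential here: it ensures $T=\wh{T}$, so that the sections $E_x$ belong to $T$ rather than merely $\wh{T}$. When $\mu$ is also assumed complete, the symmetric argument (interchanging the roles of the two coordinates) delivers $[\ov{C}]$.

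The main difficulty is thus not in the corollary itself but in locating the precise form of the Fubini theorem for $\tau$-additive products that produces $[C]$ when only one factor is complete, and in confirming that the $\tau$-additive product construction fits cleanly within the triple framework of Definition \ref{df:dn:2}. Once this bookkeeping is settled, the rest is a direct invocation of Theorem \ref{PV30}; no new construction of a vector lifting is required, since that theorem already packages the basis extension argument (Example \ref{ex:PV15:1} together with Lemma \ref{PV10} and Corollary \ref{c:sec}) needed to produce $\pi$.
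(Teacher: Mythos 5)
Your proposal is correct and follows essentially the same route as the paper: the paper's proof likewise observes that $[P_0]$ and $[C]$ (and $[\ov{C}]$ when $\mu$ is complete) hold for the $\tau$-additive product — citing \cite{bms5}, Theorem 2.15 rather than \cite{fr4} for this Fubini-type fact — and then invokes Theorem \ref{PV30} directly. Your remark that completeness of $\nu$ is what puts the sections $E_x$ in $T$ rather than $\wh{T}$ correctly identifies the role of that hypothesis.
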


\begin{proof} The properties $[P_0]$ and $[C]$ (and $[\ov{C}]$ if $\mu$ is complete) for the spaces $(X,\mathfrak{S},\vS,\mu)$, $(Y,\mathfrak{T},T,\nu)$ and $(X\times{Y},\mathfrak{S}\times\mathfrak{T},\vY,\ups)$ are consequences of \cite{bms5}, Theorem  2.15, and so the desired result is immediate by Theorem \ref{PV30}
\end{proof}

\begin{ex}\label{PV29b}
Let $(Y,\mathfrak{T},T,\nu)$ be the hyperstonian space
(see \cite{fr3}, 321K, where it is called the Stone space) of a probability space, and let $\s$ be the canonical strong lifting for $\nu$ which selects the unique clopen set from each measure class. By Proposition \ref{V30} the space $(Y,\mathfrak{T},T,\nu)$ satisfies the property $(N_{C,\nu})$, and so by Corollary \ref{PV29a} the $\tau$-additive product $(Y\times{Y},\mathfrak{T}\times\mathfrak{T}, \vY,\ups)$ of $(Y,\mathfrak{T},T,\nu)$ by itself satisfies $(N_{C,\ups})$, and there exists a $\pi\in(\s\otimes\s)\cap{V}^\infty_\R(\mathfrak{T}\times\mathfrak{T},\ups)$ for every $f\in\LL^\infty(\R,\ups)$ satisfying $[\pi(f)]_x, [\pi(f)]^y\in\LL^\infty(\R,\nu)$ for all $x,y\in{Y}$. Could $\pi$ be converted into a strong (linear) lifting on $\LL^\infty(\R,\ups)$?
\end{ex}

\begin{que}\label{q:resp.coor}
Given a family of probability spaces $(X_i,\vS_i,\mu_i)$, $i\in I$, with product $(X,\vS,\mu)$, is there always a vector lifting for $\LL^\infty(\mu)$ which respects coordinates in the sense of \cite{fr3} 346A?
\end{que}

\section{Marginals}
\label{s:v.marg}

Extending Definitions 4.2 and 4.4 of \cite{bms6} (cf.\ Definitions 3.1 and 3.2 of \cite{mms14}) in a natural way, we make the following definitions.

\begin{df}\label{VMD}
Fix sets $X$ and $Y$. Also fix a map $\delta\colon U\to \mathcal{P}(Y)$, where $U\sq \mathcal{P}(Y)$, as well as a family of maps $\eta=(\eta_x)_{x\in X}$, where for each $x\in X$, $\eta_x:A_x \to\K^Y$ for some $A_x\sq \K^Y$.

(a) Define $\delta_{\bullet}(E)\sq X\times Y$ for $E\sq X\times Y$ by
\[
[\delta_{\bullet}(E)]_x = \de(E_x)\ \text{if $E_x\in U$},\ \text{and}\ [\delta_\bullet(E)]_x=\e\ \text{otherwise}.
\]
We call $\de$ a \emph{$2$-marginal} with respect to a family $\mathcal{E}\sq\mathcal{P}(X\times Y)$, if $\de_\bullet(E)\in\mathcal{E}$ for every $E\in\mathcal{E}$.

Interchanging the roles of the two coordinates, we similarly define $\eta^\bullet(E)\sq X\times Y$ for $\eta:V\to\mathcal{P}(X)$, where $V\sq\mathcal{P}(X)$, by setting $[\eta^\bullet(E)]^y = \eta(E^y)$ when $E^y\in V$, $[\eta^\bullet(E)]^y=\e$ otherwise. Call $\eta$ a \emph{$1$-marginal} with respect to $\mathcal{E}\sq\mathcal{P}(X\times Y)$ if $\de^\bullet(E)\in\mathcal{E}$ for every $E\in\mathcal{E}$.

(b) Define $\eta_\bullet(f):X\times{Y}\to\K$ for $f:X\times{Y}\to\K$ by
\[
\bigl(\eta_\bullet(f)\bigr)(x,y):=\eta_x(f_x)(y),\;\mbox{if}\; f_x\in A_x \;\mbox{and}\;\; \bigl(\eta_\bullet(f)\bigr)(x,y):=0\;\mbox{otherwise}.
\]
We call $\eta$ a \emph{$2$-marginal} with respect to a family $\mathcal{F}\sq \K^{X\times Y}$, if $\eta_\bullet(f)\in \mathcal{F}$ for each $f\in\mathcal{F}$.

(c) Similarly, given a single map $\eta: A\to\K^Y$, where $A\sq\K^Y$, define $\eta_\bullet(f)$ for $f:X\times{Y}\to\K$ by
\[
\bigl(\eta_{\bullet}(f)\bigr)(x,y):=\eta(f_x)(y),\;\mbox{if}\; f_x\in A\;\mbox{and}\;\; \bigl(\eta_{\bullet}(f)\bigr)(x,y):=0\;\mbox{otherwise}.
\]
In this latter setting, interchanging the roles of the two coordinates, we similarly define $\theta^\bullet(f):X\times{Y}\to\K$ for $\theta:B\to\K^X$, where $B\sq\K^X$, by setting $\theta^\bullet(f)(x,y) = \theta(f^y)(x)$ when $f^y\in B$, $=0$ otherwise. We call $\theta$ a \emph{$1$-marginal} with respect to $\mathcal{F}\sq \K^{X\times Y}$ if $\theta^\bullet(f)\in\mathcal{F}$ for each $f\in \mathcal{F}$.
\end{df}

The next proposition is a stronger version of \cite{bms6}, Remark 4.6 (e).

\begin{prop}\label{p:slc30d}
Let $p\in[0,\infty]$. Let $(X,\vS,\mu)$, $(Y,T_x,\nu_x)_{x\in X}$, and $(X\times{Y},\vY,\ups)$ be probability spaces satisfying $[C]$ and $\vS\,\dot{\times}\, Y\sq\vY$. Let $T=(T_x)_{x\in X}$, $\nu=(\nu_x)_{x\in X}$. Define the four families
\begin{align*}
A_1 & := \LL^p(\ups) \\
A_2 & := \LL^p(\ups,\nu) := \{f\in\LL^p(\ups): f_x\in\LL^p(\nu_x)\;\;\mbox{for all}\;\; x\in{X}\} \\
A_3 & := \LL^{0,p}_\mu(\vY,\nu) := \{f\in\LL^0(\vY): \text{for some}\ N\in\vS_0,\ x\in N^c\ \text{implies}\ f_x\in\LL^p(\nu_x)\} \\
A_4 & := \LL^{0,p}(\vY,\nu) := \{f\in\LL^0(\vY): f_x\in\LL^p(\nu_x)\;\;\mbox{for all}\;\; x\in{X}\}
\end{align*}
We also write $A_3$, $A_4$ as $\LL^{0,p}_\mu(\ups,\nu)$, $\LL^{0,p}(\ups,\nu)$, respectively.
Let $\rho=(\rho_x)_{x\in X}$, with $\rho_x\colon\LL^p(\nu_x)\to\LL^p(\nu_x)$ satisfying $(l1)$ and $(l2)$. Suppose that one of the following completeness assumptions holds.
\begin{enumerate}
\item[$(\al)$]
$(\vS_0\,\wh{\times}\, Y)[T]\sq \vY$.

\item[$(\beta)$]
$\rho_x(0)=0$ for $x\in X$, and $\mu$ is complete.
\end{enumerate}
Then all implications among the conditions $\rho_\bullet(A_i)\sq A_j$, $i,j=1,2,3,4$, hold, omitting, when $p\not=0$, those implications involving a condition where $i\in \{3,4\}$ and $j\in \{1,2\}$.
\end{prop}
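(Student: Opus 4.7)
The plan is to reduce all twelve non-omitted conditions to a single nontrivial implication. First, the inclusions $A_2\sq A_1\sq A_3$ and $A_2\sq A_4\sq A_3$ (where $A_1\sq A_3$ uses Proposition~\ref{p:C.for.fcts}~(iv)) give monotonicity of the conditions in $i$ and $j$. Second, by Definition~\ref{VMD}~(c) the section $[\rho_\bullet(f)]_x$ is always either $\rho_x(f_x)\in\LL^p(\nu_x)$ or $0$, so every $\rho_\bullet(f)$ automatically satisfies the ``all sections in $\LL^p(\nu_x)$'' condition distinguishing $A_2$ from $A_1$ and $A_4$ from $A_3$. Thus for each fixed $i$, $\rho_\bullet(A_i)\sq A_3$ and $\rho_\bullet(A_i)\sq A_4$ both amount to ``$\rho_\bullet(f)\in\LL^0(\vY)$ for every $f\in A_i$,'' and when $i\in\{1,2\}$ the conditions $\rho_\bullet(A_i)\sq A_1$ and $\rho_\bullet(A_i)\sq A_2$ both amount to ``$\rho_\bullet(f)\in\LL^p(\ups)$.''

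For $i\in\{1,2\}$, so $f\in\LL^p(\ups)$, once $\rho_\bullet(f)$ is $\vY$-measurable we have $\rho_\bullet(f)=_\ups f$: outside the $\mu$-null $N$ from Proposition~\ref{p:C.for.fcts}~(iv), the discrepancy at $x\in N^c$ is $\nu_x$-null by $(l1)$, while on $N\times Y\in\vY_0$ the contribution to the $\ups$-integral vanishes; applying $[C]$ gives $\ups$-measure zero, hence $\rho_\bullet(f)\in\LL^p(\ups)$. So all four conditions for $i\in\{1,2\}$ collapse, and together with monotonicity the problem reduces to the equivalence
\[
\rho_\bullet(A_2)\sq A_3\;\Longleftrightarrow\;\rho_\bullet(A_3)\sq A_3,
\]
whose easy direction follows from $A_2\sq A_3$. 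For $p=0$, Proposition~\ref{p:C.for.fcts}~(iv) yields $A_1=A_3$ and $A_2=A_4$, so the omitted conditions merge into the non-omitted ones automatically.

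For the nontrivial direction, fix $f\in A_3$ with null $N\in\vS_0$ such that $f_x\in\LL^p(\nu_x)$ on $N^c$, and set $f':=f\cdot\chi_{N^c\times Y}\in A_4$. A direct computation gives $\rho_\bullet(f')-\rho_\bullet(f)=\rho_x(0)(y)\chi_{N\times Y}(x,y)$. Under $(\beta)$ this vanishes since $\rho_x(0)=0$; under $(\alpha)$ it has $T_x$-measurable sections supported in $N\times Y\in\vS_0\,\wh{\times}\,Y$, so it belongs to $(\vS_0\,\wh{\times}\,Y)[T]\sq\vY$ and is $\vY$-measurable. Either way, it suffices to prove $\rho_\bullet(f')$ is $\vY$-measurable. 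By Proposition~\ref{p:C.for.fcts}~(i) applied to $|f'|^p$ when $p<\infty$, respectively Proposition~\ref{p:C.for.fcts}~(ii) when $p=\infty$, there is $I\in\vS_0$ with $x\mapsto\|f'_x\|_p$ being $\vS$-measurable on $I^c$. Set $A_n:=\{x\in I^c:\|f'_x\|_p\leq n\}\in\vS$ and $f'_n:=f'\cdot\chi_{A_n\times Y}$. Then $A_n\times Y\in\vY$ by $\vS\,\dot{\times}\,Y\sq\vY$, $(f'_n)_x\in\LL^p(\nu_x)$ for all $x$, and $f'_n\in A_2$ (integral at most $n^p$ via $[C]$, or $\|f'_n\|_\infty\leq n$ via $[C]$ when $p=\infty$). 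By hypothesis each $\rho_\bullet(f'_n)$ is $\vY$-measurable, and explicitly
\[
\rho_\bullet(f'_n)=\rho_\bullet(f')\cdot\chi_{A_n\times Y}+\rho_x(0)(y)\cdot\chi_{A_n^c\times Y}.
\]

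Under $(\beta)$, $\mu$-completeness lets us extend $x\mapsto\|f'_x\|_p$ to a $\vS$-measurable function on all of $X$ by assigning arbitrary values on $I\in\vS_0$ (subsets of $I$ are in $\vS$ by completeness), so we may take $A_n\uparrow X$; then $\rho_x(0)=0$ gives $\rho_\bullet(f'_n)\to\rho_\bullet(f')$ pointwise, yielding $\vY$-measurability. Under $(\alpha)$, $A_n\uparrow I^c$, the pointwise limit equals $\rho_\bullet(f')\chi_{I^c\times Y}+\rho_\bullet(0)\chi_{I\times Y}$ and is $\vY$-measurable, while the remaining discrepancy $(\rho_x(f'_x)-\rho_x(0))\chi_{I\times Y}$ has $T_x$-measurable sections supported in $I\times Y\in\vS_0\,\wh{\times}\,Y$, so lies in $(\vS_0\,\wh{\times}\,Y)[T]\sq\vY$; summing these yields $\rho_\bullet(f')\in\LL^0(\vY)$. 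The main obstacle is absorbing these ``boundary'' contributions on $N\times Y$ and $I\times Y$: each assumption handles them by a different mechanism, $(\alpha)$ via the ideal-theoretic inclusion $(\vS_0\,\wh{\times}\,Y)[T]\sq\vY$, and $(\beta)$ via $\rho_x(0)=0$ combined with $\mu$-completeness.
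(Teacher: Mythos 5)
Your proposal is correct and follows essentially the same route as the paper: reduce via the inclusions $A_2\sq A_1\sq A_3$, $A_2\sq A_4\sq A_3$ and the observation that $\rho_\bullet(f)$ automatically has $\LL^p$-sections to a single implication, then zero out $f$ on a null cylinder, truncate to land in $A_2$, pass to the pointwise limit, and absorb the boundary sets via $(\alpha)$ or $(\beta)$ (the paper truncates values for $p=\infty$ where you restrict to $\{\|f'_x\|_\infty\leq n\}$, but both work). One small slip: the ``direct computation'' $\rho_\bullet(f')-\rho_\bullet(f)=\rho_x(0)(y)\chi_{N\times Y}(x,y)$ is only valid when $N$ is exactly $\{x:f_x\notin\LL^p(\nu_x)\}$, since for $x\in N$ with $f_x\in\LL^p(\nu_x)$ the section of $\rho_\bullet(f)$ is $\rho_x(f_x)$, not $0$; under $(\beta)$ you must therefore first shrink $N$ to that exact set (possible by completeness of $\mu$, exactly as the paper does), while under $(\alpha)$ your argument is unaffected because it only uses the support and section-measurability of the difference, not the formula.
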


\begin{rem}\label{r:slc30d}
(a) The omitted conditions in the last sentence all say that applying $\rho_\bullet$ to a function with $\LL^p$ sections, will create an $\LL^p$ function on the product, which of course will not hold in general when $p\not=0$.

(b) The conclusion gives in particular that the conditions ``$\rho$ is a $2$-marginal with respect to $A_i$'', $i=1,2,3,4$, are all equivalent.

(c) If $(\al)$ holds, then for any $N\in\vS_0$, the preimage of a Borel set under a function $\rho_\bullet(f)$ intersects $N\times Y$ in a member of $\vY$ since each $[\rho_\bullet(f)]_x$ is either $0$ or $\rho_x(f_x)$ and so is $T_x$-measurable.
\end{rem}

\begin{proof}
Write $H_{ij}$ for the statement $\rho_\bullet(A_i)\sq A_j$.

Since $A_2\sq A_1\sq A_3$ (the second by $[C]$) and $A_2\sq A_4\sq A_3$, in a hypothesis $H_{ij}: \rho_\bullet(A_i)\sq A_j$, we need consider only the case $i=2$, $j=3$, since that only weakens the hypothesis and this is not among the omitted conditions. When $p=0$, in a conclusion $H_{ij}$ we similarly need consider only the case $i=3$, $j=2$ since that only strengthens the conclusion. When $0<p\leq\infty$, avoiding the omitted conditions, we see that we need only consider in the conclusion the cases $(i,j) \in \{(1,2),(3,4)\}$.
Thus, we need only consider the following cases:
\begin{itemize}
\item
when $p=0$, $H_{23}\Rightarrow H_{32}$;

\item
when $p\not=0$, $H_{23}\Rightarrow H_{12}$ and $H_{23}\Rightarrow H_{34}$.
\end{itemize}

Observe that any function of the form $\rho_\bullet(f)$ belongs to $A_4$ as long as it belongs to $\LL^0(\vY)$ since for all $x\in X$, $[\rho_\bullet(f)]_x\in \LL^p(\nu_x)$ by definition of $\rho_\bullet$ and by our assumption that $\rho_x$ maps into $\LL^p(\nu_x)$.
In particular, $\rho_\bullet(f)\in A_3$ if and only if $\rho_\bullet(f)\in A_4$, and similarly $\rho_\bullet(f)\in A_1$ if and only if $\rho_\bullet(f)\in A_2$.
Taking these into account, we see that we need only consider the following cases:
\begin{itemize}
\item
when $p=0$, $H_{24}\Rightarrow H_{31}$;

\item
when $p\not=0$, $H_{24}\Rightarrow H_{11}$ and $H_{24}\Rightarrow H_{33}$.
\end{itemize}

\n Case 1. $p=0$.

\m

We need to show $H_{24}\Rightarrow H_{31}$.
Let $f\in A_3$, with $N$ as in the definition. Note that when $p=0$, $A_4\sq A_1$.
Define $g\restr N^c\times Y = f\restr N^c\times Y$, $g\restr N\times Y \equiv 0$.
Since $\vS_0\dot{\times}Y\sq\vY$,  $g$ is $\vY$-measurable, and by the choice of $N$, we have $g\in A_2$. Thus, by $H_{24}$, $\rho_\bullet(g)\in A_4$.
We have $\rho_\bullet(f)\restr N^c\times Y = \rho_\bullet(g)\restr N^c\times Y$, so $\rho_\bullet(f)$ is $\vY$-measurable on $N^c\times Y$.
If $(\al)$ holds, then it follows immediately that $\rho_\bullet(f)$ is $\vY$-measurable (see Remark \ref{r:slc30d} (c)) and therefore $\rho_\bullet(f)\in A_1$.
If $(\beta)$ holds, then we can take $N = \{x\in X:f_x\notin\LL^0(\nu)\}$ and using $\rho_x(0)=0$ for $x\in X$ we get $\rho_\bullet(f) = \rho_\bullet(g)\in A_4\sq A_1$.

\m

\n Case 2. $p=\infty$.

\m

$H_{24}\Rightarrow H_{11}$: Let $f\in A_1$.
Write $M:=\|f\|_\infty$. By $[C]$, there is an $N\in \vS_0$ such that for $x\in N^c$, $\|f_x\|_\infty\leq M$. Define $g$ as in Case 1. We get $g\in A_2$, so $\rho_\bullet(g)\in A_3$. As in Case 1, it follows that $\rho_\bullet(f)$ is $\vY$-measurable. Since each $\rho_x$ satisfies $(l1)$, we have $\|\rho_x(f_x)\|_\infty\leq M$ for $x\in N^c$. By $[C]$ and since $N\times Y\in\vY_0$, it follows that $\|\rho_\bullet(f)\|_\infty\leq M$, so $\rho_\bullet(f)\in A_1$.

$H_{24}\Rightarrow H_{33}$: Let $f\in A_3$, with $N$ as in the definition.
Define $g$ as in the argument for Case 1, getting that $g$ is $\vY$-measurable and $g_x\in \LL^\infty(\nu_x)$ for all $x\in X$.
For each $n\in\N$, write $g_n = (-n)\vee(g\wedge n)$. Then $g_n\in A_2$, so $\rho_\bullet(g_n)\in A_4$. Since for each $x$, $[g_n]_x =_{\nu_x} g_x$ if $n$ is large enough, and $\rho_x$ satisfies $(l2)$, for each $x$ we have $[\rho_\bullet(g)]_x=[\rho_\bullet(g_n)]_x\in\LL^\infty(\nu_x)$ when $n$ is large enough. It follows that $\rho_\bullet(g) = \lim_{n\to\infty}\rho_\bullet(g_n)$ is $\vY$-measurable. Then, as in Case 1, it follows that $\rho_\bullet(f)$ is $\vY$-measurable. By our observation, it then follows that $\rho_\bullet(f)\in A_4\sq A_3$.

\m

\n Case 3. $0<p<\infty$.

\m

$H_{24}\Rightarrow H_{11}$: Let $f\in A_1$. By $[C]$, there is an $N\in \vS_0$ such that for $x\in N^c$, $f_x\in\LL^p(\nu_x)$.
Define $g$ as in Case 1. We get $g\in A_2$, so $\rho_\bullet(g)\in A_4$. As in Case 1, it follows that $\rho_\bullet(f)$ is $\vY$-measurable. Since each $\rho_x$ satisfies $(l1)$, we have $\rho_x(f_x) =_{\nu_x} f_x$ for $x\in N^c$. By $[C]$, it follows that $\rho_\bullet(f)\in A_1$.

$H_{24}\Rightarrow H_{33}$: Let $f\in A_3$, with $N_0\in \vS_0$ as in the definition, i.e., $x\in N_0^c$ implies $f_x\in\LL^p(\nu_x)$.
By $[C]$, specifically Proposition \ref{p:C.for.fcts} (i) applied to $h=|f|^p$, we can enlarge $N_0$ to $N\in\vS_0$ to get that on $N^c$, the function $x\mapsto \int |f_x|^p\,d\nu_x$ is $\vS$-measurable.

Under $(\beta)$, we take specifically $N_0=\{x\in X: f_x\notin\LL^p(\nu_x)\}$. We do not need to enlarge $N_0$ since measurability on $N^c$ implies measurability on $N_0^c$ by completeness of $\mu$. So under $(\beta)$, we take $N=N_0=\{x\in X: f_x\notin\LL^p(\nu_x)\}$.

The function $x\mapsto \int |f_x|^p\,d\nu_x$ is finite-valued on $N^c$ by the choice of $N$.
For each $n\in\N$, let $E_n=\{x\in N^c: \int|f_x|^p\,d\nu_x \leq n\}\in\vS$ and define $g_n(x,y)=f(x,y)$ when $x\in E_n$, $g_n(x,y)=0$ when $x\notin E_n$. Since $E_n\times Y\in \vY$, $g_n$ is $\vY$-measurable. By $[C]$ applied to $|g_n|^p$, we have $g_n\in A_1$ and therefore $g_n\in A_2$. By $H_{24}$, we then get $\rho_\bullet (g_n) \in A_4$. For $x\in E_n$, we have $[g_n]_x=f_x$ and therefore $[\rho_\bullet(g_n)]_x = [\rho_\bullet(f)]_x$. The set $N^c$ is the union of the increasing sequence of the $E_n$. We therefore have $\rho_\bullet(f) = \lim_{n\to\infty}\rho_\bullet(g_n)$ on $N^c\times Y$, giving that $\rho_\bullet(f)$ is $\vY$-measurable on $N^c\times Y$. As in Case 1, it then follows that $\rho_\bullet(f)$ is $\vY$-measurable and hence belongs to $A_4\sq A_3$.
\end{proof}

\begin{lem}\label{VMLa}
Let $(X,\vS,\mu)$, $(Y,T,\nu)$ and $(X\times Y,\vY,\ups)$ be probability spaces satisfying $[P_2]$ and $[C]$. Let $(Y,T,\I)\in\T_\s$ with $\I\cap T = T_0$. Then for every
$f\in\LL^0(\upsilon_{\vS_0\ltimes \I})$ there exists a $g\in\LL^0(\mu\otimes\nu)$ such that $f=_{\vS_0\ltimes \I}g$.
\end{lem}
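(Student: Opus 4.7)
The plan is a two-step reduction. First, I would convert $\vY_{\vS_0\ltimes\I}$-measurability to $\vY$-measurability modulo $\vS_0\ltimes\I$ using Lemma~\ref{P10i}(ii), and then convert $\vY$-measurability to $(\vS\otimes T)$-measurability modulo $\ups$ using $[P_2]$. Two background facts are needed. First, $(X\times Y,\vY,\vS_0\ltimes\I)\in\T_\s$: by Proposition~\ref{p:skew}(ii), $\vS_0\ltimes\I$ is a $\s$-ideal of $\vS_0\ltimes\mathcal{B}_\I$, and $[C]$ together with $T\sq\mathcal{B}_\I$ (from $(Y,T,\I)\in\T_\s$) gives that each $E\in\vY$ has $E_x\in T\sq\mathcal{B}_\I$ for $x$ outside some $N\in\vS_0$, so $\vY\sq\vS_0\ltimes\mathcal{B}_\I$. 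Second, $\vY_0\sq\vS_0\ltimes\I$: for $E\in\vY_0$, $[C]$ produces $N\in\vS_0$ with $E_x\in T_0\sq\I$ for $x\in N^c$ (using $T_0=\I\cap T\sq\I$). In particular, $=_\ups$ implies $=_{\vS_0\ltimes\I}$.

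Given $f\in\LL^0(\ups_{\vS_0\ltimes\I})=\LL^0(\vY_{\vS_0\ltimes\I})$, Lemma~\ref{P10i}(ii) applied to $(X\times Y,\vY,\vS_0\ltimes\I)$ then yields a $\vY$-measurable $g_1\colon X\times Y\to\K$ with $f=_{\vS_0\ltimes\I}g_1$ (treating real and imaginary parts separately when $\K=\C$).

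For the second step, I would run the standard sup construction from the proof of Lemma~\ref{P10i}(ii). For each $r\in\Q$, $[P_2]$ provides $A_r\in\vS\otimes T$ with $\ups(\{g_1>r\}\triangle A_r)=0$. Setting $g(x,y):=\sup\{r\in\Q:(x,y)\in A_r\}$ gives a $(\vS\otimes T)$-measurable function that agrees with $g_1$ off the $\ups$-null set $\bigcup_{r\in\Q}\{g_1>r\}\triangle A_r$; a trivial adjustment on this null set keeps $g$ real-valued, and the complex case is handled componentwise. Hence $g_1=_\ups g$, so $g_1=_{\vS_0\ltimes\I}g$ via $\vY_0\sq\vS_0\ltimes\I$, and chaining yields $f=_{\vS_0\ltimes\I}g$ with $g\in\LL^0(\mu\otimes\nu)$, as required.

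The argument is essentially bookkeeping built on two standard reductions; the one substantive point is the inclusion $\vY_0\sq\vS_0\ltimes\I$, which is what allows the $\ups$-a.e.\ approximation in step two to upgrade to a $(\vS_0\ltimes\I)$-a.e.\ approximation, at which point the two steps chain together cleanly. I do not expect a serious obstacle.
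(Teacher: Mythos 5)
Your proposal is correct and follows essentially the same route as the paper: two applications of Lemma~\ref{P10i}~(ii) (the second of which you unwind by hand via the sup construction, where the paper instead cites $[P_2]$ together with Corollary~\ref{c:ext.i} to get $\vY=(\vS\otimes T)_{\vY_0}$), chained through the key inclusion $\vY_0\sq\vS_0\ltimes T_0\sq\vS_0\ltimes\I$. The paper packages the preliminary verification that $(X\times Y,\vY,\vS_0\ltimes\I)\in\T_\s$ into Proposition~\ref{p:VMLa:2} rather than checking it directly as you do, but the substance is identical.
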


\begin{proof}
Property $[C]$ continues to hold if we extend $\nu$ to make the triple $(X,\vS,\mu)$, $(Y,T_\I,\nu_\I)$, $(X\times Y,\vY,\ups)$. Then by Proposition \ref{p:VMLa:2} (applied with $\vS_0$ and $(Y,T_\I,\nu_\I)$ in the place of $\I$ and $(Y,T_x,\nu_x)_{x\in X}$), $\ups_{\vS_0\ltimes \I}$ is defined.
By $[P_2]$ and Corollary \ref{c:ext.i}, $\ups = \mu\otimes_{\vY_0}\nu$.

Apply Lemma \ref{P10i} (ii) twice, first getting a $\vY$-measurable function $g_0\colon X\times Y\to\R$ satisfying $f =_{\vS_0\ltimes\I} g_0$ and then
getting a $\vS\otimes T$-measurable function $g\colon X\times Y\to\R$ satisfying $g_0 =_{\vY_0} g$.
Noting that $\vY_0\sq \vS_0\ltimes T_0\sq \vS_0\ltimes \I$, we get $f =_{\vS_0\ltimes\I} g$.
\end{proof}

\begin{cor}\label{c:VMLa}
Let $(X,\vS,\mu)$, $(Y,T,\nu)$ and $(X\times Y,\vY,\ups)$ be probability spaces satisfying $[P_2]$ and $[C]$. For every
$f\in\LL^0(\upsilon_{\mathfrak N})$ there exists a $g\in\LL^0(\mu\otimes\nu)$ with $f=_{\mathfrak N}g$.
\end{cor}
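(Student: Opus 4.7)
The plan is to deduce this corollary directly from Lemma \ref{VMLa} by choosing the ideal $\I$ to be the null ideal $\wh{T}_0$ of the completion $\wh{\nu}$ of $\nu$. By part (b) of Definition \ref{df:dn}, $\mathfrak{N} = \mathfrak{N}(\mu,\wh{\nu}) = \vS_0\ltimes\wh{T}_0$, so with this choice the hypothesis $f\in\LL^0(\ups_{\vS_0\ltimes\I})$ of Lemma \ref{VMLa} matches the hypothesis $f\in\LL^0(\ups_{\mathfrak{N}})$ of the corollary, and its conclusion $f=_{\vS_0\ltimes\I}g$ becomes $f=_{\mathfrak{N}}g$.

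To legitimize the application of the lemma, two small verifications must be carried out. First, I would check that $(Y,T,\wh{T}_0)\in\T_\s$: the family $\wh{T}_0$ is a $\s$-ideal of the $\s$-algebra $\wh{T}$, hence in particular a $\s$-ideal of some algebra on $Y$; since $T\sq\wh{T}\sq\mathcal{B}_{\wh{T}_0}$ (the latter inclusion being automatic because $\wh{T}_0$ is an ideal of $\wh{T}$), the condition from Definition \ref{d:T} is satisfied. Second, I would verify that $\wh{T}_0\cap T = T_0$: the inclusion $T_0\sq\wh{T}_0\cap T$ is trivial, and conversely, if $A\in T$ satisfies $\wh{\nu}(A)=0$, then since $\wh{\nu}$ extends $\nu$ we have $\nu(A) = \wh{\nu}(A) = 0$, so $A\in T_0$.

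Having checked these two conditions, Lemma \ref{VMLa} applies and yields exactly the statement of the corollary. There is no genuine obstacle here, since this is a routine specialization; the only point requiring any attention is matching the notation $\mathfrak{N}$ from Definition \ref{df:dn} with the ideal $\vS_0\ltimes\wh{T}_0$ and confirming that $\wh{T}_0$ satisfies the two technical hypotheses on $\I$ that appear in the lemma.
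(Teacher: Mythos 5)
Your proposal is correct and is essentially identical to the paper's proof, which simply applies Lemma \ref{VMLa} with $\I$ equal to the completion of $T_0$ (i.e., the null ideal of $\wh{\nu}$, so that $\vS_0\ltimes\I=\mathfrak{N}$). The two verifications you spell out ($(Y,T,\wh{T}_0)\in\T_\s$ and $\wh{T}_0\cap T=T_0$) are exactly the routine checks the paper leaves implicit.
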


\begin{proof}
Apply the lemma with $\I$ being the completion of $T_0$.
\end{proof}

\begin{cor}\label{c:VMLb}
Denote by $(X\times Y,\mathfrak{S}\times{\mathfrak T},\Upsilon,\upsilon)$ the $\tau$-additive product of the $\tau$-additive topological probability spaces $(X,{\mathfrak S},\vS,\mu)$ and $(Y,\mathfrak{T},T,\nu)$.  For every
$f\in\LL^p(\upsilon_{\mathfrak N})$ there exists a $g\in\LL^p(\mu\otimes\nu)$ with $f=_{\mathfrak N}g$.
\end{cor}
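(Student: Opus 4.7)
The plan is to reduce Corollary \ref{c:VMLb} to Corollary \ref{c:VMLa}, which handles the $p=0$ case but assumes $[P_2]$ in addition to $[C]$. For the $\tau$-additive product, $[C]$ and $[\ov{C}]$ are automatic by \cite{bms5}, Theorem 2.15, but $[P_2]$ is not built into the definition; establishing $[P_2]$ is the one nontrivial step. Once it is in hand, the $\LL^0$ output from Corollary \ref{c:VMLa} automatically has finite $\LL^p$ norm, because $\upsilon_{\mathfrak N}$ restricts to $\mu\otimes\nu$ on $\vS\otimes T$.

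To verify $[P_2]$, I would let $\mathcal D=\{E\in\Upsilon:\exists F\in\vS\otimes T,\;\upsilon(E\triangle F)=0\}$ and show $\mathcal D=\Upsilon$. Closure under complement and countable union makes $\mathcal D$ a $\sigma$-algebra, and it trivially contains $\vS\otimes T$ and $\Upsilon_0$. For a basic open set $U\in\mathfrak S\times\mathfrak T$, consider the upward directed family $\mathcal V$ of finite unions of open rectangles $S\times T\sq U$ with $S\in\mathfrak S$, $T\in\mathfrak T$. Since $\bigcup\mathcal V=U$, $\tau$-additivity of $\upsilon$ gives $\upsilon(U)=\sup_{V\in\mathcal V}\upsilon(V)$. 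Choosing $V_n\in\mathcal V$ with $\upsilon(V_n)\to\upsilon(U)$ and setting $W=\bigcup_nV_n\in\vS\otimes T$, we have $W\sq U$ and $\upsilon(W)=\upsilon(U)$, so $U\triangle W\in\Upsilon_0$ and $U\in\mathcal D$. Since $\Upsilon$ is the completion of its restriction to $\sigma((\mathfrak S\times\mathfrak T)\cup(\vS\otimes T))$, this forces $\mathcal D=\Upsilon$, which is $[P_2]$.

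With $[P_2]$ and $[C]$ available, Corollary \ref{c:VMLa} applied to $f\in\LL^p(\upsilon_{\mathfrak N})\sq\LL^0(\upsilon_{\mathfrak N})$ produces a $\vS\otimes T$-measurable $g$ with $f=_{\mathfrak N}g$. To see $g\in\LL^p(\mu\otimes\nu)$ I would use two observations: by $[C]$ and $[\ov{C}]$, $\Upsilon_0\sq\mathfrak N$ (Example at the end of Section \ref{s:ext}), so Lemma \ref{P10}\,(ii) identifies $\mathfrak N$ as the full null ideal of $\upsilon_{\mathfrak N}$; and $\upsilon_{\mathfrak N}$ restricts to $\mu\otimes\nu$ on $\vS\otimes T$ since $\upsilon_{\mathfrak N}$ extends $\upsilon$, which extends $\mu\otimes\nu$. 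Hence for $0<p<\infty$, $\int|g|^p\,d(\mu\otimes\nu)=\int|g|^p\,d\upsilon_{\mathfrak N}=\int|f|^p\,d\upsilon_{\mathfrak N}<\infty$; for $p=\infty$, the set $\{|g|>\|f\|_\infty\}\sq\{|f|>\|f\|_\infty\}\cup\{f\neq g\}$ lies in $\vS\otimes T\cap\mathfrak N\sq(\vS\otimes T)_0$ by Fubini applied to the definition of $\mathfrak N$, giving $\|g\|_{\infty,\mu\otimes\nu}\leq\|f\|_{\infty,\upsilon_{\mathfrak N}}$.

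The main obstacle is the $[P_2]$ verification in the second paragraph; $\tau$-additivity enters precisely at the point of approximating each basic open set from below by a countable union of measurable rectangles, and the remainder of the argument is routine bookkeeping exploiting that $\upsilon_{\mathfrak N}$ agrees with $\mu\otimes\nu$ on $\vS\otimes T$.
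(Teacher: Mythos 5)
Your proposal is correct and follows the same route as the paper: reduce to Corollary \ref{c:VMLa} after checking its hypotheses for the $\tau$-additive product. The paper simply cites \cite{bms5}, Theorem 2.15 for $[P_2]$ and $[C]$, whereas you verify $[P_2]$ directly from $\tau$-additivity (correctly), and you also make explicit the passage from the $\LL^0$ conclusion of Corollary \ref{c:VMLa} to the $\LL^p$ statement, a step the paper's one-line proof leaves implicit; note only that your appeal to $[\ov{C}]$ in establishing $\Upsilon_0\sq\mathfrak{N}$ is unnecessary (and would require $\mu$ complete), since $[C]$ alone gives $\Upsilon_0\sq\vS_0\ltimes T_0\sq\mathfrak{N}$.
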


\begin{proof}
By \cite{bms5}, Theorem 2.15 all assumptions of Corollary \ref{c:VMLa} are satisfied and so we may apply it to get the desired result.
\end{proof}

The next lemma is analogous to the fact that on complete measure spaces, functions equal almost everywhere to a measurable function are measurable. It provides a useful setting in which this works without completeness assumptions.

\begin{lem}\label{l:eta.skew}
Let $X$ be a set, $\I$ an upwards directed family of subsets of $X$. Let
$(Y,T_x)_{x\in X}$ be measurable spaces and for each $x\in X$, let $J_x$ be an ideal of $T_x$. Write $T=(T_x)_{x\in X}$, $J=(J_x)_{x\in X}$.
Let $\vY$ be a $\s$-algebra on $X\times Y$ such that $\vY\sq \I\ltimes T$.
Let $\Kk:=\I\ltimes J$.
\begin{enumerate}
\item
For each $f\in\LL^0(\vY_{\Kk})$, there is an $N\in\I$ so that $f_x\in\LL^0(T_x)$ when $x\notin N$.

\item
If $h\colon X\times Y\to \mathbb{K}$ satisfies that for some $N\in\I$ and for all $x\in N^c$ we have $h_x\in\LL^0(T_x)$ and $h_x =_{J_x} 0$ then $h\in\LL^0(\vY_{\Kk})$ and $h =_{\Kk} 0$.

\item
If $f\in\LL^0(\vY_{\Kk})$ and $g\colon X\times Y\to\mathbb{K}$ satisfies that for some $N\in\I$ and for all $x\in N$, $f_x$, $g_x\in\LL^0(T_x)$, and $g_x =_{J_x} f_x$, then $g\in\LL^0(\vY_{\Kk})$ and $g =_{\Kk} f$.
\end{enumerate}
\end{lem}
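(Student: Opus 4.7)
My plan is to exploit the explicit structure $\vY_\Kk = \{A\,\triangle\,K : A\in\vY,\,K\in\Kk\}$ from Lemma \ref{P10i} together with the skew-product definition of $\Kk=\I\ltimes J$. Throughout I will tacitly use that $\I$ is upwards $\s$-directed and each $J_x$ is a $\s$-ideal of $T_x$, which are the conditions ensuring via Proposition \ref{p:skew2} that $\vY_\Kk$ is a $\s$-algebra and $\LL^0(\vY_\Kk)$ is defined. I will prove (i) directly, then derive (ii) from it and (iii) from (ii).

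For part (i), rather than first invoking Lemma \ref{P10i}(ii) to replace $f$ by a $\vY$-measurable function agreeing with $f$ modulo $\Kk$ --- which would then require $T_x$-completeness with respect to $J_x$ in order to transfer section measurability back to $f$ --- I will work directly with preimages of a countable base $\{U_n:n\in\N\}$ of the topology of $\mathbb{K}$. For each $n$, I will write $f^{-1}(U_n) = A_n\,\triangle\,K_n$ with $A_n\in\vY$ and $K_n\in\Kk$, then use $\vY\sq\I\ltimes T$ to obtain $N_n^A\in\I$ with $(A_n)_x\in T_x$ for $x\notin N_n^A$, and $K_n\in\I\ltimes J$ to obtain $N_n^K\in\I$ with $(K_n)_x\in J_x\sq T_x$ for $x\notin N_n^K$. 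Taking $N:=\bigcup_n(N_n^A\cup N_n^K)\in\I$ by $\s$-directedness, for every $x\notin N$ the equality $f_x^{-1}(U_n)=(A_n)_x\,\triangle\,(K_n)_x\in T_x$ holds for every $n$, and hence $f_x\in\LL^0(T_x)$.

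For part (ii), I will set $E:=\{h\neq 0\}$. The hypothesis that $E_x=\{h_x\neq 0\}\in J_x$ for $x\notin N$ gives $E\in\I\ltimes J=\Kk$, from which $h=_\Kk 0$ follows. To see $h\in\LL^0(\vY_\Kk)$, I will fix a Borel $B\sq\mathbb{K}$: if $0\notin B$ then for $x\notin N$ the section $h^{-1}(B)_x=h_x^{-1}(B)$ lies in $T_x$ (since $h_x$ is $T_x$-measurable) and is contained in $E_x\in J_x$, so it lies in $J_x$ since an ideal of $T_x$ is closed under taking subsets from $T_x$; thus $h^{-1}(B)\in\Kk\sq\vY_\Kk$. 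The case $0\in B$ follows by complementation, $h^{-1}(B)=(h^{-1}(B^c))^c\in\vY_\Kk$. Part (iii) will then reduce to applying (ii) to $h:=g-f$, whose sections for $x\notin N$ are $T_x$-measurable (as differences of two such) and equal $J_x$-a.e.\ to $0$; this gives $h\in\LL^0(\vY_\Kk)$ and $h=_\Kk 0$, so $g=f+h\in\LL^0(\vY_\Kk)$ and $\{g\neq f\}\sq\{h\neq 0\}\in\Kk$ yields $g=_\Kk f$.

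The main obstacle is part (i). The natural reflex of reducing to a $\vY$-measurable representative via Lemma \ref{P10i}(ii) fails here because, without any completeness assumption relating $T_x$ to $J_x$, the equality of $f_x$ and the representative off a $J_x$-set does not force $T_x$-measurability of $f_x$. The direct preimage decomposition sidesteps this, since sections of $\vY$-sets and of $\Kk$-sets can be analyzed separately and then combined, exploiting that sections of $\Kk$-sets land automatically in the smaller ideal $J_x$ rather than merely in $T_x$.
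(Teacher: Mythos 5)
Your proof is correct and follows essentially the same route as the paper's: the paper proves (i) by observing $\vY_\Kk\sq\I\ltimes T$ and invoking Proposition \ref{p:C.for.fcts:0} (i) (whose proof, like yours, accumulates countably many exceptional sets in $\I$ using $\s$-directedness), and its arguments for (ii) and (iii) are the ones you give. Your direct decomposition of preimages of a countable base in part (i) is just an unpacked version of the same idea, and like the paper you correctly avoid any detour through a $\vY$-measurable representative.
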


\begin{proof}
Since $\vY\sq \I\ltimes T$ and $\Kk$ is an ideal of $\I\ltimes T$, we have $(X\times Y,\vY,\Kk)\in\T_\s$ and thus $\vY_\Kk$ is defined.

(i) From $\vY\sq \I\ltimes T$ and $\Kk\sq \I\ltimes T$ we get $\vY_\Kk\sq\I\ltimes T$. Apply Proposition \ref{p:C.for.fcts:0} (i).

(ii) We may assume that $h$ is real-valued. We have $\{h<r\}\in \Kk$ when $r\leq 0$ since $[\{h<r\}]_x = \{h_x<r\}\in J_x$ for $x\in N^c$. When $r>0$, $\{h<r\}$ is the complement of $\{h\geq r\}$ which similarly belongs to $\Kk$.
Thus, $h\in\LL^0(\vY_{\Kk})$.
For $h =_{\Kk} 0$, we are given $\{h_x\not=0\}\in J_x$ when $x\in N^c$, so $\{h\not=0\}\in\Kk$ and hence $h =_{\Kk} 0$.

(iii) The function $h=g-f$ satisfies the assumptions of (ii), so $h\in\LL^0(\vY_{\Kk})$ and hence $g = f + h \in\LL^0(\vY_{\Kk})$, and since $h =_{\Kk} 0$ we get $g =_{\Kk} f$.
\end{proof}

It has been observed already in \cite{mms14}, Theorem 5.1, that given a complete probability space $(Y,T,\nu)$ not all liftings are $1$-marginals or $2$-marginals with respect the completed product $T\,\wh\otimes\,T$ of $T$ by itself.
In addition, under the assumption of CH, it has been shown in \cite{bms6}, Example 6.20, that  there are complete topological probability spaces $(X,\mathfrak{S},\vS,\mu)$ and $(Y,\mathfrak{T},T,\nu)$ and a lifting on the second space which is not 2-marginal with respect to the Radon product $\mu\otimes_R\nu$ or even with respect to the product $\mu\otimes_{\mathfrak{N}_2}\nu$.
In the case of vector liftings, we have the same phenomenon that this does not happen if $\mathfrak{N}_2$ is replaced by $\mathfrak{N}$, as shown in Examples \ref{ex:eta.skew.1} and \ref{ex:eta.skew:2} after the following lemmas.

\begin{lem}\label{l:eta.skew:m}
Let $X$ be a set, $\I$ an upwards directed family of subsets of $X$. Let
$(Y,T_x)_{x\in X}$ be measurable spaces and for each $x\in X$, let $J_x$ be an ideal of $T_x$. Write $T=(T_x)_{x\in X}$, $J=(J_x)_{x\in X}$.
Let $\vY$ be a $\s$-algebra on $X\times Y$ such that $\vY\sq \I\ltimes T$.
Let $\Kk:=\I\ltimes J$.
If $\eta=(\eta_x)_{x\in X}$, where each $\eta_x\colon\LL^0(T_x)\to \LL^0(T_x)$ satisfies $(l1)$, then $\eta_\bullet$ satisfies $(l1)$ on $\LL^0(\vY_{\Kk})$ and hence $\eta$ is a $2$-marginal with respect to $\LL^0(\vY_{\Kk})$.
\end{lem}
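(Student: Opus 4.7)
The plan is to reduce the verification of $(l1)$ for $\eta_\bullet$ directly to part (iii) of Lemma \ref{l:eta.skew}, which was designed exactly to transport section-wise equality modulo $J_x$ to equality modulo $\Kk$ on the product.

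First, given $f\in\LL^0(\vY_{\Kk})$, I will apply Lemma \ref{l:eta.skew} (i) to obtain an $N\in\I$ such that $f_x\in\LL^0(T_x)$ whenever $x\notin N$. For each such $x$, the hypothesis $(l1)$ on $\eta_x$ gives $\eta_x(f_x)\in\LL^0(T_x)$ and $\eta_x(f_x)=_{J_x}f_x$. By the definition of $\eta_\bullet$ (Definition \ref{VMD} (c), applied with the family $\eta=(\eta_x)$ as in (b)), we have $[\eta_\bullet(f)]_x=\eta_x(f_x)$ for $x\notin N$; in particular $[\eta_\bullet(f)]_x\in\LL^0(T_x)$ and $[\eta_\bullet(f)]_x=_{J_x}f_x$ for all $x\in N^c$.

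Next, I will invoke Lemma \ref{l:eta.skew} (iii) with $g:=\eta_\bullet(f)$ and the same set $N\in\I$. All of its hypotheses are verified by the previous step: $f_x$ and $g_x$ lie in $\LL^0(T_x)$ and $g_x=_{J_x}f_x$ for every $x\in N^c$. The conclusion yields $\eta_\bullet(f)\in\LL^0(\vY_{\Kk})$ and $\eta_\bullet(f)=_{\Kk}f$, which is exactly $(l1)$ for $\eta_\bullet$ on $\LL^0(\vY_{\Kk})$. The $2$-marginal assertion is then immediate from Definition \ref{VMD} (b), because $\eta_\bullet$ maps $\LL^0(\vY_{\Kk})$ into itself.

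There is no real obstacle here; all of the measurability subtleties (the absence of completeness of $\vY_{\Kk}$, the fact that $\eta_\bullet(f)$ is defined pointwise rather than modulo $\Kk$) have already been absorbed into the statement of Lemma \ref{l:eta.skew}. The only point to check with a bit of care is that the set on which $f_x\in\LL^0(T_x)$ and the set on which $[\eta_\bullet(f)]_x=_{J_x}f_x$ can be taken to be the same $N\in\I$, which is clear since by construction $\eta_\bullet(f)$ is defined from $f$ section by section and only on those sections $f_x$ that are $T_x$-measurable.
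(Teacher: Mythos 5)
Your proof is correct and is essentially the paper's own argument: the paper likewise applies Lemma \ref{l:eta.skew} (i) to obtain $N\in\I$ and then checks that $g=\eta_\bullet(f)$ satisfies the hypotheses of Lemma \ref{l:eta.skew} (iii). Your write-up just spells out the intermediate verifications in more detail.
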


\begin{proof}
Use Lemma \ref{l:eta.skew}. By (i) of that lemma, for each $f\in\LL^0(\vY_{\Kk})$, the function $g=\eta_\bullet(f)$ satisfies the assumptions of (iii) of that lemma.
\end{proof}

\begin{lem}\label{l:eta.skew:n}
Let $(X,\vS,\mu)$, $(Y,T_x,\nu_x)$ for $x\in X$, and $(X\times{Y},\vY,\ups)$ be probability spaces. Write $T=(T_x)_{x\in X}$, $T_0=(T_{x,0}:x\in X)$, where $T_{x,0}$ is the null ideal of $\nu_x$, and $\nu=(\nu_x)_{x\in X}$. Let $\I\sq \vS_0$ be a $\s$-ideal of $\vS$, $\Kk:=\I\ltimes T_0$. Suppose that $\vY\sq \I\ltimes T$ and $\Kk\cap\vY\sq \vY_0$.

Assume that $[C]_\I$ holds and $\vS_0\,\dot{\times}\,Y\sq(\vY_\Kk)_0$. If $\eta=(\eta_x)_{x\in X}$, where each $\eta_x\colon\LL^p(\nu_x)\to \LL^p(\nu_x)$ satisfies $(l1)$, then
$\eta_\bullet$ satisfies $(l1)$ on $\LL^{0,p}_\mu(\vY_\Kk,\nu)$.
\end{lem}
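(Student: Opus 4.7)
The plan is to reduce to a situation where Lemma \ref{l:eta.skew} (iii) applies directly. Given $f \in \LL^{0,p}_\mu(\vY_\Kk,\nu)$, I would fix $N \in \vS_0$ with $f_x \in \LL^p(\nu_x)$ for $x \in N^c$. The hypothesis $\vS_0\,\dot\times\,Y \sq (\vY_\Kk)_0$ ensures $N\times Y \in (\vY_\Kk)_0 \sq \vY_\Kk$ (with $\ups_\Kk(N\times Y)=0$), which allows me to set $f' := f\cdot\chi_{(X\sm N)\times Y}$ and obtain a function in $\LL^0(\vY_\Kk)$ satisfying $f' =_{\ups_\Kk} f$ and $f'_x \in \LL^p(\nu_x)$ for every $x\in X$.

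Next I would set $g' := \eta_\bullet(f')$ and observe that for every $x$, $g'_x = \eta_x(f'_x)$ lies in $\LL^p(\nu_x) \sq \LL^0(T_x)$ and equals $f'_x$ modulo $\nu_x$, by $(l1)$ applied to each $\eta_x$. Taking the exceptional set to be $\emptyset \in \I$ (with $J=T_0$), Lemma \ref{l:eta.skew} (iii) yields $g' \in \LL^0(\vY_\Kk)$ and $g' =_\Kk f'$.

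Then I would compare $g := \eta_\bullet(f)$ with $g'$: for $x \in N^c$, $f_x = f'_x \in \LL^p(\nu_x)$, so $g(x,\,\cdot\,) = \eta_x(f_x) = \eta_x(f'_x) = g'(x,\,\cdot\,)$; hence $\{g \neq g'\} \sq N\times Y \in (\vY_\Kk)_0$, yielding the $=_{\ups_\Kk}$-part of $(l1)$, namely $g =_{\ups_\Kk} g' =_{\ups_\Kk} f' =_{\ups_\Kk} f$. Moreover, each section $g_x$ equals either $\eta_x(f_x) \in \LL^p(\nu_x)$ or $0$, so $g_x \in \LL^p(\nu_x)$ holds for every $x$, giving the $\LL^p$-section requirement with empty exceptional set.

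The hard part will be verifying that $g$ itself (not merely a representative of its $\ups_\Kk$-class) lies in $\LL^0(\vY_\Kk)$. For each Borel $B \sq \K$, I would decompose $g^{-1}(B) = \bigl(g'^{-1}(B)\cap(X\sm N)\times Y\bigr) \cup \bigl(g^{-1}(B)\cap N\times Y\bigr)$; the first set is in $\vY_\Kk$ directly, while the second is contained in $N\times Y \in \vY_\Kk$ and has $T_x$-measurable sections at every $x$ (being $\{y : g_x(y)\in B\} \in T_x$ for $x\in N$ and $\emptyset$ otherwise). The critical step is then to recognize such a section-wise measurable subset of $N\times Y$ as an element of $\vY_\Kk$, which is exactly the point where the hypothesis $\vS_0\,\dot\times\,Y \sq (\vY_\Kk)_0$ is used essentially, in concert with the representation $\vY_\Kk = \{A\,\triangle\,K : A\in\vY,\,K\in\Kk\}$ from Lemma \ref{P10i} (i). This is the only nontrivial obstacle in the argument.
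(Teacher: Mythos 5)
Your overall plan---reduce to Lemma \ref{l:eta.skew} (iii) after killing $f$ on a cylinder over a null set---is the right one, but the execution breaks down at exactly the step you flag, and the principle you invoke there is false. The hypothesis $\vS_0\,\dot{\times}\,Y\sq(\vY_\Kk)_0$ puts the cylinder $N\times Y$ itself into $(\vY_\Kk)_0$; it does not make $\ups_\Kk$ complete, and a subset of $N\times Y$ with $T_x$-measurable sections need not belong to $\vY_\Kk$ when $N\in\vS_0\sm\I$. Indeed, every $E\in\vY_\Kk=\{A\,\triangle\,K:A\in\vY,\,K\in\Kk\}$ satisfies $E_x=_{\nu_x}A_x$ for all $x$ outside a member of $\I$; so, for instance, with $\I=\{\e\}$, $\vY=\vS\otimes T$ the Borel sets of $[0,1]^2$, and $N$ the Cantor set, the set $E=D\times Y$ for a non-Borel $D\sq N$ has every section equal to $Y$ or $\e$, yet $E\notin\vY_\Kk$, since membership would force $x\mapsto\nu(E_x)=\chi_D(x)$ to agree with the ($\vS$-measurable, by Tonelli) map $x\mapsto\nu(A_x)$. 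The same incompleteness defeats the earlier claim that $\{g\ne g'\}\sq N\times Y$ already yields $g=_{\ups_\Kk}g'$: for $x\in N$ with $f_x\in\LL^p(\nu_x)$ you have $g_x-g'_x=\eta_x(f_x)-\eta_x(0)=_{\nu_x}f_x$, which need not be $\nu_x$-null (your witnessing $N$ is an arbitrary $\vS_0$-set, so it may contain many such $x$); hence $\{g\ne g'\}$ need not lie in $\Kk$, and a nonmeasurable subset of a null set is not null.

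The missing ingredient is a finer choice of exceptional set, and it is exactly where the hypothesis $[C]_\I$---which your argument never uses---enters. The paper's proof first takes $N\in\I$ (not merely $\vS_0$) such that $f_x\in\LL^0(T_x)$ for $x\in N^c$ (Proposition \ref{p:C.for.fcts:0} (i), using $\vY_\Kk\sq\I\ltimes T$); over such an $N$ \emph{every} subset of $N\times Y$ lies in $\I\,\wh{\times}\,Y\sq\I\ltimes T_0=\Kk$, so arbitrary discrepancies there are harmless. Then $[C]_\I$, via Proposition \ref{p:C.for.fcts} (i) (resp.\ (ii) for $p=\infty$), makes $x\mapsto\int|f_x|^p\,d\nu_x$ (resp.\ $x\mapsto\|f_x\|_\infty$) $\vS$-measurable on $N^c$, so that $A:=\{x\in N^c:f_x\notin\LL^p(\nu_x)\}$ is an honest member of $\vS_0$, and one zeroes $f$ out only on $A\times Y$. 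With $N\cup A$ in place of your single $\vS_0$-set, your comparison argument does go through: the discrepancy between $\eta_\bullet(f)$ and $\eta_\bullet(f_1)$ over $N$ is an arbitrary subset of an $\I$-cylinder, hence in $\Kk$, while over $A$ both functions have $\nu_x$-null sections, so that discrepancy lies in $\{\e\}\ltimes T_0\sq\Kk$ as well, and $A\times Y\in(\vY_\Kk)_0$ supplies the $=_{\ups_\Kk}$ conclusion. Without this two-stage decomposition your $N^c$/$N$ splitting cannot be completed.
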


\begin{rem}\label{r:null.ideals}
(a) Since $\vY\sq \I\ltimes T$ and $\Kk$ is an ideal of $\I\ltimes T$, we have $(X\times Y,\vY,\Kk)\in\T_\s$. Since we also have $\Kk\cap\vY\sq \vY_0$, $\ups_\Kk$ is defined.

(b) $\eta$ is a $2$-marginal with respect to both $\LL^{0,p}_\mu(\vY_\Kk,\nu)$ and the smaller family $\LL^{p}(\ups_\Kk)$.

(c) When $\I=\vS_0$,
$\vS_0\,\dot{\times}\,Y\sq (\vY_\Kk)_0$ holds automatically since $\vS_0\,\dot{\times}\,Y\sq \I\ltimes T_0 = \Kk\sq (\vY_\Kk)_0$.
\end{rem}

\begin{proof}
Case 1. $p=0$. This case follows from Lemma \ref{l:eta.skew:m} since $\LL^{0,0}_\mu(\vY_\Kk,\nu)\sq \LL^{0}(\vY_\Kk)$.

Case 2. $0<p<\infty$. Let $f\in \LL^{0,p}_\mu(\vY_\Kk,\nu)$. By Proposition \ref{p:VMLa:2}, $[C]_\I$ holds for $\ups_\Kk$. By Proposition \ref{p:C.for.fcts:0} (i) and Proposition \ref{p:C.for.fcts} (i), there is an $N\in\I$ such that $f_x\in \LL^0(\nu_x)$ for $x\in N^c$, and $x\mapsto \int |f_x|^p\,d\nu_x$ is $\vS$-measurable on $N^c$. Since $f\in \LL^{0,p}_\mu(\vY_\Kk,\nu)$, the set $A=\{x\in N^c:\int |f_x|^p\,d\nu_x=\infty\}$ belongs to $\vS_0$.
Define $f_1(x,y)=0$ when $x\in A$, $f_1(x,y)=f(x,y)$ otherwise.
Then $f_1$ is $\vY_\Kk$-measurable since $\vS_0\,\dot{\times}\,Y\sq \vY_\Kk$.
For $x\in A$, we have $[\eta_\bullet(f_1)]_x = 0 = (f_1)_x$ and $[\eta_\bullet(f)]_x = 0$. For $x\in N^c\sm A$, we have $[\eta_\bullet(f_1)]_x = \eta(f_x) =_{\nu_x} f_x = (f_1)_x$. Thus, $g=\eta_\bullet(f_1)$ satisfies the assumptions of Lemma \ref{l:eta.skew} (iii) with respect to $f_1$ and hence $\eta_\bullet(f_1)$ is $\vY_\Kk$-measurable. Since $\eta_\bullet(f)$ and $\eta_\bullet(f_1)$ agree on $N^c\times Y$, and since $\mathcal{P}(N\times Y)\sq \I\,\wh{\times}\,Y\sq \I\ltimes T_0\sq \vY_\Kk$, it follows that $\eta_\bullet(f)$ is $\vY_\Kk$-measurable as well. By assumption, $A\times Y\in (\vY_\Kk)_0$, so from $[C]$ it follows that $\eta_\bullet(f)=_{\ups_\Kk} f$.

Case 3. $p=\infty$. Proceed as in Case 2, replacing $\int|f_x|^p\,d\nu_x$ by $\|f_x\|_\infty$, using Proposition \ref{p:C.for.fcts} (ii) for the measurability of $x\mapsto \|f_x\|_\infty$.
\end{proof}

\begin{ex}\label{ex:eta.skew.1}
Let $(X,\vS,\mu)$, $(Y,T_x,\nu_x)_{x\in X}$, and $(X\times{Y},\vY,\ups)$ be probability spaces satisfying $[C]$. Write $\nu=(\nu_x)_{x\in X}$.
If $\eta=(\eta_x)_{x\in X}$, where each $\eta_x\colon\LL^p(\nu_x)\to \LL^p(\nu_x)$ satisfies $(l1)$,
then $\eta$ is a $2$-marginal with respect to $\LL^{0,p}_\mu(\vY_{\mathfrak{N}(\mu,\nu)},\nu)$ and hence also with respect to $\LL^{p}(\vY_{\mathfrak{N}(\mu,\nu)})$.

\begin{proof}
Use Lemma \ref{l:eta.skew:n} with $\I=\vS_0$. (See Remark \ref{r:null.ideals} (c).)
\end{proof}
\end{ex}

\begin{ex}\label{ex:eta.skew:2}
Let $(X,\vS,\mu)$, $(Y,T,\nu)$ be probability spaces and let $(X\times Y,\vS\otimes T,\mu\otimes \nu)$ be the product space.
If $\eta\colon\LL^p(\nu)\to \LL^p(\nu)$ satisfies $(l1)$,
then $\eta$ is a $2$-marginal with respect to $\LL^{0,p}_\mu(\vS\otimes_{\{\e\}\ltimes T_0} T,\nu)$ and hence also with respect to
$\LL^{p}(\mu\otimes_{\{\e\}\ltimes T_0}\nu)$.

\begin{proof}
Use Lemma \ref{l:eta.skew:n} with $\I=\{\e\}$.
\end{proof}
\end{ex}

\begin{lem}\label{VM10a}
Let $X$ be a set, $(Y,T_x,\nu_x)_{x\in X}$ probability spaces, $f\colon X\times Y\to \K$, $\eta=(\eta_x)_{x\in X}$ with $\eta_x:\LL^p(\nu_x)\rightarrow\K^Y$.
\begin{enumerate}
\item
For $x\in X$, $[\eta_{\bullet}(f)]_x$ equals $\eta_x(f_x)$ if $f_x\in\LL^p(\nu_x)$ and equals $0$ otherwise.

\item
If each $\eta_x$ is homogenous and $f=g\otimes h$ for some $g\in\K^X$ and $h\in \LL^p(\nu)$, then $\eta_{\bullet}(f)(x,y) = g(x)\eta_x(h)(y)$ for each $(x,y)\in X\times Y$. In particular, if we have a single space $(Y,T,\nu)$ and a single $\eta:\LL^p(\nu)\rightarrow\K^Y$, where $\eta$ is homogeneous, then $\eta_{\bullet}(f) = g\otimes\eta(h)$.%
\footnote{So in general, $\eta_{\bullet}$ does not satisfy $(l2)$.}

\item
If $\K=\R$, when $\eta_x(g)\geq{0}$ $(x\in X)$ for all nonnegative $g\in \LL^p(\nu)$, $f\geq 0$ implies $\eta_\bullet(f)\geq{0}$.

\item
If we also have $g\colon X\times Y\to\K$ and $f_x,g_x\in\LL^p(\nu_x)$ for every $x\in X$ then the following hold.
\begin{enumerate}[\rm(a)]
\item
If each $\eta_x$ is homogeneous then $\eta_{\bullet}(af) = a\eta_\bullet(f)$ $(a\in\K)$.

\item
If each $\eta_x$ is additive then $\eta_\bullet(f+g)=\eta_{\bullet}(f)+\eta_{\bullet}(g)$.

\item
If $p=0$ or $\infty$ and each $\eta_x$ is multiplicative then
$\eta_\bullet(fg)=\eta_{\bullet}(f)\eta_{\bullet}(g)$.

\item
If $\eta_x(f_x)=f_x$ for all $x\in X$ then
$\eta_\bullet(f)=f$.
\end{enumerate}
\end{enumerate}
\end{lem}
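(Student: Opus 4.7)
The proof is a pointwise unpacking of Definition~\ref{VMD}(b). My plan is to fix $x\in X$, write
\[
[\eta_{\bullet}(f)]_x(y) \;=\; \eta_{\bullet}(f)(x,y),
\]
and split into the two cases $f_x\in\LL^p(\nu_x)$ or $f_x\notin\LL^p(\nu_x)$. Part (i) is then immediate from this rewriting and the piecewise definition of $\eta_\bullet$. Part (iii) is equally immediate: if $f\geq 0$ then every section $f_x\geq 0$, so when $f_x\in\LL^p(\nu_x)$ the nonnegativity hypothesis on $\eta_x$ gives $\eta_x(f_x)\geq 0$, and otherwise $[\eta_{\bullet}(f)]_x\equiv 0$ trivially.

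For (ii), I would fix $x\in X$ and observe that $(g\otimes h)_x = g(x)\cdot h$. If $g(x)\neq 0$, this lies in $\LL^p(\nu_x)$ whenever $h$ does, and homogeneity of $\eta_x$ yields $\eta_x(g(x)h)=g(x)\eta_x(h)$, which is the claimed formula. If $g(x)=0$, then $(g\otimes h)_x\equiv 0\in\LL^p(\nu_x)$ and $\eta_x(0)=0$ (again by homogeneity, since $\eta_x(0)=\eta_x(0\cdot 0)=0\cdot\eta_x(0)=0$), so both sides of the identity vanish and agree. The single-space specialization is then just the rewriting $g(x)\eta(h)(y) = (g\otimes \eta(h))(x,y)$.

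For (iv), the hypothesis that $f_x,g_x\in\LL^p(\nu_x)$ for every $x$ places us in the first branch of the definition at every $x$, so $[\eta_{\bullet}(f)]_x=\eta_x(f_x)$ and $[\eta_{\bullet}(g)]_x=\eta_x(g_x)$. Then (a) follows from $\eta_x(af_x)=a\eta_x(f_x)$, (b) from $\eta_x(f_x+g_x)=\eta_x(f_x)+\eta_x(g_x)$, (c) from $\eta_x(f_xg_x)=\eta_x(f_x)\eta_x(g_x)$ after noting that for $p=0$ and $p=\infty$ the class $\LL^p(\nu_x)$ is closed under pointwise products (so $(fg)_x=f_xg_x\in\LL^p(\nu_x)$), and (d) is tautological.

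No step here presents a genuine obstacle; the only point requiring care is in (ii), where one must notice that the identity still holds at points $x$ with $g(x)=0$ because then both branches of the piecewise definition collapse to $0$ via homogeneity. Everything else reduces to applying the algebraic property of $\eta_x$ indicated by the hypothesis directly to the $x$-section.
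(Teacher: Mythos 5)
Your proposal is correct and follows essentially the same route as the paper's proof, which simply notes that (i), (iii), (iv) are immediate from the definitions (with the same remark about closure of $\LL^p$ under products when $p=0,\infty$ for (iv)(c)) and handles (ii) via $[g\otimes h]_x = g(x)h$ and homogeneity. Your extra case split on $g(x)=0$ in (ii) is harmless but unnecessary, since $g(x)h$ is a scalar multiple of $h$ and hence lies in $\LL^p(\nu_x)$ in all cases, with homogeneity giving $\eta_x(g(x)h)=g(x)\eta_x(h)$ uniformly.
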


\begin{proof}
(i) is true by definition of $\eta_\bullet$.

(ii) We have $f_x = [g\otimes h]_x = g(x)h\in\LL^p(\nu)$ for every $x\in X$, so by homogeneity of $\eta_x$, $[\eta_{\bullet}(f)]_x = \eta(f_x) = g(x)\eta_x(h)$ giving $\eta_{\bullet}(f)(x,y) = g(x)\eta_x(h)(y)$.

(iii) holds by (i).

(iv) is clear from the definitions. In (c), that $p=0$ or $\infty$ ensures that each $\LL^p(\nu_x)$ is closed under multiplication.
\end{proof}

\begin{lem}\label{l:VM10a.2}
Let $(X,\vS,\mu)$, $(Y,T,\nu)$ and $(X\times Y,\vY,\ups)$ be probability spaces satisfying $[C]$, and let $\eta:\LL^p(\nu)\rightarrow\K^Y$.
If $\eta$ satisfies $(l2)$ then for $f,g\in\LL^p(\ups)$ with $f =_\ups g$ we have for all $y\in{Y}$ the equality $[\eta_\bullet(f)]^y=_{\wh{\mu}}[\eta_\bullet(g)]^y$.
\end{lem}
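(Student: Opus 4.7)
My plan is to reduce the claim to a pointwise application of $(l2)$ for $\eta$, using $[C]$ twice: once to localize where the sections $f_x$ and $g_x$ differ modulo $\nu$, and once to localize where those sections belong to $\LL^p(\nu)$.

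More precisely, from $f =_\ups g$ and $[C]$ applied to the function $h := \chi_{\{f \neq g\}}\in\LL^0(\ups)$ (or directly to $f-g$ via Proposition \ref{p:C.for.fcts}~(iv)), I would extract a set $N\in\vS_0$ such that $f_x =_\nu g_x$ for every $x\notin N$. Separately, since $f,g\in\LL^p(\ups)$, by Proposition \ref{p:C.for.fcts}~(iv) applied to $f$ and to $g$ I obtain sets $N_f, N_g\in\vS_0$ such that $f_x\in\LL^p(\nu)$ for $x\notin N_f$ and $g_x\in\LL^p(\nu)$ for $x\notin N_g$. Set $M := N\cup N_f\cup N_g \in \vS_0$.

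For any $x\notin M$ and any $y\in Y$, both $f_x$ and $g_x$ lie in $\LL^p(\nu)$ with $f_x =_\nu g_x$, so by $(l2)$ for $\eta$ we get $\eta(f_x) = \eta(g_x)$, and therefore by the definition of $\eta_\bullet$,
\[
[\eta_\bullet(f)]^y(x) = \eta_\bullet(f)(x,y) = \eta(f_x)(y) = \eta(g_x)(y) = \eta_\bullet(g)(x,y) = [\eta_\bullet(g)]^y(x).
\]
Hence for every $y\in Y$ the set $\{x\in X:[\eta_\bullet(f)]^y(x)\not=[\eta_\bullet(g)]^y(x)\}$ is contained in $M\in\vS_0$, so it is $\wh\mu$-null, i.e., $[\eta_\bullet(f)]^y =_{\wh\mu} [\eta_\bullet(g)]^y$. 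There is no serious obstacle here; the only subtlety is that the exceptional set need not itself lie in $\vS$, which is precisely why the conclusion is phrased in terms of $\wh\mu$ rather than $\mu$.
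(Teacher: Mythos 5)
Your proof is correct and follows essentially the same route as the paper's: both arguments use $[C]$ to produce a single $\mu$-null set $M$ off of which $f_x,g_x\in\LL^p(\nu)$ and $f_x=_\nu g_x$, apply $(l2)$ to conclude $\eta(f_x)=\eta(g_x)$ there, and observe that the exceptional set of each horizontal section is contained in $M$, hence $\wh\mu$-null. The only cosmetic difference is that you assemble $M$ from three separate applications of $[C]$ and Proposition \ref{p:C.for.fcts}~(iv), whereas the paper extracts one set $N$ in a single step.
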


\begin{proof}
By $[C]$, there is a set $N\in\vS_0$ such that for $x\in N^c$ we have $f_x,g_x\in\LL^p(\nu)$ and $f_x=_\nu g_x$.
By $(l2)$ for $\eta$ we get $[\eta_\bullet(f)]_x=\eta(f_x)=\eta(g_x)=[\eta_\bullet(g)]_x$ for $x\in N^c$. Thus, $\{[\eta_\bullet(f)]^y\not=[\eta_\bullet(g)]^y\}\sq{N}$ for all $y\in Y$, implying $\{[\eta_\bullet(f)]^y\not=[\eta_\bullet(g)]^y\}\in\wh{\vS}_0$ for all $y\in{Y}$.
\end{proof}

\begin{ex}\label{VM10b}
Denote by $(X\times Y,\mathfrak{S}\times{\mathfrak T},\Upsilon,\upsilon)$ the $\tau$-additive product of the $\tau$-additive topological probability spaces $(X,{\mathfrak S},\vS,\mu)$ and $(Y,\mathfrak{T},T,\nu)$, where $\nu$ is complete.  If $\eta\in{V}^p(\nu)$ then Lemma \ref{l:VM10a.2} applies to $\eta$, and in Lemma \ref{VM10a}, clauses (ii) and (iv)(a,b,d) apply to $\eta$, and, when $\K=\R$, so does (iii) if additionally $\eta\in\mathcal{G}(\nu)$, i.e., $\eta$ is a linear lifting, and so does (iv)(c) if $\eta\in\vL^\infty(\nu)$.
(See \cite{bms5}, Theorem 2.15, for the hypotheses of Lemma \ref{l:VM10a.2}.)
\end{ex}

The next Lemma extends \cite{mms14}, Proposition 3.1 as well as \cite{bms4}, Lemma 4.4. Compare \cite{bms6}, Lemma 4.7.

\begin{lem}\label{VM19}
Let $(X,\vS,\mu)$, $(Y,T_x,\nu_x)_{x\in X}$, $(X\times{Y},\vY,\ups)$ be probability spaces so that $[C]$ holds. Let $\nu=(\nu_x)_{x\in X}$ and $\eta=(\eta_x)_{x\in X}$, where $\eta_x\colon \LL^p(\nu_x) \to \LL^p(\nu_x)$. Fix a linear subspace $\M\sq\LL^{0,p}_\mu(\vY,\nu)$.
\begin{enumerate}
\item
If $\eta$ is a $2$-marginal for $\M$ and there is an $N\in\vS_0$ such that $x\notin N$ implies $\eta_x$ satisfies $(l1)$ on $\LL^{p}(\nu_x)$, then $\eta_\bullet$ satisfies $(l1)$ on $\M$.

\item
If for each $x$, $\eta_x(0)=0$ and $(l1)$, $(l2)$ hold for $\eta_x$, then $\eta_x([\eta_\bullet(f)]_{x})=[\eta_\bullet(f)]_{x}$ for any function $f$ on $X\times Y$.

\item
If $\eta$ is a $2$-marginal for $\M$, for each $x$, $\eta_x\in V^p(\nu_x)$, and $\vS_0\,\dot{\times}\, Y\sq\vY$, then there is a $\psi\in V(\M)$ such that $\eta_x([\psi(f)]_{x}) = [\psi(f)]_{x}$ for all $f\in\M$, $x\in X$.

\item
Suppose $\K=\R$ and $p=\infty$. If $\eta$ is a $2$-marginal for $\M$,  $\eta_x\in \vL^\infty(\nu_x)$ for each $x$, $\vS_0\,\dot{\times}\, Y\sq\vY$, and there exists a $\theta \in \vL^\infty(\ups\restr\vY[T])$ then there is a $\psi\in\Lambda^\infty(\ups)$ such that $\eta_x([\psi(f)]_{x}) = [\psi(f)]_{x}$ for all $f\in\LL^{\infty}(\ups)$, $x\in X$.
\end{enumerate}
\end{lem}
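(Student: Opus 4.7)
The plan is to build $\psi$ by composing the lifting $\theta$ with the sectionwise action of the family $\eta$. Given $f\in\LL^\infty(\ups)$, I first select a $\vY[T]$-measurable representative of its $\ups$-class: by $[C]$ and Proposition \ref{p:C.for.fcts}(iv) there is $N\in\vS_0$ with $f_x\in\LL^\infty(\nu_x)$ for $x\notin N$, and since $\vS_0\,\dot{\times}\,Y\sq\vY$, the truncation $g:=f\cdot\chi_{N^c\times Y}$ is $\vY$-measurable, has $T_x$-measurable sections for every $x\in X$ (all lying in $\LL^\infty(\nu_x)$), is therefore $\vY[T]$-measurable by Corollary \ref{c:C.for.fcts:0}, and satisfies $g=_\ups f$. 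I then set
\[
\psi(f):=\eta_\bullet(\theta(g)).
\]
Well-definedness: if $g_1,g_2$ are two such truncations then $g_1=_\ups g_2$, and since $\vY[T]\sq\vY$ also $g_1=_{\ups\restr\vY[T]}g_2$, so $\theta(g_1)=\theta(g_2)$ and hence $\psi(f)$ is independent of the chosen $N$.

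Next I verify that $\psi(f)\in\LL^\infty(\ups)$. The function $\theta(g)$ is $\vY[T]$-measurable with every section in $\LL^\infty(\nu_x)$, so $[\eta_\bullet(\theta(g))]_x=\eta_x([\theta(g)]_x)\in\LL^\infty(\nu_x)$ for all $x\in X$. The $\vY$-measurability of $\eta_\bullet(\theta(g))$ follows from the $2$-marginal hypothesis on $\eta$ (applied in conjunction with Example \ref{ex:eta.skew.1}, or directly via Lemma \ref{l:eta.skew:m} with $\I=\{\emptyset\}$ and $J_x=T_{x,0}$) to the $\vY[T]$-measurable input $\theta(g)$. Since each $\eta_x$ is a lifting of $\nu_x$, $\eta_x([\theta(g)]_x)=_{\nu_x}[\theta(g)]_x$ for every $x$, and $[C]$ then yields $\psi(f)=_\ups\theta(g)=_\ups g=_\ups f$; this simultaneously gives property $(l1)$ for $\psi$ and the bound $\|\psi(f)\|_\infty\leq\|f\|_\infty$.

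The remaining lifting properties follow routinely from the corresponding properties of $\theta$ together with Lemma \ref{VM10a}(iv): $(l2)$ from well-definedness, linearity and multiplicativity from (iv)(a,b,c) with $p=\infty$, and fixing of constants since $\theta(c)=c$ and $\eta_x(c)=c$ imply $\eta_\bullet(c)=c$; order-preservation is inherited because $\theta$ is order-preserving and each $\eta_x$ is order-preserving. Hence $\psi\in\Lambda^\infty(\ups)$. The invariance property $\eta_x([\psi(f)]_x)=[\psi(f)]_x$ is then immediate from the idempotence of each lifting $\eta_x$ (a consequence of $(l1)$ and $(l2)$): $\eta_x([\psi(f)]_x)=\eta_x(\eta_x([\theta(g)]_x))=\eta_x([\theta(g)]_x)=[\psi(f)]_x$.

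The main obstacle I expect is the $\vY$-measurability of $\eta_\bullet(\theta(g))$ in the absence of any completeness hypothesis on $\mu$; the $2$-marginal assumption on $\eta$ is precisely what closes this gap. The role of $\theta$ is to furnish canonical pointwise representatives on $\vY[T]$ to which the sectionwise map $\eta_\bullet$ can be applied unambiguously and with values whose every section is $\eta_x$-fixed, thereby enabling both the construction and the final invariance.
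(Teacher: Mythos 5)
Your proposal addresses only part (iv) of the lemma; parts (i)--(iii) are never argued. Part (i) (that $\eta_\bullet$ satisfies $(l1)$ on $\M$), part (ii) (the idempotence identity $\eta_x([\eta_\bullet(f)]_x)=[\eta_\bullet(f)]_x$), and part (iii) (the construction of $\psi\in V(\M)$ for general $p$ from a vector lifting $\theta\in V(\M)$ with values in $\LL^{0,p}(\vY,\nu)$) all require separate, if short, proofs, and (ii) and (iii) are precisely the ingredients the paper reuses to finish (iv). You do invoke the content of (ii) in passing (``the idempotence of each lifting $\eta_x$''), but (i) and (iii) are simply missing.

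For part (iv) itself, your argument is essentially the paper's: both define $\psi=\eta_\bullet\circ\theta$ after passing to $\vY[T]$-measurable representatives; the paper phrases the passage as the induced isomorphism $L^\infty(\ups\restr\vY[T])\to L^\infty(\ups)$ coming from Corollary \ref{c:sec}, while you realize it by the explicit truncation $g=f\cdot\chi_{N^c\times Y}$, which amounts to the same thing, and the verification of the lifting properties and of the invariance of sections proceeds identically. One caveat: your parenthetical alternative justification of the $\vY$-measurability of $\eta_\bullet(\theta(g))$ via Lemma \ref{l:eta.skew:m} with $\I=\{\e\}$ does not work --- that lemma only yields measurability with respect to $\vY[T]_{\{\e\}\ltimes T_0}$, a $\s$-algebra which need not be contained in $\vY$ in the absence of completeness (and its hypothesis asks for $\eta_x$ to be defined on all of $\LL^0(T_x)$ rather than on $\LL^\infty(\nu_x)$). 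Your primary justification, the $2$-marginal hypothesis on $\eta$, is the correct and indeed indispensable one here, as your closing paragraph acknowledges.
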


\begin{rem}\label{r:Mo}
Under CH, by a theorem of Mokobodzki (cf.\ \cite{bms6}, Theorem 2.7), the assumption in (iv) that there is a lifting for $\ups\restr \vY[T]$ holds if the measure algebra of $\ups$ has cardinality at most $\aleph_2$.
\end{rem}

\begin{proof}
(i) Let $f\in \M$. Then $f\in\LL^{0,p}(\vY,\nu)$, so there is an $N\in\vS_0$ such that $x\notin N$ implies $f_x\in\LL^p(\nu_x)$ and $[\eta_\bullet(f)]_x = \eta_x(f_x)=_{\nu_x}f_x$. By $[C]$, $\eta_\bullet(f)=_\ups f$.

(ii) If $f_x\notin\LL^p(\nu_x)$, then by definition $[\eta_\bullet(f)]_x=0$ and the conclusion is clear.
If $f_x\in\LL^p(\nu_x)$, then by $(l1)$, $\eta_x(f_x)=_{\nu_x}f_x$ and then by $(l2)$, $\eta_x(\eta_x(f_x)) = \eta_x(f_x)$. The conclusion follows since by definition $[\eta_\bullet(f)]_x=\eta_x(f_x)$.

(iii) If $\eta_x\in V^p(\nu_x)$, take any $\theta\in V(\M)$ whose values are in $\LL^{0,p}(\vY,\nu)$ (cf.\ Example \ref{ex:PV15:1} (b)). Define
$\psi(f):=\eta_\bullet(\theta(f))$ for each $f\in\M$.
Since $\theta$ and $\eta_\bullet$ both satisfy $(l1)$, so does $\psi$, and
$\psi$ satisfies $(l2)$ since $\theta$ does.
Linearity of $\psi$ follows from linearity of $\theta$ and the $\eta_x$, and the fact that $\theta$ takes values in $\LL^{0,p}(\vY,\nu)$ (see Lemma \ref{VM10a} (iv) (a,b)). Since $\theta(1)=1$ and $\eta_\bullet(1)=1$, we get $\psi(1)=1$, so $\psi\in V(\M)$. Finally, for all $f\in\M$ and $x\in{X}$ we have $\eta_x([\psi(f)]_x) = [\psi(f)]_x$ by (ii).

(iv) If $\K=\R$, $p=\infty$, $\eta_x\in \vL^\infty(\nu_x)$ for each $x$, and we fix $\theta\in\Lambda^\infty(\ups\restr \vY[T])$, then by Corollary \ref{c:sec}, the inclusion $\LL^\infty(\ups\restr \vY[T])\to \LL^\infty(\ups)$ induces a bijection $L^\infty(\ups\restr \vY[T])\to L^\infty(\ups)$ which is clearly an algebra isomorphism, so it suffices to define $\psi$ on $\LL^\infty(\ups\restr \vY[T])$. Notice that $[\theta(f)]_x\in\LL^\infty(\nu_x)$ when $x\in X$, $f\in \LL^\infty(\ups\restr\vY[T])$ since $[\theta(f)]_x\in\LL^0(\nu_x)$ by Corollary \ref{c:C.for.fcts:0}, and $|\theta(f)|\leq\|f\|_\infty$ (\cite{it}, (1) page 35). Thus, $\theta$ takes values in $\LL^{0,\infty}(\vY,\nu)$.
Take $\psi(f) := \eta_\bullet(\theta(f))$, $f\in\LL^\infty(\ups)$, and proceed as in (iii) (using also Lemma \ref{VM10a} (iv)(c)).
\end{proof}

There is a converse of sorts to Lemma \ref{VM19} (iii).

\begin{lem}\label{VM19a}
Let $(X,\vS,\mu)$, $(Y,T_x,\nu_x)_{x\in X}$, $(X\times{Y},\vY,\ups)$ be probability spaces satisfying $[C]$. Let $\nu=(\nu_x)_{x\in X}$ and $\eta=(\eta_x)_{x\in X}$, where $\eta_x\colon \LL^p(\nu_x) \to \LL^p(\nu_x)$ satisfies $(l2)$ for each $x$.
Let $\M\sq\LL^{0,p}_\mu(\vY,\nu)$ satisfy $\M+\NN(\mu)=\M$.
Suppose $(\vS_0\,\wh{\times}\, Y)[T]\sq\vY$ and there exists a function $\psi\colon \M\rightarrow\M$ satisfying $(l1)$ so that
\[
\eta_x([\psi(f)]_{x})=[\psi(f)]_{x},\ \text{for all $f\in\M$ with $x\notin N_f$, for some $N_f\in\vS_0$.}
\]
\begin{enumerate}
\item
Then $\eta$ is a $2$-marginal with respect to $\M$ and $\eta_\bullet$ satisfies $(l1)$ on $\M$.

\item
When we have a single space $(Y,T,\nu)$ and a single map $\eta\colon \LL^p(\nu) \to \LL^p(\nu)$, if $1\otimes g\in\M$ for all $g\in\LL^p(\nu)$ then $\eta$ satisfies $(l1)$.
\end{enumerate}
\end{lem}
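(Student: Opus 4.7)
The plan for (i) is to show that $\eta_\bullet(f)$ agrees with $\psi(f)$ off a $\mu$-null ``strip'' $N\times Y$, and then to conclude $\eta_\bullet(f)\in\M$ via the closure assumption $\M+\NN(\mu)=\M$ (I read $\NN(\mu)$ as $\NN(\ups)$, since $\M$ consists of functions on $X\times Y$). First I would fix $f\in\M$. Since $\M\sq\LL^{0,p}_\mu(\vY,\nu)$, some $M\in\vS_0$ satisfies $f_x\in\LL^p(\nu_x)$ for $x\notin M$, and since $\psi(f)=_\ups f$, property $[C]$ supplies $L\in\vS_0$ outside which $[\psi(f)]_x=_{\nu_x}f_x$. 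Setting $N:=M\cup L\cup N_f\in\vS_0$, for every $x\notin N$ the section $f_x$ lies in $\LL^p(\nu_x)$, and $(l2)$ for $\eta_x$ combined with the hypothesis $\eta_x([\psi(f)]_x)=[\psi(f)]_x$ gives
\[
[\eta_\bullet(f)]_x \;=\; \eta_x(f_x) \;=\; \eta_x([\psi(f)]_x) \;=\; [\psi(f)]_x.
\]
Thus $\eta_\bullet(f)$ and $\psi(f)$ coincide on $N^c\times Y$.

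Next I will verify $\eta_\bullet(f)\in\LL^0(\vY)$. For a Borel set $B$, decompose $[\eta_\bullet(f)]^{-1}(B)$ as $E_1\cup E_2$ with $E_1:=[\eta_\bullet(f)]^{-1}(B)\cap(N^c\times Y)$ and $E_2:=[\eta_\bullet(f)]^{-1}(B)\cap(N\times Y)$. Because $\eta_\bullet(f)=\psi(f)$ on $N^c\times Y$ and $\psi(f)\in\LL^0(\vY)$, we get $E_1\in\vY$. For $E_2$, each $x$-section $[\eta_\bullet(f)]_x^{-1}(B)$ is $T_x$-measurable, since $[\eta_\bullet(f)]_x$ equals either $\eta_x(f_x)\in\LL^p(\nu_x)$ or $0$; therefore $E_2\in(\vS_0\,\wh{\times}\,Y)[T]\sq\vY$ by hypothesis. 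The same hypothesis also yields $\vS_0\,\dot{\times}\,Y\sq(\vS_0\,\wh{\times}\,Y)[T]\sq\vY$, so $N\times Y\in\vY$ and hence $N\times Y\in\vY_0$ by $[C]$. Consequently $\eta_\bullet(f)-\psi(f)$ is $\vY$-measurable and vanishes outside a $\ups$-null set, so it lies in $\NN(\ups)$; the closure assumption then gives $\eta_\bullet(f)=\psi(f)+(\eta_\bullet(f)-\psi(f))\in\M$. The chain $\eta_\bullet(f)=_\ups\psi(f)=_\ups f$ finishes both the $2$-marginal property and $(l1)$ for $\eta_\bullet$ on $\M$.

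For (ii), I would specialize to a single space $(Y,T,\nu)$ and fix $g\in\LL^p(\nu)$. Setting $f:=1\otimes g\in\M$, Lemma \ref{VM10a}(ii) gives $\eta_\bullet(f)=1\otimes\eta(g)$ since $f_x=g\in\LL^p(\nu)$ for every $x\in X$. Applying part (i) to $f$ yields $1\otimes\eta(g)=_\ups 1\otimes g$, and $[C]$ produces $N\in\vS_0$ such that for any (in particular some) $x\notin N$, $\eta(g)=[1\otimes\eta(g)]_x=_\nu[1\otimes g]_x=g$, which is $(l1)$ for $\eta$.

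The main obstacle is the measurability step in (i). Without the hypothesis $(\vS_0\,\wh{\times}\,Y)[T]\sq\vY$, the piecewise definition of $\eta_\bullet(f)$—which forces $[\eta_\bullet(f)]_x=0$ on the exceptional set where $f_x\notin\LL^p(\nu_x)$—could fail to produce a $\vY$-measurable function on the bad strip $N\times Y$, even though $\eta_\bullet(f)$ agrees with the measurable function $\psi(f)$ $\ups$-almost everywhere. The hypothesis is precisely what one needs to absorb the sectionwise $T_x$-measurability on $N\times Y$ into the global $\s$-algebra $\vY$, allowing the argument to bypass any completeness assumption on $\ups$.
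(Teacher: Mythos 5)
Your proof is correct and follows essentially the same route as the paper's: show that $\eta_\bullet(f)$ coincides with $\psi(f)$ off a null strip $N\times Y$ using $(l2)$ and the section-invariance hypothesis, absorb the strip's measurability via $(\vS_0\,\wh{\times}\,Y)[T]\sq\vY$, and invoke $\M+\NN(\mu)=\M$ (correctly read as closure under adding $\ups$-null $\vY$-measurable functions); part (ii) likewise matches, extracting $(l1)$ for $\eta$ from a single good section of $1\otimes g$. Your expanded preimage decomposition for the measurability step is just a more detailed rendering of the paper's one-line justification.
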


\begin{rem}
When $\M=\LL^p(\ups)$, the condition $X\,\dot{\times}\,T\sq\vY$ is sufficient to ensure that $1\otimes g\in\M$ for all $g\in\LL^p(\nu)$.
\end{rem}

\begin{rem}\label{r:VM19a}
Take $\K=\R$ and suppose $(X,\vS,\mu)$, $(Y,T,\nu_x)_{x\in X}$, $(X\times Y,\vS\otimes T,\ups)$ are probability spaces so that $(\nu_x)_{x\in X}$ is a product r.c.p.\ on $T$ for $\ups$ with respect to $\mu$.
According to \cite{mu23} Theorem 3.11, for every equi-admissible family $(\tau_x)_{x\in{X}}$ of densities $\tau_x\in\vartheta(\nu_x)$ (see \cite{mu23} for the definition) there exist a family $\eta=(\eta_x)_{x\in{X}}$ of liftings $\eta_x\in\vL^\infty(\wh\nu_x)$ admissibly generated by $\tau_x$ (see \cite{mu23} for the definition), and a lifting $\varphi:\LL^\infty(\ups_\mathfrak{N})\to\LL^\infty(\wh{\ups})$ such that
\[
[\varphi(f)]_x=\eta_x\bigl([\varphi(f)]_x\bigr)\;\;\mbox{for every}\;\; f\in\LL^\infty(\ups_\mathfrak{N})\;\;\mbox{and}\;\; x\in{X}.
\]
By Lemma \ref{VM19a} applied to $(X,\vS,\mu)$, $(Y,T,\wh{\nu}_x)_{x\in X}$, $(X\times Y,\vS\,\wh{\otimes}_\ups\, T,\wh{\ups})$, where $\vS\,\wh{\otimes}_\ups\, T$ is the domain of $\wh{\ups}$, $\eta$ is a 2-marginal with respect to $\LL^\infty(\wh{\ups})$.
\end{rem}

\begin{proof}
(i) Fix $f\in\M$. Since $\psi(f)=_{\ups}f$, using $[C]$ and enlarging the given set $N_f\in\vS_0$ if necessary, we can assume that $[\psi(f)]_x,\,f_x\in\LL^p(\nu_x)$ and $[\psi(f)]_x=_{\nu_x}f_x$ for every $x\notin N_f$. For $x\notin N_f$ we have, using $(l2)$ for $\eta_x$,
\[
[\psi(f)]_x = \eta_x([\psi(f)]_x)=\eta_x(f_x)=[\eta_{\bullet}(f)]_x,
\]
so $\psi(f)\restr(N_f^c\times Y)=\eta_{\bullet}(f)\restr(N_f^c\times Y)$. Also $\eta_{\bullet}(f)\restr N_f\times Y$ is $\vY$-measurable since $(\vS_0\,\wh{\times}\, Y)[T]\sq\vY_0$. Since $\psi(f)\in\M$ and $\M+\NN(\mu)=\M$, we have $\eta_{\bullet}(f)\in\M$ and $\eta_\bullet(f) =_\ups \psi(f) =_\ups f$, so $\eta_\bullet$ satisfies $(l1)$.

(ii) Now assume that we have a single space $(Y,T,\nu)$ and a single map $\eta\colon \LL^p(\nu) \to \LL^p(\nu)$, and $1\otimes g\in\M$ for all $g\in\LL^p(\nu)$.
To see that $\eta$ satisfies $(l1)$, fix $g\in\LL^p(\nu)$ and let $f=1\otimes g$. Note that
$\{\psi(1\otimes{g})\not=1\otimes{g}\}\in\vY_0\sq\vS_0\ltimes T_0$. Choose $x\notin N_f$ such that $\{[\psi(1\otimes{g})]_x\not=[1\otimes{g}]_x = g\}\in T_0$. Since $x\notin N_f$, as in (i) we have $[\psi(1\otimes{g})]_x = [\eta_\bullet(1\otimes{g})]_x = \eta(g)$, yielding $\{\eta(g)\not=g\}\in T_0$, or $\eta(g) =_{\nu} g$ and hence $\eta$ satisfies $(l1)$.
\end{proof}

The next example shows that, assuming sets of reals of cardinality less than $\mathfrak{c}$ have measure zero, the restriction to having a single $\eta$ cannot be removed in (ii).

\begin{ex}\label{ex:VM19}
Let each of $(X,\vS,\mu)$ and $(Y,T,\nu)$ be $[0,1]$ with Lebesgue measure, and let $F^\bullet$ denote the class of $F\in T$ in the measure algebra of $\nu$. Let $(X\times Y,\vY,\ups)$ be the Lebesgue measure space on $[0,1]^2$. There are $\eta_x\colon \LL^0(\nu) \to \LL^0(\nu)$, $x\in X$, satisfying $(l2)$ so that $\eta=(\eta_x)_{x\in X}$ is a $2$-marginal for $\LL^0(\ups)$, and $\eta_\bullet$ satisfies $(l1)$ on $\LL^0(\ups)$, but none of the maps $\eta_x$ satisfy $(l1)$.

Let $E_x$, $x\in X$ be non-null Borel sets so that for each Borel set $A\sq X\times Y$, $\{x\in X:(E_x)^\bullet = (A_x)^\bullet\}$ has cardinality less than $\mathfrak{c}$.%
\footnote{
List $[0,1]$ bijectively as $\{x_\al:\al<\mathfrak{c}\}$. List the Borel subsets of $X\times Y$ as $\{B_\al:\al<\mathfrak{c}\}$. Recursively define $E_{x_\al} = [0,t_{\al}]$ so that at each stage, $(E_{x_\al})^\bullet$ is different from any of the $<\mathfrak{c}$ classes $([B_\beta]_{x_\al})^\bullet$, $\beta<\al$.}

\begin{clm}
For any $f\in \LL^0(\ups)$, there is an $N\in\vS_0$ such that $\{x\in N^c:f_x\in\LL^0(T),\,f_x=_{\nu}\chi_{E_x}\}$ has cardinality $<\mathfrak{c}$.
\end{clm}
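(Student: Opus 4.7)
The plan is to reduce $f$ to a Borel-measurable representative on the square and then use the defining property of the sequence $\langle E_x\rangle$ directly.

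First I would invoke the fact that $\ups$ is the Lebesgue measure on $[0,1]^2$ (hence complete), so by Lemma \ref{P10i} (ii) applied with $\vS=\B([0,1]^2)$ and $\I=\vY_0$, there exists a Borel function $g\colon X\times Y\to \R$ with $f=_{\ups} g$. Next, by $[C]$ (which holds for Lebesgue measure), I would obtain a set $N\in \vS_0$ such that for every $x\in N^c$ both $f_x$ and $g_x$ are $T$-measurable and $f_x=_\nu g_x$. This $N$ will be the one asserted in the claim.

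Now fix such an $N$ and set $A:=g^{-1}(\{1\})\in\B(X\times Y)$. The key observation is that for any $x\in N^c$ satisfying $f_x=_\nu \chi_{E_x}$, one has $g_x=_\nu \chi_{E_x}$, and hence the Borel section $A_x=\{y:g_x(y)=1\}$ satisfies $A_x\,\triangle\,E_x\sq \{g_x\neq\chi_{E_x}\}\in T_0$, i.e.\ $(A_x)^\bullet=(E_x)^\bullet$ in the measure algebra of $\nu$. Therefore
\[
\{x\in N^c:f_x\in\LL^0(T),\,f_x=_{\nu}\chi_{E_x}\}\;\sq\;\{x\in X:(A_x)^\bullet=(E_x)^\bullet\},
\]
and by the defining property of $\langle E_x\rangle$ applied to the Borel set $A\sq X\times Y$, the right-hand side has cardinality $<\mathfrak{c}$. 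This completes the argument.

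The only non-routine ingredient is the reduction to a Borel representative $g$, which is precisely where the completeness of $\ups$ enters; once $g$ is in hand, the rest is a direct application of the construction of the $E_x$. There is no serious obstacle — the hard work was already done in choosing the $E_x$ to diagonalize against every Borel subset of the square.
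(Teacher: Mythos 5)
Your argument is correct and is essentially identical to the paper's own proof: both replace $f$ by a Borel representative $g$, pass to a conull set $N^c$ of $x$ with $f_x=_\nu g_x$, take $A=g^{-1}(\{1\})$, and observe that $f_x=_\nu\chi_{E_x}$ forces $(A_x)^\bullet=(E_x)^\bullet$, so the set in question lands inside the small set guaranteed by the construction of the $E_x$. The extra detail you supply (citing Lemma \ref{P10i} (ii) and $[C]$ explicitly) is just a more careful justification of steps the paper takes for granted.
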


\begin{proof}
Let $g\in \LL^0(\ups)$ be Borel so that $f =_\ups g$. For some set $N\in\vS_0$, $x\in N^c$ implies $f_x\in\LL^0(\nu)$, $f_x =_{\nu} g_x$. Let $A=g^{-1}(1)$. Then $S=\{x\in X:(E_x)^\bullet = (A_x)^\bullet\}$ has cardinality less than $\mathfrak{c}$. If $x\notin N$ and $f_x =_{\nu}\chi_{E_x}$, then $g_x =_{\nu}\chi_{E_x}$ so $(A_x)^\bullet = (E_x)^\bullet$ giving $x\in S$.
\end{proof}

Suppose that sets of size $<\mathfrak{c}$ have measure zero, which is true for example if CH holds.
Fix a $\de\colon\LL^0(T)\to \LL^0(T)$ which is a $2$-marginal for $\LL^0(\ups)$ (for example, see Proposition 7.24 along with \cite{bms4} Example 4.8) and let $\eta_x\colon \LL^0(\nu) \to \LL^0(\nu)$ agree with $\de$ except on functions $h=_{\nu}\chi_{E_x}$, setting $\eta_x(h)=0$ for such functions. If $f\in \LL^0(\ups)$, then the set
\[
N_f := \{x\in X:f_x\notin\LL^0(\nu),\ \text{or}\ f_x\in\LL^0(\nu)\ \text{and}\ f_x=_{\nu}\chi_{E_x}\}
\]
has measure zero by the claim.
Since $\eta_\bullet(f)$ and $\de_\bullet(f)$ agree on $N_f^c\times Y$, $\eta_\bullet(f)\in\LL^0(\ups)$ and $\eta_\bullet(f) =_\ups \de_\bullet(f) =_\ups f$, so $(l1)$ holds for $\eta_\bullet$. It follows that each $\eta_x$ satisfies $(l2)$ and $\eta$ is a $2$-marginal for $\LL^0(\ups)$ but no $\eta_x$ satisfies $(l1)$ since $\eta_x(\chi_{E_x})=0$.
\end{ex}

The following example extends that included in Remark 7.20. 

\begin{ex}\label{disex}
Let $(X,\vS)$, $(Y,\wt{T})$ be measurable spaces. Let $\ups$ be a probability measure on $\vS\otimes\wt{T}$, and $\mu$ the marginal measure of $\ups$ on $\vS$. Suppose that the marginal measure $\wt\nu$ of $\ups$ on $\wt{T}$ is inner regular with respect to a countably compact class $\mathcal{K}\sq\wt{T}$ (see \cite{fr4} 451A for definition) which is closed under finite unions and countable intersections. By \cite{pa} Theorem 3.5 (see also \cite{fr4} Theorem 452M) there exists a family $(Y,T_x,\nu_x)_{x\in{X}}$ of complete probability spaces satisfying the property $[C]$ for $\vY=\vS\otimes\wt{T}$ such that for every $x\in{X}$ the inclusion $\mathcal{K}\sq{T}_x$ holds and $\nu_x$ is inner regular with respect to $\mathcal{K}$. So, we may apply \cite{mu23} Theorem 3.8, in order to find a family
$(\eta_x)_{x\in{X}}$ of liftings $\eta_x\in\vL^\infty(\nu_x)$ and a lifting $\varphi\in\vL^\infty(\wh\ups)$ such that
\[
[\varphi(f)]_x=\eta_x([\varphi(f)]_x)\quad\mbox{for every}\quad x\in{X}\quad\mbox{and}\quad f\in\mathcal{L}^\infty(\vS\wh\otimes_{\ups}\wt{T}).
\]
By Lemma 7.18 applied to the probability spaces $(X,\vS,\mu), (Y,T_x, \nu_x)_{x\in{X}}$, and $(X\times{Y}, \vS\,\wh\otimes_{\ups}\,\wt{T},\wh{\ups})$, the family $\eta:=\langle\eta_x\rangle_{x\in{X}}$ is a 2-marginal with respect to $\mathcal{L}^\infty(\wh{\ups})$.\\
Note that the existence of a family $(\nu_x)_{x\in{X}}$ of probability measures $\nu_x$ as above with domain $\wt{T}$ is guaranteed, if either $\wt{T}$ is the Baire $\s$-algebra with respect to a compact Hausdorf topology on $Y$, or $\wt{T}$ is the Borel $\s$-algebra with respect to an analytic Hausdorff topology on $Y$, or $(Y,\wt{T})$ is a standard Borel space (see \cite{fr4} 452N).
\end{ex}

From Lemmas \ref{VM19} and \ref{VM19a}, we easily get the following corollary.

\begin{cor}\label{VM19b}
Let $(X,\vS,\mu)$, $(Y,T_x,\nu_x)_{x\in X}$, $(X\times{Y},\vY,\ups)$ be probability spaces satisfying $[C]$. Suppose that $(\vS_0\,\wh{\times}\, Y)[T]\sq\vY$.
Let $\eta=(\eta_x)_{x\in X}$ with each $\eta_x\colon \LL^p(\nu_x) \to \LL^p(\nu_x)$ satisfying $\eta_x(0)=0$, $(l1)$, and $(l2)$. The following statements are equivalent.
\begin{enumerate}
\item
$\eta$ is $2$-marginal with respect to $\LL^p(\ups)$;
\item
$\eta_\bullet(f)=_{\ups}f$ for every $f\in\LL^p(\ups)$;
\item
there exists a map $\varphi: \LL^p(\ups)\rightarrow\LL^p(\ups)$ satisfying $(l1)$, such that $\eta_x([\varphi(f)]_{x})=[\varphi(f)]_{x}$ for all $f\in\LL^p(\ups)$ and $x\in X$;
\item
there exists a map $\varphi: \LL^p(\ups)\rightarrow\LL^p(\ups)$ satisfying $(l1)$, such that for each $f\in\LL^p(\ups)$ there exists a set $N_f\in\vS_0$ with $\eta_x([\varphi(f)]_{x})=[\varphi(f)]_{x}$ for all $x\notin N_f$.
\end{enumerate}
\end{cor}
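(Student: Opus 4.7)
The plan is to close the cycle $\mbox{(i)}\Rightarrow\mbox{(ii)}\Rightarrow\mbox{(iii)}\Rightarrow\mbox{(iv)}\Rightarrow\mbox{(i)}$ by direct appeal to Lemmas \ref{VM19} and \ref{VM19a}, applied in each case to the subspace $\M = \LL^p(\ups)$ of $\LL^{0,p}_\mu(\vY,\nu)$. The inclusion $\LL^p(\ups)\sq \LL^{0,p}_\mu(\vY,\nu)$ is furnished by $[C]$ via Proposition \ref{p:C.for.fcts} (iv), so the standing hypotheses of both lemmas are available under the assumptions of the corollary. There is no substantive obstacle here: the content of the corollary is essentially a repackaging of the two preceding lemmas, and the work lies entirely in matching the right subspace and the right auxiliary map to each lemma's hypotheses.

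For $\mbox{(i)}\Rightarrow\mbox{(ii)}$, I would invoke Lemma \ref{VM19} (i) with $\M = \LL^p(\ups)$. Since every $\eta_x$ satisfies $(l1)$, the exceptional set $N$ in that lemma can be taken to be $\e$, and the 2-marginal hypothesis (i) then yields $(l1)$ for $\eta_\bullet$ on $\LL^p(\ups)$, namely $\eta_\bullet(f)\in\LL^p(\ups)$ and $\eta_\bullet(f)=_\ups f$ for every $f\in\LL^p(\ups)$, which is (ii).

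For $\mbox{(ii)}\Rightarrow\mbox{(iii)}$, I would simply set $\varphi := \eta_\bullet$. That $\varphi$ satisfies $(l1)$ is exactly (ii). The pointwise identity $\eta_x([\eta_\bullet(f)]_x)=[\eta_\bullet(f)]_x$ for every $x\in X$ and every $f\in\LL^p(\ups)$ is Lemma \ref{VM19} (ii), which uses only the assumptions $\eta_x(0)=0$, $(l1)$ and $(l2)$ on each $\eta_x$. The implication $\mbox{(iii)}\Rightarrow\mbox{(iv)}$ is immediate by taking $N_f = \e$ for each $f$.

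Finally, for $\mbox{(iv)}\Rightarrow\mbox{(i)}$, I would apply Lemma \ref{VM19a} (i) with $\M = \LL^p(\ups)$ and $\psi := \varphi$. The closure property $\M + \NN(\ups) = \M$ is automatic since $\NN(\ups)\sq\LL^p(\ups)$; the inclusion $(\vS_0\,\wh{\times}\, Y)[T]\sq\vY$ is a standing hypothesis of the corollary; and each $\eta_x$ satisfies $(l2)$ by assumption. The lemma then delivers that $\eta$ is a 2-marginal with respect to $\LL^p(\ups)$, which is (i), completing the cycle.
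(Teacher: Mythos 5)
Your proof is correct and follows the route the paper itself intends: the corollary is stated as an immediate consequence of Lemmas \ref{VM19} and \ref{VM19a}, and your cycle (i)$\Rightarrow$(ii)$\Rightarrow$(iii)$\Rightarrow$(iv)$\Rightarrow$(i) instantiates those two lemmas with $\M=\LL^p(\ups)$ exactly as required, with the hypotheses ($\LL^p(\ups)\sq\LL^{0,p}_\mu(\vY,\nu)$ via $[C]$ and Proposition \ref{p:C.for.fcts} (iv), the closure $\M+\NN(\ups)=\M$, and the standing inclusion $(\vS_0\,\wh{\times}\,Y)[T]\sq\vY$) all correctly verified.
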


\begin{prop}\label{p:2.marg}
Let $(X,\vS,\mu)$, $(Y,T,\nu)$, $(X\times Y,\vY,\ups)$ be probability spaces satisfying $[C]$ and $\wh{\vS}_0\,\dot{\times}\,Y\sq \vY$, and suppose that there is a $\theta\colon T\to T$ satisfying $(L1)$ which is a $2$-marginal for $\vY$. Then there is a $\de\colon \LL^0(\nu)\to \LL^0(\nu)$ satisfying $(l1)$ which is a $2$-marginal for $\LL^0(\ups)$. If $Y$ has a topology $\mathfrak{T}\sq T$ with respect to which $\theta$ is strong, then we may take $\de$ to be strong, i.e., to fix the elements of $C(Y,\mathfrak{T})$.
\end{prop}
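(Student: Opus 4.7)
The plan is to construct $\de$ via a level-set formula that pairs $\theta$ with a pointwise intersection with the original level set. This ensures that when $\theta$ is strong on open sets the intersection collapses to $\{f>r\}$, yielding strongness of $\de$, while the $2$-marginal property of $\theta$ transfers directly to $\de_\bullet$. For real-valued $f\in\LL^0(\nu)$, set
\[
\wt{\de}(f)(y) := \sup\{r\in\Q : y\in\theta(\{f>r\})\cap\{f>r\}\}
\]
(with $\sup\e := -\infty$) and define $\de(f)(y) := \wt{\de}(f)(y)$ on $\{\wt{\de}(f)\in\R\}$ and $\de(f)(y) := 0$ elsewhere; for complex $\K$, apply this to $\Re f$ and $\Im f$ and combine. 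To verify $(l1)$, each set $\theta(\{f>r\})\cap\{f>r\}$ lies in $T$, so $\wt{\de}(f)$ is $T$-measurable as an $\ov{\R}$-valued function and $\{\wt{\de}(f)\notin\R\}\in T$; since $\theta$ satisfies $(L1)$ and $\Q$ is countable, there is a $\nu$-null set outside of which $y\in\theta(\{f>r\})\cap\{f>r\}$ iff $f(y)>r$ for every $r\in\Q$, whence $\wt{\de}(f)=f$ off that null set. Thus $\de(f)\in\LL^0(\nu)$ and $\de(f)=_\nu f$.

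Next, I would verify the $2$-marginal property. Given $F\in\LL^0(\ups)$, Proposition~\ref{p:C.for.fcts:0} (applicable with $\I=\vS_0$ and constant $T_x=T$ since $[C]$ gives $\vY\sq\vS_0\ltimes T$) yields $N\in\vS_0$ with $F_x\in\LL^0(\nu)$ for every $x\in N^c$; set $N':=\{x\in X:F_x\notin\LL^0(\nu)\}\sq N$. For each $r\in\Q$ the $2$-marginal hypothesis on $\theta$ gives $E_r:=\theta_\bullet(\{F>r\})\cap\{F>r\}\in\vY$, and for $x\in (N')^c$ the identities $[\theta_\bullet(\{F>r\})]_x=\theta(\{F_x>r\})$ and $[\{F>r\}]_x=\{F_x>r\}$ yield $[E_r]_x=\theta(\{F_x>r\})\cap\{F_x>r\}$. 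Hence
\[
G(x,y) := \sup\{r\in\Q:(x,y)\in E_r\}
\]
is $\vY$-measurable with $\{G\in\R\}\in\vY$, and $G(x,y)=\wt{\de}(F_x)(y)$ at every $x\in(N')^c$. Since $N'\in\wh{\vS}_0$, the hypothesis $\wh{\vS}_0\,\dot{\times}\,Y\sq\vY$ gives $N'\times Y\in\vY$, and therefore
\[
\de_\bullet(F) = G\cdot\chi_{\{G\in\R\}}\cdot\chi_{(N')^c\times Y}\in\LL^0(\ups).
\]
The complex case reduces to the real one by treating $\Re F$ and $\Im F$ separately.

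For the strong conclusion, suppose $\mathfrak{T}\sq T$ is a topology on $Y$ with $U\sq\theta(U)$ for every $U\in\mathfrak{T}$. For real-valued $f\in C(Y,\mathfrak{T})\cap\LL^0(\nu)$, every set $\{f>r\}$ is $\mathfrak{T}$-open, so $\{f>r\}\sq\theta(\{f>r\})$ and thus $\theta(\{f>r\})\cap\{f>r\}=\{f>r\}$ for every $r\in\Q$. The sup formula therefore collapses to $\wt{\de}(f)(y)=\sup\{r\in\Q:f(y)>r\}=f(y)\in\R$ at every $y\in Y$, giving $\de(f)=f$; the complex case follows by applying this separately to $\Re f$ and $\Im f$. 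The main (mild) obstacle throughout is keeping the sup formula real-valued without breaking measurability in the product, which is handled by the observation that $\{G\in\R\}\in\vY$ so that clipping to $0$ preserves $\vY$-measurability.
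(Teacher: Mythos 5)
Your proof is correct, and it takes a genuinely different and more self-contained route than the paper's. The paper defines $\theta_0(f)(y)=\sup\{r\in\Q:y\in\theta\{r<f\}\}$ and then clips $\pm\infty$ to $0$ exactly as you do, but it outsources the three key verifications to external results: \cite{bms6}, Proposition 3.4 (iv) for $(l1)$, Proposition 4.5 for the transfer of the $2$-marginal property, and Proposition 3.6 for strongness. Your modification --- intersecting $\theta(\{f>r\})$ with $\{f>r\}$ inside the sup --- is what makes a direct argument work: it forces $\wt{\de}(f)\le f$ pointwise, so strongness for continuous $f$ drops out in one line from $U\sq\theta(U)$ alone, and it lets you prove $(l1)$ from countably many applications of $(L1)$ and the $2$-marginal transfer by a bare-hands computation with the sets $E_r$, using precisely the hypotheses $[C]$ (through Proposition \ref{p:C.for.fcts:0}) and $\wh{\vS}_0\,\dot{\times}\,Y\sq\vY$ that the statement supplies. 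One caveat worth recording: because your level sets depend on the pointwise values of $f$ and not only on its $\nu$-class, your $\de$ will in general fail $(l2)$ even when $\theta$ satisfies $(L2)$, whereas the paper's $\theta_0$ inherits $(l2)$ from $(L2)$. This is harmless for the Proposition as stated, which asks only for $(l1)$, but the construction is later invoked in Example \ref{ex:p.not.q} in a way that uses $(l2)$ of the resulting $\de$, so your variant could not be substituted there without adjustment.
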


\begin{proof}
We may assume that $\K=\R$. By $[C]$ and $\wh{\vS}_0\,\dot{\times}\,Y\sq \vY$, it follows from Proposition 4.5 of \cite{bms6} that $\theta_0\colon\LL^0(\ov{\R},\nu)\to \LL^0(\ov{\R},\nu)$ is a $2$-marginal with respect to $\LL^0(\ov{\R},\ups)$, where $\theta_0(f)(y) = \sup\{r\in\Q: y\in\theta\{r<f\}\}$. By Proposition 3.4 (iv) of \cite{bms6}, $\theta_0$ satisfies $(l1)$.
Let $\de\colon\LL^0(\nu)\to \LL^0(\nu)$ be the restriction of $\theta_0$ to $\LL^0(\nu)$, with values modified to replace $\pm\infty$ by $0$, i.e., $\de(f)(x)=\theta_0(f)(x)$ when $\theta_0(f)(x)\not=\pm\infty$, $\de(f)(x)=0$ otherwise.

We check that $\de$ is a $2$-marginal with respect to $\LL^0(\ups)$. If $f\in\LL^0(\ups)$ then $(\theta_0)_\bullet(f)$ is $\vY$-measurable. By Lemma \ref{VM19} (i) (which works in the same way for functions into $\ov{\R}$), $(\theta_0)_\bullet(f) =_\ups f$ and hence $A=\{(\theta_0)_\bullet(f)=\pm\infty\}\in\vY_0$. We have
$\de_\bullet(f)\restr A^c = (\theta_0)_\bullet(f)\restr A^c$ and $\de_\bullet(f)\restr A\equiv 0$, so $\de_\bullet(f)$ is $\vY$-measurable.

For the last sentence, Proposition 3.6 of \cite{bms6} implies that $\theta_0$ is strong and hence $\de$ is strong.
\end{proof}

\begin{que}
Is there a vector lifting for Lebesgue measure on $[0,1]$ which is a $2$-marginal for $\LL^0(\ups)$, where $\ups$ is Lebesgue measure on $[0,1]^2$?
\end{que}

In the next example, we show that it is possible that a primitive lifting for $\LL^p$ which is a $2$-marginal for $\LL^q$ for all $q>p$ might not be a $2$-marginal for $\LL^p$.

\begin{ex}\label{ex:p.not.q}
Let $(X,\vS,\mu)$ and $(Y,T,\nu)$ be the Lebesgue measure space on $(0,1)$, and let $(X\times Y,\vY,\ups)$ be the Lebesgue measure space on $(0,1)^2$.
For any $p\in[0,\infty)$, there is a primitive lifting for $\LL^p(\nu)$ which is not a $2$-marginal with respect to $\LL^p(\ups)$ but whose restriction to $\LL^q(\nu)$ is a $2$-marginal with respect to $\LL^q(\ups)$ for each $q>p$.

\begin{proof}
It suffices to do the case $\K=\R$. Since there are strong liftings for $\nu$ which are $2$-marginals for $\vY$ by \cite{bms4} Example 4.8, Proposition \ref{p:2.marg} gives a  $\de\colon\LL^0(\nu)\to \LL^0(\nu)$ which is a strong $2$-marginal with respect to $\LL^0(\ups)$.

Fix a positive continuous function $h$ on $(0,1)$ which belongs to $\LL^p(\nu)$  but not to $\LL^q(\nu)$ for any $q>p$. (One possibility: if $p=0$ take $h(x)=e^{1/x}$; if $0<p<\infty$ take $h(x) = [x\log^2(2/x)]^{-1/p}$.) Define $f(x,y)=xh(y)$. We have $f\in\LL^p(\ups)$. Note that the sections $f_x$ are distinct. They are also continuous, so $\de(f_x)=f_x$ since $\de$ is strong.
Fix a set $E\sq (0,1)^2$ which is the graph of a function such that each section $E^y$ has outer measure one. (See Example \ref{ex:[C]}.) Let $\rho\colon\LL^p(\nu)\to \LL^p(\nu)$ agree with $\de$ except for setting $\rho(f_x)(y)=0$ when $(x,y)\in E$. When $q>p$, for any function $g\in\LL^q(\ups)$ we have $\de_\bullet(g)\in \LL^q(\ups)$. For some $N\in\vS_0$, $g_x\in\LL^q(\nu)$, and hence $\rho(g_x)=\de(g_x)$, for $x\in N^c$. Thus, $\rho_\bullet(g)$ agrees with $\de_\bullet(g)$ on $N^c\times Y$ and therefore $\rho_\bullet(g)\in \LL^q(\ups)$. However, for $g=f$, the zero set of $\rho_\bullet(f)$ is $E$ and hence $\rho_\bullet(f)$ is not $\vY$-measurable.
By \cite{bms6}, Proposition 3.4, $\de$ satisfies $(l2)$ and fixes constant functions; hence $\de$ and $\rho$ are primitive liftings for $\mathcal{L}^p(\nu)$.
\end{proof}
\end{ex}

The following lemma corresponds to \cite{bms4}, Lemma 4.9.%
\footnote{The claim after the statement of Lemma 4.9 in \cite{bms4} that the assumptions imply $\nu$ is complete is incorrect. Completeness of $\nu$ should be added to the hypothesis. Alternatively, write  $\mathfrak{N}(\mu,\nu)$ instead of $\mathfrak{N}$ as we have done here.}

\begin{lem}\label{VM40aa}
Let $(X,\vS,\mu)$, $(Y,T,\nu)$, $(X\times Y,\vY,\ups)$ be probability spaces satisying $[P_2]$ and $[C]$, and let $(X\times Y,\vY,\I)\in \T_\s$ with $\vS_0\,\wh{\times}\,Y\sq \mathcal{I} \sq \Kk:=\mathfrak{N}(\mu,\nu) = \vS_0\ltimes T_0$.
If $\eta:\LL^p(\nu)\rightarrow\K^Y$ satisfies $(l2)$ and $\eta_\bullet(g)\in \LL^p(\ups_\I)$ when $g\in \LL^p(\mu\otimes\nu)$, then
\begin{enumerate}
\item
$\eta_\bullet(f)\in\LL^p(\ups_\I)$ for each
$f\in\LL^p(\ups_\Kk)$.

\item
$\eta$ is a $2$-marginal with respect to $\LL^p(\ups_J)$ for any $J$ with $(X\times Y,\vY,J)\in \T_\s$, $\mathcal{I} \sq J\sq \Kk$.
\end{enumerate}
\end{lem}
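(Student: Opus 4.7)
The plan is to prove (i) by approximating $f\in\LL^p(\ups_\Kk)$ by a function $g\in\LL^p(\mu\otimes\nu)$ to which the hypothesis applies directly, and then transferring the conclusion back to $f$ using $(l2)$ together with the fact that $\eta_\bullet(f)$ and $\eta_\bullet(g)$ differ only on a set of the form $N\times Y$ with $N\in\vS_0$. Part (ii) will then follow at once from (i) via the inclusions $\vY_\I\sq\vY_J\sq\vY_\Kk$.

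For (i), apply Lemma~\ref{VMLa} with the lemma's ideal taken to be $T_0$ (trivially $(Y,T,T_0)\in\T_\s$ and $T_0\cap T=T_0$); this yields $g\in\LL^0(\mu\otimes\nu)$ with $f=_\Kk g$. Since $\ups_\Kk$ restricts to $\mu\otimes\nu$ on $\vS\otimes T$, comparing integrals of $|g|^p$ (or the essential supremum when $p=\infty$) with respect to the two measures places $g$ in $\LL^p(\mu\otimes\nu)$, so the hypothesis delivers $\eta_\bullet(g)\in\LL^p(\ups_\I)$. From $\{f\not=g\}\in\vS_0\ltimes T_0$ one extracts $N\in\vS_0$ with $f_x=_\nu g_x$ for $x\notin N$; using $[C]$ for $\ups_\Kk$ (Proposition~\ref{p:VMLa:2}) one may enlarge $N$ so that in addition $f_x,g_x\in\LL^p(\nu)$ for $x\notin N$. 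Property $(l2)$ for $\eta$ then forces $[\eta_\bullet(f)]_x=[\eta_\bullet(g)]_x$ for every $x\notin N$, so $\eta_\bullet(f)$ and $\eta_\bullet(g)$ agree off $N\times Y$.

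To conclude (i), observe that $\vS_0\,\wh{\times}\,Y\sq\I$ gives $\mathcal{P}(N\times Y)\sq\I\sq(\vY_\I)_0$, so every function supported on $N\times Y$ is automatically $\vY_\I$-measurable: each Borel preimage differs from either $\e$ or $N^c\times Y$ only by a subset of $N\times Y$. Applying this observation to $\eta_\bullet(f)-\eta_\bullet(g)$ and combining with the $\vY_\I$-measurability of $\eta_\bullet(g)$ yields $\vY_\I$-measurability of $\eta_\bullet(f)$, while $\eta_\bullet(f)=_{\ups_\I}\eta_\bullet(g)$ secures $\eta_\bullet(f)\in\LL^p(\ups_\I)$. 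For (ii), when $\I\sq J\sq\Kk$ the inclusions $\vY_\I\sq\vY_J\sq\vY_\Kk$ hold and the three measures agree on the smaller domains, so $\LL^p(\ups_J)\sq\LL^p(\ups_\Kk)$ and $\LL^p(\ups_\I)\sq\LL^p(\ups_J)$; (i) applied to $f\in\LL^p(\ups_J)\sq\LL^p(\ups_\Kk)$ then gives $\eta_\bullet(f)\in\LL^p(\ups_\I)\sq\LL^p(\ups_J)$, as required. The only delicate point is securing the measurability of $\eta_\bullet(f)$ on the ``bad'' set $N\times Y$, where we have no direct control over its values---this is exactly what the hypothesis $\vS_0\,\wh{\times}\,Y\sq\I$ is designed to handle.
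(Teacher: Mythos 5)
Your proposal is correct and follows essentially the same route as the paper's proof: obtain $g\in\LL^p(\mu\otimes\nu)$ with $f=_\Kk g$ via Lemma \ref{VMLa}, use $[C]$ for $\ups_\Kk$ (Proposition \ref{p:VMLa:2}) and $(l2)$ to show $\eta_\bullet(f)$ and $\eta_\bullet(g)$ agree off a set $N\times Y$ with $N\in\vS_0$, and then invoke $\vS_0\,\wh{\times}\,Y\sq\I$ to absorb the bad set; part (ii) follows from the inclusions $\LL^p(\ups_\I)\sq\LL^p(\ups_J)\sq\LL^p(\ups_\Kk)$. Your explicit justification that $g$ lands in $\LL^p(\mu\otimes\nu)$ (by comparing integrals under $\ups_\Kk$ and $\mu\otimes\nu$) fills a step the paper leaves implicit.
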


\begin{rem}
(a) $\Kk$ is an ideal of $\vS_0\ltimes T$ and $\vY\sq \vS_0\ltimes T$, so $(X\times Y,\vY,\Kk)$ belongs to $\T_\s$.

(b) $\I\sq \Kk$ are $\s$-ideals of the $\s$-algebras $\vY_\I \sq \vY_\Kk$, respectively.
\end{rem}

\begin{proof}
For each ideal $\mathcal{H}=\I,\Kk$, and in (ii) also $J$, since $\mathcal{H}\sq \Kk$ and $[C]$ holds, the condition $\mathcal{H}\cap \vY\sq \vY_0$ is satisfied, so $\ups_\mathcal{H}$ is defined.

(i) Fix $f\in\LL^p(\ups_\Kk)$. Choose $g\in\LL^p(\mu\otimes\nu)$ such that $f =_\Kk g$ by Lemma \ref{VMLa}. By $[C]$ for $\ups_\Kk$ (Proposition \ref{p:VMLa:2}), fix $N\in \vS_0$ such that $f_x,g_x\in\LL^p(\nu)$ and $\{f_x\not=g_x\}\in T_0$ for all $x\in N^c$. For $x\in N^c$ we have $f_x =_\nu g_x$, so $\eta(f_x)=\eta(g_x)$ by $(l2)$. It follows that
$\eta_\bullet(f)\restr(N^c\times Y)=\eta_\bullet(g)\restr(N^c\times Y)$.
By assumption, $\eta_\bullet(g)\in\LL^p(\ups_\I)$. Then because $\vS_0\,\wh{\times}\,Y\sq \mathcal{I}$ it follows that $\eta_\bullet(f)\in\LL^p(\ups_\I)$.

(ii) Follows from (i) since $\LL^p(\ups_\I)\sq \LL^p(\ups_J) \sq \LL^p(\ups_\Kk  )$.
\end{proof}

\begin{prop}\label{VM40c}
Let $(X,\vS,\mu)$, $(Y,T,\nu)$ and $(X\times Y,\vY,\ups)$ be probability spaces satisfying $[P_0]$ and $[C]$. Let $\Kk:=\mathfrak{N}(\mu,\nu)=\vS_0\ltimes T_0$.
For every $\gamma\in V^p(\mu)$ and  $\eta\in V^p(\nu)$ there exists a $\varphi\in V^p(\ups_{\Kk})\cap(\gamma\otimes\eta)$ such that $\eta([\varphi(f)]_x)=[\varphi(f)]_x$ for every $f\in\LL^p(\ups_{\Kk})$ and $x\in X$.
\end{prop}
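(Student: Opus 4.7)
Here is the plan. The strategy is to adapt the basis construction of Proposition \ref{PV20} to the extended measure $\ups_\Kk$, and then modify the chosen representatives of the basis vectors outside of $\LL^p(\mu)\otimes\LL^p(\nu)$ using $\eta_\bullet$ to force the marginal property $\eta([\varphi(f)]_x)=[\varphi(f)]_x$ on the nose.

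First I would verify that $\ups_\Kk$ itself satisfies $[P_1]$ and $[C]$. Since $[C]$ forces $\vY\sq\mu\ltimes T\sq\vS_0\ltimes T$, Proposition \ref{p:VMLa:2} (i) (applied with $\I=\vS_0$) gives $[C]$ for $\ups_\Kk$, and $[P_0]$ is automatic because $\vS\otimes T\sq\vY\sq\vY_\Kk$; together these yield $[P_1]$ by Remark \ref{r:df:dn:2} (a). In particular Example \ref{ex:eta.skew.1} applies, so the constant family $\eta=(\eta)_{x\in X}$ is a $2$-marginal with respect to $\LL^p(\ups_\Kk)$, and Lemma \ref{VM19} (i) gives $\eta_\bullet(h)=_{\ups_\Kk} h$ for every $h\in\LL^p(\ups_\Kk)$.

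Next, following the template of Proposition \ref{PV20}, I would choose a basis $\la f_i^\bullet\ra_{i\in I}$ of $L^p(\mu)$ with $\gamma(f_i)=f_i$ and a basis $\la g_j^\bullet\ra_{j\in J}$ of $L^p(\nu)$ with $\eta(g_j)=g_j$. By Lemma \ref{PV10} (applicable thanks to $[P_1]$ for $\ups_\Kk$), $\la(f_i\otimes g_j)^\bullet\ra_{(i,j)\in I\times J}$ is a basis for the subspace $\LL^p(\mu)\otimes_{\ups_\Kk}\LL^p(\nu)$ of $L^p(\ups_\Kk)$. Extend it to a basis $\la h_k^\bullet\ra_{k\in K}$ of $L^p(\ups_\Kk)$ with $I\times J\sq K$ and $h_{i,j}=f_i\otimes g_j$ for $(i,j)\in I\times J$. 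For $k\in K\sm(I\times J)$, I now \emph{replace} the representative $h_k$ by $h'_k:=\eta_\bullet(h_k)$; for $(i,j)\in I\times J$ I set $h'_{i,j}:=h_{i,j}=f_i\otimes g_j$. Each $h'_k$ still belongs to $\LL^p(\ups_\Kk)$ (Example \ref{ex:eta.skew.1}) and satisfies $h'_k=_{\ups_\Kk} h_k$, so $\la(h'_k)^\bullet\ra_{k\in K}$ is the same basis of $L^p(\ups_\Kk)$. Moreover, by Lemma \ref{VM10a} (i) and homogeneity, $[h'_k]_x\in\LL^p(\nu)$ for every $x$ and $k$: this is obvious for $k\in I\times J$ since $[f_i\otimes g_j]_x=f_i(x)g_j$, while for $k\in K\sm(I\times J)$ it follows because $[\eta_\bullet(h_k)]_x$ is either $\eta((h_k)_x)\in\LL^p(\nu)$ or $0$. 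Finally, Lemma \ref{VM19} (ii) (using $\eta(0)=0$, $(l1)$, $(l2)$ for $\eta$) gives $\eta([h'_k]_x)=[h'_k]_x$ for every $x\in X$ and every $k\in K$; for $k\in I\times J$ this is also immediate since $\eta(f_i(x)g_j)=f_i(x)\eta(g_j)=f_i(x)g_j$.

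Let $\varphi\in V^p(\ups_\Kk)$ be the unique vector lifting with $\varphi(h'_k)=h'_k$ for every $k\in K$ (Proposition \ref{p:exist.of.v.lift}). Given $f\in\LL^p(\ups_\Kk)$, write $f^\bullet=\sum_k a_k (h'_k)^\bullet$ with finite support, so that $\varphi(f)=\sum_k a_k h'_k$ and, for every $x\in X$,
\[
\eta([\varphi(f)]_x)=\eta\Bigl(\sum_k a_k [h'_k]_x\Bigr)=\sum_k a_k \eta([h'_k]_x)=\sum_k a_k [h'_k]_x=[\varphi(f)]_x,
\]
using linearity of $\eta$ and the fact that the sum is finite with each $[h'_k]_x\in\LL^p(\nu)$. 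For the factorization on tensors, writing $f^\bullet=\sum_i a_if_i^\bullet$ and $g^\bullet=\sum_j b_jg_j^\bullet$ gives $(f\otimes g)^\bullet=\sum_{i,j}a_ib_j(f_i\otimes g_j)^\bullet=\sum_{i,j}a_ib_j(h'_{i,j})^\bullet$, whence, exactly as in Claim \ref{clm:pi.gamma.eta},
\[
\varphi(f\otimes g)(x,y)=\sum_{i,j}a_ib_jf_i(x)g_j(y)=\gamma(f)(x)\eta(g)(y)=(\gamma(f)\otimes\eta(g))(x,y),
\]
so $\varphi\in\gamma\otimes\eta$. The main thing to get right is the choice of modified representatives: one must ensure that the new $h'_k$ simultaneously (a) span the same lines in $L^p(\ups_\Kk)$, (b) satisfy $\eta([h'_k]_x)=[h'_k]_x$ pointwise in $x$, and (c) still equal $f_i\otimes g_j$ on the tensor-product part of the basis so that the $\gamma\otimes\eta$ identity survives; the identity $\eta_\bullet(f_i\otimes g_j)=f_i\otimes g_j$ is what makes these three requirements compatible.
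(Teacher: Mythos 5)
Your proof is correct, and it reaches the conclusion by a genuinely different mechanism than the paper's. The paper first invokes Proposition \ref{PV20} to produce a product vector lifting $\pi\in(\gamma\otimes\eta)\cap V^p(\ups)$ over the \emph{unextended} measure $\ups$, then post-composes to form $\psi=\eta_\bullet\circ\pi$ on $\LL^p(\ups)$ (using Lemma \ref{l:eta.skew:m} to see that $\eta$ is a $2$-marginal for $\LL^p(\ups_\Kk)$, so that $\psi$ lands in $\LL^p(\ups_\Kk)$), verifies linearity, $(l1)$, $(l2)$ and the tensor identity for $\psi$ by hand, and finally transports $\psi$ to all of $\LL^p(\ups_\Kk)$ through the isomorphism $L^p(\ups_\Kk)\to L^p(\ups)$. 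You instead work over $\ups_\Kk$ from the outset: you build the basis of $L^p(\ups_\Kk)$ directly (legitimate, since $[P_1]$ and $[C]$ pass to $\ups_\Kk$ by Proposition \ref{p:VMLa:2} and Remark \ref{r:df:dn:2}~(a), so Lemma \ref{PV10} applies), replace the representatives outside the tensor part by their $\eta_\bullet$-images, and let $\varphi$ be the vector lifting fixing the modified representatives. Both arguments pivot on the same two facts --- that $\eta$ is a $2$-marginal with respect to $\LL^p(\ups_\Kk)$ because $\Kk=\vS_0\ltimes T_0$ is a skew-product ideal, and that $\eta_\bullet$ is idempotent on sections --- but your route buys a cleaner endgame: linearity and $(l2)$ for $\varphi$ are automatic from Proposition \ref{p:exist.of.v.lift} rather than checked for a composition, and the final extension step from $\LL^p(\ups)$ to $\LL^p(\ups_\Kk)$ disappears entirely. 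The one point you rightly flag as delicate --- that the tensor-part representatives $f_i\otimes g_j$ need not be replaced because $\eta_\bullet(f_i\otimes g_j)=f_i\otimes\eta(g_j)=f_i\otimes g_j$ (Lemma \ref{VM10a}~(ii)) --- is exactly what keeps the $\gamma\otimes\eta$ identity intact, and your verification of it is complete.
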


\begin{proof}
By Proposition \ref{p:VMLa:2}, $\ups_{\Kk}$ is defined and $[C]$ holds for the triple $(X,\vS,\mu)$, $(Y,T,\nu)$, $(X\times Y,\vY_{\Kk},\ups_{\Kk})$.
By Proposition \ref{PV20} choose
$\pi\in(\gamma\otimes\eta)\cap V^p(\ups)$ such that for every $f\in\LL^p(\ups)$,
$[\pi(f)]_x\in\LL^p(\nu)$ for all $x\in X$. Define
$\psi:=\eta_{\bullet}\circ\pi$, i.e.,
$[\psi(f)]_x:=\eta([\pi(f)]_x)$ for $f\in\LL^p(\ups)$ and $x\in X$.
Clearly $\psi(1)=1$, and
\[
\eta([\psi(f)]_x)=[\psi(f)]_x\;\mbox{for}\; f\in\LL^p(\upsilon)\;\mbox{and}\; x\in X.
\]
By Lemma \ref{l:eta.skew:m} the map $\eta$ is a 2-marginal with respect to $\LL^p(\ups_{\Kk})$, so $\psi(f)=\eta_{\bullet}(\pi(f))\in\LL^p(\ups_{\Kk})$ and $\psi(f)=_{\Kk}\pi(f)=_{\ups}f$, implying  $\psi(f)=_{\Kk}f$ since $\vY_0\sq{\Kk}$. Thus, we have
\[
\psi\colon\LL^p(\upsilon)\to \LL^p(\ups_\Kk),\ \ \text{$\psi(f)=_\Kk f$.}
\]
Since $\pi$ satisfies $(l2)$, so does $\psi = \eta_\bullet\circ \pi$.
By Lemma \ref{VM10a} (iv)(a,b), $\psi$ is linear since $\pi$ is linear and $[\pi(f)]_x\in\LL^p(\nu)$ for every $x\in X$.
For $f\in\LL^p(\mu)$, $g\in\LL^p(\nu)$, we have, using Lemma \ref{VM10a} (ii),
\[
\psi(f\otimes{g}) = \eta_{\bullet}(\pi(f\otimes{g})) =
\eta_{\bullet}(\gamma(f)\otimes\eta(g)) = \gamma(f)\otimes\eta(\eta(g)) = \gamma(f)\otimes\eta(g),
\]
i.e. we have $\psi\in\gamma\otimes\eta$.

Thus, $\psi$ has all the desired properties of $\varphi$ except that it is defined only on $\LL^p(\upsilon)$ rather than on all of $\LL^p(\ups_\Kk)$.
Using Lemma \ref{P10i} (ii), we now readily extend the domain of $\psi$ to all of $\LL^p(\upsilon_{\Kk})$. Since $\ups_\Kk$ extends $\ups$, if $g_1,g_2\in\LL^p(\ups)$ satisfy $g_1 =_\Kk g_2$ then $g_1 =_\ups g_2$. Hence, we have a linear isomorphism $i\colon L^p(\ups_\Kk)\to L^p(\ups)$ where for each $g\in \LL^p(\ups)$, we map $g^\bullet\in L^p(\ups_\Kk)$ to $g^\bullet\in L^p(\ups)$. The composition
\[
\LL^p(\ups_\Kk) \stackrel{q}{\longrightarrow} L^p(\ups_\Kk) \stackrel{i}{\longrightarrow} L^p(\ups) \stackrel{\ov{\psi}}{\longrightarrow} \LL^p(\ups)
\]
is the desired extension $\varphi$ of $\psi$, where $q(f)=f^\bullet$ and $\ov{\psi}(g^\bullet)=\psi(g)$. Clearly $\varphi(1)=1$, and $\varphi$ is a composition of linear maps and hence linear. Since $q$ satisfies $(l2)$, so does $\varphi$. Invariance of the sections under $\eta$ holds since for $f\in\LL^p(\ups_\Kk)$, $[\varphi(f)]_x = [\psi(g)]_x$ for any $g\in\LL^p(\ups)$ in the $\ups_\Kk$-class of $f$.
Also for such $f$ and $g$, $\varphi(f) = \psi(g) =_\Kk g =_\Kk f$, so $(l1)$ holds.
The property $\varphi\in \gamma\otimes\eta$ holds since this property only deals with functions $f\otimes g$ that belong to $\LL^p(\ups)$ on which $\varphi$ and $\psi$ agree.
\end{proof}

Taking $\ups=\mu\otimes\nu$ with $\nu$ complete gives the following.

\begin{cor}\label{VM30}
Let  $(X,\vS,\mu)$ and $(Y,T,\nu)$ be probability spaces with $\nu$ complete. For every
$\gamma\in V^p(\mu)$ and $\eta\in V^p(\nu)$ there exists a $\varphi\in V^p(\mu\otimes_\mathfrak{N}\nu)\cap(\gamma\otimes\eta)$,  such that $\eta([\varphi(f)]_x)=[\varphi(f)]_x$ for every $f\in\LL^p( \mu\otimes_\mathfrak{N}\nu)$ and $x\in X$.
\end{cor}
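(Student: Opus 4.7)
The plan is to verify that Proposition \ref{VM40c} applies directly to the triple obtained by taking $\ups := \mu\otimes\nu$ on $\vY := \vS\otimes T$, and then to translate the conclusion using the definition of $\mu\otimes_\mathfrak{N}\nu$.

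First I would check the hypotheses $[P_0]$ and $[C]$ for the triple $(X,\vS,\mu)$, $(Y,T,\nu)$, $(X\times Y,\vS\otimes T,\mu\otimes\nu)$. Property $[P_0]$ is immediate since $\vS\otimes T\sq\vY=\vS\otimes T$. Property $[C]$ is exactly the Fubini/Tonelli theorem: for every $E\in\vS\otimes T$ we have $E_x\in T$ for all $x\in X$, the map $x\mapsto \nu(E_x)$ is $\vS$-measurable, and $(\mu\otimes\nu)(E)=\int\nu(E_x)\,d\mu(x)$ (so in $[C]$ we may take $N=\e$).

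Next, since $\nu$ is complete, $\wh{\nu}=\nu$, and by Definition \ref{df:dn} (b) we have
\[
\mathfrak{N}=\mathfrak{N}(\mu,\wh{\nu})=\mathfrak{N}(\mu,\nu)=\vS_0\ltimes T_0=\Kk,
\]
where $\Kk$ denotes the $\s$-ideal appearing in Proposition \ref{VM40c}. Moreover, by the definition $\mu\otimes_\I\nu:=(\mu\otimes\nu)_\I$ given in the Example following Proposition \ref{p:VMLa:2}, we have $\mu\otimes_\mathfrak{N}\nu=(\mu\otimes\nu)_\mathfrak{N}=\ups_\Kk$.

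Finally, applying Proposition \ref{VM40c} to $\gamma\in V^p(\mu)$ and $\eta\in V^p(\nu)$ yields a $\varphi\in V^p(\ups_\Kk)\cap(\gamma\otimes\eta)$ with $\eta([\varphi(f)]_x)=[\varphi(f)]_x$ for every $f\in\LL^p(\ups_\Kk)$ and every $x\in X$. Rewriting $\ups_\Kk=\mu\otimes_\mathfrak{N}\nu$, this is exactly the conclusion of the corollary. There is no obstacle here beyond the bookkeeping identification of $\mathfrak{N}$ with $\Kk$, which is why the completeness hypothesis on $\nu$ is needed.
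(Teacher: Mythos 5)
Your proposal is correct and is essentially the paper's own argument: the paper derives Corollary \ref{VM30} by the one-line remark "Taking $\ups=\mu\otimes\nu$ with $\nu$ complete" applied to Proposition \ref{VM40c}, and your verification of $[P_0]$ and $[C]$ via Fubini--Tonelli together with the identification $\mathfrak{N}=\mathfrak{N}(\mu,\wh{\nu})=\mathfrak{N}(\mu,\nu)=\Kk$ (using $\wh{\nu}=\nu$) and $\mu\otimes_\mathfrak{N}\nu=(\mu\otimes\nu)_\mathfrak{N}=\ups_\Kk$ is exactly the bookkeeping that remark leaves implicit.
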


\begin{thm}\label{VM50}
Let $(X,\mathfrak{S},\vS,\mu)$, $(Y,\mathfrak{T},T,\nu)$ and $(X\times Y,\mathfrak{S}\times\mathfrak{T},\vY,\ups)$ be topological probability spaces satisfying $[P_0]$ and $[C]$. Let $\Kk:=\mathfrak{N}(\mu,\nu)=\vS_0\ltimes T_0$. Assume $(N_{C,\mu})$, $(N_{C,\nu})$ hold, and
\begin{equation}\label{eq:cont}
\text{$h_x\in C^p(Y)$ for all $h\in C^p(X\times Y)$ and $x\in X$.}
\end{equation}
For every $\gamma\in V^p(\mathfrak{S},\mu)$ and  $\eta\in V^p(\mathfrak{T},\nu)$ there exists a $\varphi\in V^p(\mathfrak{S}\times \mathfrak{T}, \ups_\Kk)\cap(\gamma\otimes\eta)$ such that $\eta([\varphi(f)]_x)=[\varphi(f)]_x$ for every $f\in\LL^p(\ups_\Kk)$ and $x\in X$.
\end{thm}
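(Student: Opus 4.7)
The plan is to adapt the proof of Proposition~\ref{VM40c} by substituting Theorem~\ref{PV30} for Proposition~\ref{PV20}, and then to verify that strongness propagates through the composition with $\eta_\bullet$ and the extension from $\LL^p(\ups)$ to $\LL^p(\ups_\Kk)$. First, I would check that Theorem~\ref{PV30} applies to the triple $(X,\mathfrak{S},\vS,\mu)$, $(Y,\mathfrak{T},T,\nu)$, $(X\times Y,\mathfrak{S}\times\mathfrak{T},\vY,\ups)$: $[P_0]$ and $[C]$ are given, and $(N_{C,\mu})$, $(N_{C,\nu})$ are part of the hypothesis. Applying Theorem~\ref{PV30} yields $\pi\in V^p(\mathfrak{S}\times\mathfrak{T},\ups)\cap(\gamma\otimes\eta)$ with $[\pi(f)]_x\in\LL^p(\nu)$ for every $x\in X$ and every $f\in\LL^p(\ups)$.

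Next, define $\psi:=\eta_\bullet\circ\pi$. By Example~\ref{ex:eta.skew.1} applied to the single space $(Y,T,\nu)$, $\eta$ is a $2$-marginal with respect to $\LL^p(\ups_\Kk)$, so $\psi(f)\in\LL^p(\ups_\Kk)$ and (by Lemma~\ref{VM19}~(i) applied to $\pi(f)$, whose sections all lie in $\LL^p(\nu)$) $\psi(f)=_\Kk\pi(f)=_\ups f$. The same reasoning as in Proposition~\ref{VM40c} shows that $\psi$ is linear (via Lemma~\ref{VM10a}~(iv)(a,b)), satisfies $(l2)$ (inherited from $\pi$), fixes constants, and satisfies $\psi(f\otimes g)=\gamma(f)\otimes\eta(g)$ by Lemma~\ref{VM10a}~(ii) together with the idempotency $\eta\circ\eta=\eta$. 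The marginality $\eta([\psi(f)]_x)=[\psi(f)]_x$ follows from Lemma~\ref{VM19}~(ii), since for each $x\in X$, $\eta_x([\psi(f)]_x)=\eta(\eta([\pi(f)]_x))=\eta([\pi(f)]_x)=[\psi(f)]_x$.

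Now I would verify the strong property at the level of $\psi$. Given $h\in C^p(X\times Y)$, strongness of $\pi$ gives $\pi(h)=h$. By hypothesis~(\ref{eq:cont}), $h_x\in C^p(Y)$ for every $x\in X$, and since $\eta$ is strong, $\eta(h_x)=h_x$. Therefore $[\psi(h)]_x=[\eta_\bullet(h)]_x=\eta(h_x)=h_x$ pointwise, i.e.\ $\psi(h)=h$ on all of $X\times Y$.

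Finally, extend $\psi$ to $\LL^p(\ups_\Kk)$ exactly as in the last paragraph of the proof of Proposition~\ref{VM40c}: the natural map $L^p(\ups_\Kk)\to L^p(\ups)$ sending $g^\bullet\mapsto g^\bullet$ (for $g\in\LL^p(\ups)$) is a linear isomorphism by Corollary~\ref{P10ii}, and its composition with the quotient $\LL^p(\ups_\Kk)\to L^p(\ups_\Kk)$ and with $\bar\psi\colon g^\bullet\mapsto\psi(g)$ (well-defined by $(l2)$ for $\psi$) produces a linear map $\varphi\colon\LL^p(\ups_\Kk)\to\LL^p(\ups_\Kk)$ extending $\psi$. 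Since $\varphi(f)$ depends only on the $\ups$-class of any $\LL^p(\ups)$-representative of $f$, properties $(l1)$, $(l2)$, linearity, the tensor identity, and the marginality $\eta([\varphi(f)]_x)=[\varphi(f)]_x$ all transfer from $\psi$ to $\varphi$; and strongness is preserved because for $h\in C^p(X\times Y)\sq\LL^p(\ups)$, $h$ is itself a valid representative, so $\varphi(h)=\psi(h)=h$. The main obstacle is keeping track of the three sets of data (pointwise equality for strongness, $\ups$-a.e.\ equality for $(l1)$ on $\LL^p(\ups)$, and $\ups_\Kk$-a.e.\ equality on the extension), but once the hypothesis~(\ref{eq:cont}) is used to put the sections $h_x$ into $C^p(Y)$, these three levels align cleanly.
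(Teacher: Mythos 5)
Your proposal is correct and follows essentially the same route as the paper: invoke Theorem~\ref{PV30} in place of Proposition~\ref{PV20}, repeat the construction of $\varphi$ from Proposition~\ref{VM40c} with the resulting $\pi$, and derive strongness from (\ref{eq:cont}) together with the strongness of $\eta$ (the paper packages the last step as $\varphi(h)=\psi(h)=\eta_\bullet(\pi(h))=\eta_\bullet(h)=h$ via Lemma~\ref{VM10a}~(iv)(d), which is exactly your sectionwise computation). The only differences are cosmetic choices of reference (e.g.\ Example~\ref{ex:eta.skew.1} versus Lemma~\ref{l:eta.skew:m} for the $2$-marginal property of $\eta$).
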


\begin{rem}\label{r:cont}
Here are some conditions under which (\ref{eq:cont}) holds:
(1) $p=0$;
(2) $p=\infty$ (by Lemma \ref{PVR}, $(N_{C,\upsilon})$ holds, so if $h\in C(X\times Y)$ and $\|h\|_\infty<\infty$, then $h$ is bounded by  Remark \ref{V20E} (iii));
(3) any $p$, if $Y$ is pseudocompact, i.e., the elements of $C(Y)$ are all bounded.
\end{rem}

\begin{proof} By Theorem \ref{PV30} choose a $\pi\in(\gamma\otimes\eta)\cap{V}^p(\mathfrak{S}\times\mathfrak{T},\ups)$ for every $f\in\LL^p(\ups)$ satisfying $[\pi(f)]_x\in\LL^p(\ups)$ for all $x\in{X}$, and repeat the construction of $\varphi$ from the proof of Proposition \ref{VM40c} using this $\pi$. From (\ref{eq:cont}) and Lemma \ref{VM10a} (iv)(d) we have for $h\in C^p(X\times Y)$ that $\eta_{\bullet}(h)=h$. Along with the fact that $\pi$ is $C^p(X\times{Y})$-strong this implies that $\varphi(h)=\psi(h)=\eta_{\bullet}(\pi(h))=\eta_{\bullet}(h)=h$,  i.e. $\varphi$ is $C^p(X\times{Y})$-strong.
\end{proof}

\begin{cor}\label{VM60}
Let $(X\times Y,\mathfrak{S}\times{\mathfrak T},\Upsilon,\upsilon)$ be the
$\tau$-additive product of the $\tau$-additive topological probability spaces $(X,{\mathfrak S},\vS,\mu)$ and $(Y,\mathfrak{T},T,\nu)$ with $\nu$ complete. Assume that $(N_{C,\mu})$, $(N_{C,\nu})$, and {\rm(\ref{eq:cont})} hold. For every $\gamma\in V^p({\mathfrak S},\mu)$, $\eta\in V^p({\mathfrak T},\nu)$ there exists a $\varphi\in V^p(\upsilon_{\mathfrak N})\cap(\gamma\otimes\eta)$ such that $\eta([\varphi(f)]_x)=[\varphi(f)]_x$ for every $f\in\LL^p(\ups_\mathfrak{N})$ and $x\in X$.
\end{cor}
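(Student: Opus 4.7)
The plan is to deduce Corollary \ref{VM60} as a direct application of Theorem \ref{VM50}. Since the hypotheses $(N_{C,\mu})$, $(N_{C,\nu})$, \eqref{eq:cont}, and the choice of vector liftings $\gamma\in V^p(\mathfrak{S},\mu)$ and $\eta\in V^p(\mathfrak{T},\nu)$ are already matched with what the theorem demands, the only work is to check that the $\tau$-additive product $(X\times Y,\mathfrak{S}\times\mathfrak{T},\Upsilon,\upsilon)$ satisfies $[P_0]$ and $[C]$.

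First I would invoke \cite{bms5}, Theorem 2.15, which establishes precisely these structural properties for the $\tau$-additive product of two $\tau$-additive topological probability spaces (with $\nu$ complete): $\vS\otimes T\sq\vY$, giving $[P_0]$, and $\ups$ admits the Fubini-type disintegration $\vY\sq\mu\ltimes T$ with $\ups=\mu\ltimes\nu\restr\vY$, giving $[C]$. This is already the mechanism underlying Corollary \ref{PV29a}, so the reference is direct.

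Next I would observe that $\mathfrak{N}(\mu,\nu)=\vS_0\ltimes T_0$ is precisely the ideal $\Kk$ appearing in Theorem \ref{VM50}; since $\nu$ is complete, the hypothesis there that the $\s$-ideal be $\vS_0\ltimes T_0$ matches exactly the definition of $\mathfrak{N}$ given in Definition \ref{df:dn} (b). Thus $\ups_{\mathfrak{N}}$ coincides with $\ups_\Kk$, and the space $\LL^p(\ups_{\mathfrak{N}})$ is the one addressed by the theorem.

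Finally I would apply Theorem \ref{VM50} to obtain $\varphi\in V^p(\mathfrak{S}\times\mathfrak{T},\ups_\Kk)\cap(\gamma\otimes\eta)$ with $\eta([\varphi(f)]_x)=[\varphi(f)]_x$ for all $f\in\LL^p(\ups_\Kk)$ and $x\in X$. Rewriting $\ups_\Kk$ as $\ups_{\mathfrak{N}}$ yields the conclusion. There is no real obstacle here: the corollary is a pure specialization, and the only mild subtlety is noting that \eqref{eq:cont} is preserved verbatim from the theorem statement (so no separate verification is needed, unlike the cases listed in Remark \ref{r:cont} which one might invoke to check that the hypothesis is nontrivially satisfied in common situations).
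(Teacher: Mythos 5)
Your proposal is correct and is essentially the paper's own proof: the paper likewise cites \cite{bms5}, Theorem 2.15 to verify the hypotheses of Theorem \ref{VM50} and then applies that theorem directly. Your extra observation that completeness of $\nu$ makes $\mathfrak{N}=\mathfrak{N}(\mu,\wh{\nu})$ coincide with $\Kk=\vS_0\ltimes T_0$ is a correct and worthwhile detail that the paper leaves implicit.
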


\begin{proof}
Since the assumptions of Theorem \ref{VM50} are satisfied by \cite{bms5}, Theorem 2.15,  we may apply Theorem \ref{VM50} in order to get the result.
\end{proof}

\section{Measurable modifications of stochastic processes}
\label{s:meas.stoch.proc}

In this section we prove that the existence of a measurable $p$-integrable vector lifting modification for stochastic processes is equivalent to the existence of a marginal vector lifting in one of the factors (see Theorem \ref{slc30}) and, in case of modifications respecting continuous functions to the existence of strong marginals (see Corollary \ref{s50AR}). In Theorem \ref{t:mm1} and Corollary \ref{c:mm1} we provide a characterization of stochastic processes which have a measurable modification.

Because our main objects of study here are liftings, in this section we assume throughout that $\K=\R$. The results not mentioning liftings work for complex scalars as well (by the same arguments).
Throughout this section, $p\in[0,\infty] $ and we fix a measurable space $(\Gamma,\B)$, where $\Gamma$ is a Tychonoff space and $\B$ is its $\s$-algebra of Baire sets.

Given a measurable space $(X,\vS)$, write $\LL^\infty_\Gamma = \LL^\infty_\Gamma(\vS)$ for the space of all bounded $\vS$-$\B$-measurable maps from $X$ into $\Gamma$, where we say $f\colon X\to \Gamma$ is \emph{bounded} if $f(X)\sq K$ for some compact $K\sq \Gamma$.

\begin{rem}\label{r:meas.ble}
Measurability of $f\colon X\to \Gamma$ is equivalent to $\vS$-measurability of each composition $h\circ f$, for $h\in C(\Gamma)$,
since $\B$ is the Baire $\s$-algebra.
\end{rem}

\begin{df}\label{co}
Fix a probability space $(X,\vS,\mu)$ and $\rho\in\Lambda^\infty(\mu)$. Then $\rho^\prime:\LL^\infty_\Gamma\to\LL^\infty_\Gamma$, the lifting of $\LL^\infty_\Gamma$ \emph{associated to} $\rho$, is the unique map satisfying
\begin{enumerate}
\item[(a)]
$h\circ\rho^\prime(f)=\rho(h\circ f)$ for $f\in\LL^\infty_\Gamma$ and $h\in C(\Gamma)$,
\item[(b)]
$\rho^{\prime}(f)=^*_\mu f$ for $f\in\LL^\infty_\Gamma$,
\item[(c)]
$\rho^\prime(f)=\rho^\prime(g)$ if $f=^*_\mu{g}$ for $f,g\in \LL^\infty_\Gamma$.
\end{enumerate}
where $f =_\mu^* g$ means $h\circ f =_\mu h\circ g$ for all $h\in C(\Gamma)$.
\end{df}

\begin{rem}\label{r:co}
(i)
It follows by \cite{bms6}, Remark 6.5(a) that, property (a) already uniquely determines $\rho^\prime$ and hence necessarily implies (b) and (c).

(ii)
For the proof that $\rho^\prime$ exists, see
Theorem IV.7 p.\ 52 of \cite{it}.

(iii)
Again by \cite{bms6}, Remark 6.5(d), in (b) we cannot strengthen $=^*_\mu$ to $=_\mu$.
\end{rem}

\begin{df}\label{mf10}
Let $(Y,T)$, $(X\times Y,\Upsilon)$ be measurable spaces. For (c)--(e) below, assume we also have a family of measures $\nu:=(\nu_x)_{x\in X}$ on $T$.

A family $(Q_x)_{x\in X}$ of $T$-$\B$-measurable functions $Q_x\colon Y\to \Gamma$ will be called a \emph{stochastic process} (or just a \emph{process}) \emph{over} $(Y,T)$ \emph{with values in} $\Gamma$. We say that the function $Q\colon X\times Y\to \Gamma$ given by $Q(x,y)=Q_x(y)$ and the process $(Q_x)_{x\in X}$ are \emph{associated} to each other. Conveniently, $(Q_x)_{x\in X}$ is the family of vertical sections of $Q$, so there is no conflict with our notation for vertical sections.  The process is called
\begin{enumerate}
\item[(a)]
\emph{bounded} if $Q_x\in\LL_\Gamma^{\infty}(T)$ for every $x\in X$,

\item[(b)]
$\vY$-$\B$-\emph{measurable} if $Q$ is $\vY$-$\B$-measurable,

\item[(c)]
an  \emph{$\LL^p$-process} if $\Gamma=\R$ and $Q_x\in\LL^p(\nu_x)$ for each $x\in X$,

\item[(d)]
a \emph{modification} of $(R_x)_{x\in X}$ if $Q_x=_{\nu_x}{R}_x$ for every $x\in X$,

\item[(e)]
a \emph{weak modification} of $(R_x)_{x\in X}$ if $Q_x=^*_{\nu_x}R_x$ for every $x\in X$,
\end{enumerate}
where $(R_x)_{x\in X}$ is another stochastic process over $(Y,T)$ with values in $\Gamma$.
\end{df}

The next theorem does for liftings for $\LL^p(\nu)$ something analogous to what Theorem 6.10 of \cite{bms6} did for liftings for the measure algebra of $\nu$.

\begin{thm}\label{slc30}
Let $(X,\vS,\mu)$, $(Y,T,\nu_x)_{x\in X}$, and $(X\times Y,\vY,\ups)$ be probability spaces satisfying $[C]$, $\vS\,\dot{\times}\,Y\sq\vY$. For $\rho:=(\rho_x)_{x\in X}$ with $\rho_x\colon \LL^p(\nu_x)\to \LL^p(\nu_x)$ satisfying $(l1)$ and $(l2)$ with either $(\al)$ $(\vS_0\,\wh{\times}\,Y)[T]\sq\vY$ or $(\beta)$ $\rho(0)=0$ and $\mu$ is complete, consider the following statements. In {\rm (iii)}, assume $\rho_x\in\Lambda^\infty(\nu_x)$ for each $x$.
\begin{enumerate}
\item
$\rho$ is a $2$-marginal with respect to
$\LL^p(\ups)$.

\item
$(\rho_x(Q_x))_{x\in X}$ is a $\vY$-measurable $\LL^p$-process for every $\vY$-measurable $\LL^p$-process $(Q_x)_{x\in X}$.

\item
$(\rho'_x(Q_x))_{x\in X}$ is $\vY$-$\B$-measurable for every bounded $\vY$-$\B$-measurable process $(Q_x)_{x\in X}$ over $(Y,T)$ with values in $\Gamma$, for any choice of $\Gamma$.
\end{enumerate}
Then {\rm(i)} is equivalent to {\rm (ii)}. When $p=\infty$ and $\rho_x\in\Lambda^\infty(\nu_x)$ for $x\in X$, {\rm(i)}--{\rm(iii)} are all equivalent.
\end{thm}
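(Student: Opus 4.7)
\textbf{Proof plan for Theorem \ref{slc30}.}

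The equivalence (i)$\Leftrightarrow$(ii) is essentially a direct application of Proposition \ref{p:slc30d}. A $\vY$-measurable $\LL^p$-process is exactly a function in $A_4=\LL^{0,p}(\vY,\nu)$, since the sections $Q_x\in\LL^p(\nu_x)$ hold for \emph{every} $x\in X$. Similarly, $(\rho_x(Q_x))_{x\in X}$ is just $\rho_\bullet(Q)$ (the default value $0$ is never invoked), and the requirement that it be a $\vY$-measurable $\LL^p$-process is precisely $\rho_\bullet(Q)\in A_4$. Thus condition (ii) is the statement $H_{44}:\rho_\bullet(A_4)\sq A_4$, while (i) is $H_{11}:\rho_\bullet(A_1)\sq A_1$. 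Neither pair lies among the omitted implications of Proposition \ref{p:slc30d}, so under hypothesis $(\al)$ or $(\beta)$ both conditions belong to the list of mutually equivalent statements.

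For the case $p=\infty$ and $\rho_x\in\Lambda^\infty(\nu_x)$, I will add (iii) to the chain of equivalences by proving (i)$\Rightarrow$(iii)$\Rightarrow$(i). The key preliminary observation is that when $\Gamma=\R$ and $f\in\LL^\infty_\R(T)$ is bounded, then $\rho'_x(f)=\rho_x(f)$. Indeed, since $\rho_x$ is a multiplicative lifting for $\LL^\infty(\nu_x)$, we have $\rho_x(p\circ f)=p\circ\rho_x(f)$ for every polynomial $p$, and the Weierstrass theorem applied on a compact set containing the ranges of $f$ and $\rho_x(f)$ upgrades this to $\rho_x(h\circ f)=h\circ\rho_x(f)$ for every $h\in C(\R)$; by the uniqueness in Definition~\ref{co}, $\rho_x=\rho'_x$ on bounded functions. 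More generally, for any Tychonoff $\Gamma$ and any bounded $Q_x\colon Y\to\Gamma$ the defining identity of $\rho'_x$ gives
\[
h\circ \rho'_x(Q_x)=\rho_x(h\circ Q_x)\qquad (h\in C(\Gamma)).
\]

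For (i)$\Rightarrow$(iii), let $Q$ be a bounded $\vY$-$\B$-measurable process with values in $\Gamma$ and fix $h\in C(\Gamma)$. Then $h\circ Q\in\LL^\infty(\ups)$ and every section $h\circ Q_x$ is bounded and lies in $\LL^\infty(\nu_x)$. By (i), $\rho_\bullet(h\circ Q)\in\LL^\infty(\ups)$, hence is $\vY$-measurable. By the boxed identity above,
\[
\rho_\bullet(h\circ Q)(x,y)=\rho_x(h\circ Q_x)(y)=h\bigl(\rho'_x(Q_x)(y)\bigr),
\]
so $h\circ Q'$ is $\vY$-measurable, where $Q'(x,y):=\rho'_x(Q_x)(y)$. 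By Remark~\ref{r:meas.ble}, $Q'$ is $\vY$-$\B$-measurable.

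For (iii)$\Rightarrow$(i), take $\Gamma=\R$. Let $f\in\LL^\infty(\ups)$, set $M:=\|f\|_\infty$, and let $f_M:=(-M)\vee(f\wedge M)$, a pointwise bounded $\vY$-measurable function with $f_M=_\ups f$. Each $[f_M]_x\in\LL^\infty(\nu_x)$ is bounded, so $f_M$ is a bounded $\vY$-$\B$-measurable process with values in $\R$. By (iii) and the identification $\rho'_x=\rho_x$ on bounded functions, $\rho_\bullet(f_M)$ is $\vY$-measurable, and since $\|\rho_x([f_M]_x)\|_\infty\leq \|[f_M]_x\|_\infty\leq M$ pointwise, $\rho_\bullet(f_M)\in\LL^\infty(\ups)$. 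Finally, by $[C]$ there is $N\in\vS_0$ such that $f_x=[f_M]_x$ for $x\in N^c$, so $\rho_\bullet(f)$ and $\rho_\bullet(f_M)$ agree on $N^c\times Y$; on $N\times Y$ the measurability of $\rho_\bullet(f)$ is obtained as in the proof of Proposition~\ref{p:slc30d} (using $(\al)$, where each section of $\rho_\bullet(f)$ is $T_x$-measurable, or $(\beta)$, where enlarging $N$ to include $\{x:f_x\notin\LL^\infty(\nu_x)\}$ makes $\rho_\bullet(f)=\rho_\bullet(f_M)$ off a $\mu$-null set). Hence $\rho_\bullet(f)\in\LL^\infty(\ups)$, proving (i).

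The main obstacle is the bookkeeping that separates $\rho_x$ from the ostensibly larger operator $\rho'_x$ in (iii); once one recognizes that on bounded $\R$-valued sections the two coincide, the rest of the argument is a careful transfer of the ``sections agree off a $\mu$-null set'' machinery from Proposition~\ref{p:slc30d}, together with the standard identity for Tychonoff-valued liftings.
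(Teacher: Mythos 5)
Your treatment of (i)$\Leftrightarrow$(ii) is correct and is exactly the paper's argument: (ii) is the condition $H_{44}$ of Proposition \ref{p:slc30d} and (i) is $H_{11}$, neither is omitted, so they are equivalent. Your identification $\rho'_x=\rho_x$ on bounded real-valued functions is also fine (the paper quotes the same identity from Ionescu Tulcea). However, the bridge to (iii) has a genuine gap in both directions, and both instances come from the same oversight: in Definition \ref{mf10}, a \emph{bounded} process only requires each individual section $Q_x$ to have range in some compact set $K_x$, with no uniformity over $x$. Consequently $h\circ Q$ need not belong to $\LL^\infty(\ups)$ --- take $X=Y=[0,1]$, $\Gamma=\R$, $Q(x,y)=1/x$ for $x>0$: every section is constant, hence bounded, but $Q\notin\LL^\infty(\ups)$. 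So in your (i)$\Rightarrow$(iii) the assertion ``$h\circ Q\in\LL^\infty(\ups)$'' fails and (i) cannot be applied to $h\circ Q$. What is true is that $h\circ Q\in\LL^{0,\infty}(\ups,\nu)=A_4$, so you must invoke the equivalent condition (ii) (i.e.\ $H_{44}$) instead; this is precisely why the paper proves (ii)$\Rightarrow$(iii) rather than (i)$\Rightarrow$(iii).

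The direction (iii)$\Rightarrow$(i) has a matching problem under hypothesis $(\beta)$. With a single truncation at level $M=\|f\|_\infty$, there may remain a $\mu$-null set of points $x$ at which $f_x\in\LL^\infty(\nu_x)$ but $\|f_x\|_\infty>M$ (the bound $\|f_x\|_\infty\le M$ is only guaranteed off a null set of $x$). For such $x$ one has $[\rho_\bullet(f)]_x=\rho_x(f_x)\neq\rho_x([f_M]_x)$ in general, and it is not $0$ either, so neither comparison with $f_M$ nor with the zeroed-out function recovers $\rho_\bullet(f)$ exactly; and under $(\beta)$ alone, agreement with a measurable function off a null rectangle $N\times Y$ does not yield $\vY$-measurability of $\rho_\bullet(f)$ on $N\times Y$ (that is exactly what $(\al)$ buys). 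The paper avoids this by proving (iii)$\Rightarrow$(ii) instead: for $f\in\LL^{0,\infty}(\ups,\nu)$ \emph{every} section is essentially bounded, so with the whole sequence $f_n=(-n)\vee(f\wedge n)$ one gets $[\rho_\bullet(f)]_x=[\rho_\bullet(f_n)]_x$ for $n$ large depending on $x$, hence $\rho_\bullet(f)=\lim_n\rho_\bullet(f_n)$ pointwise is $\vY$-measurable, and then the already established (ii)$\Leftrightarrow$(i) finishes. Both of your gaps are repaired by routing the (iii) equivalence through (ii) rather than (i).
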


\begin{rem}\label{r:slc30}
(a) When $p\in(0,\infty)$, Theorem \ref{slc30} applies to vector liftings $\rho_x\in V^p(\nu_x)$ but none of these are linear liftings (Proposition \ref{p:o-p}).

(b) In (iii) $\bigl(\rho'_x(Q_x)\bigr)$ is a process by definition of $\rho^\prime$, and it is a weak modification of $(Q_x)$ by Definition \ref{co} (b). (It may not be a modification of $(Q_x)$ by \cite{bms6}, Remark 6.11.)

(c) For the case $p=\infty$ and $\rho_x\in\Lambda^\infty(\nu_x)$, the proof of (ii) $\Leftrightarrow$ (iii) below makes use of none of the assumptions on $\ups$.
\end{rem}

\begin{proof}
(ii) is equivalent to stating that $\rho$ is a $2$-marginal for $\LL^{0,p}(\ups,\nu)$, so the equivalence of (i) and (ii) follows from Proposition \ref{p:slc30d}.
For the rest of the proof, let $p=\infty$ and $\rho_x\in\Lambda^\infty(\nu_x)$ for each $x$.

(ii) $\Rightarrow$ (iii) This part follows as for the proof of (i) $\Rightarrow$ (iv) in Theorem 6.10 of \cite{bms6}. We repeat the argument for completeness.
By (ii), $\rho$ is a $2$-marginal with respect to
$\LL^{0,\infty}(\ups,\nu)$.
Let $Q\colon X\times Y\to \Gamma$ be $\vY$-$\B$-measurable with $Q_x\in\LL^\infty_\Gamma(T)$ for all $x\in X$. Let $g\in C(\Gamma)$. Then $g\circ Q\colon X\times Y\to\R$ is $\vY$-measurable, and each section $[g\circ Q]_x = g\circ Q_x$ belongs to $\LL^\infty(\nu_x)$ since $Q_x$ is bounded. Thus, $g\circ Q\in\LL^{0,\infty}(\ups,\nu)$ and hence $\rho_\bullet (g\circ Q)\in \LL^{0,\infty}(\ups,\nu)$.
The map
\begin{align*}
(x,y) \mapsto{} & \rho_\bullet (g\circ Q)(x,y) = [\rho_\bullet (g\circ Q)]_x(y) \\
& = \rho([g\circ Q]_x)(y) = \rho(g\circ Q_x)(y) = g(\rho^\prime(Q_x)(y))
\end{align*}
is therefore $\vY$-measurable for all $g\in C(\Gamma)$, and therefore
$(x,y)\mapsto \rho^\prime(Q_x)(y)$ is $\vY$-$\B$-measurable.

(iii) $\Rightarrow$ (ii) We apply (iii) with $\Gamma=\R$. For $f\in\LL^\infty(\nu_x)$, by \cite{it}, (2) page 35, we have $g\circ\rho(f)=\rho_x(g\circ{f})$ for any $g\in C(\R)$. From Remark \ref{r:co} (i), it then follows that in this case we have $\rho'_x = \rho_x$.
Thus, when $p=\infty$, the only distinction between (ii) and (iii) with $\Gamma=\R$ is that in (ii), the $Q_x$ are essentially bounded, whereas in (iii) they are bounded.

We want to show that $\rho$ is a $2$-marginal with respect to $\LL^{0,\infty}(\ups,\nu)$. Let $f\in\LL^{0,\infty}(\ups,\nu)$. We use an idea from the proof of Proposition \ref{p:slc30d}. Define $f_n=(-n)\vee (f\wedge n)$ for $n\in\N$. Then $f_n\in\LL^{0}(\ups)$ and each section $[f_n]_x$ is bounded. By (iii), the process $(\rho_x([f_n]_x))_{x\in X}$ is $\vY$-measurable. For each $x$, when $n$ is large enough we have $f_x =_{\nu_x} [f_n]_x$ , so $[\rho_\bullet(f)]_x = [\rho_\bullet(f_n)]_x$. Thus, $\rho_\bullet(f) = \lim_{n\to\infty}\rho_\bullet(f_n)$ is $\vY$-measurable, yielding $\rho_\bullet(f)\in\LL^{0,\infty}(\ups,\nu)$, as desired.
\end{proof}

\begin{cor}\label{s50AR}
Let $(X,\vS,\mu)$, $(Y,T,\nu_x)_{x\in X}$, and $(X\times Y,\vY,\ups)$ be probability spaces satisfying $[C]$, $\vS\,\dot{\times}\,Y\sq\vY$, and $\vS_0\,\wh{\times}\,Y\sq\vY$. Suppose we have a topology $\mathfrak{T}\sq T$ on $Y$ with $X\,\dot{\times}\,\mathfrak{T}\sq\vY$.
For $\rho:=(\rho_x)_{x\in X}$ with $\rho_x\colon \LL^p(\nu_x)\to \LL^p(\nu_x)$ satisfying $(l1)$ and $(l2)$, consider the following statements. In {\rm (iii)}, assume $\rho_x\in\Lambda^\infty(\nu_x)$ for each $x$.
\begin{enumerate}
\item
$\rho$ is a $C^p(Y)$-strong $2$-marginal with respect to $\LL^p(\ups)$.

\item
$(\rho_x(Q_x))_{x\in X}$ is a $\vY$-measurable $\LL^p$-process for every $\vY$-measurable $\LL^p$-process $(Q_x)_{x\in X}$
and $Q_x=\rho_x(Q_x)$ when $Q_x$ is continuous.

\item
$(\rho'_x(Q_x))_{x\in X}$ is $\vY$-$\B$-measurable for every bounded $\vY$-$\B$-measurable  process $(Q_x)_{x\in X}$ with values in $\Gamma$, for any choice of $\Gamma$, and $\rho'_x(Q_x) = Q_x$ when $Q_x$ is continuous.
\end{enumerate}
Then {\rm(i)} is equivalent to {\rm(ii)}. If $p=\infty$ and $\rho_x\in\Lambda^\infty(\nu_x)$ for $x\in X$, {\rm(i)}--{\rm(iii)} are all equivalent.
\end{cor}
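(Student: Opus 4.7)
The plan is to invoke Theorem \ref{slc30} for the marginal content and then separately argue the added continuity-fixing clauses in (i), (ii), and (iii) are mutually equivalent. The hypothesis $\vS_0\,\wh{\times}\,Y\sq\vY$ gives in particular $(\vS_0\,\wh{\times}\,Y)[T]\sq\vY$, so Theorem \ref{slc30}'s condition $(\al)$ holds and yields the equivalence of the marginal parts of (i) and (ii), together with that of (iii) when $p=\infty$ and $\rho_x\in\Lambda^\infty(\nu_x)$. What remains is to match the strongness clauses.

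The forward directions are immediate. For (i)-strong $\Rightarrow$ (ii)-strong: if $(Q_x)$ is a $\vY$-measurable $\LL^p$-process and $Q_x$ is continuous for some $x$, then $Q_x\in C(Y)\cap\LL^p(\nu_x)=C^p(Y,\nu_x)$, whence $\rho_x(Q_x)=Q_x$ by hypothesis. For (i)-strong $\Rightarrow$ (iii)-strong (in the $p=\infty$, lifting case): for each $h\in C(\Gamma)$, $h\circ Q_x\in C^\infty(Y,\nu_x)$ is fixed by $\rho_x$, so $h\circ\rho'_x(Q_x)=\rho_x(h\circ Q_x)=h\circ Q_x$, and the uniqueness statement in Remark \ref{r:co}(i) forces $\rho'_x(Q_x)=Q_x$.

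For (ii)-strong $\Rightarrow$ (i)-strong, fix $x_0\in X$ and $f\in C^p(Y,\nu_{x_0})$; I construct a $\vY$-measurable $\LL^p$-process $(Q_x)$ with $Q_{x_0}=f$ continuous, from which (ii)-strong immediately yields $\rho_{x_0}(f)=f$. Set $F(x,y):=f(y)$; this is $\vY$-measurable because $X\,\dot\times\,\mathfrak{T}\sq\vY$. Applying Proposition \ref{p:C.for.fcts}(i) to the nonnegative $\vY$-measurable function $|F|^p$ yields $I\in\vS_0$ so that $x\mapsto\int|f|^p\,d\nu_x$ is $\vS$-measurable on $I^c$; hence $A:=\{x\in I^c:\int|f|^p\,d\nu_x<\infty\}\in\vS$. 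Define $Q$ to equal $F$ on $(A\cup\{x_0\})\times Y$ and $0$ elsewhere. The set $A\times Y$ lies in $\vS\,\dot\times\,Y\sq\vY$; and $\{x_0\}\times Y$ lies in $\vY$ either because $x_0\in A$ (trivial) or because $x_0\in I$ and then $\{x_0\}\sq I\in\vS_0$ places $\{x_0\}\times Y$ in $\vS_0\,\wh\times\,Y\sq\vY$. Therefore $(Q_x)$ is a $\vY$-measurable $\LL^p$-process with $Q_{x_0}=f\in C^p(Y,\nu_{x_0})$.

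For the final equivalence (iii)-strong $\Leftrightarrow$ (ii)-strong when $p=\infty$ and $\rho_x\in\Lambda^\infty(\nu_x)$, I would take $\Gamma=\R$ so that $\rho'_x=\rho_x$ (as shown in the proof of Theorem \ref{slc30}) and transfer between bounded and essentially-bounded continuous sections by the same truncation argument used there, together with $(l2)$ for each $\rho_x$. The main obstacle is in the reverse direction of the second clause: arranging that the constant-in-$x$ process genuinely lies in $\vY$ when $x_0$ happens to fall in the $\mu$-null exceptional set provided by $[C]$—which is precisely what the hypothesis $\vS_0\,\wh\times\,Y\sq\vY$ is there to accommodate.
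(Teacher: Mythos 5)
Your proposal follows the same skeleton as the paper's proof: reduce the marginal content to Theorem \ref{slc30} via $(\vS_0\,\wh{\times}\,Y)[T]\sq\vS_0\,\wh{\times}\,Y\sq\vY$, then match the continuity-fixing clauses, using $X\,\dot{\times}\,\mathfrak{T}\sq\vY$ to realize a continuous $f$ as a section of a $\vY$-measurable $\LL^p$-process for (ii) $\Rightarrow$ (i), and using $h\circ\rho'_x(Q_x)=\rho_x(h\circ Q_x)=h\circ Q_x$ for $h\in C(\Gamma)$ for (i) $\Rightarrow$ (iii). The one place you go beyond the paper is (ii) $\Rightarrow$ (i): the paper simply takes $Q=1\otimes g$, while you cut down to $(A\cup\{x_0\})\times Y$ to guarantee $Q_x\in\LL^p(\nu_x)$ for \emph{every} $x$; this is a legitimate refinement (it handles $f\in\LL^p(\nu_{x_0})$ that fail to be in $\LL^p(\nu_x)$ for other $x$), but note that your measurability argument for $A$ via $\int|f|^p\,d\nu_x$ and Proposition \ref{p:C.for.fcts}(i) is only meaningful for $0<p<\infty$; for $p=\infty$ you should instead take $A=\{x\in I^c:\|f\|_{\LL^\infty(\nu_x)}<\infty\}$ using Proposition \ref{p:C.for.fcts}(ii), and for $p=0$ no truncation is needed at all since $\mathfrak{T}\sq T$ already gives $f\in\LL^0(\nu_x)$ for every $x$. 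These are routine adjustments, so I regard the argument as correct.
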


\begin{proof}
The equivalence of the statements (i) and (ii) is immediate by Theorem \ref{slc30}. The assumption $X\,\dot{\times}\,\mathfrak{T}\sq\vY$ ensures for (ii) $\Rightarrow$ (i) that if $g\in C^p(Y)$ then $Q=1\otimes g$ is the associated function of a $\vY$-measurable $\LL^p$-process.
Now assume $p=\infty$ and $\rho_x\in\Lambda^\infty(\nu_x)$ for each $x$.

For (i) $\Rightarrow$ (iii), by Theorem \ref{slc30} we get that $(\rho_x(Q_x))_{x\in{X}}$ is $\vY$-measurable for every bounded $\vY$-$\B$-measurable process $(Q_x)_{x\in{X}}$ with values in $\Gamma$. The equality $\rho'_x(Q_x) = Q_x$ in (iii) follows from $g\circ\rho'_x(Q_x) = \rho_x(g\circ Q_x) = g\circ Q_x$ for $g\in C(\Gamma)$.
The converse implication (iii) $\Rightarrow$ (ii) is clear by Theorem \ref{slc30}, taking $\Gamma=\R$, which as pointed out before makes $\rho'_x = \rho_x$.
\end{proof}

\begin{ex}\label{s50ar1}
Let $(X,\vS,\mu)$ and $(Y,T,\nu)$ be probability spaces.

(a) Assume that $\nu$ is complete. Every $\rho\in A\mathcal{G}(\nu)$ is a $2$-marginal with respect to $\LL^\infty(\mu\,\wh\otimes\,\nu)$ by \cite{mms15}, Theorem 2.6 along with Corollary \ref{VM19b}. It follows that the clauses \rm{(i)} and \rm{(ii)} of Theorem \ref{slc30} hold. See \cite{mms12} for the definition of $A\mathcal{G}(\nu)$.

(b) If $\nu$ is complete and $\rho\in\Lambda^\infty(\nu)$ is an admissibly generated lifting (see \cite{mms2}, page 291 for the definition), then we get the existence of a lifting $\pi\in\Lambda^\infty(\mu\,\wh\otimes\,\nu)$ with $\rho$-invariant sections: $[\pi(f)]_x=\rho([\pi(f)]_x)$ for each $f\in\LL^\infty(\mu\,\wh\otimes\,\nu)$ and each $x\in{X}$, by \cite{mms2}, Theorem 2.13. By Corollary \ref{VM19b}, this property implies that $\rho$ is a $2$-marginal with respect to $\LL^\infty(\mu\,\wh\otimes\,\nu)$, and so the clauses of Theorem \ref{slc30} all hold for
$(X,\vS,\mu)$, $(Y,T,\nu)$, $(X\times Y,\vS\,\wh{\otimes}\, T, \mu\,\wh{\otimes}\,\nu)$.

(c) Let $(X\times{Y},\vY,\ups)$ be a probability space satisfying $[C]$ and $\vS\,\dot{\times}\,Y\sq\vY$.
If $\eta:\LL^p(\nu)\to \LL^p(\nu)$ satisfies $(l1)$ and $(l2)$ then
$\eta$ is a $2$-marginal with respect to $\LL^p(\ups_{\mathfrak{N}(\mu,\nu)})$ by Example \ref{ex:eta.skew.1}.
It follows that the clauses of Theorem \ref{slc30} hold for the spaces $(X,\vS,\mu)$, $(Y,T,\nu)$, $(X\times Y,\vY_{\mathfrak{N}(\mu,\nu)}, \ups_{\mathfrak{N}(\mu,\nu)})$ which satisfy $[C]$ by Proposition \ref{p:VMLa:2} and satisfy $\vS_0\,\wh{\times}\,Y\sq\mathfrak{N}(\mu,\nu)\sq \vY_{\mathfrak{N}(\mu,\nu)}$. (Clause (iii) requires $p=\infty$, $\eta\in\vL^\infty(\nu)$.)
\end{ex}

For part (a) of the next example, cf.\ \cite{bms6}, Example 6.14(a).

\begin{ex}\label{s50ar2}
Let $(X,\mathfrak{S},\vS,\mu)$ and $(Y,\mathfrak{T},T,\nu)$ be topological probability spaces.

(a) If $\mu,\nu$ are $\tau$-additive then from \cite{bms5}, Theorem 2.15 we get a $\tau$-additive product space $(X\times Y,\mathfrak{S}\times \mathfrak{T}, \vY,\ups)$.
By the proof of (v) of that theorem, $[C]$ holds if we replace $(\vY,\ups)$ by $(\ov{\vY},\ov{\ups})$, where $\ov{\vY}:=\mathfrak{B}(X\times Y)$, $\ov{\ups}:=\ups\restr\ov{\vY}$. Extending by the ideal $\mathfrak{N}$ to get $\ov{\vY}_\mathfrak{N},\ov{\ups}_\mathfrak{N}$, we preserve $[C]$ by
Proposition \ref{p:VMLa:2}.
If $\rho\in{V}^p(\mathfrak{T},\nu)$, then similarly to what we did in Example \ref{s50ar1} (c), we may apply Example \ref{ex:eta.skew.1} in order to get that $\rho$ is a $2$-marginal with respect to $\LL^p(\ov{\ups}_\mathfrak{N})$. Then clauses (i) and (ii) of Corollary \ref{s50AR} hold for $(X,\vS,\mu)$, $(Y,T,\nu)$, $(X\times Y,\ov{\vY}_\mathfrak{N},\ov{\ups}_\mathfrak{N})$. Clause (iii) also holds when $p=\infty$, $\rho\in\vL^\infty(\nu)$.

(b) If $\nu$ is complete and $\rho\in\Lambda^\infty(\nu)$ is a strong admissibly generated lifting,
then there exists a $\mathfrak{S}\times\mathfrak{T}$-strong lifting $\pi\in\Lambda^\infty(\mu\,\wh\otimes\,\nu)$ with $\rho$-invariant sections as in Example \ref{s50ar1} (b), and such that the product topology $\mathfrak{S}\times\mathfrak{T}$ consists of measurable sets by \cite{mms19}, Theorem 4.1.%
\footnote{Recall that for topological probability spaces $(X,\mathfrak{S},\vS,\mu)$ and $(Y,\mathfrak{T},T,\nu)$, in general we don't have $\mathfrak{S}\times\mathfrak{T}\sq\Sigma\otimes T$ (see \cite{fr3}, Lemma 346K and \cite{gg}).}
We then get, as in Example \ref{s50ar1} (b), that $\rho\in\Lambda^\infty(\nu)$ is a $2$-marginal with respect to $\LL^\infty(\mu\,\wh\otimes\,\nu)$. But since $\pi$ is $\mathfrak{S}\times\mathfrak{T}$-strong, it follows by \cite{fr4}, 453C, that $\pi$ is $C(X\times{Y})\cap\LL^\infty(\mu\,\wh\otimes\,\nu)$-strong. Thus, the clause \rm{(i)} of Corollary \ref{s50AR} holds, and so the clauses of that Corollary all hold for $(X,\mathfrak{S},\vS,\mu),\ \ (Y,\mathfrak{T},T,\nu),\ \ (X\times Y,\mathfrak{S}\times\mathfrak{T},\vS\,\wh{\otimes}\, T, \mu\,\wh{\otimes}\,\nu)$.
\end{ex}

Cohn \cite{co72}, Theorem 3 (see also Chung and Doob \cite{cd}) has given a necessary and sufficient condition  for a process to have a measurable modification. The result of Cohn has been extended in \cite{bms6}, Theorem 8.8. Hoffmann-J{\o}rgensen  \cite{hj} has given another necessary and sufficient condition, which only depends on the 2-dimensional marginal distributions of the process. Musia{\l} \cite{mu23}, Proposition 4.1 states another characterization of processes possessing a measurable modification. The next theorem is inspired by that statement. See Corollary \ref{c:mm1} for the specialization to probability measures, and Remark \ref{r:mu} for how our result relates to the one in \cite{mu23}. We begin with a technical fact.

\begin{lem}\label{l:ups.T}
Let $X$ be a set, $\I$ an upwards directed family of subsets of $X$. Let
$(Y,T_x)_{x\in X}$ be measurable spaces and for each $x\in X$, let $J_x$ be an ideal of $T_x$. Write $T=(T_x)_{x\in X}$, $J=(J_x)_{x\in X}$.
Let $\vY$ be a $\s$-algebra on $X\times Y$ such that $\vY\sq \I\ltimes T$.
\begin{enumerate}
\item
If $(\I\,\wh{\times}\, Y)[T]\sq \vY$ then $\vY_{\I\ltimes J}[T] = \vY[T]_{\{\e\}\ltimes J}$.
\end{enumerate}
If $(Y,T_x,K_x)\in \T_\s$, $x\in X$, and $J_x = K_x\cap T_x$, $K=(K_x)_{x\in X}$ then
\begin{enumerate}
\setcounter{enumi}{1}
\item
$\vY_{\I\ltimes K}[T] = \vY_{\I\ltimes J}[T]$.
\end{enumerate}
\end{lem}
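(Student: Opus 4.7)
The plan is to establish both equalities by expressing each element of the relevant $\vY_\Kk$ via its canonical representation $A\triangle F$, then using upwards directedness of $\I$ to pick a single set $Q\in\I$ that simultaneously controls the measurability of $A$ (through $\vY\sq\I\ltimes T$) and the membership of $F$ in the appropriate skew product. Part (ii) will go through without using the extra hypothesis of (i); part (i) will require a small ``surgery'' on the representation, and that is where the hypothesis $(\I\,\wh{\times}\,Y)[T]\sq\vY$ enters.

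For (ii), the argument is essentially algebraic. Given $B = A\triangle F \in \vY_{\I\ltimes K}[T]$ with $A\in\vY$ and $F\in\I\ltimes K$, I would use $\vY\sq\I\ltimes T$ together with the defining property of $\I\ltimes K$ and upwards directedness to produce a single $Q\in\I$ with $A_x\in T_x$ and $F_x\in K_x$ for every $x\notin Q$. Since $B_x\in T_x$ everywhere, it follows that $F_x = A_x\triangle B_x\in T_x$ off $Q$, whence $F_x\in K_x\cap T_x = J_x$. Thus $F\in\I\ltimes J$ and $B\in\vY_{\I\ltimes J}[T]$. The reverse inclusion is immediate from $J_x\sq K_x$.

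For (i), the inclusion $\vY[T]_{\{\e\}\ltimes J}\sq \vY_{\I\ltimes J}[T]$ is routine: if $A'\in\vY[T]$ and $G\in\{\e\}\ltimes J$, then $G\in\I\ltimes J$ and each section of $A'\triangle G$ lies in $T_x$ since $J_x\sq T_x$. The other inclusion is where I need to do work. Given $B = A\triangle F \in\vY_{\I\ltimes J}[T]$, I pick $Q\in\I$ with $A_x\in T_x$ and $F_x\in J_x$ for $x\notin Q$, and define
\[
A' := (A\sm(Q\times Y))\cup (B\cap(Q\times Y)).
\]
Applying the hypothesis $(\I\,\wh{\times}\,Y)[T]\sq\vY$ to both $Q\times Y$ and $B\cap(Q\times Y)$ (each contained in $Q\times Y$, with all sections in $T_x$) gives $A'\in\vY$; its sections are $B_x\in T_x$ for $x\in Q$ and $A_x\in T_x$ for $x\notin Q$, so $A'\in\vY[T]$. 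Setting $G := A'\triangle B$, one computes $G_x = \e$ for $x\in Q$ and $G_x = F_x\in J_x$ for $x\notin Q$, hence $G\in\{\e\}\ltimes J$, and $B = A'\triangle G\in\vY[T]_{\{\e\}\ltimes J}$.

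The main obstacle is the construction of $A'$ in (i): any canonical representation $A\triangle F$ of $B$ may have $A$ with badly behaved sections on a set in $\I$, and the role of $(\I\,\wh{\times}\,Y)[T]\sq\vY$ is precisely to let me patch those sections by overwriting $A$ with the already well-behaved sections of $B$ on $Q\times Y$, while keeping the result inside $\vY$.
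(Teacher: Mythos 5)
Your proof is correct and follows essentially the same route as the paper's: part (ii) is the identical section-by-section argument $F_x = A_x\,\triangle\,B_x\in T_x\cap K_x = J_x$ off a single $Q\in\I$, and part (i) uses the hypothesis $(\I\,\wh{\times}\,Y)[T]\sq\vY$ in exactly the same way to control the part of the set lying over $Q\times Y$. The only cosmetic difference is that in (i) the paper splits the set into its pieces over $N\times Y$ and $N^c\times Y$ and shows each lies in $\vY[T]_{\{\e\}\ltimes J}$ (then uses that this is a $\s$-algebra), whereas you explicitly build the patched representative $A'$ and exhibit the decomposition $B=A'\,\triangle\,G$ directly.
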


\begin{proof}
By Proposition \ref{p:skew2}, $(X\times Y,\vY,\mathcal{D})\in\T_\s$, and so $\vY_\mathcal{D}$ is defined, for $\mathcal{D} = \I\ltimes J$, $\I\ltimes K$, and similarly $(X\times Y,\vY[T],\{\e\}\ltimes J)\in\T_\s$ so that $\vY[T]_{\{\e\}\ltimes J}$ is defined.

(i) Since $\vY[T]_{\{\e\}\ltimes J} \sq \vY_{\I\ltimes J}$ and for any $E\in \{\e\}\ltimes J$ and $x\in X$ we have $E_x\in J_x\sq T_x$, we have $\vY[T]_{\{\e\}\ltimes J} \sq \vY_{\I\ltimes J}[T]$.
Conversely, let $A\in \vY_{\I\ltimes J}[T]$. Then for some $B\in \vY$ and $E\in \I\ltimes J$, we have $A = B\,\triangle\,E$. For some $N\in \I$, we have for all $x\in N^c$ that $B_x\in T$ and $E_x\in J_x$. It follows that $E\cap(N^c\times Y)\in \{\e\}\ltimes J$. We also have $B\cap(N^c\times Y)\in \vY$ since $N\times Y\in(\I\,\wh{\times}\, Y)[T]\sq \vY$, and hence $B\cap(N^c\times Y)\in \vY[T]$. Therefore, $A\cap(N^c\times Y) = (B\,\triangle\,E)\cap(N^c\times Y) \in \vY[T]_{\{\e\}\ltimes J}$.
By assumption, for all $x\in X$, $A_x\in T_x$, so
$A\cap(N\times Y)\in (\I\,\wh{\times}\, Y)[T]\sq \vY[T]\sq \vY[T]_{\{\e\}\ltimes J}$. Thus, $A\in\vY[T]_{\{\e\}\ltimes J}$.

(ii) Since $J_x\sq K_x$ for all $x$, we have $\vY_{\I\ltimes J}[T]\sq \vY_{\I\ltimes K}[T]$.
Conversely, let $A\in \vY_{\I\ltimes K}[T]$. Then for some $B\in \vY\sq \I\ltimes T$ and $E\in \I\ltimes K$, we have $A = B\,\triangle\,E$. For some $N\in \I$, we have for all $x\in N^c$ that $B_x\in T_x$ and $E_x\in K_x$. But we also have by assumption $A_x\in T_x$, so $E_x = A_x\,\triangle\,B_x\in T_x$ and therefore $E_x\in J_x$. It follows that $E\in \I\ltimes J$ and thus $A\in\vY_{\I\ltimes J}$.
\end{proof}

\begin{thm}\label{t:mm1}
Let $X$ be a set, $\I$ an upwards directed family of subsets of $X$. For $x\in X$, let $(Y,T_x,K_x)\in\T_\s$, $J_x:=K_x\cap T_x$. Write
\begin{center}
$T=(T_x)_{x\in X}$, $J=(J_x)_{x\in X}$, $K=(K_x)_{x\in X}$, $T_K = ((T_x)_{K_x})_{x\in X}$.
\end{center}
Let $\vY$ be a $\s$-algebra on $X\times Y$ such that $\vY\sq \I\ltimes T$.
Let $\F_T = \{f\in \R^{X\times Y}:f_x\in\LL^0(T_x),\,x\in X\}$. For $f\in\F_T$,
consider the following statements, where $\Omega$ is a $\s$-algebra with $\vY\sq\Omega\sq\vY_{\I\ltimes K}$.
\begin{enumerate}
\item[\rm(a)]
$f_x =_{J_x} g_x$ for all $x\in X$, for some $\vY$-measurable $g\in \F_T$.

\item[\rm(b)]
$f$ is $\vY[T]_{\{\e\}\ltimes T_0}$-measurable.

\item[\rm(c)]
$f$ is $\vY_{\I\ltimes J}$-measurable.

\item[\rm(d)]
$f$ is $\vY_{\I\ltimes K}$-measurable.

\item[\rm(e)]
$f_x =_{K_x} g_x$ for all $x\in X$, for some $\Omega$-measurable $g\in \F_{T_K}$.
\end{enumerate}
The following implications hold: {\rm(a)} $\Leftrightarrow$ {\rm(b)} $\Rightarrow$ {\rm(c)} $\Leftrightarrow$ {\rm(d)}, {\rm(a)} $\Rightarrow$ {\rm(e)} $\Rightarrow$ {\rm(d)}. If $(\I\,\wh{\times}\, Y)[T]\sq \vY$, then {\rm(c)} $\Rightarrow$ {\rm(b)} and therefore all statements are equivalent.
\end{thm}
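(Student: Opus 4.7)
The plan is to treat the theorem as a bookkeeping exercise that strings together the structural results of Section~\ref{s:ext}---specifically Lemmas~\ref{P10i}, \ref{l:eta.skew}, and \ref{l:ups.T}, plus Corollary~\ref{c:C.for.fcts:0} on the relationship between measurability of a function and measurability of its sections. I read (b) as $f$ being $\vY[T]_{\{\e\}\ltimes J}$-measurable (what the lemma calls the ideal $J$). The main loop I will close is
\[
\text{(a)}\Leftrightarrow\text{(b)}\Rightarrow\text{(c)}\Leftrightarrow\text{(d)},\quad \text{(a)}\Rightarrow\text{(e)}\Rightarrow\text{(d)},
\]
and then, under the extra hypothesis $(\I\,\wh{\times}\,Y)[T]\sq\vY$, close (c)$\Rightarrow$(b).

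For (a)$\Rightarrow$(b), given a $\vY$-measurable $g\in\F_T$ with $g_x=_{J_x}f_x$, set $h:=f-g$. Every section $h_x\in\LL^0(T_x)$ satisfies $h_x=_{J_x}0$, so Lemma~\ref{l:eta.skew}(ii) applied with $\vY[T]$ in the role of $\vY$ and $\{\e\}$ in the role of $\I$ puts $h$ in $\LL^0(\vY[T]_{\{\e\}\ltimes J})$; then $f=g+h$ is as well. Conversely, $(X\times Y,\vY[T],\{\e\}\ltimes J)\in\T_\s$ (using Proposition~\ref{p:skew}(ii) and $\vY[T]\sq\{\e\}\ltimes T$), so Lemma~\ref{P10i}(ii) produces a $\vY[T]$-measurable $g$ with $f=_{\{\e\}\ltimes J}g$, i.e.\ $f_x=_{J_x}g_x$ for all $x$; by Corollary~\ref{c:C.for.fcts:0}, $g\in\F_T$.

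The implication (b)$\Rightarrow$(c) is immediate from $\vY[T]\sq\vY$ and $\{\e\}\ltimes J\sq\I\ltimes J$, so $\vY[T]_{\{\e\}\ltimes J}\sq\vY_{\I\ltimes J}$. For (c)$\Leftrightarrow$(d), (c)$\Rightarrow$(d) follows from $J_x\sq K_x$; for the reverse, $f\in\F_T$ together with $\vY_{\I\ltimes K}$-measurability gives $f^{-1}(B)\in\vY_{\I\ltimes K}[T]$ for every Borel $B$, and Lemma~\ref{l:ups.T}(ii) rewrites this as $\vY_{\I\ltimes J}[T]\sq\vY_{\I\ltimes J}$. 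The implication (a)$\Rightarrow$(e) is trivial: a witness $g$ for (a) is $\vY$-measurable hence $\Omega$-measurable, lies in $\F_T\sq\F_{T_K}$, and $=_{J_x}$ implies $=_{K_x}$. For (e)$\Rightarrow$(d), the given $g$ is $\vY_{\I\ltimes K}$-measurable, and $h:=f-g$ has $h_x\in\LL^0((T_x)_{K_x})$ with $h_x=_{K_x}0$ for all $x$; Lemma~\ref{l:eta.skew}(ii), now with $T_K$ in place of $T$ and $K$ in place of $J$---legitimate since $\vY\sq\I\ltimes T\sq\I\ltimes T_K$ and $K_x$ is an ideal of $(T_x)_{K_x}$---gives $h\in\LL^0(\vY_{\I\ltimes K})$, so $f=g+h$ is too. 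Finally, under $(\I\,\wh{\times}\,Y)[T]\sq\vY$, (c) combined with $f\in\F_T$ yields (by Corollary~\ref{c:C.for.fcts:0}) $\vY_{\I\ltimes J}[T]$-measurability of $f$, and Lemma~\ref{l:ups.T}(i) identifies $\vY_{\I\ltimes J}[T]=\vY[T]_{\{\e\}\ltimes J}$, giving (b).

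I expect the main obstacle to be purely notational: keeping straight which $\sigma$-algebra and which ideal are in play at each step, verifying the various $\T_\s$-memberships and the inclusions $\vY[T]\sq\{\e\}\ltimes T$ and $\vY\sq\I\ltimes T_K$ that underlie each application of Lemma~\ref{l:eta.skew} or Lemma~\ref{l:ups.T}, and confirming that $\I$ being upward directed (and in particular nonempty---one needs some $N\in\I$, taken as $\e$ when the section conditions hold everywhere) suffices. No individual step is deep; the work lies in repeatedly translating between a set $E\in\vY_{\I\ltimes J}$ written as $A\,\triangle\,F$ and the pointwise behaviour of its $x$-sections.
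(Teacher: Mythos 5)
Your proof is correct and follows essentially the same route as the paper: the same reduction of (a)$\Leftrightarrow$(b) to Lemmas~\ref{P10i}(ii) and \ref{l:eta.skew} (you unfold part (iii) of the latter into part (ii) plus linearity, which is exactly how the paper derives (iii)), the same use of Corollary~\ref{c:C.for.fcts:0} with Lemma~\ref{l:ups.T}(i),(ii) for (d)$\Rightarrow$(c) and (c)$\Rightarrow$(b), and for (e)$\Rightarrow$(d) you apply Lemma~\ref{l:eta.skew}(ii) directly to the substituted structures $(T_K,K,\Omega)$ where the paper instead invokes its already-proven implication (a)$\Rightarrow$(c) for that modified triple --- a cosmetic difference. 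Your reading of the ideal in clause (b) as $\{\e\}\ltimes J$ rather than $\{\e\}\ltimes T_0$ is the correct one and matches what the paper's own proof actually uses.
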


\begin{proof}
The implications {\rm(b)} $\Rightarrow$ {\rm(c)} $\Rightarrow$ {\rm(d)} are clear. For the converses of these, if (d) holds, then by Corollary \ref{c:C.for.fcts:0}, $f$ is $\vY_{\I\ltimes K}[T]$-measurable, and hence, by Lemma \ref{l:ups.T} (ii), is $\vY_{\I\ltimes J}[T]$-measurable, giving (c).
If $(\I\,\wh{\times}\, Y)[T]\sq \vY$ and (c) holds, then by Corollary \ref{c:C.for.fcts:0}, $f$ is $\vY_{\I\ltimes J}[T]$-measurable, and hence, by Lemma \ref{l:ups.T} (i), is $\vY[T]_{\{\e\}\ltimes T_0}$-measurable, giving (b). Next we check that {\rm(a)} and {\rm(b)} are equivalent.

(a) $\Rightarrow$ (b) Assume that $g\in\F_T$ is $\vY$-measurable and $f_x=_{J_x}g_x$ for $x\in X$. By Lemma \ref{l:eta.skew} (iii) (taking $\I$, $\vY$ there to be $\{\e\}$, $\vY[T]$, respectively),
$f$ is $\vY[T]_{\{\e\}\ltimes J}$-measurable.

(b) $\Rightarrow$ (a)
Suppose $(f_x)_{x\in X}$ is a $\vY[T]_{\{\e\}\ltimes J}$-measurable process. By Lemma \ref{P10i} (ii), there is a real-valued $\vY[T]$-measurable function $g$ on $X\times Y$ such that $g =_{\{\e\}\ltimes J} f$. For each $x\in X$, $g_x$ is $T$-measurable by Proposition \ref{p:C.for.fcts:0} (i) (with $\I=\{\e\}$), giving $g\in\F_T$.
From $g =_{\{\e\}\ltimes J} f$ we get that for each $x\in X$, $g_x =_{J_x} f_x$, as required.

The implication {\rm(a)} $\Rightarrow$ {\rm(e)} is obvious since $\vY\sq\Omega$, so there remains to show {\rm(e)} $\Rightarrow$ {\rm(d)}. For this we apply {\rm(a)} $\Rightarrow$ {\rm(c)} to the structures obtained by replacing $T_x$, $J_x$ and $\vY$ with $(T_x)_{K_x}$, $K_x$ and $\Omega$, respectively. For the hypothesis corresponding to $\vY\sq\I\ltimes T$, we have $\Omega\sq \vY_{\I\ltimes K}\sq \I\ltimes T_K$. We have $f\in\F_T\sq \F_{T_K}$, so (a) $\Rightarrow$ (c) in this instance gives that (e) implies that $f$ is $\Omega_{\I\ltimes K}$-measurable, and therefore is $\vY_{\I\ltimes K}$-measurable since $\Omega_{\I\ltimes K} \sq (\vY_{\I\ltimes K})_{\I\ltimes K} = \vY_{\I\ltimes K}$. Thus, (d) holds.
\end{proof}

\begin{rem}\label{r:eta.mod}
Assume that $(\I\,\wh{\times}\, Y)[T]\sq \vY$ holds and there exists an $\eta=(\eta_x)_{x\in X}$, where each $\eta_x\colon\LL^0(T_x)\to \LL^0(T_x)$ satisfies $(l1)$ and $(l2)$ (modulo $K_x$) and $\eta$ is a $2$-marginal for  $\LL^0(\vY)$.
Then starting with $f\in\F_T$ being $\vY_{\I\ltimes K}$-measurable as in (d), we directly get $g\in \F_T$ as in (a) by taking $g=\eta_\bullet(f)$.

\begin{proof}
By $(l1)$ for the $\eta_x$, $g\in\F_T$ and $f_x =_{K_x} g_x$ for all $x\in X$. There remains to show $g$ is $\vY$-measurable.
By Lemma \ref{P10i} (ii), there is an $h\in\LL^0(\vY)$ such that $f =_{\I\ltimes K} h$.
There is an $N\in \I$ such that for $x\in N^c$, we have $h_x\in\LL^0(T)$ and $f_x =_{K_x} h_x$. By redefining $h$ to be zero on $N\times Y$, we get $h\in \F_T$. The function $\eta_\bullet(h)$ is $\vY$-measurable since $\eta$ is a $2$-marginal for  $\LL^0(\vY)$. By $(l2)$, when $x\in N^c$, $\eta_x(f_x)=\eta_x(h_x)$. Hence $\eta_\bullet(f)$ and $\eta_\bullet(h)$ have the same restriction to $N^c\times Y$. By $(\I\,\wh{\times}\, Y)[T]\sq \vY$, it follows that $g=\eta_\bullet(f)$ is $\vY$-measurable.
\end{proof}
\end{rem}

\begin{cor}\label{c:mm1}
Let $(X,\vS,\mu)$, $(Y,T,\nu_x)_{x\in X}$, and $(X\times Y,\vY,\ups)$ be probability spaces satisfying $[C]$. Write
Consider the following statements.
\begin{enumerate}
\item[\rm(a)]
$(Q_x)_{x\in{X}}$ has a $\vY$-measurable  modification $(U_x)_{x\in{X}}$ over $(Y,T)$.

\item[\rm(b)]
$(Q_x)_{x\in{X}}$ is $\vY[T]_{\{\e\}\ltimes T_0}$-measurable.

\item[\rm(c)]
$(Q_x)_{x\in{X}}$ is $\vY_{\mathfrak{N}(\mu,\nu)}$-measurable.

\item[\rm(d)]
$(Q_x)_{x\in{X}}$ is $\vY_{\mathfrak{N}}$-measurable.

\item[\rm(e)]
$Q_x =_{\wh{\nu}_x} U_x$, $x\in X$, for some $\wh{\vY}$-measurable $U\colon X\times Y\to\R$, where $U_x$ is $\wh{\nu}_x$-measurable.
\end{enumerate}
The following implications hold: {\rm(a)} $\Leftrightarrow$ {\rm(b)} $\Rightarrow$ {\rm(c)} $\Leftrightarrow$ {\rm(d)}, {\rm(a)} $\Rightarrow$ {\rm(e)} $\Rightarrow$ {\rm(d)}. If $(\vS_0\,\wh{\times}\, Y)[T]\sq \vY$, then {\rm(c)} $\Rightarrow$ {\rm(b)} and therefore all statements are equivalent.
\end{cor}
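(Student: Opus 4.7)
The plan is to deduce Corollary \ref{c:mm1} from Theorem \ref{t:mm1} by a direct specialization, taking $\I=\vS_0$, $T_x = T$ for all $x$, and making choices for $K_x$, $J_x$, $\Omega$ that cause the five conditions of the theorem to match the five conditions of the corollary.

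First I will fix the parameters. Set $\I := \vS_0$, which is upwards $\s$-directed and hence upwards directed. For each $x\in X$, let $K_x$ be the completion of the null ideal of $\nu_x$ (a $\s$-ideal of $\mathcal{P}(Y)$), so that $(Y,T,K_x)\in \T_\s$, and set $J_x := K_x\cap T = T_{x,0}$, the null ideal of $\nu_x$. With these choices one has $\I\ltimes K = \vS_0\ltimes \wh{T_0} = \mathfrak{N}$ and $\I\ltimes J = \vS_0\ltimes T_0 = \mathfrak{N}(\mu,\nu)$, and $T_{K_x}= \wh{T}_x$. The hypothesis $\vY \sq \I\ltimes T$ of the theorem is then precisely $\vY\sq \vS_0\ltimes T$, which follows from $[C]$.

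Next I will choose the auxiliary $\s$-algebra $\Omega$ appearing in statement (e) of the theorem to be $\Omega := \wh{\vY}$. The required sandwich $\vY\sq \Omega\sq \vY_{\I\ltimes K}$ becomes $\vY\sq \wh{\vY}\sq \vY_{\mathfrak{N}}$; the first inclusion is trivial, and for the second one uses that $[C]$ gives $\vY_0\sq \vS_0\ltimes T_0\sq \mathfrak{N}$, so any subset of a $\vY$-null set lies in $\vY_\mathfrak{N}$. A process $(Q_x)_{x\in X}$ is by definition a family with each $Q_x$ being $T$-measurable, so the associated function $Q$ lies in $\F_T$ and the theorem applies to it.

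With these identifications, the translation of the theorem's statements to those of the corollary is routine: (a) becomes "existence of a $\vY$-measurable modification," (b) is verbatim the same, (c) becomes $\vY_{\mathfrak{N}(\mu,\nu)}$-measurability, (d) becomes $\vY_\mathfrak{N}$-measurability, and (e) becomes the existence of a $\wh{\vY}$-measurable $U$ whose sections are $\wh{\nu}_x$-measurable and satisfy $Q_x =_{\wh{\nu}_x} U_x$ (noting $=_{K_x}$ on $T_x$-measurable functions is the same as $=_{\wh{\nu}_x}$ since $K_x = \wh{T_{x,0}}$). The additional hypothesis $(\I\,\wh{\times}\,Y)[T]\sq \vY$ of Theorem \ref{t:mm1}, needed for {\rm(c)} $\Rightarrow$ {\rm(b)}, translates to exactly $(\vS_0\,\wh{\times}\,Y)[T]\sq \vY$, matching the hypothesis stated in the final sentence of the corollary.

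Since the translation is faithful and all the hypotheses of Theorem \ref{t:mm1} are verified, the chain of implications {\rm(a)} $\Leftrightarrow$ {\rm(b)} $\Rightarrow$ {\rm(c)} $\Leftrightarrow$ {\rm(d)}, {\rm(a)} $\Rightarrow$ {\rm(e)} $\Rightarrow$ {\rm(d)} transfers directly, and under $(\vS_0\,\wh{\times}\,Y)[T]\sq \vY$ the additional implication {\rm(c)} $\Rightarrow$ {\rm(b)} also transfers. There is no real obstacle here; the only thing to be careful about is bookkeeping the identifications $K_x = \wh{T_{x,0}}$ and $\Omega=\wh{\vY}$ and checking the two inclusions $\vY_0\sq \mathfrak{N}(\mu,\nu)\sq \mathfrak{N}$ that make the $\Omega$-sandwich and the ideals work out.
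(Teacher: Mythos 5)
Your proposal is correct and is essentially the paper's own proof: the paper deduces the corollary from Theorem \ref{t:mm1} by taking $\Omega=\wh{\vY}$, with exactly the identifications you spell out ($\I=\vS_0$, $K_x=\wh{T_{x,0}}$, $J_x=T_{x,0}$, so $\I\ltimes K=\mathfrak{N}$ and $\I\ltimes J=\mathfrak{N}(\mu,\nu)$). Your extra bookkeeping (verifying $\vY\sq\vS_0\ltimes T$ from $[C]$ and $\wh{\vY}\sq\vY_{\mathfrak{N}}$ for the sandwich) is just a more explicit write-up of the same specialization.
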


\begin{proof}
This follows immediately from the theorem, taking $\Omega = \wh{\vY}$.
\end{proof}

\begin{rem}\label{r:d.e}
The equivalence of (d) and (e) Corollary \ref{c:mm1} holds under the milder assumption that $\vS_0\,\dot{\times}\, Y\sq \vY$. Since $\vY_\mathfrak{N} = \wh{\vY}_\mathfrak{N}$, for the equivalence of (d) and (e), we can apply Theorem \ref{t:mm1} to the spaces $(X,\vS,\mu)$, $(Y,\wh{T}_x,\wh{\nu}_x)_{x\in X}$, $(X\times Y,\wh{\vY},\wh{\upsilon})$, where $\wh{T}_x$ is the domain of $\wh{\nu}_x$. ($[C]$ holds by Proposition \ref{p:C.for.fcts} (v).) Now the assumption $(\I\,\wh{\times}\, Y)[T]\sq \vY$ holds since $\vS_0\,\wh{\times}\, Y\sq \wh{\vY}$ follows from $\vS_0\,\dot{\times}\, Y\sq \vY$.
Some assumption is needed because if we take $(X,\vS,\mu)=(Y,T,\nu)=[0,1]$ with Lebesgue measure, $(X\times Y,\vY,\ups)=$ the trivial probability having $X\times Y$ as its only nonempty measurable set then all the measures are complete. The function $Q=\chi_{\{0\}\times Y}$ is $\vY_\mathfrak{N}$-measurable since $\{0\}\times Y\in\mathfrak{N}$, but if $U_x =_{\nu} Q_x$ for all $x$, then $U$ must take both the values $0$ and $1$ and hence is not $\vY$-measurable.
\end{rem}

\begin{rem}\label{r:eta.mod.2}
For $0\leq p\leq\infty$, we could stipulate in clause (a) of Corollary \ref{c:mm1} that $(U_x)_{x\in X}$ is an $\LL^p$-process and in clauses (b)--(d) that $(Q_x)_{x\in X}$ is an $\LL^p$-process, since a modification of an $\LL^p$-process is an $\LL^p$-process. Then, similar to Remark \ref{r:eta.mod}, if $(\vS_0\,\wh{\times}\, Y)[T]\sq \vY$ holds and there exists an $\eta=(\eta_x)_{x\in X}$, where each $\eta_x\colon\LL^p(T_x)\to \LL^p(T_x)$ satisfies $(l1)$ and $(l2)$ and $\eta$ is a $2$-marginal for  $\LL^{0,p}(\vY)$, then for any process $(Q_x)_{x\in X}$ which is  $\vY_\mathfrak{N}$-measurable, we get a $\vY$-measurable modification $(U_x)_{x\in X}$ by taking $U_x = \eta_\bullet(Q_x)$. In the proof of Proposition 4.1 of \cite{mu23}, liftings for $\LL^\infty$ were used to show that (d) implies (e).
\end{rem}

\begin{rem}\label{r:mu}
In Proposition 4.1 of \cite{mu23}, (using our notation) a product r.c.p.\ $(\nu_x)_{x\in X}$ for a measure $\ups$ is given (see Example \ref{ex:prod.r.c.p.}), and a process on $(Y,T)$ is defined to be measurable if it is $\wh{\ups}$-measurable.
The proof that a $\vY_\mathfrak{N}$-measurable process has a measurable modification produces a family of variables $(U_x)_{x\in X}$ where $U_x$ is $\wh{\nu}_x$-measurable.
By our definition, this is not a process because the $U_x$ are not all measurable with respect to the same $\s$-algebra. Nevertheless, it fits into the framework of Corollary \ref{c:mm1} as statement (e) (see Remark \ref{r:d.e}), and was the motivation for including statement (e).
\end{rem}

\begin{cor}\label{mm2}
Let $(X,\vS,\mu)$, $(Y,T,\nu_x)_{x\in X}$, and $(X\times Y,\vY,\ups)$ be probability spaces satisfying $[C]$. Let $(Q_x)_{x\in{X}}$ be a process over $(Y,T)$ with values in $\R$.
The following statements are equivalent:
\begin{enumerate}
\item
$(Q_x)_{x\in{X}}$ has a $\vY_\mathfrak{N}$-measurable modification $(U_x)_{x\in{X}}$ over $(Y,T)$.

\item
$(Q_x)_{x\in{X}}$ is $\vY_\mathfrak{N}$-measurable.
\end{enumerate}
\end{cor}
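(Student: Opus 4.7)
My plan is to handle both implications directly, with (ii) $\Rightarrow$ (i) being essentially tautological and (i) $\Rightarrow$ (ii) reducing to the completeness of the extended measure $\ups_\mathfrak{N}$. The key insight is that $\mathfrak{N}$ is a $\s$-ideal of the full power set $\mathcal{P}(X\times Y)$, so it sits inside $\vY_\mathfrak{N}$ as the null ideal of a complete measure.

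For (ii) $\Rightarrow$ (i), I would simply observe that if $(Q_x)_{x\in X}$ is $\vY_\mathfrak{N}$-measurable, then $(Q_x)$ itself is a $\vY_\mathfrak{N}$-measurable modification of $(Q_x)$ over $(Y,T)$, since each $Q_x$ is $T$-measurable by the definition of a process.

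For (i) $\Rightarrow$ (ii), suppose $(U_x)_{x\in X}$ is a $\vY_\mathfrak{N}$-measurable modification of $(Q_x)_{x\in X}$. I would first show that $E := \{(x,y) : Q_x(y) \ne U_x(y)\}$ lies in $\mathfrak{N}$. For every $x$ the sections $Q_x$ and $U_x$ are $T$-measurable and $\nu_x$-a.e.\ equal, so $E_x \in T_{x,0} \sq \wh{T}_{x,0}$; taking the empty set as the $\mu$-null exceptional set in Definition \ref{df:dn} places $E$ in $\vS_0 \ltimes \wh{T}_0 = \mathfrak{N}$. Next I would invoke that $\ups_\mathfrak{N}$ is well-defined (because $[C]$ forces $\vY \cap \mathfrak{N} \sq \vY_0$: any $E\in\vY\cap\mathfrak{N}$ has $\nu_x$-null sections for $\mu$-a.e.\ $x$, so $\ups(E) = 0$) and complete with null ideal exactly $\mathfrak{N}$ by Corollary \ref{c:comp}, since $\mathfrak{N}$ is a $\s$-ideal of $\mathcal{P}(X \times Y)$. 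Since $U$ is $\vY_\mathfrak{N}$-measurable and $Q =_{\ups_\mathfrak{N}} U$ (because $E \in \mathfrak{N} = (\vY_\mathfrak{N})_0$), completeness of $\ups_\mathfrak{N}$ yields that $Q$ is $\vY_\mathfrak{N}$-measurable, i.e., (ii) holds.

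There is no substantive obstacle: once one observes that $\mathfrak{N}$ is a $\s$-ideal of the full power set — so that $\{Q \ne U\}$ automatically belongs to it, and $\ups_\mathfrak{N}$ is complete — the equivalence is just a completeness-of-extension argument. Notably, we do not need either of the intermediate conditions from Corollary \ref{c:mm1}, nor the extra assumption $(\vS_0\,\wh{\times}\, Y)[T]\sq \vY$ that was required there to close the loop.
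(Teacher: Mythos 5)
Your proof is correct, and it takes a genuinely more direct route than the paper's. The paper deduces (i) $\Rightarrow$ (ii) by restricting $\ups_\mathfrak{N}$ to the $\s$-algebra $\vY_\mathfrak{N}[T]$ of sets with all vertical sections in $T$, checking via Proposition \ref{p:VMLa:2} and Lemma \ref{l:ups.T} that the restricted triple still satisfies $[C]$, applying the implication (a) $\Rightarrow$ (d) of Corollary \ref{c:mm1} to that triple, and finishing with $(\vY_\mathfrak{N}[T])_\mathfrak{N}\sq(\vY_\mathfrak{N})_\mathfrak{N}=\vY_\mathfrak{N}$. You bypass that machinery: since $Q_x$ and $U_x$ are both $T$-measurable and $\nu_x$-a.e.\ equal, every section of $E=\{Q\ne U\}$ lies in $T_{x,0}\sq\wh{T}_{x,0}$, so $E\in\mathfrak{N}$ with empty exceptional set; and since $\mathfrak{N}$ is a hereditary $\s$-ideal of $\mathcal{P}(X\times Y)$ contained in $\vY_\mathfrak{N}$, the decomposition $Q^{-1}(B)=(U^{-1}(B)\sm E)\cup(Q^{-1}(B)\cap E)$ gives $\vY_\mathfrak{N}$-measurability of $Q$ at once. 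What the paper's detour buys is reuse of the already-established Theorem \ref{t:mm1}; what yours buys is brevity and transparency. One small point to make explicit: for your claim that $\ups_\mathfrak{N}$ is complete with null ideal exactly $\mathfrak{N}$, Corollary \ref{c:comp} and Lemma \ref{P10} (ii) require not only $\vY\cap\mathfrak{N}\sq\vY_0$ (which you verify) but also the reverse inclusion $\vY_0\sq\mathfrak{N}$; this does follow from $[C]$, since $\ups(F)=\int_{N^c}\nu_x(F_x)\,d\mu(x)=0$ forces $\nu_x(F_x)=0$ for $\mu$-almost all $x$. In fact your argument needs neither completeness nor the exact identification of the null ideal, only that $E\in\mathfrak{N}$ and that $\mathfrak{N}$ is downward closed with $\mathfrak{N}\sq\vY_\mathfrak{N}$, so this is a matter of phrasing rather than a gap.
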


\begin{proof}
For (i) implies (ii), let $U$ be a $\vY_\mathfrak{N}$-measurable modification of $(Q_x)_{x\in{X}}$. Then $U$ is $\vY_\mathfrak{N}[T]$-measurable by
Corollary \ref{c:C.for.fcts:0}. The restriction of $\ups_\mathfrak{N}$ to $\vY_\mathfrak{N}[T]$ satisfies $[C]$.
(By Proposition \ref{p:VMLa:2} (i), $\ups_{\mathfrak{N}(\mu,\nu)}$ satisfies $[C]$, and hence so does its restriction to $\vY_{\mathfrak{N}(\mu,\nu)}[T] = \vY_{\mathfrak{N}}[T]$ (Lemma \ref{l:ups.T} (ii)) which is the same as the restriction of $\ups_\mathfrak{N}$ to that $\s$-algebra.) We may therefore apply the theorem to that restriction in the place of $\ups$ to conclude that
$Q$ is $(\vY_{\mathfrak{N}}[T])_\mathfrak{N}$-measurable. Since $(\vY_{\mathfrak{N}}[T])_\mathfrak{N} \sq (\vY_{\mathfrak{N}})_\mathfrak{N} = \vY_\mathfrak{N}$, (ii) follows.

(ii) $\Rightarrow$ (i) is immediate by taking $U=Q$.
\end{proof}

\section{Fubini type products}
\label{s:fubini}

Let $(X,\vS,\mu)$, $(Y,T,\nu)$, $(X\times Y,\vY,\ups)$ be fixed probability spaces, and let $p\in[0,\infty]$.
Also fix $\de\colon\LL^p(\mu)\to\R^X$ and $\eta\colon\LL^p(\nu)\to\R^Y$.
In this section we work with real scalars assuming throughout that $\K=\R$.

\begin{df} \label{A}
Define
$\de\odot\eta\colon \LL^p(\ups)\rightarrow\R^{X\times{Y}}$ by the formula $\de\odot\eta := \de^\bullet\circ \eta_\bullet$, i.e.,
\[
(\de\odot\eta)(f)(x,y):= \de([\eta_\bullet(f)]^y)(x)\;\;\mbox{if}\;[\eta_\bullet(f)]^y\in\LL^p(\mu)\;\mbox{and}\;(\gamma\odot\delta)(f)(x,y):=0\;\mbox{otherwise}
\]
for all $f\in\LL^p(\ups)$ and $(x,y)\in{X}\times{Y}$.
\end{df}

\begin{rem}
We can consider the composition $\eta\odot_t\de := \eta_\bullet\circ \de^\bullet$ instead of $\de^\bullet\circ \eta_\bullet$.
It is easy to derive from results about $\de\odot\eta$ corresponding results for the product $\eta\odot_t\de$. By \cite{mms15}, Theorem
3.2, when $\ups=\mu\,\wh{\otimes}\,\nu$ we have $\de\odot\eta\not=\eta\odot_t\de$ if the spaces
$(X,\vS,\mu)$ and $(Y,T,\nu)$ are not purely atomic, i.e. practically in all
interesting cases.
\end{rem}

\begin{df}[Cf.\ \cite{mms14}, Definition 3.2] \label{B1}
We say that $\eta$ \emph{generates $\LL^p(\mu)$-sections}, if $[\eta_\bullet(f)]^y\in\LL^p(\mu)$ for all $f\in\LL^p(\ups)$ and $y\in Y$.
If $\eta$ is a primitive lifting for $\LL^p(\nu)$ which is a 2-marginal with respect to $\LL^p(\ups)$ and generates $\LL^p(\mu)$-sections, then we say $\eta$ is $\mu$-\emph{smooth}.
\end{df}

The next result corresponds to \cite{mms14}, Lemma 3.1, concerning primitive liftings for sets, and has a similar proof, which we enclose for completeness.

\begin{lem}\label{00}
Assume that $[C]$ holds, $\mu$ is complete, $(\vS_0\,\wh{\times}\,Y)[T]\sq \vY$, and $\eta$ is a primitive lifting for $\LL^p(\nu)$ which is a $2$-marginal with respect to $\LL^p(\ups)$. Then $\eta$ is $\mu$-smooth if and only if
there exists a map $\varphi: \LL^p(\ups)\rightarrow\LL^p(\ups)$ satisfying $(l1)$, such that for all $f\in\LL^p(\ups)$, there exists an $N_f\in\vS_0$ such that for all $x\notin N_f$, $\eta([\varphi(f)]_{x})=[\varphi(f)]_{x}$, and $[\varphi(f)]^y\in\LL^p(\mu)$ for every $f\in\LL^p(\ups)$ and $y\in Y$. In this statement we can also take each $N_f$ to be empty.
\end{lem}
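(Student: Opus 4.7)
The plan is to establish both directions directly, producing $\varphi := \eta_\bullet\restriction \LL^p(\ups)$ in the forward direction and exploiting $[C]$ together with completeness of $\mu$ in the reverse direction.

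For the direction ($\Rightarrow$), suppose $\eta$ is $\mu$-smooth and set $\varphi(f) := \eta_\bullet(f)$. The $2$-marginal hypothesis gives $\varphi(f)\in\LL^p(\ups)$, and property $(l1)$ for $\varphi$ follows from Lemma \ref{VM19} (i) applied to the constant family with every $\eta_x=\eta$. The section condition $[\varphi(f)]^y\in\LL^p(\mu)$ is exactly the definition of $\mu$-smoothness. For the invariance property, I would argue pointwise: if $f_x\in\LL^p(\nu)$ then $[\varphi(f)]_x=\eta(f_x)$, so combining $\eta(f_x)=_\nu f_x$ with $(l2)$ for $\eta$ gives $\eta([\varphi(f)]_x)=\eta(\eta(f_x))=\eta(f_x)=[\varphi(f)]_x$; and if $f_x\notin\LL^p(\nu)$ then $[\varphi(f)]_x=0$ by definition of $\eta_\bullet$, and $\eta$ fixes the constant function $0$ since it is a primitive lifting. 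Hence invariance holds for every $x\in X$, and we may take $N_f=\e$, which handles the final sentence of the statement.

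For the direction ($\Leftarrow$), suppose $\varphi$ with the stated properties exists. To show $\eta$ generates $\LL^p(\mu)$-sections, fix $f\in\LL^p(\ups)$ and $y\in Y$. Using $\varphi(f)=_\ups f$ (from $(l1)$) and $[C]$, I would enlarge $N_f$ to some $N\in\vS_0$ such that both $f_x$ and $[\varphi(f)]_x$ lie in $\LL^p(\nu)$ with $f_x =_\nu [\varphi(f)]_x$ whenever $x\notin N$. Applying $(l2)$ for $\eta$ and the invariance property of $\varphi$ then yields $[\eta_\bullet(f)]_x = \eta(f_x) = \eta([\varphi(f)]_x) = [\varphi(f)]_x$ on $N^c$, so $[\eta_\bullet(f)]^y =_\mu [\varphi(f)]^y$. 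Completeness of $\mu$ finally transfers membership in $\LL^p(\mu)$ from $[\varphi(f)]^y$ to $[\eta_\bullet(f)]^y$, establishing $\mu$-smoothness.

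The only delicate point is verifying that $N_f=\e$ may be chosen in the forward direction: this hinges on $\eta(0)=0$, which follows from $\eta$ being a primitive lifting and hence fixing constants, combined with the convention that $[\eta_\bullet(f)]_x=0$ when $f_x\notin\LL^p(\nu)$. The standing hypothesis $(\vS_0\,\wh{\times}\,Y)[T]\sq\vY$ does not enter the core computation but is what keeps the $2$-marginal property of $\eta$ well-behaved in this setting (cf.\ Proposition \ref{p:slc30d}).
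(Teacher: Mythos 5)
Your proposal is correct and follows essentially the same route as the paper: the central step in both is the chain of equalities $[\eta_\bullet(f)]^y(x)=\eta(f_x)(y)=\eta([\varphi(f)]_x)(y)=[\varphi(f)]_x(y)=[\varphi(f)]^y(x)$ valid off a $\mu$-null set, followed by completeness of $\mu$ to transfer membership in $\LL^p(\mu)$ between $[\eta_\bullet(f)]^y$ and $[\varphi(f)]^y$. The only cosmetic difference is that you construct $\varphi=\eta_\bullet$ explicitly (with $N_f=\e$ justified via $\eta(0)=0$ and idempotence from $(l1)$, $(l2)$), whereas the paper obtains such a $\varphi$ by citing Corollary \ref{VM19b}, whose proof amounts to the same choice via Lemma \ref{VM19} (ii).
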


\begin{proof}
Consider a map $\varphi\colon \LL^p(\ups)\to\LL^p(\ups)$ satisfying $(l1)$ and such that for all $f\in\LL^p(\ups)$ there exists a set $N_f\in\vS_0$ with $\eta([\varphi(f)]_x) = [\varphi(f)]_x$ for all $x\notin{N}_f$. There is such a map by Corollary \ref{VM19b} (and we can ask that $N_f=\e$ for all $f$). By  $[C]$ and $(l1)$, there is an $M_f\in \vS_0$ such that $f_x,[\varphi(f)]_x\in\LL^p(\nu_x)$ and $f_x =_{\nu} [\varphi(f)]_x$ for all $x\notin M_f$. For $y\in{Y}$ and $x\notin N_f\cup M_f$ we get
\[
[\eta_\bullet(f)]^y(x) = \eta(f_x)(y) = \eta([\varphi(f)]_x)(y) = [\varphi(f)]_x(y) = [\varphi(f)]^y(x).
\]
Since $\mu$ is complete, it follows that $\{[\eta_\bullet(f)]^y \not= [\varphi(f)]^y\}\in\vS_0$. Thus, $[\varphi(f)]^y\in\LL^p(\mu)$ if and only if $[\eta_\bullet(f)]^y\in\LL^p(\mu)$.
\end{proof}

\begin{lem}\label{01}
The map $\de\odot\eta:\LL^p(\ups)\to\R^{X\times{Y}}$
has the following properties. In {\rm(iii)}, {\rm(v)}, {\rm(vi)}, assume that $[C]$ holds. In {\rm(vi)}, {\rm(vii)}, assume also that $\vS_0\,\dot{\times}\,Y\sq\vY$ holds.
\begin{enumerate}
\item
$\de([\eta_\bullet(f)]^y)=[(\de\odot\eta)(f)]^y$ for all $f\in\LL^p(\ups)$ and all $y\in{Y}$ with $[\eta_\bullet(f)]^y\in\LL^p(\mu)$.

\item
$[(\de\odot\eta)(f)]^y=\de([(\de\odot\eta)(f)]^y)$ for all $f\in\LL^p(\ups)$ and all $y\in{Y}$.

\item
If $\mu$ is complete, $\de\odot\eta$ satisfies $(l2)$.

\item
If $\de$ and $\eta$ fix the constant function with value $k$ on $X$ and $Y$, respectively, then $\de\odot\eta$ fixes the constant function with value $k$ on $X\times Y$.

\item
If $\de$ and $\eta$ are homogeneous then $\de\odot\eta\in\de\otimes\eta$.

\item
If $\de\in V^p_\R(\mu)$, $\eta\in V^p_\R(\nu)$, and $\eta$ generates $\LL^p(\mu)$-sections, then $\de\odot\eta$ is linear: $(\de\odot\eta)(af+g)=a(\de\odot\eta)(f)+(\de\odot\eta)(g)$
for all $a\in\R$ and $f,g\in\LL^p( \ups)$.

\item
If $\de\in\Lambda^\infty(\mu)$, $\eta\in\Lambda^\infty(\nu)$, and $\eta$ generates $\LL^\infty(\mu)$-sections, then $\de\odot\eta$ is multiplicative: $(\de\odot\eta)(fg)=(\de\odot\eta)(f)(\de\odot\eta)(g)$
for all $f,g\in\LL^\infty(\ups)$.

\item
If $\de$ and $\eta$ are positive then so is $\de\odot\eta$.
\end{enumerate}
\end{lem}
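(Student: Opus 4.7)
The plan is to treat the eight clauses in sequence, using clause (i) as the main computational tool for the horizontal sections of $(\de\odot\eta)(f)$. Clause (i) is read off directly from Definition \ref{A}: when $[\eta_\bullet(f)]^y\in\LL^p(\mu)$, the defining formula gives $(\de\odot\eta)(f)(x,y)=\de([\eta_\bullet(f)]^y)(x)$, which is precisely $[(\de\odot\eta)(f)]^y(x)$. For (ii), I would split into two cases: if $[\eta_\bullet(f)]^y\in\LL^p(\mu)$, use (i) to rewrite the section as $\de([\eta_\bullet(f)]^y)$, then use $(l1)$ for $\de$ (which puts $\de([\eta_\bullet(f)]^y)$ into $\LL^p(\mu)$ and makes it $=_\mu [\eta_\bullet(f)]^y$) together with $(l2)$ for $\de$ to get the idempotence $\de\circ\de =\de$ on this input; if $[\eta_\bullet(f)]^y\notin\LL^p(\mu)$, then $[(\de\odot\eta)(f)]^y\equiv 0$ and $\de(0)=0$ since $\de$ fixes constants. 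Clause (iv) is a one-line computation: $\eta(k)=k$ propagates through the definition to $\eta_\bullet(k)\equiv k$, hence $[\eta_\bullet(k)]^y\equiv k\in\LL^p(\mu)$, and $\de(k)=k$ finishes. Clause (viii) follows because $f\geq 0$ makes every vertical section of $\eta_\bullet(f)$ either $\eta(f_x)\geq 0$ or the default value $0$, so horizontal sections are $\geq 0$ and positivity of $\de$ on $\LL^p(\mu)$ (with the default value $0$ on the ``otherwise'' branch) delivers the conclusion.

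Clause (v) hinges on the explicit form of tensor-product sections: $(f\otimes g)_x = f(x)\cdot g$ is a scalar multiple of $g\in\LL^p(\nu)$, so homogeneity of $\eta$ yields $\eta_\bullet(f\otimes g)(x,y) = f(x)\eta(g)(y)$; taking horizontal section at $y$ gives $\eta(g)(y)\cdot f\in\LL^p(\mu)$, and homogeneity of $\de$ produces $(\de\odot\eta)(f\otimes g)(x,y) = \eta(g)(y)\de(f)(x) = [\de(f)\otimes\eta(g)](x,y)$. Clause (iii) uses $[C]$ and completeness of $\mu$: for $f_1=_\ups f_2$, Proposition \ref{p:C.for.fcts} supplies $N\in\vS_0$ off of which $[f_i]_x\in\LL^p(\nu)$ and $[f_1]_x =_\nu [f_2]_x$, so $(l2)$ for $\eta$ forces $\eta_\bullet(f_1)=\eta_\bullet(f_2)$ on $N^c\times Y$; the horizontal sections therefore agree on $N^c$, and completeness of $\mu$ ensures the disagreement set is $\mu$-null (and that the two sections lie in $\LL^p(\mu)$ simultaneously). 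Then $(l2)$ for $\de$ (when both sections lie in $\LL^p(\mu)$) or the default value $0$ (otherwise) closes the argument. For (vi), the hypothesis that $\eta$ generates $\LL^p(\mu)$-sections eliminates the ``otherwise'' branch of the definition, so $(\de\odot\eta)(h)(x,y)=\de([\eta_\bullet(h)]^y)(x)$ for all $h$ and $y$; $[C]$ combined with linearity of $\eta$ gives $\eta_\bullet(af+g) = a\eta_\bullet(f)+\eta_\bullet(g)$ off a set $N\times Y$ with $N\in\vS_0$, and $\vS_0\,\dot\times\,Y\sq\vY$ ensures the discrepancy is absorbed when passing to horizontal sections in $\LL^p(\mu)$; linearity together with $(l2)$ of $\de$ then completes the proof.

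The main obstacle is clause (vii), multiplicativity on $\LL^\infty(\ups)$, because $[C]$ is \emph{not} among the hypotheses---only $\vS_0\,\dot\times\,Y\sq\vY$ together with $\eta$ generating $\LL^\infty(\mu)$-sections. The plan parallels (vi) with multiplicativity replacing linearity: on the set of $x$ where both $f_x, g_x\in\LL^\infty(\nu)$, we have $(fg)_x = f_x g_x\in\LL^\infty(\nu)$ and multiplicativity of $\eta\in\Lambda^\infty(\nu)$ gives $\eta((fg)_x)=\eta(f_x)\eta(g_x)$, hence the pointwise identity $\eta_\bullet(fg)(x,y)=\eta_\bullet(f)(x,y)\eta_\bullet(g)(x,y)$ on that set. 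The hard part will be arguing that the residual set of $x$ (where some section falls outside $\LL^\infty(\nu)$) becomes $\mu$-negligible after passing to horizontal sections: the hypothesis that $\eta$ generates $\LL^\infty(\mu)$-sections forces each of $[\eta_\bullet(fg)]^y$, $[\eta_\bullet(f)]^y$, $[\eta_\bullet(g)]^y$ to be $\vS$-measurable and essentially bounded, so their pointwise discrepancy is a $\vS$-measurable set, and one exploits $\vS_0\,\dot\times\,Y\sq\vY$ to place this set inside $\vS_0$. Once the $\mu$-a.e.\ equality $[\eta_\bullet(fg)]^y =_\mu [\eta_\bullet(f)]^y\cdot [\eta_\bullet(g)]^y$ is in hand, multiplicativity of $\de\in\Lambda^\infty(\mu)$ together with $(l2)$ of $\de$ finishes the argument.
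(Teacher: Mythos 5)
Clauses (i), (ii), (iv), (v) and (viii) are handled exactly as in the paper, and your (iii) simply inlines the content of Lemma \ref{l:VM10a.2} rather than citing it; all of that is fine. In (vi) you take a mildly different route: instead of first replacing $f$ and $g$ by $\ups$-equivalent functions all of whose vertical sections lie in $\LL^p(\nu)$ (which is what the paper does via Corollary \ref{c:sec}, using $(l2)$ for $\de\odot\eta$ from clause (iii)), you keep $f$ and $g$ and observe that for each $y$ the horizontal sections $[\eta_\bullet(af+g)]^y$ and $a[\eta_\bullet(f)]^y+[\eta_\bullet(g)]^y$ are both in $\LL^p(\mu)$ (because $\eta$ generates $\LL^p(\mu)$-sections) and agree off $N\in\vS_0$, so their disagreement set is a measurable subset of $N$, hence null, and $(l2)$ together with linearity of $\de$ applies. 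This is correct, and it has the small advantage of not routing through clause (iii), which carries a completeness hypothesis on $\mu$.

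The genuine gap is in (vii). You correctly notice that $[C]$ is absent from the stated hypotheses there, but your proposed substitute does not work: the inclusion $\vS_0\,\dot{\times}\,Y\sq\vY$ says nothing about the vertical sections of $f$, $g$ or $fg$, so it cannot force the set $R=\{x\in X: f_x\notin\LL^\infty(\nu)\ \text{or}\ g_x\notin\LL^\infty(\nu)\}$ to be $\mu$-negligible, and on $R$ the functions $\eta_\bullet(fg)$ and $\eta_\bullet(f)\eta_\bullet(g)$ can genuinely differ (for instance, if $f_x$ is nonmeasurable while $f_xg_x$ is bounded measurable, then $\eta_\bullet(f)(x,\cdot)\,\eta_\bullet(g)(x,\cdot)=0$ whereas $\eta_\bullet(fg)(x,\cdot)=\eta(f_xg_x)$ need not vanish). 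Knowing that the three horizontal sections are $\vS$-measurable makes the discrepancy set measurable, but measurability does not make it null; some Fubini-type hypothesis is needed to control $R$. In fact the paper's own proof of (vii) opens with ``As in (vi), we may assume that $f_x,g_x\in\LL^\infty(\nu)$ for all $x\in X$,'' which invokes Proposition \ref{p:C.for.fcts} (iv) and Corollary \ref{c:sec} and hence $[C]$; the intended reading is therefore that $[C]$ is assumed in (vii) as well, the preamble of the lemma apparently omitting it by oversight. Once $[C]$ is granted, your argument for (vii) goes through exactly as your (vi), with Lemma \ref{VM10a} (iv)(c) supplying multiplicativity of $\eta_\bullet$ and multiplicativity of $\de$ finishing. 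You should either add $[C]$ to the hypotheses of (vii) or acknowledge that your route cannot close without it.
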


Recall from Definition \ref{E5} that positivity of a primitive lifting means that it  takes nonnegative values on nonnegative functions.

\begin{proof}
(i) follows by Definition \ref{A}.

(ii) If $f\in\LL^p(\ups)$  and $y\in{Y}$ then
$[(\de\odot\eta)(f)]^y$ equals either $\de\left([\eta_\bullet(f)]^y\right)$ if $[\eta_\bullet(f)]^y\in\LL^p(\mu)$, or
$0$ otherwise. Either way we get $[(\de\odot\eta)(f)]^y=
\de\left([(\de\odot\eta)(f)]^y\right)$.

(iii) By Lemma \ref{l:VM10a.2}, from $f=_{\ups}g$ and completeness of $\mu$ we get that for all $y$, $[\eta_\bullet(f)]^y =_\mu [\eta_\bullet(g)]^y$, i.e., $\mu([\eta_\bullet(f)]^y \,\triangle\, [\eta_\bullet(g)]^y)=0$, but the sets $[\eta_\bullet(f)]^y$, $[\eta_\bullet(g)]^y$ might not be measurable. If these sets are measurable, it follows that $[(\de\odot\eta)(f)]^y = \de([\eta_\bullet(f)]^y) = \de([\eta_\bullet(g)]^y) = [(\de\odot\eta)(g)]^y$. Otherwise we have $[(\de\odot\eta)(f)]^y = 0 = [(\de\odot\eta)(g)]^y$.

(iv) Obvious from the definitions.

(v) If $g\in\LL^p(\mu)$ and $h\in\LL^p(\nu)$, then
$[\eta_\bullet(g\otimes h)]^y = [g\otimes\eta(h)]^y = g\eta(h)(y) \in \LL^p(\mu)$ by Lemma \ref{VM10a} (ii), so using (i) we get
$[(\de\odot\eta)(g\otimes{h})]^y = \de(\left[\eta_\bullet(g\otimes{h})\right]^y) = \eta(h)(y)\de(g) = [\de(g)\otimes\eta(h)]^y$.

(vi) Let $a\in\R$.
Since $\de\odot\eta$ satisfies $(l2)$ by (iii), it follows from Proposition \ref{p:C.for.fcts} (iv) and Corollary \ref{c:sec} (here we use $\vS_0\,\dot{\times}\,Y\sq\vY$) that we may assume that $f_x,g_x\in\LL^p(\nu)$ for all $x\in X$.
Then from Lemma \ref{VM10a} (iv) it follows that $\eta_\bullet(af+g) = a\eta_\bullet(f) + \eta_\bullet(g)$. Since $\eta$ generates $\LL^p(\mu)$-sections, we then get for any $y\in Y$ that $[(\de\odot\eta)(af+g)]^y = \de([\eta_\bullet(af+g)]^y) = a\de([\eta_\bullet(f)]^y) + \de([\eta_\bullet(g)]^y) = a[(\de\odot\eta)(f)]^y + [(\de\odot\eta)(g)]^y$.

(vii) As in (vi), we may assume that $f_x,g_x\in \LL^\infty(\nu)$ for all $x\in X$. For all $f\in\LL^\infty(\ups)$ and all $y\in Y$ applying Lemma \ref{VM10a} (iv)(c) and the assumption that $\eta$ generates $\LL^\infty(\mu)$-sections, we get
\begin{align*}
[(\de\odot\eta)(fg)]^y
& = [\de([\eta_\bullet(fg)]^y) = \de([\eta_\bullet(f)]^y[\eta_\bullet(g)]^y) \\
& = \de([\eta_\bullet(f)]^y)\de([\eta_\bullet(g)]^y) = [(\de\odot\eta)(f)]^y[(\de\odot\eta)(g)]^y
\end{align*}

(viii) Let $f\in\LL^p(\ups)$ with $f\geq{0}$ and let $y\in{Y}$. Since $\eta$ is positive, Lemma \ref{VM10a} (iii) gives $\eta_\bullet(f)\geq 0$. Then for $y\in{Y}$, $[\eta_\bullet(f)]^y\geq 0$, so if $[\eta_\bullet(f)]^y\in\LL^p(\mu)$ we get $[(\de\odot\eta)(f)]^y=\de([\eta_\bullet(f)]^y)\geq{0}$ since $\de$ is positive. If $[\eta_\bullet(f)]^y\notin\LL^p(\mu)$ then $[(\de\odot\eta)(f)]^y = 0$.
\end{proof}

\begin{rem}\label{C}
\rm{There is now an obvious question, whether when $\de$ and $\eta$ are primitive liftings for $\LL^p(\mu)$ and $\LL^p(\nu)$, respectively, the map $\de\odot\eta$ is a primitive lifting for $\LL^p(\ups)$. As it is shown in \cite{mms14}, Remark 5.1, when $\ups = \mu\,\wh{\otimes}\,\nu$, if $\de\in\Lambda^\infty(\mu)$ and $\eta\in\Lambda^\infty(\nu)$ it may happen that
$\de\odot\eta$ does not satisfy (l1).}
\end{rem}

\begin{df}[Cf.\ \cite{mms14}, Definition 4.2]\label{D}
The pair $(\de,\eta)$ is \emph{$\LL^p(\ups)$-preserving} if $f\in\LL^p(\ups)$ implies $(\de\odot\eta)(f)\in\LL^p(\ups)$.
\end{df}

\begin{prop}\label{10a}
If $\de$ is a $1$-marginal and $\eta$ is a $2$-marginal with respect to $\LL^p(\ups)$, then the pair $(\de,\eta)$ is $\LL^p(\ups)$-preserving.
\end{prop}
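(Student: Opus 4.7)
The plan is to observe that Proposition 10a is essentially a composition argument, since by construction $\de\odot\eta = \de^\bullet \circ \eta_\bullet$ in the notation of Definition \ref{VMD}. First I would unfold the definition: comparing Definition \ref{A} with clauses (b) and (c) of Definition \ref{VMD}, the formula
\[
(\de\odot\eta)(f)(x,y) = \de\bigl([\eta_\bullet(f)]^y\bigr)(x)\ \text{if}\ [\eta_\bullet(f)]^y\in\LL^p(\mu),\ \ =0\ \text{otherwise}
\]
is precisely $\de^\bullet(\eta_\bullet(f))(x,y)$, where $\de^\bullet$ acts on horizontal sections and $\eta_\bullet$ on vertical sections. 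So the identity $\de\odot\eta = \de^\bullet\circ\eta_\bullet$ is immediate.

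Next I would use the marginal hypotheses in sequence. Fix any $f\in\LL^p(\ups)$. Since $\eta$ is a $2$-marginal with respect to $\LL^p(\ups)$, the function $g:=\eta_\bullet(f)$ belongs to $\LL^p(\ups)$. Then, since $\de$ is a $1$-marginal with respect to $\LL^p(\ups)$, we have $\de^\bullet(g)\in\LL^p(\ups)$. Combining these yields $(\de\odot\eta)(f) = \de^\bullet(\eta_\bullet(f))\in\LL^p(\ups)$, which is exactly the definition of $(\de,\eta)$ being $\LL^p(\ups)$-preserving (Definition \ref{D}).

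There is no real obstacle here; the proof is a one-line composition, once one identifies $\de\odot\eta$ with $\de^\bullet\circ\eta_\bullet$. The only thing worth double-checking is that the ``otherwise'' clauses in the two definitions match — i.e., that assigning $0$ in both cases (when the appropriate section fails to lie in $\LL^p(\mu)$, resp.\ $\LL^p(\nu)$) produces the same function when composed, which it does because the value of $\de^\bullet$ at the zero function (viewed as a horizontal section) evaluated on any $y$ is what it is, and this has no effect on the preservation argument since we only need that the composed function lies in $\LL^p(\ups)$.
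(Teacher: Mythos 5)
Your proof is correct and is essentially identical to the paper's own one-line argument: apply the $2$-marginality of $\eta$ to get $\eta_\bullet(f)\in\LL^p(\ups)$, then the $1$-marginality of $\de$ to conclude $\de^\bullet(\eta_\bullet(f))\in\LL^p(\ups)$. The extra remarks about the ``otherwise'' clauses are harmless but not needed, since Definition \ref{A} already declares $\de\odot\eta=\de^\bullet\circ\eta_\bullet$.
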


\begin{proof} Let $f\in\LL^p(\ups)$ be arbitrary. Then $\eta_\bullet(f)\in\LL^p(\ups)$ by the marginality of $\eta$ and therefore
$(\de\odot\eta)(f) = \de^\bullet(\eta_\bullet(f))\in\LL^p(\ups)$ by the marginality of $\de$.
\end{proof}

\begin{df}[Cf.\ \cite{mms14}, Definition 3.2] \label{Da}
$\eta$ is a \emph{weak $2$-marginal} with respect to $\LL^p(\ups)$ if for each $f\in\LL^p(\ups)$ there is an $M_f\in T_0$ such that $[\eta_\bullet(f)]^y, f^y\in \LL^p(\mu)$ and $[\eta_\bullet(f)]^y=_{\mu}f^y$ for all $y\notin M_f$.
\end{df}

\begin{rem}\label{r:Da}
When $[C]$ and $[\ov{C}]$ hold, and $(\vS_0\,\wh{\times}\, Y)[T]\sq\vY$, a $2$-marginal with respect to $\LL^p(\ups)$ is also a weak $2$-marginal with respect to $\LL^p(\ups)$.

\begin{proof}
By Corollary \ref{VM19b}, $\eta_\bullet$ satisfies $(l1)$ on $\LL^p(\ups)$.
Then for each $f\in\LL^p(\ups)$ we have $\eta_\bullet(f)\in\LL^p(\ups)$ and $\eta_\bullet(f) =_\ups f$. Then by $[\ov{C}]$, there is an $M_f\in T_0$ such that $[\eta_\bullet(f)]^y, f^y\in \LL^p(\mu)$ and $[\eta_\bullet(f)]^y=_{\mu}f^y$ for all $y\notin M_f$.
\end{proof}
\end{rem}

Under some mild assumptions, $\de\odot\eta$ satisfies $(l1)$ if and only if $(\de,\eta)$ is $\LL^p(\ups)$-preserving and $\eta$ is a weak $2$-marginal with respect to $\LL^p(\ups)$. The following proposition is modelled on Proposition 4.3 of \cite{mms14} which establishes that equivalence in the context of liftings for sets with $\ups=\mu\,\wh{\otimes}\,\nu$. As pointed out in the footnote to \cite{bms4}, Remark 5.7 (b), the proof given there has a gap. We take the opportunity in Corollary \ref{c:E} to fill that gap. The new proof also eliminates completeness assumptions on the measures.

\begin{prop}\label{E}
Assume $[\ov{C}]$ holds, $\de$ is a primitive lifting for $\LL^p(\mu)$, $\eta\colon \LL^p(\nu)\to \R^Y$.
\begin{enumerate}
\item
If $(\de,\eta)$ is $\LL^p(\ups)$-preserving and $\eta$ is a weak $2$-marginal with respect to $\LL^p(\ups)$ then $\de\odot\eta$ satisfies $(l1)$.

\item
Assume that $\vS\,\dot{\times}\,Y\sq \vY$. Suppose that either {\rm(a)} $X$ is not an atom, or {\rm(b)}
$X$ is an atom, $[C]$ holds, $X\,\dot{\times}\,T\sq \vY$, and $\eta$ satisfies $(l2)$.
If $\de\odot\eta$ satisfies $(l1)$ then $(\de,\eta)$ is $\LL^p(\ups)$-preserving and $\eta$ is a weak $2$-marginal with respect to $\LL^p(\ups)$.
\end{enumerate}
\end{prop}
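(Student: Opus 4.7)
\textbf{Proof plan for Proposition \ref{E}.}

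Part (i) is a direct verification. Fix $f\in\LL^p(\ups)$. The weak $2$-marginal hypothesis supplies $M_f\in T_0$ so that for every $y\notin M_f$, both $[\eta_\bullet(f)]^y$ and $f^y$ lie in $\LL^p(\mu)$ and satisfy $[\eta_\bullet(f)]^y =_\mu f^y$. Since $\de$ is a primitive lifting for $\LL^p(\mu)$, for such $y$ we have $[(\de\odot\eta)(f)]^y = \de([\eta_\bullet(f)]^y) =_\mu [\eta_\bullet(f)]^y =_\mu f^y$, where the first equality uses Definition \ref{A} and $[\eta_\bullet(f)]^y\in\LL^p(\mu)$, and the second uses $(l1)$ for $\de$. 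The $\LL^p(\ups)$-preserving assumption gives $(\de\odot\eta)(f)\in\LL^p(\ups)$, and $[\ov{C}]$ applied to the $\vY$-measurable difference $(\de\odot\eta)(f)-f$ then yields $(\de\odot\eta)(f) =_\ups f$.

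For part (ii), the $\LL^p(\ups)$-preserving property is built into the statement that $\de\odot\eta$ satisfies $(l1)$. For the weak $2$-marginal, fix $f\in\LL^p(\ups)$; from $(l1)$ and $[\ov{C}]$ we obtain an $M_f\in T_0$ such that for $y\notin M_f$ both $[(\de\odot\eta)(f)]^y$ and $f^y$ lie in $\LL^p(\mu)$ with $[(\de\odot\eta)(f)]^y =_\mu f^y$. For each such $y$ one of two cases occurs: (A) $[\eta_\bullet(f)]^y\in\LL^p(\mu)$, in which case $\de([\eta_\bullet(f)]^y) =_\mu [\eta_\bullet(f)]^y$ forces $[\eta_\bullet(f)]^y =_\mu f^y$; or (B) $[\eta_\bullet(f)]^y\notin\LL^p(\mu)$, which forces $[(\de\odot\eta)(f)]^y=0$ and hence $f^y =_\mu 0$. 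The task is to confine the $y$'s in case (B) to a $T_0$ set after enlarging $M_f$.

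For subcase (b) where $X$ is an atom, Example \ref{ex:atom.factor}(ii) lets us write $f =_\ups w\circ\pi_Y$ for a unique $w\in\LL^p(\nu)$ with $f^y =_\mu w(y)$ for $y$ outside some $N_f\in T_0$, and $f_x =_\nu w$ for $x$ outside some $N\in\vS_0$. The key reduction is to test $(l1)$ of $\de\odot\eta$ on $1\otimes h$ for $h\in\LL^p(\nu)$: one computes $(\de\odot\eta)(1\otimes h)(x,y) = \de(\eta(h)(y)\cdot 1_X)(x) = \eta(h)(y)$, so $(l1)$ and $[\ov{C}]$ yield $\eta(h) =_\nu h$. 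Applying this to $w$, and using $(l2)$ for $\eta$ to pass from $f_x =_\nu w$ to $\eta(f_x) = \eta(w)$, we obtain $[\eta_\bullet(f)]^y(x) = \eta(w)(y) = w(y)$ for $\mu$-almost every $x$ and $\nu$-almost every $y$, giving the weak $2$-marginal equality on a $\mu$-conull set of $x$'s and a $\nu$-conull set of $y$'s.

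Subcase (a) with $X$ not an atom is the main obstacle. The strategy is again to first identify $\eta$ with the identity modulo $\nu$-null sets by testing $(l1)$ on structured functions. Pick $A\in\vS$ with $0<\mu(A)<1$ (possible since $X$ is not an atom) and $h\in\LL^p(\nu)$; using $\vS\,\dot{\times}\,Y\sq\vY$, the function $\chi_A\otimes h$ lies in $\LL^p(\ups)$. A direct calculation, exploiting linearity of $\de$, the identity $\de(\chi_{A^c}) = 1-\de(\chi_A)$, and $\de(\chi_A)=_\mu\chi_A$, reduces $(l1)$ on $\chi_A\otimes h$ to the pointwise identity
\[
\eta(0)(y) + [\eta(h)(y)-\eta(0)(y)]\chi_A(x) =_\mu \chi_A(x)h(y)\quad\text{for $\nu$-almost every $y$.}
\]
Evaluating separately on $A$ and on $A^c$, each of positive $\mu$-measure, yields $\eta(h) =_\nu h$ for every $h\in\LL^p(\nu)$ together with $\eta(0) =_\nu 0$. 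The hard part will be passing from this identification of $\eta$ with the identity to genuine weak $2$-marginality for an arbitrary $f\in\LL^p(\ups)$: the set $E:=\{(x,y)\in X_0\times Y: \eta(f_x)(y)\ne f_x(y)\}$, where $X_0\in\vS$ is a $\mu$-conull set on which $f_x\in\LL^p(\nu)$, has all $x$-sections in $T_0$ so belongs to $\vS_0\ltimes T_0 = \mathfrak{N}(\mu,\nu)$, but need not be in $\vY$ itself. The plan is to extend $\ups$ by this $\s$-ideal via Lemma \ref{P10} and Proposition \ref{p:VMLa:2} to obtain a space on which $\eta_\bullet(f) =_{\ups_{\mathfrak{N}}} f$, and then use the preserved $[\ov{C}]$ to conclude that $[\eta_\bullet(f)]^y =_\mu f^y$ (in particular belongs to $\LL^p(\mu)$) for $y$ outside a $T_0$ set, thus ruling out case (B) and establishing the weak $2$-marginal property.
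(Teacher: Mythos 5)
Your part (i) is correct and is essentially the paper's argument. The difficulties are all in part (ii), and they are concentrated exactly where you flag "the hard part": confining $K:=\{y:[\eta_\bullet(f)]^y\notin\LL^p(\mu)\}$ to a $\nu$-null set. In subcase (a) your plan is to prove $\eta(h)=_\nu h$ for each $h\in\LL^p(\nu)$, conclude that $\eta_\bullet(f)$ and $f$ agree up to a set of $\mathfrak{N}(\mu,\nu)$, extend $\ups$ by that $\s$-ideal, and then ``use the preserved $[\ov{C}]$.'' But $[\ov{C}]$ is \emph{not} preserved under adjoining $\mathfrak{N}$: Example \ref{ex:[C]} of the paper exhibits, for Lebesgue measure on the square, a set in $\mathfrak{N}$ (the graph of a function) all of whose horizontal sections are Bernstein sets. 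Knowing that the vertical sections of $\eta_\bullet(f)-f$ are $\nu$-null for almost every $x$ gives no control at all over the horizontal sections $[\eta_\bullet(f)]^y$, which are precisely what must be shown measurable; so the final step of your subcase (a) argument fails. (A smaller slip: $\de$ is only a primitive lifting, not linear, so ``linearity of $\de$'' and $\de(\chi_{A^c})=1-\de(\chi_A)$ are not available; fortunately your computation of $[(\de\odot\eta)(\chi_A\otimes h)]^y$ only needs $(l1)$ applied to the simple function $\eta(h)(y)\chi_A+\eta(0)(y)\chi_{A^c}$.) A second gap affects both subcases: the weak $2$-marginal property demands $[\eta_\bullet(f)]^y\in\LL^p(\mu)$, i.e.\ genuine $\vS$-measurability, and $\mu$ is not assumed complete in (ii). In subcase (b) you only establish that $[\eta_\bullet(f)]^y$ is constant off a $\mu$-null set $P$; its restriction to $P$ can be arbitrary, so membership in $\LL^p(\mu)$ does not follow.

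The paper closes both gaps with a single device that your proposal is missing. Given $N\sq X$ (either a $\mu$-null set or one piece of a partition $X=A_0\cup A_1$ into sets of positive measure), replace $f$ by the function $g$ that agrees with $f$ on $N\times Y$ and is identically $1$ on $N^c\times Y$; this $g$ lies in $\LL^p(\ups)$ because $\vS\,\dot{\times}\,Y\sq\vY$. Since $g^y$ equals $1$ on the positive-measure set $N^c$, $g^y$ is never $=_\mu 0$, so $(l1)$ for $\de\odot\eta$ together with $[\ov{C}]$ forces $[(\de\odot\eta)(g)]^y\neq 0$ for $\nu$-almost every $y$; by the ``otherwise $0$'' clause of Definition \ref{A} this means $[\eta_\bullet(g)]^y\in\LL^p(\mu)$, hence $[\eta_\bullet(f)]^y\restr N=[\eta_\bullet(g)]^y\restr N$ is measurable (and $p$-integrable) on $N$, for $\nu$-almost every $y$. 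Taking $N\in\vS_0$ supplies the missing measurability on the exceptional set in subcase (b), and taking $N=A_0$ and $N=A_1$ in turn gives $\nu^*(K)=0$ in subcase (a) without ever identifying $\eta$ with the identity. Without an argument of this kind your proof of (ii) is incomplete.
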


\begin{proof} (i)
The argument in \cite{mms14} adapts easily to the present setting.
Assume $(\de,\eta)$ is $\LL^p(\ups)$-preserving and $\eta$ is a weak $2$-marginal with respect to $\LL^p(\ups)$. Write $\varphi:=\de\odot\eta$. Let $f\in\LL^p(\ups)$. Then $\varphi(f)\in\LL^p(\ups)$ and for some $M\in T_0$, $[\eta_\bullet(f)]^y,f^y\in \LL^p(\mu)$ and $[\eta_\bullet(f)]^y=_{\mu}f^y$ when $y\notin M$.
Let $F=\{\varphi(f)\not=f\}$. We have $F\in\vY$. When $y\notin M$,
$[\varphi(f)]^y=\de([\eta_\bullet(f)]^y)= \de(f^y)=_{\mu}f^y$, where we used that $\de$ is a primitive lifting for $\LL^p(\mu)$, giving $\mu(F^y) = \mu\{[\varphi(f)]^y\not=f^y\} = 0$.
It follows from $[\ov{C}]$ that $\ups(F) = 0$. Thus, $\varphi(f)=_{\ups}f$ and $(l1)$ holds.

(ii) Assume that $(l1)$ holds for $\varphi=\de\odot\eta$. Then in particular the pair $(\de,\eta)$ is $\LL^p(\ups)$-preserving.
To prove that $\eta$ is a weak $2$-marginal for $\LL^p(\ups)$, let $f\in\LL^p(\ups)$.

\begin{clm}\label{claim:f}
For each $N\in\vS_0$, $\nu^*(G_N)=0$, where $G_N=\{y\in Y: [\eta_\bullet(f)]^y\restr N\notin \LL^0(\mu\restr N)\}$.
\end{clm}

\begin{proof}
Let $N\in\vS_0$. Consider the function $g$ which agrees with $f$ on $N\times Y$ and is identically equal to $1$ on $N^c\times Y$. This function $g$ belongs to $\LL^p(\ups)$ since $\vS\,\dot{\times}\,Y\sq \vY$.
By $[\ov{C}]$ and $(l1)$ for $\varphi$, let $M\in T_0$ satisfy that $y\notin M$ implies $[\varphi(g)]^y,g^y\in\LL^p(\mu)$ and $[\varphi(g)]^y =_\mu g^y =_\mu 1$.

For $y\in G_N$, we have $[\eta_\bullet(g)]^y\restr N = [\eta_\bullet(f)]^y\restr N\notin \LL^0(\mu\restr N)$ implying that $[\eta_\bullet(g)]^y\notin \LL^p(\mu)$ and hence $[\varphi(g)]^y = 0$, so that $y\in M$. Thus $G_N\sq M$ and the claim follows.
\end{proof}

\begin{clm}\label{claim:c}
$\nu^*(K)=0$, where $K=\{y\in Y:[\eta_\bullet(f)]^y\notin \LL^p(\mu)\}$.
\end{clm}

\begin{proof}
We treat two cases, corresponding to assumptions (a) and (b).

\m

\n (a) $X$ is not an atom.

\m

This is similar to the proof of Claim \ref{claim:f}.
Write $X=A_0\cup A_1$ where $A_0$ and $A_1$ are disjoint sets of positive measure. Then $K=K_0\cup K_1$, where for $i=0,1$, $K_i = \{y\in K : [\eta_\bullet(f)]^y\restr A_i\notin \LL^p(\mu\restr A_i)\}$.
It will suffice to show that $\nu^*(K_i)=0$ for $i=0,1$. We show $\nu^*(K_0)=0$, the case $i=1$ being similar. Consider the function $g$ which agrees with $f$ on $A_0\times Y$ and is identically equal to $1$ on $A_1\times Y$. We have $g\in\LL^p(\ups)$.
By $[\ov{C}]$ and $(l1)$ for $\varphi$, let $M\in T_0$ satisfy that $y\notin M$ implies $[\varphi(g)]^y,g^y\in\LL^p(\mu)$ and $[\varphi(g)]^y =_\mu g^y$ which is not $=_\mu 0$ since it equals $1$ on $A_1$.

For $y\in K_0$, $[\eta_\bullet(g)]^y\restr A_0 = [\eta_\bullet(f)]^y\restr A_0\notin \LL^p(\mu\restr A_0)$ implying that $[\eta_\bullet(g)]^y\notin \LL^p(\mu)$ and hence $[\varphi(g)]^y=0$. Thus, $K_0\sq M$ and hence $\nu^*(K_0)=0$, as desired.

\m

\n (b) $X$ is an atom, $[C]$ holds, $X\,\dot{\times}\,T\sq \vY$, and $\eta$ satisfies $(l2)$.

\m

By Example \ref{ex:atom.factor}, there is a $w\in\LL^0(\nu)$ such that the $(X\,\dot{\times}\,T)$-measurable function $g\colon X\times Y\to\R$ given by $g(x,y)=w(y)$ satisfies $f=_\ups g$.
By $[C]$, there exists a set $P\in \vS_0$, such that $x\notin P$ implies $f_x\in \LL^p(\nu)$, and $f_x =_\nu g_x = w$. Since $\eta$ satisfies $(l2)$,
$[\eta_\bullet(f)]_x = \eta(w)$ when $x\in P^c$, so that for any $y\in Y$, $[\eta_\bullet(f)]^y\restr P^c = \eta(w)(y)$ is constant, in particular, $[\eta_\bullet(f)]^y\restr P^c\in\LL^p(\mu)$. For $y\notin G_P$ we have $[\eta_\bullet(f)]^y\restr P\in \LL^0(\mu)$ and hence $[\eta_\bullet(f)]^y\in \LL^p(\mu)$. Thus, $K\sq G_P$ and Claim \ref{claim:c} follows by Claim \ref{claim:f}.
\end{proof}

With this claim, the proof proceeds by adapting the argument in \cite{mms14}.
From $[\ov{C}]$ and $(l1)$ we get $M\in T_0$, such that $y\notin M$ implies $[\varphi(f)]^y, f^y\in\vS$, and $[\varphi(f)]^y=_{\mu}f^y$. By Claim \ref{claim:c}, we can also take $M$ so that $y\notin M$ implies $[\eta_\bullet(f)]^y\in \LL^p(\mu)$.
For $y\in M^c$ we have $[\eta_\bullet(f)]^y=_{\mu}\de([\eta_\bullet(f)]^y)=
[\varphi(f)]^y=_{\mu}f^y$,
and therefore $\eta$ is a weak $2$-marginal with respect to $\LL^p(\ups)$.
\end{proof}

We similarly have the following corollary of the preceding proof, giving a version of the proposition for maps on the family of measurable sets instead of on $\LL^p$. Except for the new clause (c) in (ii), it follows by a straightforward modification to the proof of Proposition \ref{E}.

For set maps $\de\in \mathcal{P}(X)^\vS$, $\eta\in\mathcal{P}(Y)^T$ the notions in this section have meanings analogous to the ones given above. For $E\sq X\times Y$, $\de^\bullet(E)=\{(x,y)\in X\times Y: x\in \de(E^y),\,E^y\in\vS\}$ and
$\eta_\bullet(E)=\{(x,y)\in X\times Y: y\in \de(E_x),\,E_x\in T\}$. Then the pair $(\de,\eta)$ is $\vY$-preserving if $E\in\vY$ implies $(\de\odot\eta)(f):=\de^\bullet(\eta_\bullet(E))\in\vY$.
We say $\eta$ is a weak $2$-marginal with respect to $\vY$ if for each $E\in\vY$ there is an $M_E\in T_0$ such that $[\eta_\bullet(E)]^y, E^y\in \vY$ and $[\eta_\bullet(E)]^y=_{\mu}E^y$ for all $y\notin M_E$.

\begin{cor}\label{c:E}
Assume $[\ov{C}]$ holds, $\vS\,\dot{\times}\,Y\sq \vY$, $\de$ is a primitive lifting for $\mu$, and $\eta\in \mathcal{P}(Y)^T$.
\begin{enumerate}
\item
If $(\de,\eta)$ is $\vY$-preserving and $\eta$ is a weak $2$-marginal with respect to $\vY$ then $\de\odot\eta$ satisfies $(L1)$.

\item
Suppose that one of the following holds: {\rm(a)}
$X$ is not an atom; {\rm(b)}
$X$ is an atom, $[C]$ holds, $X\,\dot{\times}\,T\sq \vY$, and $\eta$ satisfies $(L2)$;
{\rm(c)} $X$ is an atom, $X\,\dot{\times}\,T\sq \vY$, $\eta$ satisfies $(V)$ and $\mu$ is complete.
If $\de\odot\eta$ satisfies $(L1)$ then $(\de,\eta)$ is $\vY$-preserving and $\eta$ is a weak $2$-marginal with respect to $\vY$.
\end{enumerate}
\end{cor}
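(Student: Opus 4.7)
My plan is to adapt the proof of Proposition \ref{E} from $\LL^p$-functions to sets, with case (c) as the only genuinely new contribution; the rest is a mechanical translation.

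For part (i), I mirror Proposition \ref{E} (i) at the level of sets. Given that $(\de,\eta)$ is $\vY$-preserving and $\eta$ is a weak $2$-marginal, for each $E\in\vY$ preservation puts $\varphi(E):=(\de\odot\eta)(E)$ in $\vY$, and for $y$ outside the $\nu$-null set $M_E$ supplied by the weak $2$-marginal condition we have $[\eta_\bullet(E)]^y\in\vS$ with $[\eta_\bullet(E)]^y=_\mu E^y$, so
$\varphi(E)^y=\de([\eta_\bullet(E)]^y)=_\mu[\eta_\bullet(E)]^y=_\mu E^y$
(using that $\de$ is a primitive lifting for $\mu$). Then $[\ov C]$ yields $\varphi(E)=_\ups E$, i.e., $(L1)$.

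For part (ii), $\vY$-preservation is built into the requirement $\varphi(E)\in\vY$ that is part of $(L1)$, so only the weak $2$-marginal property needs attention. Fix $E\in\vY$ and let $M\in T_0$ be given by $[\ov C]$ applied to $\varphi(E)=_\ups E$ (and, in case (c), simultaneously to $\varphi(E^c)=_\ups E^c$), so that for $y\notin M$ the sections $E^y,\varphi(E)^y$ (and $\varphi(E^c)^y$) are in $\vS$ with $\varphi(E)^y=_\mu E^y$. In each subcase the real task is to show that $[\eta_\bullet(E)]^y\in\vS$ for $y$ outside a $\nu$-null set; once that is in hand, $\varphi(E)^y=\de([\eta_\bullet(E)]^y)=_\mu[\eta_\bullet(E)]^y$ together with $\varphi(E)^y=_\mu E^y$ forces $[\eta_\bullet(E)]^y=_\mu E^y$.

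Cases (a) and (b) run parallel to the corresponding subcases of Proposition \ref{E} (ii). For (a), decompose $X=A_0\sqcup A_1$ into two $\vS$-sets of positive measure and apply $(L1)$ to $F_i:=(E\cap A_i\times Y)\cup(A_{1-i}\times Y)\in\vY$; since $\mu(F_i^y)\geq\mu(A_{1-i})>0$ forces $\varphi(F_i)^y\neq\emptyset$, $[\eta_\bullet(F_i)]^y$ is measurable for $y$ off a $\nu$-null set, and the identity $[\eta_\bullet(F_i)]^y\cap A_i=[\eta_\bullet(E)]^y\cap A_i$ (from $(F_i)_x=E_x$ on $A_i$) stitches the two pieces of $[\eta_\bullet(E)]^y$ into a $\vS$-set that equals $E^y$ mod $\mu$. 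For (b), use Example \ref{ex:atom.factor} (iii) to produce $B\in T$ and $P\in\vS_0$ with $E=_\ups X\times B$ and $E_x=_\nu B$ for $x\in P^c$; then $(L2)$ collapses $[\eta_\bullet(E)]^y\cap P^c$ to either $P^c$ or $\emptyset$, while applying $(L1)$ to $G:=(E\cap P\times Y)\cup(P^c\times Y)$ and exploiting $\mu(G^y)=1$ forces $[\eta_\bullet(G)]^y\in\vS$, whence $[\eta_\bullet(E)]^y\cap P=[\eta_\bullet(G)]^y\cap P\in\vS$.

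Case (c) is the main obstacle and is what the corollary adds to its $\LL^p$ counterpart: without $(L2)$ the reduction of case (b) breaks. I will first observe that Example \ref{ex:atom.factor} (i) still yields a $B\in T$ with $E=_\ups X\times B$ using only $[\ov C]$ and $X\,\dot\times\,T\sq\vY$, and enlarge $M$ so that for $y\notin M$ we also have $\mu(E^y)=\chi_B(y)$. For $y\in B\setminus M$, $\mu(\varphi(E)^y)=1$ forces $\varphi(E)^y\neq\emptyset$, hence $[\eta_\bullet(E)]^y\in\vS$. For the complementary $y\in B^c\setminus M$ I apply the same reasoning to $E^c$: $\mu(\varphi(E^c)^y)=1$ gives $[\eta_\bullet(E^c)]^y\in\vS$ with $\mu([\eta_\bullet(E^c)]^y)=1$. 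Now invoke $(V)$: for each $x$ with $E_x\in T$ one has $\eta(E_x)\cap\eta(E_x^c)=\emptyset$, which translates into the disjointness
\[
[\eta_\bullet(E)]^y\cap[\eta_\bullet(E^c)]^y=\emptyset.
\]
Hence $[\eta_\bullet(E)]^y\sq X\setminus[\eta_\bullet(E^c)]^y$, a $\mu$-null set; completeness of $\mu$ puts $[\eta_\bullet(E)]^y$ in $\vS_0$, so $[\eta_\bullet(E)]^y=_\mu\emptyset=_\mu E^y$. This closes case (c) and the corollary.
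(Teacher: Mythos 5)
Your proof is correct and follows essentially the same route as the paper: a mechanical translation of Proposition \ref{E} to sets for (i), (ii)(a), (ii)(b), and, for the new case (c), the identical key argument — use the atom structure to reduce $E$ to $X\times B$ modulo $\ups$, deduce $[\eta_\bullet(E)]^y\in\vS$ with measure one for $y\in B$ off a null set, then handle $y\in B^c$ by applying the same to $E^c$ and using $(V)$ (disjointness of $[\eta_\bullet(E)]^y$ and $[\eta_\bullet(E^c)]^y$) together with completeness of $\mu$. No gaps.
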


\begin{proof} Repeat the proof of Proposition \ref{E}, making appropriate changes to the notation: in the whole proof, replace $\LL^p(\ups)$ by $\vY$; in (i), replace $f$ by a set $E\in\vY$, and take $F=\varphi(E)\,\triangle\,E$; in (ii), fix a set $E\in\vY$, replacing $f$ by $E$ in the rest of the proof, replace the restriction symbol $\restr$ by the intersection symbol $\cap$, replace $\LL^p(\mu)$, $\LL^p(\mu\restr N)$ and $\LL^p(\mu\restr A_i)$ by $\vS$ and $\LL^p(\nu)$ by $T$ (for any exponent $p$), and replace the constant functions $0$ and $1$ by the sets $\e$ and $X$.
Claim \ref{claim:f} now reads as follows.

\begin{clm}\label{claim:f.sets}
For each $N\in\vS_0$, $\nu^*(G_N)=0$, where $G_N=\{y\in Y: [\eta_\bullet(E)]^y\cap N\notin \vS\}$.
\end{clm}

In the proof of Claim \ref{claim:f}, replace $g$ by $F=E\cup (N^c\times Y)$. Claim \ref{claim:c} now reads as follows.

\begin{clm}\label{claim:c.sets}
$\nu^*(K)=0$, where $K=\{y\in Y:[\eta_\bullet(E)]^y\notin \vS\}$.
\end{clm}

In the proof of Claim \ref{claim:c}, in (a), replace $g$ by $F=E\cup (A_1\times Y)$. In (b), and also for (c), the fact that $X$ is an atom and $[C]$ holds with $X\,\dot{\times}\,T\sq \vY$, gives us (from Example \ref{ex:atom.factor} again)
an $A_E\in T$ and an $N_E\in T_0$ such that $E =_\ups F:=X\times A$ ($F$ replaces $g$) and $y\notin N_E$ implies $E^y\in \vS$ and $\mu(E^y)=\chi_{A_E}(y)$.
The value $\eta(w)(y)$ becomes $P^c$ or $\e$ depending on whether $y\in \eta(A_E)$.

\m

That leaves the proof of Claim \ref{claim:c.sets} under the assumptions in (c), namely that $X$ is an atom, $\eta$ satisfies $(V)$ and $\mu$ is complete. For that, we will make use of the following claim.

\begin{clm}\label{claim:a}
If $y\in A_E\sm N_E$ then $[\eta_\bullet(E)]^y \in \vS$ and $\mu([\eta_\bullet(E)]^y) = 1$.
\end{clm}

\begin{proof}
When $y\in A_E\sm N_E$, $[\varphi(E)]^y =_\mu E^y$, so $[\varphi(E)]^y$ has measure one since $y\in A_E\sm N_E$. It follows that $[\eta_\bullet(E)]^y\in\vS$ and $\de([\eta_\bullet(E)]^y) = [\varphi(E)]^y$ has measure one. Therefore $\mu([\eta_\bullet(E)]^y)=1$.
\end{proof}

Since $\eta$ satisfies $(V)$, so does $\eta_\bullet$, and then we get $[\eta_\bullet(E)]^y\cap [\eta_\bullet(E^c)]^y = \e$ for all $y$. By Claim \ref{claim:a} applied to $E^c$, $y\in A_{E^c}\sm N_{E^c}$ implies $\mu([\eta_\bullet(E^c)]^y) = 1$, and then by completeness of $\mu$ we get $\mu([\eta_\bullet(E)]^y) = 0$, in particular $[\eta_\bullet(E)]^y\in\vS$.
We also have that
$y\in A_E\sm N_E$ implies $[\eta_\bullet(E)]^y\in\vS$ and $\mu([\eta_\bullet(E)]^y) = 1$, so it follows that $(A_E\sm N_E)\cap (A_{E^c}\sm N_{E^c}) = \e$.
We have by $[\ov{C}]$ that $\ups(E)= \ups(X\times A_E) = \nu(A_E)$ and similarly for $E^c$, so the set $(A_E\sm N_E)\cup (A_{E^c}\sm N_{E^c})$ has $\nu$-measure one and is disjoint from $K$. Claim \ref{claim:c.sets} follows.

\m

Using Claim \ref{claim:c.sets}, the proof is completed as in the proof of Proposition \ref{E}, making appropriate changes as above.
\end{proof}

The following example shows that the assumption that $\eta$ satisfies $(L2)$ in (ii)(b) cannot be deleted.

\begin{ex}\label{ex:L2.needed}
Let $X=Y=[0,1]$, $\mu=$ Lebesgue measure restricted to Borel sets of measure $0$ or $1$, $\nu=$ Lebesgue measure, $\ups=\mu\,\wh{\otimes}\,\nu$.

Note that each $\mu\otimes\nu$-measurable set $E$ has the property that for some measure zero set $P\sq X$, $E\cap (P^c\times Y) = P^c\times A$ for some measurable set $A$ (since these sets form a $\s$-algebra containing the products $U\times V$, where $U$ is $\mu$-measurable and $V$ is $\nu$-measurable). The measure zero sets for $\ups$ are therefore the sets which are contained in $(P\times Y)\cup (X\times Q)$ for some measure zero sets $P,Q$.

Let $u\colon X\to [0,1/2)$ and $h\colon X\to [1/2,1]$ be functions such that $u$ is one-to-one and has measure zero range, and each fiber $h^{-1}(y)$ of $h$ is a Bernstein subset of $X$, $y\in [1/2,1]$. Let $\de$ be the unique lifting for $\mu$, and let $\eta_0$ be any lifting for Lebesgue measure satisfying $\eta_0([0,1/2)) = [0,1/2)$, and for measurable sets $E\sq[0,1]$, let $\eta(E) =\eta_0(E)$ unless $E$ has the form $E = [0,1/2)\sm \{u(x)\}$, in which case set $\eta([0,1/2)\sm \{u(x)\})=[0,1/2)\cup \{h(x)\}$.

Let $E\sq X\times Y$ be $\ups$-measurable. By \cite{bms4}, Proposition 5.2 (iii), we have $E =_\ups L:= X\times A$ for some measurable set $A$. The set $E\,\triangle\,(X\times A)$ is contained in $(P\times Y)\cup (X\times Q)$ for some measure zero sets $P,Q$, so $E\cap (P^c\times Q^c) = P^c\times (A\cap Q^c)$. If $A=_\nu[0,1/2)$ fails, then we also do not have $E_x =_\nu [0,1/2)$ for any $x$ and hence $\eta_\bullet(E)\cap(P^c\times Y) = P^c\times \eta(A)$. This gives for any $y\in \eta(A)$, $[\eta_\bullet(E)]^y \cap P^c = P^c$ and therefore $[\eta_\bullet(E)]^y$ has measure one and $[\varphi(E)]^y = \de([\eta_\bullet(E)]^y) = X$.
When $y\notin \eta(A)$, we get $[\eta_\bullet(E)]^y \cap P^c = \e$ and therefore $[\eta_\bullet(E)]^y$ has measure zero and $[\varphi(E)]^y = \de([\eta_\bullet(E)]^y) = \e$. It follows that $\varphi(E) = X\times \eta(A)$ and hence $\varphi(E) =_\ups E$.

If $A=_\nu[0,1/2)$ holds, then for some $N\in\vS_0$, we have for $x\in N^c$ that $E_x\in T$ and $E_x =_\nu A$, so $[\eta_\bullet(E)]_x = \eta(E_x) = [0,1/2)$ or $[0,1/2)\cup \{w(x)\}$ for some $w(x)\in [1/2,1]$. Only countable many fibers $w^{-1}(y)$ of the (partially defined) function $w$ can be measurable of positive measure. For any other value of $y$, $[\eta_\bullet(E)]^y$ is of measure zero or not measurable, so $[\varphi(E)]^y=\e$. It follows that $\varphi(E)\cap (N^c\times Y)$ equals the union of $N^c\times [0,1/2)$ and countably many sets of measure zero of the form $\de(w^{-1}(y))\times\{y\}$ and hence equals $E$ modulo $\ups$.

However, identifying $u$ with its graph $\{(x,u(x)):x\in X\}$, for the set $E=(X\times [0,1/2))\sm u$, all the sections $[\eta_\bullet(E)]^y = h^{-1}(y)$ are nonmeasurable for $y\in [1/2,1]$, so $\eta$ is not a weak $2$-marginal for $\vY$.
\end{ex}

\begin{prop}\label{F}
Assume that $\mu$ is complete and that $[C]$ and $[\ov{C}]$ hold.
Let $\de\in P^p(\mu)$, $\eta\in P^p(\nu)$. Assume that $(\de,\eta)$ is $\LL^p(\ups)$-preserving.
If $\eta$ is a weak $2$-marginal with respect to $\LL^p(\ups)$, then
$\de\odot\eta\in P^p(\ups)$.
\end{prop}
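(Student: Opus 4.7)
The proof should be essentially a straightforward assembly of facts already proved. The plan is to verify one-by-one the three conditions defining membership in $P^p(\ups)$: that $\de\odot\eta$ maps $\LL^p(\ups)$ into $\LL^p(\ups)$, satisfies $(l1)$ and $(l2)$, and fixes each constant function.

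First I would observe that $\de\odot\eta$ takes values in $\LL^p(\ups)$: this is exactly the content of $(\de,\eta)$ being $\LL^p(\ups)$-preserving in Definition \ref{D}. Next, property $(l1)$ follows directly from Proposition \ref{E}~(i): we have $[\ov{C}]$ by hypothesis, $\de$ is a primitive lifting for $\LL^p(\mu)$ (so in particular $\de(f)=_\mu f$ for each $f\in\LL^p(\mu)$), the pair $(\de,\eta)$ is $\LL^p(\ups)$-preserving, and $\eta$ is a weak $2$-marginal with respect to $\LL^p(\ups)$.

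For $(l2)$, I would invoke Lemma \ref{01}~(iii), whose hypotheses --- namely $[C]$ and completeness of $\mu$ --- are assumed. Finally, since $\de$ and $\eta$ are primitive liftings for $\LL^p(\mu)$ and $\LL^p(\nu)$, they fix each constant function on $X$ and $Y$, respectively, so by Lemma \ref{01}~(iv) the map $\de\odot\eta$ fixes each constant function on $X\times Y$. Combining these four facts gives $\de\odot\eta\in P^p(\ups)$.

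There is no real obstacle here; the proposition simply collects the statements about $(l1)$, $(l2)$, and constant-preservation established earlier into one packaged conclusion. The only thing to be careful about is confirming that each of the earlier results is applied under its stated hypotheses, most notably that Proposition \ref{E}~(i) needs only $[\ov{C}]$, while Lemma \ref{01}~(iii) requires $[C]$ together with completeness of $\mu$ --- both of which are part of the hypotheses of the present proposition.
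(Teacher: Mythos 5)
Your proposal is correct and follows exactly the paper's own proof: $(l2)$ via Lemma \ref{01}~(iii), constant-preservation via Lemma \ref{01}~(iv), and $(l1)$ via Proposition \ref{E}~(i), with the $\LL^p(\ups)$-preserving hypothesis supplying the range condition. Nothing is missing and the hypotheses are tracked correctly.
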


\begin{proof} The product $\de\odot\eta$ satisfies condition
$(l2)$ by Lemma \ref{01} (iii), and $(\de\odot\eta)(f)=f$ for $f$ constant, by Lemma \ref{01} (iv), so we need only to show condition $(l1)$, which follows by Proposition \ref{E}.
\end{proof}

\begin{prop}\label{G}
Assume that $\mu$ is complete and that $[C]$ and $[\ov{C}]$ hold.
Let $\de\in{V}_\R^\infty(\mu)$, $\eta\in {V}_\R^\infty(\nu)$. If $(\de,\eta)$ is $\LL^\infty(\ups)$-preserving and $\eta$ is a weak $2$-marginal with respect to $\LL^\infty(\ups)$ and generates $\LL^\infty(\mu)$-sections, then $\de\odot\eta\in{V}^\infty_\R(\ups)$.
\end{prop}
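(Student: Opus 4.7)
The plan is to reduce everything to linearity of $\de\odot\eta$, obtaining all the other properties of a vector lifting from Proposition \ref{F}. Under the present hypotheses ($\mu$ complete, $[C]$, $[\ov{C}]$, $(\de,\eta)$ is $\LL^\infty(\ups)$-preserving, and $\eta$ is a weak $2$-marginal with respect to $\LL^\infty(\ups)$), Proposition \ref{F} applied at $p=\infty$ gives $\de\odot\eta\in P^\infty(\ups)$, so $\de\odot\eta$ fixes the constant functions, satisfies $(l1)$ and $(l2)$, and maps $\LL^\infty(\ups)$ into itself. What remains is $\R$-linearity.

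For linearity, I would fix $a\in\R$ and $f,g\in\LL^\infty(\ups)$, and start from $[C]$ applied to the triple $(X,\vS,\mu)$, $(Y,T,\nu)$, $(X\times Y,\vY,\ups)$: by Proposition \ref{p:C.for.fcts}(iv) there is $N\in\vS_0$ such that $f_x$, $g_x$, $(af+g)_x\in\LL^\infty(\nu)$ for every $x\notin N$. Linearity of $\eta\in V^\infty_\R(\nu)$ then yields, pointwise on $N^c\times Y$,
\[
\eta_\bullet(af+g) \;=\; a\,\eta_\bullet(f) + \eta_\bullet(g).
\]
Taking horizontal sections at an arbitrary $y\in Y$, I use the hypothesis that $\eta$ generates $\LL^\infty(\mu)$-sections to conclude that $[\eta_\bullet(f)]^y$, $[\eta_\bullet(g)]^y$, and $[\eta_\bullet(af+g)]^y$ all lie in $\LL^\infty(\mu)$, and these two $\vS$-measurable functions agree on $N^c$; since the discrepancy set is contained in $N\in\vS_0$, we obtain
\[
[\eta_\bullet(af+g)]^y \;=_\mu\; a\,[\eta_\bullet(f)]^y + [\eta_\bullet(g)]^y.
\]

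Now I apply $\de\in V^\infty_\R(\mu)$: property $(l2)$ for $\de$ converts the $=_\mu$ above to an equality, and $\R$-linearity of $\de$ distributes the scalar and the sum. Finally, Lemma \ref{01}(i), which is exactly applicable because every horizontal section of $\eta_\bullet$ lies in $\LL^\infty(\mu)$ by the $\LL^\infty(\mu)$-section hypothesis, rewrites $\de([\eta_\bullet(h)]^y)$ as $[(\de\odot\eta)(h)]^y$ for $h\in\{f,g,af+g\}$. Since $y$ was arbitrary, this gives $(\de\odot\eta)(af+g)=a(\de\odot\eta)(f)+(\de\odot\eta)(g)$ pointwise on $X\times Y$, completing the proof that $\de\odot\eta\in V^\infty_\R(\ups)$.

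There is no real obstacle here; the one subtle point is that $\eta_\bullet$ itself is not genuinely linear, only "linear off $N$" for an $N\in\vS_0$ depending on $f$ and $g$, so one has to move to horizontal sections before invoking $(l2)$ and the linearity of $\de$. The completeness of $\mu$ is not used in this final step (it has already been absorbed in Proposition \ref{F}), but it is consistent with the framework: what is actually needed is that $N\in\vS_0$, which follows directly from $[C]$, together with the $\LL^\infty(\mu)$-section hypothesis on $\eta$ that guarantees the horizontal sections are genuine elements of $\LL^\infty(\mu)$ rather than truncated to zero by the definition of $\de\odot\eta$.
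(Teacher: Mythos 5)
Your proof is correct and has the same overall architecture as the paper's: the paper's proof is literally ``by Proposition \ref{F} we get $\de\odot\eta\in P^\infty(\ups)$ and by Lemma \ref{01} (vi) the product is linear.'' Where you differ is in how you establish linearity. The paper's Lemma \ref{01} (vi) first replaces $f$ and $g$ by versions all of whose vertical sections lie in $\LL^\infty(\nu)$, invoking Corollary \ref{c:sec} and the condition $\vS_0\,\dot{\times}\,Y\sq\vY$ (which is an explicit extra hypothesis of Lemma \ref{01} (vi) but does not appear in the statement of Proposition \ref{G}), and only then exploits genuine pointwise linearity of $\eta_\bullet$. You instead keep $f$ and $g$, accept that $\eta_\bullet(af+g)=a\eta_\bullet(f)+\eta_\bullet(g)$ only off $N\times Y$ for some $N\in\vS_0$, and absorb the discrepancy at the level of horizontal sections: the difference of the two sides restricted to the $y$-section is $\vS$-measurable (by the $\LL^\infty(\mu)$-sections hypothesis) and supported in $N$, hence $\mu$-null, and then $(l2)$ together with linearity of $\de$ finishes the computation via Lemma \ref{01} (i). This buys you something real: your argument does not need $\vS_0\,\dot{\times}\,Y\sq\vY$ at all, so it proves Proposition \ref{G} exactly as stated, whereas the paper's citation of Lemma \ref{01} (vi) quietly imports that condition. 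Your observation at the end --- that completeness of $\mu$ plays no role in the linearity step and that the essential inputs are $N\in\vS_0$ from $[C]$ plus the section hypothesis preventing truncation to zero --- is accurate.
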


\begin{proof} By Proposition \ref{F} we get $\de\odot\eta\in{P}^\infty(\ups)$ and by Lemma \ref{01} \rm{(vi)} the product $\de\odot\eta$ is a linear map.
\end{proof}

\begin{cor}\label{H}
Assume that $\mu$ is complete and that $[C]$ and $[\ov{C}]$ hold.
If $(\de,\eta)\in\mathcal{G}(\mu)\times\mathcal{G}(\nu)$
is $\LL^\infty(\ups)$-preserving and $\eta$ is a weak $2$-marginal with respect to $\LL^\infty(\ups)$ and generates $\LL^\infty(\mu)$-sections, then $\de\odot\eta\in\mathcal{G}(\ups)$.
\end{cor}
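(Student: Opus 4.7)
The plan is to deduce Corollary \ref{H} as a direct consequence of Proposition \ref{G} together with the positivity clause Lemma \ref{01} (viii). Since every linear lifting is, by definition (\ref{E5} (b)), a vector lifting which is order-preserving, the hypotheses of the corollary give in particular that $\de\in V^\infty_\R(\mu)$ and $\eta\in V^\infty_\R(\nu)$. All remaining assumptions of Proposition \ref{G} are directly built into the hypothesis ($\mu$ complete, $[C]$ and $[\ov{C}]$, $(\de,\eta)$ preserves $\LL^\infty(\ups)$, $\eta$ a weak $2$-marginal with respect to $\LL^\infty(\ups)$, and $\eta$ generates $\LL^\infty(\mu)$-sections), so Proposition \ref{G} will immediately deliver $\de\odot\eta\in V^\infty_\R(\ups)$.

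It then remains to show that $\de\odot\eta$ is order-preserving. First I would observe that a linear lifting is automatically positive: if $\xi$ is linear, order-preserving, and fixes constants (in particular $\xi(0)=0$), then $f\geq 0$ implies $\xi(f)\geq \xi(0)=0$. Applying this to $\de$ and $\eta$, both maps are positive, so Lemma \ref{01} (viii) yields that $\de\odot\eta$ is positive on $\LL^\infty(\ups)$.

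Finally, I would recover order-preservation from positivity and linearity of $\de\odot\eta$: given $f,g\in\LL^\infty(\ups)$ with $f\leq g$, the function $g-f$ is nonnegative, so by positivity $(\de\odot\eta)(g-f)\geq 0$, and by linearity (from Proposition \ref{G}, or Lemma \ref{01} (vi)) this equals $(\de\odot\eta)(g)-(\de\odot\eta)(f)$, giving $(\de\odot\eta)(f)\leq (\de\odot\eta)(g)$. Combined with the vector-lifting conclusion from Proposition \ref{G}, this proves $\de\odot\eta\in \mathcal{G}(\ups)$.

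There is no real obstacle here: the entire content lies in assembling Proposition \ref{G}, Lemma \ref{01} (viii), and the elementary remark that for linear maps fixing constants, positivity is equivalent to being order-preserving. The only slightly subtle point to flag in the write-up is verifying that the hypotheses of Lemma \ref{01} (viii) apply (one needs $\de,\eta$ positive, which follows from their being linear liftings, and one uses that $\eta$ generates $\LL^\infty(\mu)$-sections implicitly through the use of Proposition \ref{G} to ensure the tensor factor enters $\LL^\infty(\ups)$ in the first place).
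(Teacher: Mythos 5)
Your proposal follows the paper's proof exactly: the paper also derives $\de\odot\eta\in V^\infty_\R(\ups)$ from Proposition \ref{G} and then invokes Lemma \ref{01} (viii) to conclude $\de\odot\eta\in\mathcal{G}(\ups)$. The only difference is that you spell out the elementary equivalence between positivity and order-preservation for linear maps, which the paper leaves implicit; this is a correct and harmless addition.
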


\begin{proof} By  Proposition \ref{G} we get  $\de\odot\eta\in{V}^\infty_\R(\ups)$; hence $\de\odot\eta\in\mathcal{G}(\ups)$ by Lemma \ref{01} \rm{(viii)}.
\end{proof}

\begin{que}\label{Q}
Let $(\de,\eta)\in{V}^\infty_\R(\mu)\times{V}^\infty_\R(\nu)$ preserve measurability, and let $\eta$ be a weak 2-marginal with respect to $\LL^\infty(\mu\,\wh\otimes\,\nu)$. Assume that $\de\odot\eta\in{V}^\infty_\R(\mu\,\wh\otimes\,\nu)$. Does $\eta$ generate $\mu$-measurable sections in $\LL^\infty(\mu)$?
What if we assume also that $(\de,\eta)\in\mathcal{G}(\mu)\times\mathcal{G}(\nu)$ and $\de\odot\eta\in\mathcal{G}(\mu\,\wh\otimes\,\nu)$?
\end{que}

The next result is complementary to Theorem 2.6 from \cite{mms15}.

\begin{thm}\label{K}
Assume that $\mu$ and $\nu$ are complete.
Let $\de\in\mathcal{G}(\mu)$, $\eta\in{A}\mathcal{G}(\nu)$. If $(\de,\eta)$ is $\LL^\infty(\ups)$-preserving and $\eta$ generates $\LL^\infty(\mu)$-sections,  then $\pi:=\de\odot\eta\in\mathcal{G}(\mu\,\wh\otimes\,\nu)$ and the following hold.
\begin{enumerate}
\item
$\pi\in\de\otimes\eta$.
\item
$[\pi(f)]^y=\de([\pi(f)]^y)\;\;\mbox{for all}\;\; f\in\LL^\infty(\mu\,\wh\otimes\,\nu)\;\;\mbox{and all}\;\; y\in{Y}$.
\end{enumerate}
\end{thm}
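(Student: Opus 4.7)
The plan is to deduce the theorem as a direct specialization of Corollary \ref{H} applied to the completed product $\ups=\mu\,\wh\otimes\,\nu$. First I would check that the structural hypotheses of that corollary are automatic in the present setting. Since $\mu$ and $\nu$ are complete, the triple $(X,\vS,\mu)$, $(Y,T,\nu)$, $(X\times Y,\vS\,\wh\otimes\,T,\mu\,\wh\otimes\,\nu)$ satisfies $[P_0]$, $[C]$ and $[\ov{C}]$ by the classical Fubini theorem for complete measures, and the inclusion $(\vS_0\,\wh{\times}\, Y)[T]\sq \vS\,\wh\otimes\, T$ follows at once from completeness of $\mu\,\wh\otimes\,\nu$, since any subset of $N\times Y$ with $N\in\vS_0$ is $\mu\,\wh\otimes\,\nu$-null and hence measurable. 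Moreover $A\mathcal{G}(\nu)\sq \Lambda^\infty(\nu)\sq \mathcal{G}(\nu)$, so $(\de,\eta)\in \mathcal{G}(\mu)\times \mathcal{G}(\nu)$, and the $\LL^\infty(\ups)$-preservation and $\LL^\infty(\mu)$-section-generation properties are by hypothesis.

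The only nontrivial hypothesis remaining to verify is that $\eta$ is a weak $2$-marginal with respect to $\LL^\infty(\mu\,\wh\otimes\,\nu)$. For this I would use the admissible generation of $\eta$ in the following chain: by Example \ref{s50ar1}(b), since $\eta\in A\mathcal{G}(\nu)$ and $\nu$ is complete, there exists a lifting $\pi_0\in\Lambda^\infty(\mu\,\wh\otimes\,\nu)$ with $\eta$-invariant vertical sections, i.e.\ $[\pi_0(f)]_x=\eta([\pi_0(f)]_x)$ for every $f\in\LL^\infty(\mu\,\wh\otimes\,\nu)$ and every $x\in X$. By Corollary \ref{VM19b}, applied with $T_x=T$, $\nu_x=\nu$ and $\vY=\vS\,\wh\otimes\,T$ (whose ambient assumptions $[C]$ and $(\vS_0\,\wh{\times}\,Y)[T]\sq\vY$ were just verified), this invariance entails that $\eta$ is a (full) $2$-marginal with respect to $\LL^\infty(\mu\,\wh\otimes\,\nu)$. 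Remark \ref{r:Da} then upgrades this to a weak $2$-marginal. At this point every hypothesis of Corollary \ref{H} is in force, and its conclusion gives $\pi:=\de\odot\eta\in \mathcal{G}(\mu\,\wh\otimes\,\nu)$.

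It remains to prove (i) and (ii). Conclusion (i), that $\pi(f\otimes g)=\de(f)\otimes\eta(g)$ for $f\in\LL^\infty(\mu)$ and $g\in\LL^\infty(\nu)$, is immediate from Lemma \ref{01}(v), since linear liftings are in particular homogeneous. Conclusion (ii), the $\de$-invariance $[\pi(f)]^y=\de([\pi(f)]^y)$ of the horizontal sections of $\pi(f)$, is exactly the content of Lemma \ref{01}(ii) applied to $\pi=\de\odot\eta$. If there is a delicate step in this plan, it is the invocation of the chain Example \ref{s50ar1}(b) $\to$ Corollary \ref{VM19b} $\to$ Remark \ref{r:Da} to convert admissible generation of $\eta$ into weak $2$-marginality, but all of the side hypotheses of those results hold trivially here because of the completeness of $\mu$ and $\nu$.
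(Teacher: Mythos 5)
Your proposal follows essentially the same route as the paper: derive weak $2$-marginality of $\eta$ from the existence of a product linear lifting with $\eta$-invariant vertical sections (a consequence of admissible generation), then invoke Corollary \ref{H} to get $\pi\in\mathcal{G}(\mu\,\wh\otimes\,\nu)$, and finish with Lemma \ref{01} (v) and (ii) for conclusions (i) and (ii). The paper does exactly this, citing \cite{mms15}, Proposition 2.4 for the invariant-section lifting and then Lemma \ref{VM19a} to obtain $(l1)$ for $\eta_\bullet$ and hence weak $2$-marginality.

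One citation in your chain is off, and as written it breaks the argument: Example \ref{s50ar1} (b) applies to an admissibly generated \emph{multiplicative} lifting $\rho\in\Lambda^\infty(\nu)$, whereas the hypothesis here is $\eta\in A\mathcal{G}(\nu)$, i.e.\ an admissibly generated \emph{linear} lifting; your claimed inclusion $A\mathcal{G}(\nu)\sq\Lambda^\infty(\nu)$ is backwards (liftings are linear liftings, not conversely), so Example \ref{s50ar1} (b) is not literally applicable. The fact you need is nonetheless true for $\eta\in A\mathcal{G}(\nu)$: either use Example \ref{s50ar1} (a), or go directly to \cite{mms15}, Proposition 2.4 as the paper does, which yields $\varphi\in\mathcal{G}(\mu\,\wh\otimes\,\nu)$ with $[\varphi(f)]_x=\eta([\varphi(f)]_x)$ for $x$ outside a null set $N_f$; Lemma \ref{VM19a} (or Corollary \ref{VM19b}, whose clause (iv) only requires invariance off a null set) then gives the $2$-marginality, and Remark \ref{r:Da} the weak $2$-marginality. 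With that substitution your argument is complete and coincides with the paper's.
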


\begin{proof} Since $\eta\in{A}\mathcal{G}(\nu)$ and $\mu$ and $\nu$ are complete, it follows by \cite{mms15}, Proposition 2.4
that there exists $\varphi\in\mathcal{G}(\mu\,\wh{\otimes}\,\nu)$ such that for every $f \in\LL^\infty(\mu\,\wh{\otimes}\,\nu)$ there exists a set $N_f \in \vS_0$ such that
$[\varphi(f)]_x = \eta([\varphi(f)]_x)$ for every $x\in X\sm N_f$.
By Lemma \ref{VM19a}, taking $\ups=\mu\,\wh{\otimes}\,\nu$ and $\mathcal{M}=\LL^\infty(\ups)$, we get that $\eta_\bullet$ satisfies $(l1)$ and hence $\eta$ is a weak $2$-marginal with respect to $\LL^\infty(\mu\,\wh\otimes\,\nu)$. It then follows by Corollary \ref{H} that $\pi:=\de\odot\eta\in\mathcal{G}(\mu\,\wh\otimes\,\nu)$, and so applying Lemma \ref{01}, \rm{(ii)}, \rm{(v)}, we obtain the validity of the statements \rm{(i)} and \rm{(ii)}.
\end{proof}

The next result corresponds to \cite{mms14}, Proposition 5.1, concerning liftings for sets, and has a similar proof, which we enclose for completeness.

\begin{prop}\label{000}
Assume that $\mu$ is complete.
If $(\de,\eta)\in\Lambda^\infty(\mu)\times\Lambda^\infty(\nu)$ is $\LL^\infty(\ups)$-preserving and $\eta$ is $2$-marginal with respect to $\LL^\infty(\mu\,\wh\otimes\,\nu)$ the the following statements are equivalent
\begin{enumerate}
\item
$\eta$ is $\mu$-smooth;
\item
$\de\odot\eta\in\Lambda^\infty(\mu\,\wh\otimes\,\nu)$.
\end{enumerate}
\end{prop}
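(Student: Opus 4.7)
I plan to prove the two implications separately, using throughout that for the completed product $\ups=\mu\,\wh{\otimes}\,\nu$ of the complete probability spaces $\mu$ and $\nu$ the properties $[C]$, $[\ov{C}]$, $\vS\,\dot{\times}\,Y\sq\vY$, $\vS_0\,\dot{\times}\,Y\sq\vY$, and $(\vS_0\,\wh{\times}\,Y)[T]\sq\vY$ all hold automatically, so the structural hypotheses needed below are in force.

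For the forward direction $(i)\Rightarrow(ii)$, assume $\eta$ is $\mu$-smooth, so it is a $2$-marginal with respect to $\LL^\infty(\ups)$ and generates $\LL^\infty(\mu)$-sections. By Remark~\ref{r:Da}, $\eta$ is then also a weak $2$-marginal. Since $\Lambda^\infty\sq\mathcal{G}$ for both $\mu$ and $\nu$, Corollary~\ref{H} applies and yields $\de\odot\eta\in\mathcal{G}(\ups)$, i.e.\ a linear lifting on $\LL^\infty(\ups)$. Multiplicativity of $\de\odot\eta$ then follows from Lemma~\ref{01}~(vii), whose hypotheses (multiplicativity of $\de$ and $\eta$ and generation of $\LL^\infty(\mu)$-sections by $\eta$) are all present; combining the linear-lifting structure with multiplicativity yields $\de\odot\eta\in\Lambda^\infty(\ups)$.

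For the converse $(ii)\Rightarrow(i)$, set $\pi:=\de\odot\eta\in\Lambda^\infty(\ups)$; since the $2$-marginal property of $\eta$ is already in the hypotheses, it suffices to prove that $\eta$ generates $\LL^\infty(\mu)$-sections. Suppose for contradiction there exist $f\in\LL^\infty(\ups)$ and $y_0\in Y$ with $[\eta_\bullet(f)]^{y_0}\notin\LL^\infty(\mu)$; then by Definition~\ref{A}, $\pi(f)(x,y_0)=0$ for every $x\in X$. Pick any nonzero scalar $c$ and set $g:=f-c\in\LL^\infty(\ups)$. Choose $N\in\vS_0$ with $f_x\in\LL^\infty(\nu)$ for $x\in N^c$ (which exists by $[C]$); then for every $x\in N^c$ we have $g_x=f_x-c\in\LL^\infty(\nu)$ and, by linearity of $\eta$ and $\eta(c)=c$, $[\eta_\bullet(g)]^{y_0}(x)=[\eta_\bullet(f)]^{y_0}(x)-c$. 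Thus $[\eta_\bullet(f)]^{y_0}$ and $[\eta_\bullet(g)]^{y_0}$ differ modulo a $\mu$-null set by a $\vS$-measurable bounded function, so completeness of $\mu$ forces the two sections to share the same ``belongs to $\LL^\infty(\mu)$'' status. Hence $[\eta_\bullet(g)]^{y_0}\notin\LL^\infty(\mu)$ and $\pi(g)(x,y_0)=0$ for every $x\in X$. On the other hand, linearity of $\pi$ together with $\pi(1)=1$ yield $\pi(g)=\pi(f)-c$, so $\pi(g)(x,y_0)=-c\neq 0$ on the nonempty set $X$---a contradiction.

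The main obstacle lies in the converse, where non-measurability information about $[\eta_\bullet(f)]^{y_0}$ must be extracted purely from the algebraic identities forced by $\pi\in\Lambda^\infty(\ups)$, since the definition of $\de\odot\eta$ silently sets the value to zero on the bad locus. The constant-subtraction trick above---an analogue for functions of the complementation trick $E\mapsto E^c$ used for set-theoretic liftings in \cite{mms14}, Proposition~5.1---does exactly this, and its validity rests essentially on the completeness of $\mu$ (to transfer non-measurability across a $\mu$-null-set modification) and on the fact that $\eta$ fixes constants.
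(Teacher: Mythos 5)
Your proof is correct and follows essentially the same route as the paper: the forward direction is the identical chain through Corollary~\ref{H} (via Remark~\ref{r:Da}) and Lemma~\ref{01}~(vii), and your constant-subtraction argument for the converse is the same linearity trick the paper runs with the companion function $g=1-f$ instead of $g=f-c$, exploiting in both cases that completeness of $\mu$ ties the measurability status of $[\eta_\bullet(f)]^y$ to that of its companion while linearity of $\pi$ and $\pi(1)=1$ rule out the bad locus.
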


\begin{proof}
For (i) $\Rightarrow$ (ii), by Corollary \ref{H} we get $\de\odot\eta\in\mathcal{G}(\mu\,\wh\otimes\,\nu)$, implying along with Lemma \ref{01} (vii) that $\de\odot\eta\in\Lambda^\infty(\mu\,\wh\otimes\,\nu)$.

For (ii) $\Rightarrow$ (i),
let $f\in\LL^\infty(\mu\,\wh\otimes\,\nu)$ and let $g=1-f$. By Proposition \ref{p:C.for.fcts} (ii), there is an $N\in\vS_0$ such that for $x\notin N$. $f_x,g_x\in\LL^\infty(\nu)$. When $x\notin N$, we have $f_x + g_x = 1$, so $\eta(f_x) + \eta(g_x) = \eta(1)=1$. Thus, we have for any $y\in Y$ the equation $[\eta_\bullet(f)]^y(x) + [\eta_\bullet(g)]^y(x) = 1$ for $x\notin N$. Since $\mu$ is complete, it follows that $[\eta_\bullet(f)]^y \in\LL^\infty(\mu)$ if and only if $[\eta_\bullet(g)]^y\in\LL^\infty(\mu)$.
Since for any $y\in Y$ we have
\[
1 = [(\de\odot\eta)(1)]^y = [(\de\odot\eta)(f+g)]^y = [(\de\odot\eta)(f)]^y + [(\de\odot\eta)(g)]^y,
\]
the conditions $[\eta_\bullet(f)]^y\in\LL^\infty(\mu)$ and $[\eta_\bullet(g)]^y\in\LL^\infty(\mu)$ do not both fail, and hence they both hold.
Thus, $\eta$ generates $\LL^\infty(\mu)$-sections and therefore is $\mu$-smooth.
\end{proof}

\begin{thm}\label{001}
Let $(X,\vS,\mu)$ be complete and non-atomic and let $(Y,T,\nu)$ be complete, non-atomic and perfect. If the pair $(\de,\eta)\in\Lambda^\infty(\mu)\times\Lambda^\infty(\nu)$ is $\LL^\infty(\mu\,\wh{\otimes}\,\nu)$-preserving and $\eta$ is a $2$-marginal with respect to $\LL^\infty(\mu\,\wh\otimes\,\nu)$ then $\de\odot\eta\in{P}^\infty(\mu\,\wh\otimes\,\nu)$ but $\de\odot\eta\notin\Lambda^\infty(\mu\,\wh\otimes\,\nu)$.
\end{thm}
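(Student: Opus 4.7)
The proof splits into two assertions. Write $\ups:=\mu\,\wh\otimes\,\nu$.

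\textbf{Proof of $\de\odot\eta\in P^\infty(\ups)$.} Since liftings are primitive liftings, $\de\in P^\infty(\mu)$ and $\eta\in P^\infty(\nu)$. The completeness of $\mu$ and $\nu$ ensures that $\ups$ satisfies both $[C]$ and $[\ov{C}]$, and that $(\vS_0\,\wh{\times}\,Y)[T]\sq\vY$. The hypothesis that $\eta$ is a $2$-marginal with respect to $\LL^\infty(\ups)$, combined with Corollary \ref{VM19b} applied with $\M=\LL^\infty(\ups)$, yields $\eta_\bullet(f)=_\ups f$ for every $f\in\LL^\infty(\ups)$; via $[\ov{C}]$, this immediately promotes $\eta$ to a weak $2$-marginal with respect to $\LL^\infty(\ups)$ in the sense of Definition \ref{Da} (cf.\ Remark \ref{r:Da}). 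Proposition \ref{F} then applies and delivers $\de\odot\eta\in P^\infty(\ups)$.

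\textbf{Reduction of $\de\odot\eta\notin\Lambda^\infty(\ups)$ to failure of $\mu$-smoothness.} Assume toward a contradiction that $\de\odot\eta\in\Lambda^\infty(\ups)$. All hypotheses of Proposition \ref{000} are in force, so its equivalence forces $\eta$ to be $\mu$-smooth, i.e., $\eta$ generates $\LL^\infty(\mu)$-sections. Writing $\pi:=\de\odot\eta$, the formula $[\pi(f)]^y=\de\bigl([\eta_\bullet(f)]^y\bigr)$ (valid for every $y$ under $\mu$-smoothness) together with the idempotence $\de\circ\de=\de$ yields the rigidity identity
\[
[\pi(f)]^y \;=\; \de\bigl([\pi(f)]^y\bigr), \qquad f\in\LL^\infty(\ups),\ y\in Y.
\]
Applied to $f=\chi_E$ with $E\in\vY$ and writing $\pi(E)$ for the set with $\pi(\chi_E)=\chi_{\pi(E)}$, this says every horizontal section $[\pi(E)]^y$ is a $\de$-fixed member of $\vS$. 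In other words, $\pi$ would be a lifting of $\ups$ whose horizontal sections are all simultaneously selected by the single lifting $\de$ of $\mu$.

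\textbf{Ruling out the rigidity identity.} The plan is to show that no lifting $\pi\in\Lambda^\infty(\ups)$ can possess such a $\de$-rigid family of horizontal sections when $(X,\vS,\mu)$ is non-atomic and $(Y,T,\nu)$ is non-atomic and perfect. Concretely, one produces a set $E\in\vY$ and a distinguished point $y_0\in Y$ for which $[\eta_\bullet(E)]^{y_0}=\{x\in X:y_0\in\eta(E_x)\}$ fails to be $\mu$-measurable, contradicting $\mu$-smoothness. Perfectness of $\nu$ supplies a family of regular conditional probabilities and enough $\vS$-valued measurable selections to organize a transfinite recursion of length $\mathfrak{c}$; non-atomicity of both factors provides the freedom to diagonalize against every candidate $\de$-fixed representative indexed by $y$. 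The construction is structurally the same as the graph/Bernstein-section argument of Example \ref{ex:[C]}, transported to the perfect non-atomic setting via the measure-algebra isomorphism with the standard Lebesgue space, and it shows that the image algebra $\de(\vS)$ is strictly too sparse to accommodate all horizontal sections of a $\vY$-measurable representative of an arbitrary $\ups$-class.

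\textbf{The main obstacle} is precisely this Bernstein-style construction: one must simultaneously guarantee $\vY$-measurability of $E$ and the failure of $\mu$-measurability for some horizontal section of $\eta_\bullet(E)$, inside ZFC without additional set-theoretic hypotheses. Perfectness of $\nu$ is essential because it gives access to product regular conditional probabilities (Example \ref{ex:prod.r.c.p.}) and hence to a controllable decomposition of $\ups$-measurable sets into measurable sections; non-atomicity supplies cardinality $\mathfrak{c}$ of independent degrees of freedom. An alternative, less hands-on route is to quote the classical non-existence of strong (coordinate-invariant) product liftings for non-atomic perfect products in the style of Losert and Fremlin (cf.\ \cite{fr3} 346A ff.\ and the discussion of coordinate-respecting liftings in \cite{mms15}, \cite{mms25}), which directly contradicts the rigidity identity derived above, completing the proof.
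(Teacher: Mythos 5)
Your first half is fine and matches the paper: Remark \ref{r:Da} upgrades the $2$-marginal hypothesis to a weak $2$-marginal, and Proposition \ref{F} gives $\de\odot\eta\in P^\infty(\ups)$. Your reduction of the second half via Proposition \ref{000} to the $\mu$-smoothness of $\eta$ also agrees with the paper. The problem is everything after that.

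The ``rigidity identity'' $[\pi(f)]^y=\de([\pi(f)]^y)$ is just Lemma \ref{01}\,(ii) and holds unconditionally (with the convention that the section is $0$ when $[\eta_\bullet(f)]^y\notin\LL^\infty(\mu)$), so it carries no contradiction by itself; and your claim that no lifting of $\ups$ can have all horizontal sections $\de$-fixed is neither proved nor correctly sourced. The Bernstein/transfinite-diagonalization sketch is not carried out, and you yourself flag that making it work in ZFC is ``the main obstacle''---the analogous constructions in this paper (Example \ref{ex:VM19}, and \cite{bms6}, Example 6.20) need CH or cardinal hypotheses precisely to control measurability, so there is no reason to believe the step goes through unconditionally. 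The alternative citation to coordinate-respecting product liftings (Fremlin 346A, Losert-type examples) addresses a different phenomenon and does not yield the needed impossibility. The paper's actual argument is different and essential: from the $2$-marginality of $\eta$ as a set lifting one invokes \cite{bms4}, Lemma 4.11, to manufacture a lifting $\pi\in\vL(\mu\,\wh{\otimes}\,\nu)$ whose \emph{vertical} sections are $\eta$-invariant, $\eta([\pi(E)]_x)=[\pi(E)]_x$; the $\mu$-smoothness of $\eta$ then forces all \emph{horizontal} sections $[\pi(E)]^y$ to be $\vS$-measurable; and the coexistence of these two properties for a genuine (multiplicative) lifting on a product of a complete non-atomic space with a complete non-atomic perfect space is exactly what \cite{mms2}, Theorem 4.3, rules out. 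This is where non-atomicity and perfectness actually enter, and it is the step your proposal is missing.
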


\begin{proof}
We have that $\eta$ is a weak $2$-marginal with respect to $\LL^\infty(\mu\,\wh\otimes\,\nu)$ by Remark \ref{r:Da}. That $\de\odot\eta\in{P}^\infty(\mu\,\wh\otimes\,\nu)$ then follows from Proposition \ref{F}.
For the last statement suppose we had $\de\odot\eta\in\vL^\infty(\mu\,\wh{\otimes}\,\nu)$.

By Proposition 9.24 
$\eta$ is $\mu$-smooth, i.e., it is a 2-marginal with respect to $\mathcal{L}^\infty(\mu\,\wh{\otimes}\,\nu)$ and generates $\mathcal{L}^\infty(\mu)$-sections.
Since $\eta$ is 2-marginal with respect to $\mathcal{L}^\infty(\mu\,\wh{\otimes}\,\nu)$, the induced lifting on $T$, which we also denote by $\eta$, is a $2$-marginal with respect to $\vS\,\wh{\otimes}\,{T}$. (The induced lifting on $T$ is given by $B\mapsto \eta(B)$ where $\eta(\chi_B)=\chi_{\eta(B)}$. We have $\eta_\bullet(\chi_E) = \chi_{\eta_\bullet(E)}$ for any set $E\sq X\times Y$, as is readily verified.)
Thus, applying \cite{bms4} Lemma 4.11 (see \cite{bms6} Lemma 4.8 for an important correction to the proof%
\footnote{In \cite{bms6} Lemma 4.8, $\xi\in\vartheta(\nu)$ should read $\xi\in\vartheta(I_2)$. In our application here, we have $\xi=\eta$.}
) we obtain the existence of a lifting $\pi\in\vL(\mu\,\wh{\otimes}\,\nu)$ such that
\begin{equation}\label{eq:x}
\eta([\pi(E)]_x)=[\pi(E)]_x\;\;\mbox{for all}\;\; E\in\vS\,\wh{\otimes}\,{T}\;\;\mbox{and}\;\; x\in{X}.
\end{equation}
By standard arguments it then follows that
$\eta([\pi(f)]_x)=[\pi(f)]_x$ for all $f\in \LL^\infty(\mu\,\wh{\otimes}\,\nu)$ and $x\in X$. (From (\ref{eq:x}) this holds for $f=\chi_E$, $E\in\vS\,\wh{\otimes}\,{T}$, and hence for simple functions. Then use the fact that any $f\in \LL^\infty(\mu\,\wh{\otimes}\,\nu)$ is a uniform limit of simple functions and the fact that liftings preserve uniform limits.)

Since $\eta$ generates $\mathcal{L}^\infty(\mu)$-sections, and $[\eta_\bullet(f)]_x = \eta(f_x) = \eta([\pi(f)]_x) = [\pi(f)]_x$ for $\mu$-almost all $x\in X$, it follows that  $[\pi(f)]^y\in\mathcal{L}^\infty(\mu)$ for all $f\in\mathcal{L}^\infty(\mu\,\wh{\otimes}\,\nu)$ and $y\in{Y}$. (Cf.\ the proof of Lemma 9.4.)
Thus, we have
\begin{equation}\label{eq:xx}
[\pi(E)]^y\in\vS\;\;\mbox{for all}\;\; E\in\vS\,\wh{\otimes}\,{T}\;\;\mbox{and}\;\; y\in{Y}.
\end{equation}
So, there exist liftings $\eta\in\vL(\nu)$ and $\pi\in\vL(\mu\,\wh{\otimes}\,\nu)$ satisfying conditions \eqref{eq:x} and \eqref{eq:xx}, which contradicts \cite{mms2} Theorem 4.3.
\end{proof}

\end{document}